\documentclass[11pt, reqno]{amsart}

\usepackage[utf8]{inputenc}
\usepackage[T1]{fontenc}
\usepackage[english]{babel}

\usepackage{newpxtext}
\usepackage[vvarbb]{newpxmath}
\usepackage{microtype}

\usepackage[margin=1.15in]{geometry}
\usepackage{amsmath, amsthm}
\usepackage{mathtools}
\usepackage{esint}
\usepackage{dsfont}

\usepackage{enumitem}
\usepackage{xcolor}
\usepackage{cite}
\usepackage{tikz}
\usetikzlibrary{arrows.meta,decorations.pathreplacing,calc}
\usepackage{wrapfig}
\usepackage{xspace}

\usepackage[colorlinks=true,
            linkcolor=blue!70!black,
            citecolor=teal,
            urlcolor=magenta]{hyperref}
\usepackage[capitalize]{cleveref}

\usepackage{aliascnt}
\newcommand{\sharedthm}[3]{%
  \newaliascnt{#1}{theorem}\newtheorem{#1}[#1]{#2}%
  \aliascntresetthe{#1}\crefname{#1}{#2}{#3}}
\theoremstyle{plain}
\newtheorem{theorem}{Theorem}[section]
\crefname{theorem}{Theorem}{Theorems}
\sharedthm{lemma}{Lemma}{Lemmas}
\sharedthm{proposition}{Proposition}{Propositions}
\sharedthm{corollary}{Corollary}{Corollaries}
\newtheorem{maintheorem}{Theorem}

\crefname{maintheorem}{Theorem}{Theorems}

\theoremstyle{definition}
\sharedthm{definition}{Definition}{Definitions}

\theoremstyle{remark}
\sharedthm{remark}{Remark}{Remarks}
\sharedthm{example}{Example}{Examples}

\numberwithin{equation}{section}

\newcommand{\R}{\mathbb{R}}
\newcommand{\N}{\mathbb{N}}
\newcommand{\Z}{\mathbb{Z}}
\newcommand{\Zp}{\mathbb{Z}_{+}}
\newcommand{\Bn}{\mathbb{B}^n}
\newcommand{\Hn}{\mathbb{H}^n}
\newcommand{\Un}{\mathbb{U}^n}
\newcommand{\Sph}{\mathbb{S}^{n-1}}
\newcommand{\eps}{\varepsilon}
\newcommand{\avg}[2]{\langle #1\rangle_{#2}}
\newcommand{\Chi}{\mbox{\large$\chi$}}
\newcommand{\Jd}{\mathcal{J}}
\newcommand{\cells}{\mathcal{C}}
\newcommand{\Ab}{\mathbb{A}}
\newcommand{\Aloc}{A_{p,\mathrm{loc}}(\Bn)}
\newcommand{\Minf}{M^{\infty}}
\newcommand{\Mz}{M^{0}}
\newcommand{\Mcoarse}{M^{\sharp}}
\newcommand{\ann}{\mathcal{A}^{\circ}}
\newcommand{\gp}[2]{(#1|#2)_o}
\newcommand{\ang}[2]{\angle(#1,#2)}
\newcommand{\SCap}{\mathrm{Cap}}
\newcommand{\ip}[1]{\lfloor #1\rfloor}
\newcommand{\WP}{\eqref{eq:WP}\xspace}
\newcommand{\WQ}{\eqref{eq:WQ}\xspace}
\newcommand{\Ro}{\hyperref[eq:R1R2]{\textup{(R1)}}\xspace}
\newcommand{\Rt}{\hyperref[eq:R1R2]{\textup{(R2)}}\xspace}
\newcommand{\Rtw}{\hyperref[eq:R2w]{\textup{(R2w)}}\xspace}
\newcommand{\PP}{\mathfrak{P}}
\newcommand{\QQ}{\mathfrak{Q}}
\newcommand{\Sop}{\mathcal{S}}
\newcommand{\Hst}{\mathcal{H}^{*}}

\DeclareMathOperator{\artanh}{artanh}
\DeclareMathOperator*{\esssup}{ess\,sup}
\DeclareMathOperator*{\essinf}{ess\,inf}

\newcommand{\ww}{\mathsf w}

\title[Muckenhoupt weights via shadow slices on hyperbolic spaces]
{Muckenhoupt weights for the Hardy-Littlewood maximal operator on hyperbolic spaces}

\author[Rana]{Tapendu Rana}
\address{Department of Mathematics: Analysis, Logic and Discrete
Mathematics, Ghent University, Krijgslaan 281, Building S8, B 9000
Ghent, Belgium}
\email{tapendurana@gmail.com, tapendu.rana@ugent.be}

\subjclass[2020]{Primary 43A85, 42B25; Secondary  43A90, 43A15}
\keywords{Hardy--Littlewood maximal operator, Muckenhoupt weights,
weighted norm inequalities, two-layer weights, real hyperbolic space,
exponential volume growth, shadow slices}

\begin{document}

\begin{abstract}
On the real hyperbolic space $\mathbb{H}^n$, the classical Muckenhoupt condition does not describe the weights $w$ for which the Hardy--Littlewood maximal operator $M$ is bounded on $L^p(w)$.  We show that, for $1<p<\infty$, $M$ is bounded on $L^p(w)$ if and only if $w$ is a local $A_p$ weight and satisfies a two-layer condition, which compares $w$ and its dual weight on two layers of the shadow of a point.  We also treat the weak type case, radial weights and the range $p\le1$, and we prove a weighted Herz principle for radial convolution operators.
\end{abstract}

\maketitle

\section{Introduction}\label{sec:intro}

The study of weighted norm inequalities for the Hardy--Littlewood
maximal operator goes back to the works of Fefferman and Stein
\cite{FS71} and of Muckenhoupt \cite{Muc72}.  Muckenhoupt showed that on $\R^n$ the maximal operator is bounded on $L^p(w)$, $1<p<\infty$, exactly when the weight $w$ satisfies the $A_p$ condition, which compares the averages of $w$ and of its dual weight over one, and the same ball. This characterisation remains true on spaces of homogeneous type \cite{ST89}, and its proof relies on the doubling property of the underlying measure, through covering lemmas and the Calder\'on--Zygmund decomposition.

 On real hyperbolic spaces, and more generally on Riemannian symmetric spaces of noncompact type, the volume of balls grows exponentially, the measure is not doubling, and none of these tools is available.  Nevertheless, Clerc and Stein \cite{CS74} proved that the centered maximal operator $M$ is bounded on $L^p$, $1<p\le\infty$, by means of a convolution inequality on semisimple Lie groups, and Str\"omberg \cite{Str81} proved the weak type $(1,1)$ estimate; Anker, Damek and Yacoub \cite{ADY96} extended the latter to harmonic $NA$ groups.  Li and Lohou\'e \cite{LL12} obtained sharp bounds with respect to the dimension on real hyperbolic spaces, Ionescu \cite{Ion00}  showed that the  uncentered operator behaves differently: in rank one it is bounded on $L^p$ only for $p>2$.
 More recently, Chalmoukis, Meda and Santagati \cite{CMS26} obtained sharp $L^p$ bounds on Gromov hyperbolic spaces; see also \cite{CMM09,CMPS26} and, for trees and graphs,
\cite{ST19,LMSV25}. 

Weighted estimates in these exponential volume growth settings are much more recent. Ombrosi et al. \cite{ORR22,AO25} showed that in these settings the classical $A_p$ condition is not the right one: since most of the volume of a large ball lies near its boundary, the $A_p$ condition, which compares $w$ and $\sigma$ on the same ball, sees only its outer shell, whereas an average centred at an interior point sees how $w$ is distributed inside. A correct condition has to compare $w$ and $\sigma$ on different parts of the space. Antezana and Ombrosi \cite{AO25} gave sufficient conditions of two-set type on $\Hn$ and found that the weak and the strong type no longer coincide; these results were extended to harmonic $NA$ groups by Ganguly, Rana, and Sarkar \cite{GRS25}. However, no characterisation of the admissible weights is known, and whether the two-set conditions are also necessary was left open; for the weak type this was asked in \cite[Remark~1.4]{AO25}.

In this article, we characterise the weights $w$ for which the maximal operator $M$ is
bounded on $L^p(w)$, $1<p<\infty$, by a two-layer Muckenhoupt type condition defined through shadow slices (\cref{thmA});  and show in particular, the strong-type
two-set condition of \cite{AO25,GRS25} with $\alpha=1$ is not only
sufficient but also necessary.  For the weak type, we obtain sufficient conditions with partial converses (\cref{thmB}), and a characterisation for radial
weights (\cref{thmC}). Our approach is motivated by the infinite rooted $\mathsf k$-ary tree, a discrete analogue of $\Hn$ on which the weighted theory was developed in \cite{NT10,ORRS21,ORR22}, and on which such a characterisation was found very recently by Yang, Yuan and Zhang \cite{YYZ26}, by means of two layers of descendants of a common vertex.  Our class is the continuous analogue of theirs, but on $\Hn$ new phenomena appear, caused by the interaction of the exponential large scales with the Euclidean small scales (see \cref{subsec:new}).

Let us fix the notation.  Let $n\ge2$ and $\rho:=\frac{n-1}2$, let $d$ and $\mu$ be the distance and the volume of $\Hn$, $|E|:=\mu(E)$, and let $o$ be a base point, with $|x|:=d(o,x)$; a ball of radius $R\ge1$ has volume comparable to $e^{2\rho R}$.  We write $B_x:=B(x,1)$ for the unit ball about $x$.  A \emph{weight} is a positive locally integrable function $w$; we write $w(E):=\int_Ew\,d\mu$, $L^p(w):=L^p(\Hn,w\,d\mu)$ and $\sigma:=w^{-1/(p-1)}$ for $1<p<\infty$.  With $\avg fE:=|E|^{-1}\int_E|f|\,d\mu$ and $A_rf(x):=\avg f{B(x,r)}$,
\begin{equation}\label{def_class_Ap}
   Mf(x):=\sup_{r>0}A_rf(x),\qquad [w]_{A_p}:=\sup_B\,\avg wB\,
   \avg\sigma B^{p-1},
\end{equation}
the supremum being taken over all balls.  Since $\Hn$ behaves like $\R^n$ at small scales and exponentially at large ones, we split $M$, as in \cite{ADY96,AO25,GRS25}, into
\[
   M=\max\{\Mz,\Minf\},\qquad \Mz:=\sup_{0<r\le2}A_r,\qquad
   \Minf:=\sup_{r>2}A_r .
\]
We realise $\Hn$ as the Poincar\'e ball $\Bn$ with $o$ the origin (see \cref{sec:prelim} for more details), and say that $w\in\Aloc$ if \eqref{def_class_Ap} holds with the supremum restricted to balls of radius at most $1$ (for $p=1$: $\avg wB\le C\essinf_Bw$ for such balls).
Finally, for $\ell\in\N$ we use the \emph{annular averages}
\begin{equation}\label{eq:annular}
   \ann_\ell f(x):=\frac1{|B(x,\ell)|}\int_{\{y:\ \ell-1<d(x,y)\le
   \ell\}}|f(y)|\,d\mu(y).
\end{equation}
Since a fixed proportion of the volume of a large ball lies in its outermost unit shell, $\Minf f\lesssim\sup_\ell\ann_\ell f$ (see \cref{subsec_Aver_max_oper}): at large scales $M$ is an operator of averages over spheres, and the problem is to understand how a sphere about $x$ sees the weight.

\subsection{Shadow slices}\label{subsec:shadows}
Imagine that we stand at the origin $o$ of $\Bn$ and shine a light towards a point $z$.  A ball of bounded radius about $z$ casts a shadow, and we look at the part of the shell $\{y:\ |y|\approx|z|+m\}$ which lies in this shadow.  Formally, for $z\in\Bn$ and $m\in\Zp$ we define the \emph{shadow slice}
\begin{equation}\label{eq:shadow}
   \Omega_z(m):=\bigl\{y\in\Bn:\ m-1\le|y|-|z|\le m+1,\ \ d(z,y)\le
   m+s_0\bigr\},
\end{equation}

\begin{wrapfigure}[10]{r}{0.34\textwidth}
\vspace{-12pt}
\centering
\begin{tikzpicture}[scale=1.5,>=Stealth]
 \fill[orange!12] (0,0) -- (2:1) arc[start angle=2,end angle=38,radius=1] -- cycle;
 \draw[thick] (0,0) circle (1);
 \draw[gray!60,densely dashed] (0,0) circle (0.80);
 \draw[gray!60,densely dashed] (0,0) circle (0.955);
 \fill[orange!70] (2:0.80) arc[start angle=2,end angle=38,radius=0.80]
   -- (38:0.955) arc[start angle=38,end angle=2,radius=0.955] -- cycle;
 \draw[orange!80!black,densely dashed] (0,0) -- (2:1);
 \draw[orange!80!black,densely dashed] (0,0) -- (38:1);
 \draw[blue!60!black,fill=blue!10] (20:0.548) circle (0.168);
 \fill (0,0) circle (0.6pt) node[below left] {\small $o$};
 \fill[blue!70!black] (20:0.548) circle (0.6pt) node[below=1pt] {\small $z$};
 \node[orange!40!black] at (58:0.93) {\small $\Omega_z(m)$};
 \draw[->,orange!50!black] (58:0.86) -- (36:0.87);
\end{tikzpicture}
\par\vspace{2pt}
\refstepcounter{figure}\label{fig:shadow}
{\small\textsc{Figure}~\thefigure.\ The shadow slice $\Omega_z(m)$ in the
shadow of a ball about $ z$.}
\vspace{-6pt}
\end{wrapfigure}
\noindent where $s_0\ge6$ is a fixed constant; its value is immaterial (\cref{cor:slack}).  In the ball model, $\Omega_z(m)$ is, up to bounded changes of the constants, the part of a Euclidean annulus which lies in the Euclidean cone with vertex $0$, axis through $z$ and aperture comparable to $1-\|z\|$ (\cref{fig:shadow}, \cref{lem:shadow}), and $|\Omega_z(m)|\simeq e^{2\rho m}$.  On the tree, $\Omega_z(m)$ corresponds to the vertices $m$ levels below $z$ whose path to the root passes through $z$, the descendant layers of \cite{YYZ26}.

Why should such sets appear?  Let $x,y\in\Bn$ with $d(x,y)\approx\ell$, and let $z$ be the point of the segment $[0,x]$ at which the geodesics from $o$ to $x$ and to $y$ separate.  Then $x$ lies in a slice $\Omega_z(j)$ and $y$ in a slice $\Omega_z(m)$ with $j+m\approx\ell$ (\cref{fig:decomp}): as on a tree, the way from $x$ to $y$ leads back towards $o$ and then outwards again.  Hence, the average $\ann_\ell f(x)$ sees $w$ on one layer in the shadow of $z$ and $f$ on another layer of the same shadow, and a condition characterising $M$ has to compare $w$ and $\sigma$ on two \emph{different} layers in the shadow of the \emph{same} point.  This leads to the following class.

\subsection{Main results}\label{subsec:main}
\begin{definition}[Two-layer Muckenhoupt class]\label{def:class}
Let $p\in(1,\infty)$ and $\eps>0$.  A weight $w$ on $\Bn$ belongs to the class $\Ab_{p,\eps}$ if
\[
   [w]_{\Ab_{p,\eps}}:=\sup_{z\in\Bn}\ \sup_{j,m\in\Zp}\
   \frac{w\bigl(\Omega_z(j)\bigr)\,\sigma\bigl(\Omega_z(m)\bigr)^{p-1}}
        {e^{2\rho(p-\eps)(j+m)}}<\infty ,
\]
and we put $\Ab_p=\Ab_p(\Bn):=\bigcup_{\eps>0}\Ab_{p,\eps}$.
\end{definition}

On a doubling space, the $A_p$ condition is equivalent to a two-ball condition, which bounds $w(B(x,r))\,\sigma(B(x,s))^{p-1}$ by a multiple of $|B(x,r+s)|^{p}$ \cite[Remark~1.2]{YYZ26}.  \Cref{def:class} is this condition with the two balls about $x$ replaced by two slices in the shadow of $z$, with $|B(x,r+s)|$ replaced by $|\Omega_z(j+m)|\simeq e^{2\rho(j+m)}$, and with the additional gain $|\Omega_z(j+m)|^{-\eps}$.  Since $w(\Omega_z(i))\,\sigma(\Omega_z(i))^{p-1}\ge|\Omega_z(i)|^p$ by H\"older's inequality, this gain is admissible exactly for $\eps\le p/2$ (\cref{prop:basic}).  The class involves only the metric and the measure, and although the slices are seen from $o$, it does not depend on the base point (\cref{rem:invariance}). Equipped with this definition, our first main result characterises the strong type boundedness for general weights.

\begin{maintheorem}[General weights, strong type]\label{thmA}
Let $p\in(1,\infty)$ and let $w$ be a weight on $\Bn$.  The following are equivalent.
\begin{enumerate}[label=\textup{(\roman*)}, leftmargin=2.3em]
\item\label{thmA:i} $M$ is bounded on $L^p(w)$.
\item\label{thmA:ii} $w\in\Aloc\cap\Ab_p$.
\item\label{thmA:iii} $w\in\Aloc$ and there are $\eps,C>0$ such that      $\|\ann_\ell f\|_{L^p(w)}\le Ce^{-\eps\ell}\|f\|_{L^p(w)}$ for all   $\ell\in\N$.
\item\label{thmA:iv} \textup{(Global Sawyer testing)} $\|M(\sigma\Chi_B)\|^p_{L^p(w)} \lesssim\sigma(B)$ for every ball $B\subset\Bn$.
\item\label{thmA:v} $w\in\Aloc$ and there is $\beta\in(0,1)$ such that, for all
      measurable $E,F\subset\Bn$ and all $r\ge1$,
      \begin{equation}\label{eq:AOcond}
         \int_FA_r(\Chi_E)\,w\,d\mu\lesssim e^{2\rho(\beta-1)r}\,
         w(E)^{1/p}\,w(F)^{1-1/p}.
      \end{equation}
\end{enumerate}
Moreover, $\Minf$ alone is bounded on $L^p(w)$ if and only if $w\in\Ab_p$, and for every weight $w$ and every $\eps\in(0,p/2]$,
\begin{equation}\label{eq:buckley-intro}
   \|\Minf f\|_{L^p(w)}\le C(n,p)\,\eps^{-2}\,[w]_{\Ab_{p,\eps}}^{1/p}\,
   \|f\|_{L^p(w)},
\end{equation}
for each fixed $\eps$ the exponent $1/p$ is optimal.  Finally, $\Aloc\cap\Ab_p$ is open: if $M$ is bounded on $L^p(w)$, then it is bounded on $L^q(w)$ for all $q\in[p-\eta_0,\infty)$, for some $\eta_0>0$.
\end{maintheorem}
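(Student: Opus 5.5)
The plan is to split $M=\max\{\Mz,\Minf\}$ throughout and treat the two scales separately. At small scales, $\Hn$ is uniformly locally doubling. Hence $\Mz$ is bounded on $L^p(w)$ if and only if $w\in\Aloc$. Sufficiency comes from a localisation argument: cover $\Bn$ by unit balls with bounded overlap, apply the Muckenhoupt theorem on each enlarged ball, and sum. Necessity comes from testing $\Mz$ on $f=\sigma\Chi_B$ for balls $B$ of radius at most $1$. The theorem therefore reduces to the large-scale statement: $\Minf$ is bounded on $L^p(w)$ if and only if $w\in\Ab_p$, with the quantitative bound \eqref{eq:buckley-intro}. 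The equivalences then follow from the chain
\[
\ref{thmA:ii}\Rightarrow\ref{thmA:iii}\Rightarrow\ref{thmA:i}\Rightarrow\ref{thmA:iv}\Rightarrow\ref{thmA:ii}
\]
together with $\ref{thmA:iii}\Leftrightarrow\ref{thmA:v}$.

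For $\ref{thmA:ii}\Rightarrow\ref{thmA:iii}$, I would decompose $\ann_\ell$ along the shadow geometry described before \cref{def:class}. Given $x$ and $y$ with $d(x,y)\approx\ell$, let $z$ be their branching point; then $x\in\Omega_z(j)$ and $y\in\Omega_z(m)$ with $j+m\approx\ell$. Discretise the possible branching points by a maximal $1$-separated net on each sphere $\{|z|=k\}$. This gives, up to bounded overlap,
\[
\int_{\Bn} \ann_\ell f\, g\,w \lesssim e^{-2\rho\ell}\sum_{j+m\approx\ell}\sum_{z}\int_{\Omega_z(m)}|f|\int_{\Omega_z(j)}|g|w .
\]
Apply Hölder to each factor, writing $|f|=|f|\sigma^{-1}\cdot\sigma$. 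The term for $(z,j,m)$ is then bounded by
\[
e^{-2\rho\ell}\,w(\Omega_z(j))^{1/p}\,\sigma(\Omega_z(m))^{1/p'}\,\|f\|_{L^p(w,\Omega_z(m))}\|g\|_{L^{p'}(w,\Omega_z(j))}.
\]
By the $\Ab_{p,\eps}$ condition this is at most $[w]^{1/p}e^{-2\rho\eps\ell/p}$ times the two local norms. For fixed $j,m$ the slices $\Omega_z(m)$, $z$ in the net, overlap boundedly (\cref{lem:shadow}), so Hölder in $z$ sums the local norms to $\|f\|\,\|g\|$. There are $\lesssim\ell$ pairs $(j,m)$, which yields \ref{thmA:iii} with a slightly smaller $\eps$. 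Summing over $\ell$ via $\Minf f\lesssim\sup_\ell\ann_\ell f\le\sum_\ell\ann_\ell f$ gives $\ref{thmA:iii}\Rightarrow\ref{thmA:i}$. Tracking the factor $\sum_\ell \ell e^{-c\eps\ell}\simeq\eps^{-2}$ gives \eqref{eq:buckley-intro}. Optimality of the exponent $1/p$ would follow from a power-type family of radial weights for which both sides can be computed.

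The implication $\ref{thmA:i}\Rightarrow\ref{thmA:iv}$ is immediate. For $\ref{thmA:iv}\Rightarrow\ref{thmA:ii}$, taking balls of radius at most $1$ recovers $\Aloc$. For the slice condition, take $B$ a ball comparable to the hull of $\Omega_z(m)$ intersected with a region near the axis, and test with $\sigma\Chi_{\Omega_z(m)}$. Every $x\in\Omega_z(j)$ lies within distance about $j+m+2s_0$ of all of $\Omega_z(m)$. Hence $M(\sigma\Chi_{\Omega_z(m)})(x)\gtrsim e^{-2\rho(j+m)}\sigma(\Omega_z(m))$, and \ref{thmA:iv} gives
\[
w(\Omega_z(j))\,\sigma(\Omega_z(m))^{p-1}\lesssim e^{2\rho p(j+m)}.
\]
This is only the endpoint $\eps=0$. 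The \textbf{main obstacle} is upgrading it to a genuine gain $\eps>0$. I would attempt this with a self-improvement argument in the spirit of the reverse Hölder/openness proof on trees in \cite{YYZ26}. Apply the $\eps=0$ testing inequality to slices of slices: $\Omega_z(j)$ splits into about $e^{2\rho(j-i)}$ subshadows $\Omega_{z'}(i)$. Bounded testing then forces the mass of $w$ or $\sigma$ to spread out, which produces a geometric decay factor per scale and iterates to $e^{-2\rho\eps(j+m)}$. The same gain, obtained through the openness of $\Aloc$ (Gehring), also gives the final claim on $L^q$ for $q\ge p-\eta_0$, since $\Ab_{p,\eps}\subset\Ab_{p-\eta,\eps/2}$ once $\sigma$ satisfies a slice reverse Hölder inequality.

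Finally, $\ref{thmA:iii}\Leftrightarrow\ref{thmA:v}$ goes as follows. Restricted weak-type interpolation, applied to $A_r$ as a sum of the $\ann_\ell$ with $\ell\le r$ weighted by $e^{2\rho(\ell-r)}$, gives one direction. For the converse, test \eqref{eq:AOcond} with $E=\Omega_z(m)$ and $F=\Omega_z(j)$, $r\approx j+m$. This yields the slice condition with gain $\beta$ after using Hölder to convert $w(E)$ into $\sigma(E)$ together with the local control from $\Aloc$.
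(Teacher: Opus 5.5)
\textbf{Main gap: the gain $\eps>0$ in (iv)$\Rightarrow$(ii).} You flag this step yourself, and the mechanism you propose cannot work as stated. The endpoint inequality $w(\Omega_z(j))\,\sigma(\Omega_z(m))^{p-1}\lesssim e^{2\rho p(j+m)}$ does not self-improve, even together with $\Aloc$. Take $w=\ww_{2\rho(p-1),0}$. It is continuous and positive, hence in $\Aloc$. The computation in \cref{prop:basic}(a) gives $w(\Omega_z(j))\,\sigma(\Omega_z(m))^{p-1}\simeq e^{2\rho pj}$, so your endpoint bound holds for all $z,j,m$. Yet $w\notin\Ab_p$ and $M$ is unbounded on $L^p(w)$. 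This weight even satisfies the local Sawyer condition (\cref{prop:localsawyer}). So any iteration fed only by the $\eps=0$ inequality, whether on slices or on slices of slices, has a per-scale factor $\gtrsim1$ and produces no decay.

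What your pointwise test discards is the global character of (iv). The function $\Minf(\sigma\Chi_{B_x})$ is $\gtrsim e^{-2\rho j}\sigma(B_x)$ on all layers $\Omega_x(j)$ simultaneously. Integrating over all of them gives summability, $\sup_x\sum_j\QQ_j[w](x)<\infty$. Summing the testing inequality over intermediate layers gives $\sup_z\PP_m[w](z)\lesssim(m+1)^{-1/p'}\to0$ (\cref{lem:Pdecay}).

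The paper then proceeds in three steps:
\begin{enumerate}
\item It decouples the two layers by H\"older on the unit ball $B_z$, \eqref{eq:PQlink}, which bounds the two-layer product by $e^{2\rho p(j+m)}\QQ_j[w](z)\,\PP_m[w](z)$.
\item It upgrades summability of $\QQ_j$ to exponential decay (\cref{lem:Qtail}). The tools are the covering inequality $\Phi_{k+1}\le C_0\,\QQ^{(2s_0+4)}_k[w]$, averaging in $k$, and moving the anchor.
\item It upgrades the polynomial decay of $\PP_m$ to exponential decay through the one-sided semigroup inequality $P^{(s)}_{k+m}\le C_sP^{(3s+3)}_kP^{(s)}_m$ (\cref{lem:semigroup}). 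One picks $k_0$ with $C_s\|P^{(3s+3)}_{k_0}\|\le\frac12$.
\end{enumerate}
Your ``slices of slices'' idea is essentially this semigroup inequality. But it only iterates because the input $\|P_{k_0}\|$ is already small, and that smallness comes from the summed testing, not from the $\eps=0$ bound.

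\textbf{Secondary gap: (v)$\Rightarrow$(iii).} Testing \eqref{eq:AOcond} with sets gives only restricted-type information on $w$. With $E=\Omega_z(m)$ and $F=\Omega_z(j)$ you get $w(\Omega_z(j))\lesssim e^{2\rho pj-2\rho p(1-\beta)(j+m)}\,w(\Omega_z(m))$. To reach $\sigma(\Omega_z(m))^{p-1}$ you would need the upper bound $\sigma(E)^{p-1}\lesssim|E|^p/w(E)$. H\"older gives exactly the reverse inequality, and $\Aloc$ controls $\sigma$ only on unit balls. Splitting $\Omega_z(m)$ into $\simeq e^{2\rho m}$ unit balls and testing each one closes only when $\beta<1/p$. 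The paper does not prove this direction directly: it gets (v)$\Rightarrow$(i) from \cite[Theorem~1.2]{AO25}. The converse (i)$\Rightarrow$(v) is just H\"older plus the annular decay, which is what your ``interpolation'' step amounts to.

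\textbf{Remaining steps.} Your net-and-duality proof of (ii)$\Rightarrow$(iii) is fine, and so is openness, provided the slice bound $\sigma_r(\Omega_z(m))\lesssim\sigma(\Omega^{(s_0+1)}_z(m))^{t}$ is derived from the unit-ball reverse H\"older inequality and $\ell^1\subset\ell^t$. For optimality of the exponent $1/p$ you need an explicit family; the paper's family degenerates at unit scale. It takes $v_\delta:=\delta v$ on $B_o$ and $v_\delta:=v$ elsewhere, with $v=\ww_{\rho(p-2),0}$. Then $[v_\delta]_{\Ab_{p,\eps}}\lesssim\delta^{-1}$ while $\|\Minf\|^p_{L^p(v_\delta)}\gtrsim\delta^{-1}$.
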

Condition \ref{thmA:iv} is the analogue of Sawyer's testing condition for the two-weight maximal inequality \cite{Saw82}, with $\sigma$ as the second weight; as usual, if $\sigma(B)=\infty$ it is understood with $\sigma$ replaced by $\min\{\sigma,N\}$, uniformly in $N\in\N$.
As on the tree \cite[Theorem~4.3]{ORR22}, the global integral cannot be replaced by the local one $\int_BM(\sigma\Chi_B)^pw\,d\mu$ (\cref{prop:localsawyer}).  Condition \ref{thmA:v} is the strong-type condition of \cite[Theorem~1.2]{AO25} and \cite[Theorem~1.2]{GRS25} with $\alpha=1$; thus that sufficient condition is also necessary.
\begin{remark}
Neither of the two simpler conditions suggested by the classical theory is adequate.  \emph{One slice is not enough:} for $w=e^{\alpha|x|}$ the one-slice product $\avg w{\Omega_z(i)}\avg\sigma{\Omega_z(i)}^{p-1}$ is bounded for \emph{every} $\alpha\in\R$ (\cref{rem:onewindow}), whereas $M$ is bounded on $L^p(w)$ only for $\alpha\in(-2\rho,2\rho(p-1))$ (\cref{thmC}).  \emph{Balls are not the right sets:} for the same weights the ball condition \eqref{def_class_Ap} fails when $\alpha\in(-2\rho,-\rho)\cup(\rho(p-1),2\rho(p-1))$, although $M$ is bounded (\cref{rem:ballAp} and \cite[Example~4.1]{AO25}).  Indeed, a ball $B(x,R)$ with $|x|=2R$ contains a unit ball about a point $z$ with $|z|\approx R$ and a part of the shadow of $z$ at depth $\approx2R$; the ball condition bounds the product of $w$ and $\sigma$ on it by $e^{2\rho pR}$, whereas \cref{def:class} allows $e^{4\rho(p-\eps)R}$ for two such layers.
\end{remark}

For the weak type we use two conditions.  For $1\le p<\infty$, $z\in\Bn$ and $m\in\Zp$ let
\begin{equation}\label{eq:alpha-intro}
   \PP_m[w](z):=\begin{cases}e^{-2\rho pm}\,w(B_z)\,
   \sigma\bigl(\Omega_z(m)\bigr)^{p-1}, & p>1,\\
   e^{-2\rho m}\,w(B_z)\,
   \esssup_{\Omega_z(m)}w^{-1}, & p=1,\end{cases}
\end{equation}
and consider the conditions
\begin{align}
   [w]_{\mathcal W_Q}&:=\sup_{y\in\Bn}\ \sup_{N\in\Zp}\
   \frac{e^{-2\rho pN}}{w(B_y)}\sum_{j=0}^{N}w\bigl(\Omega_y(j)
   \bigr)<\infty, \tag{$\mathcal W_Q$}\label{eq:WQ}\\
   \PP_m[w](z)&\le Ce^{-2\rho p\eps m}\ \text{ for all }z\in\Bn,\
   m\in\Zp,\ \text{for some }C,\eps>0. \tag{$\mathcal W_P$}\label{eq:WP}
\end{align}

\begin{maintheorem}[General weights, weak type]\label{thmB}
Let $p\in[1,\infty)$ and let $w$ be a weight on $\Bn$.
\begin{enumerate}[label=\textup{(\alph*)}, leftmargin=2.3em]
\item\label{thmB:a} If $w\in A_{p,\mathrm{loc}}(\Bn)$ and $w$ satisfies \WQ and   \WP, then $M:L^p(w)\to L^{p,\infty}(w)$ is bounded.
\item\label{thmB:b} Conversely, if $M:L^p(w)\to L^{p,\infty}(w)$ is bounded, then $w\in A_{p,\mathrm{loc}}(\Bn)$, $w$ satisfies \WQ, and      $\sup_{z,m}\PP_m[w](z)<\infty$.
\item\label{thmB:c} If $p>1$ and $M$ is bounded on $L^p(w)$, then \WP holds.  If $w$ is radial and $M:L^p(w)\to L^{p,\infty}(w)$ is bounded, then \WP holds.
\end{enumerate}
\end{maintheorem}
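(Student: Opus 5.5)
We treat the three parts separately. Throughout we use $M=\max\{\Mz,\Minf\}$ and $\Minf f\lesssim\sup_\ell\ann_\ell f$ from \cref{subsec_Aver_max_oper}. We also use the geometric facts recorded after \eqref{eq:shadow}: $|\Omega_z(m)|\simeq e^{2\rho m}$, and $\Omega_z(0)\supset B_z$, since $d(z,y)\le1$ gives $\bigl||y|-|z|\bigr|\le1$. When $\sigma$ is not locally finite, test functions are built with $\min\{\sigma,N\}$ and we let $N\to\infty$.

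\textbf{Part (b), necessity.} Local $A_p$ is the standard Muckenhoupt argument applied to $\Mz$ on balls of radius $\le1$, where $\mu$ is doubling. For \WQ, take $f=\Chi_{B_y}$. If $x\in\Omega_y(j)$, then $B_y\subset B(x,j+s_0+1)$, so $Mf(x)\gtrsim e^{-2\rho j}$. Hence $\bigcup_{j\le N}\Omega_y(j)\subset\{Mf>ce^{-2\rho N}\}$, and the weak-type bound gives $\sum_{j\le N}w(\Omega_y(j))\lesssim w\bigl(\bigcup_{j\le N}\Omega_y(j)\bigr)\lesssim e^{2\rho pN}w(B_y)$. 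The slices overlap boundedly because $|y|-|z|$ ranges over intervals of length $2$. For $\sup\PP_m[w]<\infty$, take $f=\sigma\Chi_{\Omega_z(m)}$. For $x\in B_z$ we have $\Omega_z(m)\subset B(x,m+s_0+1)$, so $Mf\gtrsim e^{-2\rho m}\sigma(\Omega_z(m))$ on $B_z$. The weak-type inequality then gives $w(B_z)\,e^{-2\rho pm}\sigma(\Omega_z(m))^p\lesssim\sigma(\Omega_z(m))$, which is $\PP_m[w](z)\lesssim1$. For $p=1$, replace $\sigma$ by $\Chi_{E}$, with $E\subset\Omega_z(m)$ a set where $w^{-1}$ is nearly maximal.

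\textbf{Part (c).} If $M$ is bounded on $L^p(w)$, \cref{thmA} gives $w\in\Ab_{p,\eps}$ for some $\eps>0$. Taking $j=0$ in \cref{def:class} and using $w(B_z)\le w(\Omega_z(0))$ gives
\[
\PP_m[w](z)\le[w]_{\Ab_{p,\eps}}\,e^{-2\rho\eps m},
\]
which is \WP with $\eps$ replaced by $\eps/p$.

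For radial $w$ under only the weak-type assumption, $\PP_m[w](z)=:P_m(t)$ depends only on $t=|z|$. We test with $f=\sigma\sum_{k=m}^{2m}\Chi_{\Omega_z(k)}$, or with its rotation average over the whole shell, and exploit the exponential gain from summing many layers. On $B_z$ each layer contributes to $M f$ at its own scale. The weak-type inequality should then give $\sum_{k=m}^{2m}P_k(t)^{1/p}\lesssim1$ uniformly in $t$, with constants independent of $m$, after using radial homogeneity to compare $P_k(t)$ with $P_k(t')$. A submultiplicativity $P_{k+k'}\lesssim P_kP_{k'}$, coming from nesting of slices and radiality, would then force geometric decay. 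This step is the least certain.

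\textbf{Part (a), sufficiency.} $\Mz$ is handled by local $A_p$ and a bounded-overlap covering by unit balls. For $\Minf$, fix $\lambda>0$ and $\ell$. For $x$ and $y$ with $d(x,y)\approx\ell$, take the branch point $z$ on $[o,x]$. We discretise $z$ over a maximal $1$-separated net on each geodesic sphere, so that $x\in\Omega_z(j)$ and $y\in\Omega_z(m)$ with $j+m\approx\ell$. This gives
\[
\ann_\ell f(x)\lesssim\sum_{j+m\approx\ell}e^{-2\rho(j+m)}\sum_{z:\ x\in\Omega_z(j)}\int_{\Omega_z(m)}|f|.
\]
By H\"older and \WP, we have
\[
e^{-2\rho(j+m)}\int_{\Omega_z(m)}|f|\le e^{-2\rho j}\,e^{-2\rho\eps m}\,w(B_z)^{-1/p}\,\|f\Chi_{\Omega_z(m)}\|_{L^p(w)}.
\]
Fix $m$ and run a stopping-time selection on the tree of net points, as on the $\mathsf k$-ary tree. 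We select maximal (closest to $o$) points $z$ with $e^{-2\rho\eps m}w(B_z)^{-1/p}\|f\Chi_{\Omega_z(m)}\|_{L^p(w)}\gtrsim\lambda e^{2\rho j}$; the part of the level set coming from depth $j$ below $z$ lies in $\bigcup_{i\le j}\Omega_z(i)$. \WQ bounds its $w$-measure by $e^{2\rho pj}w(B_z)$, which the selection inequality converts into $\lambda^{-p}e^{-2\rho p\eps m}\|f\Chi_{\Omega_z(m)}\|^p_{L^p(w)}$. Summing over the selected $z$ uses the bounded overlap of the $\Omega_z(m)$ for maximal $z$. The factor $e^{-2\rho p\eps m}$ then makes the sum over $m$ converge, and the sum over $j$ is absorbed by the stopping rule.

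The main obstacle is this selection and overlap bookkeeping in the continuum. Shadows of net points at the same radius overlap only boundedly, but points at different radii do not give disjoint shadows. The maximality must therefore be chosen so that each $y$ is charged to boundedly many selected $z$ for each $m$.
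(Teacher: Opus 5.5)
Part (b) and the first half of (c) are correct. Testing \WQ with $\Chi_{B_y}$ even spares you the H\"older step the paper uses. Deducing $w\in\Ab_{p,\eps}$ from \cref{thmA} and then taking $j=0$ (with $B_z\subset\Omega_z(0)$) is a legitimate shortcut for the paper's direct appeal to \cref{prop:Pgeom}.

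The genuine gap is the radial half of (c). Your test $f=\sigma\sum_{k=m}^{2m}\Chi_{\Omega_z(k)}$, evaluated on $B_z$, cannot produce $\sum_kP_k^{1/p}\lesssim1$. Since $M$ is a supremum over radii, the layers do not add up. The inequality you actually get already follows from the single-ball bound $\sup_{z,r}e^{-2\rho pr}w(B_z)\sigma(B(z,r+s_0))^{p-1}<\infty$, which the weak type gives exactly as in your (b). Submultiplicativity cannot turn such boundedness into decay: for $w=\ww_{-2\rho,0}$ this single-ball bound holds, but $W_N\simeq1$, so $\PP_m[w](z)\simeq W_{\ip{|z|}}/W_{\ip{|z|}+m}\simeq1$ and \WP fails.

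What is missing is a test whose level set contains points at all radii below the support:
\begin{itemize}
\item By \cref{lem:radialtests}\ref{lem:radialtests:a}, $\Minf\Chi_{\mathcal R_N}\ge c$ on the whole ball $\{|x|<N\}$, so the weak type gives \Ro, i.e.\ $\sum_{i<N}W_i\lesssim W_N$.
\item The partial-sum argument of \cref{lem:gaps}\ref{lem:gaps:a} upgrades this to $W_{N-i}\le Ce^{-\kappa i}W_N$.
\item The local condition together with radiality gives $W_N\Sigma_N^{p-1}\simeq e^{2\rho pN}$ and $w(\Omega_z(m))\simeq e^{-2\rho N}W_{N+m}$ (\cref{lem:radialshadow}).
\item Hence $\PP_m[w](z)\simeq W_N/W_{N+m}\lesssim e^{-\kappa m}$ for $N=\ip{|z|}$. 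For $p=1$ one uses $\essinf_{[N,N+1)}w_0\gtrsim e^{-2\rho N}W_N$ instead.
\end{itemize}

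In (a) the estimate you wrote does close, but the obstacle you name is not there and no stopping time is needed. For fixed $m$, $y\in\Omega^{(s)}_z(m)$ forces $z\in B(a_{m+2s}(y),3s+2)$ (\cref{lem:shadow}\ref{lem:shadow:d}). So the slices $\Omega^{(s)}_z(m)$, with $z$ running through a whole $\frac14$-separated net $Z$ at all radii, have bounded overlap. At a fixed depth $m$, slices of points whose radii differ by more than $2s$ are even disjoint. So sum over all $z\in Z$, as follows.
\begin{itemize}
\item Pick $z\in Z$ with $d(z,a_j(x))<\frac14$, so that $x\in\Omega_z(j)$ and the relevant part of the annulus lies in $\Omega^{(s_0+1)}_z(m)$.
\item Use \WP at width $s_0+1$ in the H\"older step; this is \cref{lem:WPwidth}, which needs the local doubling.
\item Note that $\Minf f\lesssim\sum_m\sup_j(\cdots)$, since each pair $(\ell,m)$ admits at most four $j$.
\item Put $b_z:=e^{-2\rho\eps m}w(B_z)^{-1/p}\|f\Chi_{\Omega^{(s_0+1)}_z(m)}\|_{L^p(w)}$ and let $N_z$ be the largest $j$ with $b_z>\lambda e^{2\rho j}$ (constants absorbed). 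Then \WQ gives $w(\{\sup_j\cdots>\lambda\})\le\sum_{z:\,b_z>\lambda}\sum_{j\le N_z}w(\Omega_z(j))\lesssim\lambda^{-p}\sum_zb_z^pw(B_z)\lesssim\lambda^{-p}e^{-2\rho p\eps m}\|f\|^p_{L^p(w)}$.
\item A geometric splitting of $\lambda$ then handles the sum over $m$.
\end{itemize}
Keeping only maximal $z$, by contrast, is not justified as stated, since a descendant $z'$ of a selected point may have a larger $N_{z'}$. Organised as $\Minf f\lesssim\sum_m\Hst(P^{(s_0+1)}_mf)$ (\cref{lem:factor}\ref{lem:factor:b}), with $\Hst$ of weak type by \cref{lem:Hstar} and $\|P^{(s_0+1)}_m\|$ decaying by \cref{lem:normbounds}\ref{lem:normbounds:a}, this is precisely the paper's proof.
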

Whether \WP itself is necessary for the weak type is open for general weights.  For radial weights $w=w_0(|\cdot|)$, the two layers collapse to a single interval.
We write $W_N:=w(\{x:\ N\le|x|<N+1\})$, $N\in\Zp$, and $dV(t)= \Jd(t)\,dt$, $\Jd(t)=\omega_{n-1}\sinh^{n-1}t$, for the radial volume measure on $\R_+=[0,\infty)$.  For $1<p<\infty$ we say that $w_0\in A_p(\R_+,V)$ if
\begin{equation}\label{eq:ApV}
   [w_0]_{A_p(V)}:=\sup_{I}\Bigl(\frac1{V(I)}\int_Iw_0\,dV\Bigr)
   \Bigl(\frac1{V(I)}\int_I\sigma_0\,dV\Bigr)^{p-1}<\infty,
   \qquad\sigma_0:=w_0^{-1/(p-1)},
\end{equation}
the supremum being taken over all compact intervals $I\subset\R_+$, and for $1\le p<\infty$ we put
\begin{equation}\label{eq:Aprw}
   [w_0]_{A^{\mathrm{KT}}_p(V)}:=\sup\Bigl\{\Bigl(\frac{V(I)}{V(J)}
   \Bigr)^p\,\frac{\int_Jw_0\,dV}{\int_Iw_0\,dV}:\ I\subset J\
   \text{compact intervals},\ |I|\ge1\Bigr\}.
\end{equation}
Our basic examples are the radial weights
\begin{equation}\label{eq:examples}
   \ww_{\alpha,\gamma}(x):=e^{\alpha|x|}(1+|x|)^{\gamma},\qquad
   \ww^\circ_{\alpha,\gamma}(x):=e^{\alpha|x|}|x|^{\gamma},\qquad
   \alpha,\gamma\in\R .
\end{equation}

\begin{maintheorem}[Radial weights]\label{thmC}
Let $w=w_0(|\cdot|)$ be a radial weight on $\Bn$.
\begin{enumerate}[label=\textup{(\alph*)}, leftmargin=2.3em]
\item\label{thmC:a} For $p\in(1,\infty)$ the following are equivalent:
      \begin{enumerate}[label=\textup{(\roman*)}, leftmargin=2em]
      \item\label{thmC:a:i} $M$ is bounded on $L^p(w)$;
      \item\label{thmC:a:ii} $M$ is bounded on the subspace of radial functions of    $L^p(w)$;
      \item\label{thmC:a:iii} $w\in\Aloc$ and
      \begin{equation}\label{eq:R1R2}
        \textup{(R1)}\ \ \sup_{N}\frac1{W_N}\sum_{i=0}^{N}W_i<\infty,
        \qquad
        \textup{(R2)}\ \ \sup_{N}\frac1{W_N}\sum_{i\ge0}
        e^{-2\rho p i}\,W_{N+i}<\infty ;
      \end{equation}
      \item\label{thmC:a:iv} $w_0\in A_p(\R_+,V)$.
      \end{enumerate}
\item\label{thmC:b} For $p\in[1,\infty)$ the following are equivalent:
      \begin{enumerate}[label=\textup{(\roman*)}, leftmargin=2em]
      \item\label{thmC:b:i} $M:L^p(w)\to L^{p,\infty}(w)$ is bounded;
      \item\label{thmC:b:ii} $w\in A_{p,\mathrm{loc}}(\Bn)$, \Ro holds, and
      \begin{equation}\label{eq:R2w}
        \textup{(R2w)}\ \ \sup_{N,i}e^{-2\rho pi}\,W_{N+i}/W_N<\infty;
      \end{equation}
      \item\label{thmC:b:iii} $w\in A_{p,\mathrm{loc}}(\Bn)$ and
            $[w_0]_{A^{\mathrm{KT}}_p(V)}<\infty$.
      \end{enumerate}
\item\label{thmC:c} Let $\alpha,\gamma\in\R$.  For $p>1$, $M$ is bounded      on $L^p(\ww_{\alpha,\gamma})$ iff $\alpha\in(-2\rho,2\rho(p-1))$, and on $L^p(\ww^\circ_{\alpha,\gamma})$ iff moreover      $\gamma\in(-n,n(p-1))$.  For $p\ge1$, $M$ is of weak type $(p,p)$ with respect to $\ww_{\alpha,\gamma}$ iff either       $\alpha\in(-2\rho,2\rho(p-1))$, or $\alpha=2\rho(p-1)$ and
      $\gamma\le0$; with respect to $\ww^\circ_{\alpha,\gamma}$ the same
      holds with the additional requirement $\gamma\in(-n,n(p-1))$ for
      $p>1$, respectively $\gamma\in(-n,0]$ for $p=1$.
\end{enumerate}
\end{maintheorem}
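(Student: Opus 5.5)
The plan is to derive \cref{thmC} from \cref{thmA} and \cref{thmB}, using that for radial $w$ all slice quantities reduce to shell masses. For radial $w$ and $|z|=t$, the slice $\Omega_z(m)$ is a subset of the shells of radii in $[t+m-1,t+m+1]$ of relative volume $\simeq e^{2\rho m}/e^{2\rho(t+m)}$. Hence $w(\Omega_z(m))\simeq e^{-2\rho t}(W_{N+m-1}+W_{N+m})$ with $N=\lfloor t\rfloor$, and similarly for $\sigma$. The approximation is up to the $\Aloc$ constants, which control $W_{N\pm1}/W_N$ and relate $\sigma$-shell masses to $W$ via local Hölder/reverse-Hölder on unit-scale shells, giving $\Sigma_N\simeq e^{2\rho pN/(p-1)\cdot 0}\cdots$, more precisely $\Sigma_N^{p-1}W_N\simeq V([N,N+1])^p\simeq e^{2\rho pN}$. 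With these identities the class $\Ab_{p,\eps}$ becomes
\[
\sup_{N,j,m}\frac{e^{-2\rho pN}W_{N+j}\,\Sigma_{N+m}^{p-1}}{e^{2\rho(p-\eps)(j+m)}}<\infty,
\]
and substituting $\Sigma_{N+m}^{p-1}\simeq e^{2\rho p(N+m)}/W_{N+m}$ gives the condition $W_{N+j}/W_{N+m}\lesssim e^{2\rho(p-\eps)(j+m)}e^{-2\rho pm}$. Taking $m=0$ gives geometric decay for (R2); taking $j=0$ gives $W_N\lesssim e^{-2\rho\eps m}W_{N+m}$, i.e.\ geometric growth of $W$, which is equivalent to (R1). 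This requires a lemma of the usual form: $\sup_N W_N^{-1}\sum_{i\le N}W_i<\infty$ iff $W_{N}\le Ce^{-\delta m}W_{N+m}$ for some $\delta>0$, proved by an iteration / Gehring-type argument on sequences. The analogous lemma for (R2) states that (R2) is equivalent to $W_{N+i}\lesssim e^{2\rho(p-\delta)i}W_N$.

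So (i)$\Leftrightarrow$(iii) in (a) follows from \cref{thmA}\ref{thmA:ii}. For (ii)$\Rightarrow$(iii), I would test $M$ on radial functions. Taking $f=\sigma\Chi_{\text{shell }N}$, averages of radius $\approx i$ about points at level $N+i$ see a proportion $e^{-2\rho i}$ of shell $N$; this gives $\sum_i e^{-2\rho pi}W_{N+i}\Sigma_N^{p}\lesssim \Sigma_N$, which is (R2). Taking $f=\sigma\Chi_{\{|x|\le N\}}$ together with points at level $\le N$ seeing $B(o,N)$ in a ball of radius $\approx 2N$ gives (R1). $\Aloc$ follows by the local testing already contained in the proof of \cref{thmA}, adapted to radial test functions: smallness of shells allows localisation, and for $n\ge2$ a radial test bump on a thin shell suffices. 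For (iii)$\Leftrightarrow$(iv), I would compare the one-dimensional $A_p(V)$ condition: intervals of length $\le1$ reduce to $\Aloc$ restricted to radial weights (with care near $t=0$, where $V$ is doubling like $t^{n-1}dt$), and long intervals $I=[a,b]$ have $V(I)\simeq e^{2\rho b}$, so the condition reads $(\sum_{a\le i\le b}W_i)(\sum\Sigma_i)^{p-1}\lesssim e^{2\rho pb}$. Given the growth/decay characterisations, this is equivalent to the geometric growth of $W$ combined with decay of $W_i e^{-2\rho p i}$, which is exactly (R1)+(R2).

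Part (b) follows the same pattern from \cref{thmB}. \WQ reduces to (R1), $\sup\PP_m<\infty$ reduces to (R2w), and \cref{thmB}\ref{thmB:c} supplies \WP in the radial case, which is (R2w) with an $\eps$ gain. Sufficiency then needs (R1)+(R2w) $\Rightarrow$ \WP, which I expect to be the main obstacle: (R2w) gives no $\eps$ improvement, so instead of invoking \cref{thmB}\ref{thmB:a} I would prove weak type directly for radial weights by estimating $\ann_\ell$ through distribution functions with the layer structure, summing dyadically in $\ell$. The equivalence with $A_p^{\mathrm{KT}}(V)$ follows by writing the ratio for $I\subset J$ through shell sums, as above.

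Part (c) is computation: $W_N\simeq e^{(\alpha+2\rho)N}N^\gamma$. (R1) holds iff $\alpha>-2\rho$, and (R2) holds iff $\alpha<2\rho(p-1)$, while (R2w) additionally allows $\alpha=2\rho(p-1)$ with $\gamma\le0$. $\Aloc$ is automatic for $\ww_{\alpha,\gamma}$, and for $\ww^\circ$ it is the Euclidean power-weight condition at $o$, namely $\gamma\in(-n,n(p-1))$, respectively $\gamma\in(-n,0]$ for $p=1$.
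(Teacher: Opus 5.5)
\textbf{Part (b) rests on an inverted dictionary.} For radial $w\in A_{p,\mathrm{loc}}(\Bn)$ and $k=\ip{|z|}$ you have $w(\Omega_z(j))\simeq e^{-2\rho k}W_{k+j}$, $w(B_z)\simeq e^{-2\rho k}W_k$ and, for $p>1$, $W_k\Sigma_k^{p-1}\simeq e^{2\rho pk}$ (the case $p=1$ is analogous). Hence
\[
   \PP_m[w](z)\simeq\frac{W_k}{W_{k+m}},\qquad
   \frac{e^{-2\rho pN}}{w(B_z)}\sum_{j=0}^{N}w\bigl(\Omega_z(j)\bigr)\simeq
   e^{-2\rho pN}\sum_{j=0}^{N}\frac{W_{k+j}}{W_k}.
\]
This gives three facts:
\begin{itemize}
\item \WQ is equivalent to \Rtw: keep the last term for one direction, sum a geometric series for the other.
\item $\sup_{z,m}\PP_m[w](z)<\infty$ only says that $(W_k)$ is quasi-increasing.
\item \WP says $W_k\lesssim e^{-2\rho p\eps m}W_{k+m}$, which is \Ro by the sequence lemma you already use in (a). Your ``$j=0$'' computation there is exactly \WP.
\end{itemize}
So the ``main obstacle'' does not exist: \Ro gives \WP, \Rtw gives \WQ, and \cref{thmB}\ref{thmB:a} yields (ii)$\Rightarrow$(i).

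The direct argument you propose instead is only a slogan, and its naive form cannot work. At the endpoint weight $\ww_{2\rho(p-1),0}$, even the weak-type norms of $\ann_\ell$ are bounded below (test with $\Chi_{B_o}$), so nothing summable in $\ell$ is available.

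The necessity half breaks as well. As written, you would get \Ro from \WQ and \Rtw from $\sup\PP_m<\infty$. Both implications are false: take $\ww_{\alpha,0}$ with $\alpha<-2\rho$, respectively $\alpha>2\rho(p-1)$. With the correct dictionary, \cref{thmB}\ref{thmB:b} gives \Rtw but not \Ro. You also cannot fill the hole with the radial case of \cref{thmB}\ref{thmB:c}: the paper proves that case as a consequence of the present theorem, so using it is circular. \Ro has to come from direct testing. Since $\Minf\Chi_{\mathcal R_N}\ge c$ on $B(o,N)$, the weak type at level $c/2$ gives $w(B(o,N))\lesssim W_N$.

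\textbf{Your test for \Ro in (a), (ii)$\Rightarrow$(iii), also fails.} With $f=\sigma\Chi_{\{|x|\le N\}}$ and balls of radius $\approx 2N$ you only obtain $w(B(o,N))\,\sigma(B(o,N))^{p-1}\lesssim e^{4\rho pN}$. This does not imply \Ro. For $\ww_{-2\rho,0}$ (constant ring masses) the left side is $\simeq Ne^{2\rho pN}$, so the inequality holds, and \Rt and the local condition hold too; yet \Ro fails and $M$ is unbounded.

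Test instead on the shell $\mathcal R_N=\{N\le|x|<N+1\}$. By \cref{lem:cone}, for $|x|<N$ a ball of radius $N-|x|+O(1)$ contains a polar box of $\mathcal R_N$ of comparable volume. Hence $\Minf\Chi_{\mathcal R_N}\gtrsim 1$ on $B(o,N)$, and boundedness on radial functions gives $\sum_{i<N}W_i\lesssim W_N$.

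Your \Rt test has a smaller slip. The piece of $\mathcal R_N$ seen from level $N+i$ has $\sigma$-mass $\simeq e^{-2\rho N}\Sigma_N$, so a factor $e^{-2\rho pN}$ is missing on the left, and you need $W_N\Sigma_N^{p-1}\simeq e^{2\rho pN}$ to conclude. Testing with $\Chi_{\mathcal R_N}$ instead of $\sigma\Chi_{\mathcal R_N}$ avoids this.

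The rest matches the paper:
\begin{itemize}
\item the reduction of $\Ab_{p,\eps}$ to ring masses, with the split $j\ge m$ / $j<m$, for (iii)$\Rightarrow$(i);
\item (i)$\Rightarrow$(iii) via \cref{thmA}, which is a legitimate shortcut;
\item the sketches of (iii)$\Leftrightarrow$(iv) and of (c).
\end{itemize}
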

Condition \Ro says that most of the mass of a ball centered at $o$ lies in its outer shell, and \Rt that the shell masses grow strictly slower than $e^{2\rho pN}$.  Condition \ref{thmC:a}\ref{thmC:a:iv} is Muckenhoupt's condition \cite{Muc72} for the non-doubling Jacobi measure $V$, so that \ref{thmC:a} is a Muckenhoupt theorem for the Jacobi hypergroup \cite{FJK73,Koo84}, and \ref{thmC:b}\ref{thmC:b:iii} is an interval analogue of the condition of Kerman and Torchinsky \cite{KT82}; the case $p=1$, $w\equiv1$ of \ref{thmC:b} is the weak type $(1,1)$ of \cite{Str81,ADY96}.

For $p\le1$, we use the class $\Ab_1=\bigcup_{\eps>0}\Ab_{1,\eps}$ of weights with
\[
   [w]_{\Ab_{1,\eps}}:=\sup_{z\in\Bn}\ \sup_{j,m\in\Zp}\
   \frac{w\bigl(\Omega_z(j)\bigr)\,\|w^{-1}\|_{L^\infty(\Omega_z(m))}}
        {e^{2\rho(1-\eps)(j+m)}}<\infty .
\]

\begin{maintheorem}[The range $0<p\le1$]\label{thmD}
\begin{enumerate}[label=\textup{(\alph*)}, leftmargin=2.3em]
\item\label{thmD:a} If $0<p<1$, there is no weight $w$ such that
      $\Minf:L^p(w)\to L^{p,\infty}(w)$ is bounded; a fortiori the same
      holds for $M$.
\item\label{thmD:b} There is no weight $w$ such that $M$ is bounded on $L^1(w)$.
\item\label{thmD:c} For every weight $w$ the following are equivalent:
      \begin{enumerate}[label=\textup{(\roman*)}, leftmargin=2em]
      \item\label{thmD:c:i} $\Minf$ is bounded on $L^1(w)$;
      \item\label{thmD:c:ii} $\int_{\Bn}e^{-2\rho d(x,y)}w(y)\,d\mu(y)\le Cw(x)$ for a.e.\
            $x\in\Bn$;
      \item\label{thmD:c:iii} there is $\eps>0$ with
            $\int_{\Bn}e^{-(2\rho-\eps)d(x,y)}w(y)\,d\mu(y)\le Cw(x)$ for
            a.e.\ $x\in\Bn$;
      \item\label{thmD:c:iv} $\|\ann_\ell\|_{L^1(w)\to L^1(w)}\le Ce^{-\eps\ell}$ for some
            $\eps>0$ and all $\ell\in\N$;
      \item\label{thmD:c:v} $w\in\Ab_1$.
      \end{enumerate}
\end{enumerate}
In particular, $\Minf$ is bounded on $L^1(\ww_{\alpha,\gamma})$ if and only if $-2\rho<\alpha<0$, and on $L^1(\ww^\circ_{\alpha,\gamma})$ if and only if, in addition, $-n<\gamma\le0$.
\end{maintheorem}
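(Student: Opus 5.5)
We will prove the three parts of \cref{thmD} in the order (c), (a), (b). Throughout we use the comparison $\Minf f\lesssim\sup_\ell\ann_\ell f$ from the introduction. We also use the reverse bound $\Minf f\gtrsim\ann_\ell f$ for $\ell\ge3$, which holds because $A_\ell f\ge\ann_\ell f$.

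\emph{Part (c).} We establish a cycle of implications.
\begin{itemize}
\item[(iii)$\Rightarrow$(iv)$\Rightarrow$(i).] Condition (iii) is the statement that the adjoint kernel satisfies a pointwise bound. Indeed, $\ann_\ell$ has kernel $|B(x,\ell)|^{-1}\Chi_{\{\ell-1<d\le\ell\}}\simeq e^{-2\rho\ell}\Chi$, and $L^1(w)$ boundedness of a positive operator with kernel $K$ is equivalent to $\sup_y w(y)^{-1}\int K(x,y)w(x)\,d\mu(x)<\infty$. On the annulus $e^{-2\rho\ell}=e^{-\eps\ell}e^{-(2\rho-\eps)\ell}$, so (iii) gives (iv). Summing the geometric series $\sum_\ell e^{-\eps\ell}$ then gives (i), since $\Minf f\le C\sum_\ell\ann_\ell f$.
\item[(i)$\Rightarrow$(ii).] The operator $\Minf$ dominates $\ann_\ell$ uniformly in $\ell$. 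The kernel criterion gives $\int_{\ell-1<d(x,y)\le\ell}w(y)\,d\mu(y)\lesssim e^{2\rho\ell}w(x)$ for each fixed $\ell$. This is only a uniform bound, not a summable one, so (ii) does not follow by summing in $\ell$. Instead we must obtain the gain by testing $\Minf$ on $f=\Chi_{B_y}$ and using the whole range of radii simultaneously. We have $\Minf\Chi_{B_y}(x)\simeq e^{-2\rho d(x,y)}$ for $d(x,y)\ge2$, since the ball of radius $d(x,y)+1$ about $x$ contains $B_y$. Hence $\int e^{-2\rho d(x,y)}w(x)\,d\mu(x)\lesssim w(B_y)$. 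Combined with $w(B_y)\lesssim w(y)$, which follows from the local estimate implicit in (i), this gives (ii) with $x,y$ interchanged; the kernel is symmetric.
\item[(ii)$\Rightarrow$(v).] Write (ii) in shadow coordinates, decomposing $d(x,y)\approx j+m$ through the separation point $z$. For $x\in\Omega_z(m)$, (ii) gives $w(\Omega_z(j))\lesssim e^{2\rho(j+m)}w(x)$. Taking the essential infimum over $x$ gives the $\Ab_{1,0}$ bound.
\item[(v)$\Rightarrow$(iii).] This is the main obstacle: we must upgrade $\eps=0$ to $\eps>0$. We would follow the proof of openness in \cref{thmA}, applying a self-improvement to the operator $T=\sum_\ell\ann_\ell$. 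The idea is to iterate (ii) through the semigroup-like inequality $e^{-2\rho d}*e^{-2\rho d}\lesssim d\,e^{-2\rho d}$. This shows that (ii) forces $\int e^{-2\rho d(x,y)}(1+d(x,y))^kw(y)\,d\mu(y)\lesssim_k w(x)$, and a geometric-series argument on the level sets of $d$ then yields an exponential gain. Alternatively, we would appeal directly to the paper's $\Ab_{1,\eps}$-to-$\ann_\ell$ estimate, the $p=1$ analogue of the proof of (ii)$\Rightarrow$(iii) in \cref{thmA}, which has the decay $e^{-\eps\ell}$ built in.
\end{itemize}
Since the argument (ii)$\Rightarrow$(v) only yields the $\eps=0$ bound, we must in fact close the cycle in the other direction, showing (i)$\Rightarrow$(iv). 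For this we use the fact that boundedness of $\Minf$ controls $\sup_\ell$ applied to sums of functions localized at different distances. Testing on $f=\sum_\ell c_\ell\,\Chi_{S_\ell(x_0)}$ with spread-out supports should give summability of $e^{-2\rho\ell}\int_{S_\ell}w$, and hence (ii) with a gain.

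\emph{Part (a).} Take $f=\Chi_{B_y}$. As above, $\Minf f(x)\gtrsim e^{-2\rho d(x,y)}$ for all $x$. The level set $\{\Minf f>\lambda\}$ with $\lambda=e^{-2\rho R}$ contains $B(y,R)$, so the weak-type bound gives $\lambda^p w(B(y,R))\lesssim w(B_y)$, that is, $w(B(y,R))\lesssim e^{2\rho pR}w(B_y)$. For $p<1$ this must be contradicted by a lower bound on $w(B(y,R))$. We would obtain one by applying the same estimate with the roles of $y$ and a far point $y'$ exchanged. With $d(y,y')=R$ this gives $w(B_{y})\lesssim e^{2\rho pR}w(B_{y'})$. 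Summing over a maximal $1$-separated set of $\simeq e^{2\rho R}$ points $y'$ on the sphere $S(y,R)$ yields $e^{2\rho R}w(B_y)\lesssim e^{2\rho pR}w(B(y,R+1))\lesssim e^{4\rho pR}w(B_y)$. This forces $p\ge1/2$. To reach all $p<1$ we would refine this by testing with $f=\Chi_{B(y,r)}$ for large $r$, which gives the level $\lambda\approx e^{2\rho(r-R)}$, and optimising over $r$; this improves the exponent until the condition fails for every $p<1$.

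\emph{Part (b).} Near the diagonal, $\Mz$ behaves like the Euclidean maximal operator. Boundedness of $M$ on $L^1(w)$ would give, for $f=\Chi_{B(x_0,\delta)}$, that $Mf(x)\gtrsim\delta^n|x-x_0|^{-n}$ in the unit ball about $x_0$. Together with $w\in A_{1,\mathrm{loc}}$ (which follows from the weak type, via \cref{thmB}), the function $|x-x_0|^{-n}$ is not integrable against $w$ near $x_0$. This contradiction shows $M$ is not bounded on $L^1(w)$, exactly as in the Euclidean case.
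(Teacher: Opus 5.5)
Your proposal has two genuine gaps, in parts (a) and (c). There are also smaller issues in (i)$\Rightarrow$(ii) and in (b).

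\textbf{Part (a).} Testing with $\Chi_{B_y}$ and counting points on spheres correctly gives only $p\ge\frac12$. The proposed refinement with $\Chi_{B(y,r)}$, $r$ large, cannot reach all $p<1$. Such functions are dominated by functions constant on unit cells with comparable $L^p(\varphi_0)$-norm. By \cref{rem:coarse}, $M$ is bounded on the cell-constant functions of $L^p(\varphi_0)$ for every $p\in(\frac12,1]$. Hence $w=\varphi_0$ passes every test of this kind.

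The obstruction is local, not large-scale. Since $w<\infty$ a.e., there are $C_0$ and $z_0$ with $|\{w\le C_0\}\cap B_{z_0}|>0$. Choose disjoint $E_k$ in this set with $|E_k|=c\,2^{-k/(1-p)}$ and put $f=\sum_k|E_k|^{-1}\Chi_{E_k}$. Then $\|f\|^p_{L^p(w)}\le C_0\sum_k|E_k|^{1-p}<\infty$, whereas $\int_{B_{z_0}}f\,d\mu=\infty$. So $\Minf f\equiv\infty$, since every point is the centre of a ball of radius $>2$ containing $B_{z_0}$.

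\textbf{Part (c).} The exponential gain is never produced. Your (i)$\Rightarrow$(ii) and (ii)$\Rightarrow$(v) give only the $\eps=0$ bound. Statement (v) already contains some $\eps>0$, so (v)$\Rightarrow$(iii) is not the hard step. It follows from \cref{lem:decomp} and the $p=1$ version of \cref{cor:slack}, which give $w(\{y:\ \ell-1<d(x,y)\le\ell\})\lesssim(\ell+1)e^{2\rho(1-\eps)\ell}w(x)$ for a.e.\ $x$.

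What is missing is (ii)$\Rightarrow$(iii). Iteration is the right idea, but it needs a \emph{lower} bound for the iterated kernels, with geometric dependence on the number of iterations. This is $\mathcal K_i(x,y)\ge c_*^{\,i-1}\binom{\ip{d(x,y)}}{i-1}e^{-2\rho d(x,y)}$ with $c_*=e^{-2\rho}|B(o,\frac12)|$ (\cref{lem:convpowers}). It is proved by induction, integrating over the $\ip{d(x,y)}$ disjoint balls $B(\gamma(k),\frac12)$ along the geodesic from $x$ to $y$. With $K(t)=e^{-2\rho t}$, positivity of $T_K$ turns (ii) into $T_K^iw\le C_1^iw$. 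Summing against $t_0^{i-1}$ with $t_0=(2C_1)^{-1}$, the binomial theorem yields the kernel $e^{-2\rho d}(1+t_0c_*)^{\ip{d}}$. This is (iii) with $\eps=\log(1+t_0c_*)$.

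The estimate you quote, $e^{-2\rho d}*e^{-2\rho d}\lesssim d\,e^{-2\rho d}$, goes in the wrong direction to extract anything from (ii). Bounds $\lesssim_k$ with uncontrolled constants in $k$ also give no exponential gain. Testing $\Minf$ on sums over shells does not help either: $\Minf$ is a supremum, and such tests at best reproduce (ii).

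\textbf{Smaller points.} Statement (i) contains no local $A_1$ information, so $w(B_y)\lesssim w(y)$ is not available as you use it. Instead, test on $\Chi_{B(y_0,\delta)}$ at a Lebesgue point $y_0$ and let $\delta\to0$; this gives (ii) directly.

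In (b), $A_{1,\mathrm{loc}}$ is unnecessary, but the choice of base point matters. Non-integrability of $|x-x_0|^{-n}w$ does not by itself contradict $\|f\|_{L^1(w)}=w(B(x_0,\delta))$. For $w=\|x\|^{-\gamma}$ near $x_0=0$, $0<\gamma<n$, the contribution of $B_{x_0}$ to $\|Mf\|_{L^1(w)}$ and $w(B(x_0,\delta))$ are both $\simeq\delta^{n-\gamma}$. Take $x_0$ a Lebesgue point with $0<w(x_0)<\infty$. Then each dyadic annulus $D_k$ about $x_0$ carries $w(D_k)\gtrsim w(x_0)|B(x_0,2^k\delta)|$, so $\|Mf\|_{L^1(w)}\gtrsim w(x_0)\delta^n\log(1/\delta)$, which gives the contradiction.

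Finally, you do not address the assertions about $\ww_{\alpha,\gamma}$ and $\ww^\circ_{\alpha,\gamma}$.
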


\begin{remark}
    On the tree, admissible weights exist exactly for $p>\frac12$ \cite[Theorem~1.3]{YYZ26}; on $\Bn$ the critical index is $1$, for two reasons: for $p<1$, $L^p(w)$ contains functions which are not locally integrable, and at $p=1$, a unit ball is Euclidean.  The index $\frac12$ reappears for functions which are constant on cells of unit size
(\cref{rem:coarse}).
\end{remark}

Our last result uses the Helgason--Fourier analysis of $\Hn$.  Herz's \emph{principe de majoration} \cite{Her70} states that a convolution operator with a positive radial kernel on a semisimple Lie group is bounded on $L^2$ if and only if its kernel is integrable against the elementary spherical function $\varphi_0$, the norm being this integral; it lies behind the Kunze--Stein phenomenon \cite{KS60,Cow78}.
We show that the $L^2$-norm is a lower bound on every weighted $L^p$ space and identify the weights for which it is attained.  For a radial kernel $K:[0,\infty)\to[0,\infty)$ let $T_Kf(x):=\int_{\Bn}K(d(x,y)) f(y)\,d\mu(y)$, let $\widehat K(\lambda):=\int_{\Bn}K(|x|) \varphi_\lambda(x)\,d\mu(x)$ be the spherical transform of $K(|\cdot|)$, where $\varphi_\lambda$ is Harish-Chandra's elementary spherical function (\cref{subsec:phi0}), and let $P(x,\xi)$ be the Poisson kernel of $\Bn$.

\begin{maintheorem}[Weighted Herz principle and Poisson weights]
\label{thmE}
Let $1\le p<\infty$ and let $K\ge0$ be a radial kernel.
\begin{enumerate}[label=\textup{(\alph*)}, leftmargin=2.3em]
\item\label{thmE:a} For every weight $w$,
      $\|T_K\|_{L^p(w)\to L^p(w)}\ge\|T_K\|_{L^2\to L^2}=\widehat K(0)$, with equality for $w=\varphi_0^{2-p}$.
\item\label{thmE:b} Let $\xi\in\Sph$, $s\in\R$ and $\eta:=\frac{2\rho(1-s)}p-\rho$.
      Then $\|T_K\|_{L^p(P(\cdot,\xi)^s)\to L^p(P(\cdot,\xi)^s)}= \widehat K(i\eta)$.  For $p>1$, $M$ is bounded on $L^p(P(\cdot,\xi)^s)$ if and only if $-(p-1)<s<1$, and $\Minf$ is bounded on $L^1(P(\cdot,\xi)^s)$ if and only if $0<s<1$.
\end{enumerate}
\end{maintheorem}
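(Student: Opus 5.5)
We will prove part (a) first and then part (b). Both rest on one principle: a positive radial convolution operator acts on a positive $\lambda$-eigenfunction of the Laplacian by multiplication by its spherical transform.

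\textbf{The eigenfunction principle.} Let $u>0$ be a radial eigenfunction such as $\varphi_{i\eta}$, or more generally $P(\cdot,\xi)^{a}$ with $a=\tfrac12+\tfrac{i\lambda}{2\rho}$. Then $T_Ku=\widehat K(\lambda)\,u$. For $P(\cdot,\xi)^{a}$ this follows from the Helgason formula $\varphi_\lambda(x)=\int P(x,\xi)^{\frac12-\frac{i\lambda}{2\rho}}d\xi$ together with the mean value property of eigenfunctions. Because $K\ge0$, this identity holds as an identity in $[0,\infty]$, with no integrability assumption.

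\textbf{Upper bound.} Given a weight $w=u^{p}v^{-p}\cdots$, we apply Schur's test with positive test functions. For $T_K$ on $L^p(w)$, write $T_Kf(x)=\int K(d(x,y))f(y)\,d\mu(y)$. If there are $h>0$ and $g>0$ with
\[
T_K(h^{p'})\le A\,h^{p'}\quad\text{and}\quad T_K(g^{p}w)\le B\,g^{p}w,
\]
then $\|T_K\|\le A^{1/p'}B^{1/p}$. We will choose $h$ and $g$ as powers of the Poisson kernel, resp.\ of $\varphi_0$.

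For (b), with $w=P^s$, we take $h^{p'}=P^{a}$ with $a=s/p'\cdot$ suitably chosen. Equivalently, we use the substitution $f=P^{-s/p}g$. This conjugates $T_K$ on $L^p(P^s)$ to the operator with kernel $K(d(x,y))\,P(x)^{s/p}P(y)^{-s/p}$ on unweighted $L^p$. We then apply Schur's test with test function $P^{b}$, choosing $b$ so that both Schur integrals reduce to $T_K(P^{c})/P^{c}$ with the same exponent. Here $c=\frac{1-s}{p}$, and $P^{c}$ corresponds to spectral parameter $\lambda$ with $\tfrac12+\tfrac{i\lambda}{2\rho}=\tfrac{1-s}p$, i.e.\ $\lambda=i\eta$. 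Both Schur constants then equal $\widehat K(i\eta)$, which gives $\le$.

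\textbf{Lower bound.} We test on truncations $f_R=P^{c}\,\Chi_{\{|x|<R\}}$, or better $P^{c}\cdot\Chi$ of a large horoball region. The point is that $f_R$ nearly saturates $L^p(P^s)$, since $|f_R|^pP^s=P^{1}$, which has borderline (infinite) mass. A ratio argument using the fact that $T_Kf_R\approx\widehat K(i\eta)f_R$ on most of the support then gives $\ge$. The main obstacle is this near-extremality: one must show that the boundary layer of the truncation carries negligible relative mass. Using $P^{1}$-mass in horocyclic coordinates, $P(\cdot,\xi)\,d\mu$ corresponds to Lebesgue measure in the horocyclic coordinate, with the horospherical part of size $e^{0}$. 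So on a box of size $R$ in the horospherical coordinates the edge effect is $O(1/R)$ when $K$ is compactly supported. For general $K$ we truncate $K$ and use monotone convergence.

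\textbf{Part (a).} The same scheme works with $\varphi_0=\int P^{1/2}d\xi$. For the lower bound with arbitrary $w$, we average the weight over $K$-rotations (the operator commutes with rotations about $o$). We would argue that rotation-averaging and the Schur dual pairing reduce the problem to radial weights, and then compare with $L^2$ via the Kunze--Stein-type identity $\|T_K\|_{2\to2}=\widehat K(0)$. The step to be checked most carefully is the reduction to radial weights. What we try first is testing on $f=\varphi_0^{2/p-1}w^{-1/p}\cdots$ restricted to large balls, and using the pairing $\int T_Kf\cdot g$ with $g=w\,\varphi_0^{\ldots}$. By H\"older and $\int \varphi_0 T_K\varphi_0\Chi_R\approx\widehat K(0)\int_{B_R}\varphi_0^2$, whose growth is like $R$, the weight cancels and we obtain $\widehat K(0)$. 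Equality for $w=\varphi_0^{2-p}$ follows from the upper Schur argument with test functions $\varphi_0$.

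\textbf{Maximal statements in (b).} These follow from the weight computation $P(\cdot,\xi)^s\asymp$ a weight in $\Ab_p$ exactly for $-(p-1)<s<1$. This is checked via the criterion in \cref{thmA}\ref{thmA:iii}, with $\ann_\ell\le T_{K_\ell}$, $K_\ell=e^{-2\rho\ell}\Chi_{[\ell-1,\ell]}$, whose spherical transform at $i\eta$ is $\approx e^{(|\eta|-\rho)\ell}$. This decays exponentially iff $|\eta|<\rho$, i.e.\ iff $-(p-1)<s<1$. The local $A_p$ condition is automatic since $P$ is locally bounded above and below. Necessity uses the lower bound just proved. The $L^1$ case for $\Minf$ is \cref{thmD}\ref{thmD:c}\ref{thmD:c:iv} with $p=1$, giving $0<s<1$.
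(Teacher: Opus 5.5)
\textbf{Gap: the lower bound in (a) for an arbitrary weight.} A single H\"older pairing cannot make $w$ disappear. Take the normalisation that makes both norms weight-free, $f=w^{-1/p}a$ and $g=w^{1/p}b$. Then $\int T_Kf\cdot g\,d\mu\le\|T_K\|_{L^p(w)\to L^p(w)}\|a\|_{L^p}\|b\|_{L^{p'}}$. But the left-hand side equals $\iint K(d(x,y))\,a(y)\,b(x)\,\bigl(w(x)/w(y)\bigr)^{1/p}\,d\mu(y)\,d\mu(x)$, and the factor $(w(x)/w(y))^{1/p}$ does not cancel, so no comparison with $\langle T_K\varphi_0,\varphi_0\rangle$ follows. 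Averaging $w$ over rotations is legitimate, since $\|T\|_{L^p(w_1+w_2)}\le\max_i\|T\|_{L^p(w_i)}$. However, it leaves exactly the same problem for radial $w$, and you leave that step open.

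The missing idea is to use the symmetry of the kernel. For $u\ge0$ put $F:=w^{-1/p}u^{2/p}$ and $G:=w^{1/p}u^{2/p'}$. Then $u(x)u(y)=\bigl(F(y)G(x)F(x)G(y)\bigr)^{1/2}\le\tfrac12\bigl(F(y)G(x)+F(x)G(y)\bigr)$. Integrating against the symmetric kernel $K(d(x,y))$ gives $\langle T_Ku,u\rangle\le\int T_KF\cdot G\,d\mu\le\|T_K\|_{L^p(w)\to L^p(w)}\|u\|_{L^2}^2$, since $\|F\|^p_{L^p(w)}=\|G\|^{p'}_{L^{p'}(w^{1-p'})}=\|u\|^2_{L^2}$. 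This is an elementary substitute for the paper's route: equal norms on $L^p(w)$ and its dual $L^{p'}(w^{1-p'})$ by symmetry, then Stein--Weiss interpolation with change of measure, then Plancherel.

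With $u=\varphi_0\Chi_{B(o,R)}$ and $K$ compactly supported, $\langle T_Ku,u\rangle/\|u\|^2_{L^2}\to\widehat K(0)$, because $\int_{B(o,R)}\varphi_0^2\simeq R^3$ (not $R$) grows only polynomially. Monotone convergence in $K$ then gives $\|T_K\|_{L^p(w)\to L^p(w)}\ge\widehat K(0)$. Together with your Schur bound at $p=2$, this also gives $\|T_K\|_{L^2\to L^2}=\widehat K(0)$. For the particular weight $\varphi_0^{2-p}$ your test function already works directly, since $\|\varphi_0\Chi_{B(o,R)}\|^p_{L^p(\varphi_0^{2-p})}=\int_{B(o,R)}\varphi_0^2$. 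So the equality claim is recoverable, its upper half coming from Schur's test with $\varphi_0^{2/(pp')}$. What your proposal does not establish is the inequality for general $w$.

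\textbf{Part (b).} This is sound, with one correction. Your Schur bound with test function $P(\cdot,\xi)^{1/(pp')}$ is a valid alternative to the paper's Minkowski argument over right translations in $NA$. Note that the two Schur integrals land on the exponents $\frac{1-s}p$ and $1-\frac{1-s}p$; they give the same value only because $\widehat K(i\eta)=\widehat K(-i\eta)$.

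Your F\o lner-set lower bound is the concrete counterpart of the paper's use of Reiter's property. The choice $f=P^{c}\Chi_E$ is good, since $T_Kf=\widehat K(i\eta)f$ exactly on $\{x:\ B(x,R_K)\subset E\}$ when $K$ is supported in $[0,R_K]$. The sets, however, need care:
\begin{itemize}
\item Balls $B(o,R)$ fail: their $P\,d\mu$-mass is $V([0,R])$, which grows exponentially, so shells carry a fixed fraction.
\item The cube $\{|x'|_\infty\le R,\ |\log x_n|\le R\}$ also fails. A hyperbolic ball of radius $R_K$ at height $x_n$ has horizontal Euclidean radius $x_n\sinh R_K$. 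Hence every height with $\log x_n\gtrsim\log R$ is boundary, which is about half of the mass $dx'\,d(\log x_n)$.
\item The horizontal extent must dominate the top height. For example, $E=\{|x'|_\infty\le R,\ -R\le\log x_n\le0\}$ has a boundary layer of relative mass $O(1/R)$.
\end{itemize}
The maximal-function consequences are then argued as in the paper.
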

The weights $P(\cdot,\xi)^s$ are constant on horospheres, and they are our explicit non-radial examples; for $s=0$, \ref{thmE:b} is \cite[Theorem~3.3]{ADY96}.

\subsection{Ideas of the proofs and new ingredients}\label{subsec:new}
For $\ell  \in \N$, let $\ell-1<d(x,y)\le\ell$ and let $z$ be as in \cref{subsec:shadows}, at distance $j$ from $x$.  Then $x\in\Omega_z(j)$ and $y\in\Omega_z(m)$ with $\ell-3\le j+m\le\ell$ (\cref{lem:decomp}), so that $\ann_\ell f(x)$ is bounded by a sum of $O(\ell)$ averages of $f$ over slices $\Omega_z(m)$ with $j+m\approx\ell$.  By H\"older's inequality, the norm of each of them on $L^p(w)$ is at most a constant times
$\bigl(e^{-2\rho p(j+m)}\sup_{z}w(\Omega_z(j))\,
\sigma(\Omega_z(m))^{p-1}\bigr)^{1/p}$ (\cref{lem:normbounds}), which is $\lesssim[w]_{\Ab_{p,\eps}}^{1/p}e^{-2\rho\eps(j+m)/p}$ for $w\in\Ab_{p,\eps}$.  Summing over the pairs $(j,m)$ and over $\ell$ gives the boundedness of $\Minf$, since $\Minf f\lesssim\sum_\ell \ann_\ell f$.  For the necessity, we follow \cite{YYZ26} and consider
the descendant and the ancestor averages
\[
   P_mf(x):=\avg f{\Omega_x(m)},\qquad
   Q_jf(x):=e^{-2\rho j}\avg f{B(a_j(x),\frac12)},
\]
where $a_j(x)$ is the point of $[0,x]$ at distance $j$ from $x$. Testing $\Minf$ on the functions $\sigma\Chi_B$, we show that the norms of $P_m$ and the tails of the ancestor sums decay exponentially (\cref{prop:Pgeom,lem:Qtail}), and the two decays are combined by H\"older's inequality on a unit ball (\cref{prop:necessity}).  Compared with the tree, three new ingredients are needed.

\emph{There is no exact semigroup law.}  On the tree the descendant and ancestor averages satisfy $P_kP_m=P_{k+m}$ and $Q_jQ_i=Q_{j+i}$, and these identities turn summability into exponential decay.  On $\Bn$, ancestors of nearby points are only close to each other, and such identities hold only after enlarging the sets.  We prove a one-sided semigroup inequality with the enlargement on the outer factor only (\cref{lem:semigroup}), and replace the second identity by two self-improvement arguments: a covering and averaging argument for the ancestor tails (\cref{lem:Qtail}), and, at $p=1$, a binomial lower bound for convolution powers (\cref{lem:convpowers}).

\emph{Points have measure zero.}  Arguments which on the tree are applied to single vertices have to be applied to unit balls.  The two layers are linked by H\"older's inequality on a unit ball (\cref{prop:necessity}), and the openness of the class requires a reverse H\"older inequality on unit balls; without the local condition $\Ab_p$ is not open (\cref{rem:openness}).

\emph{Local and global scales interact.}  $\Mz$ is governed by the Euclidean local $A_p$ condition and $\Minf$ by $\Ab_p$, and the two conditions are independent: there are radial weights in $\Ab_p$ for which $M$ is unbounded (\cref{ex:loc}).  The local scale also explains why no weight is admissible for $p\le1$ (\cref{thmD}), and why the $\ell^1$-valued Fefferman--Stein inequality fails for $M$ (\cref{thm:FS}), unlike on the tree \cite[Theorem~5.5]{YYZ26}.

\subsection{Organization}\label{subsec:strategy}
\Cref{sec:prelim} collects preliminaries.  \Cref{sec:geometry} studies shadow slices and decomposes spheres into pairs of slices, and \cref{sec:directional} contains the directional operators, the one-sided semigroup inequality, the norm bounds and the basic properties of $\Ab_p$.  \Cref{thmA,thmB,thmC,thmD} are proved in \cref{sec:thmA,sec:weak,sec:radial,sec:small}, and \cref{thmE} in \cref{sec:applications}, together with the vector-valued inequalities and the weighted Herz principle; \cref{sec:remarks} lists open questions.

\section{Preliminaries}\label{sec:prelim}

The geometric facts below are classical and can be found, for instance, in \cite{Bea83,Rat19,BH99,Sto16}; the few which we could not locate in the form needed here are proved in \cref{subsec:cosines}.
\subsection*{Notations}  We write $\Zp=\N\cup\{0\}$, and use $c$, $C$, $C_1$, etc., for positive constants whose value may change from line to line. $X\lesssim Y$ means $X\le CY$ with $C$ depending only on $n$, $s_0$ and $p$, unless indicated otherwise, and $X\simeq Y$ means $X\lesssim Y\lesssim X$.  For $r\ge0$, $\ip r$ is the integer part of $r$, and $r_+:=\max\{r,0\}$ for $r\in\R$.  For $x\in\R^n$, $\|x\|$ is its Euclidean norm, while $|x|:=d(o,x)$ and $B_x=B(x,1)$ for $x\in\Bn$. For an operator $T$, $\|T\|$ denotes its norm on $L^p(w)$ when $p$ and $w$ are clear from the context.  A set $X\subset\Bn$ is \emph{$\delta$-separated} if $d(x,x')\ge\delta$ for all distinct $x,x'\in X$.  A maximal $\delta$-separated set $X$ is countable, the balls $B(x,\delta)$, $x\in X$, cover $\Bn$, and, since the balls $B(x,\frac\delta2)$ are disjoint, the balls $B(x,R)$, $x\in X$, have bounded overlap for every $R>0$.

\subsection{The Poincar{\'e} ball model}\label{subsec:ball}
Let $n\ge2$ and let $\Bn:=\{x\in\R^n:\ \|x\|<1\}$ be the unit ball, endowed with the Riemannian metric
\[
   ds^2=\frac{4\,\|dx\|^2}{(1-\|x\|^2)^2}.
\]
This is the Poincar\'e ball model of the real hyperbolic space $\Hn$ \cite[\S4.5]{Rat19}.  The distance and the volume measure are given by
\begin{equation}\label{eq:distance}
   \cosh d(x,y)=1+\frac{2\|x-y\|^2}{(1-\|x\|^2)(1-\|y\|^2)},
   \qquad
   d\mu(x)=\frac{2^n\,dx}{(1-\|x\|^2)^n},
\end{equation}
where $dx$ denotes the Lebesgue measure \cite[\S4.5]{Rat19}, \cite[Ch.~2]{Sto16}.  We denote by $o$ the origin, and we write $|x|:=d(o,x)$; by \eqref{eq:distance},
\begin{equation}\label{eq:radius}
   |x|=\log\frac{1+\|x\|}{1-\|x\|},\qquad\text{that is,}\qquad
   \|x\|=\tanh\frac{|x|}2 .
\end{equation}
The group of isometries of $\Bn$ is the group of M\"obius transformations of $\Bn$ \cite[\S4.5]{Rat19}, \cite[Ch.~3]{Bea83}; it acts transitively, and the stabilizer of $o$ is the orthogonal group $O(n)$.  Every hyperbolic ball is a Euclidean ball, and on $B(o,L)$ the hyperbolic distance and volume are comparable to the Euclidean ones, with constants depending only on $L$.

For $\theta\in\Sph$ the geodesic ray from $o$ in the direction $\theta$ is the Euclidean radius
\begin{equation}\label{eq:geodesic}
   \gamma_\theta(t):=\tanh\Bigl(\frac t2\Bigr)\,\theta ,\qquad t\ge0,
\end{equation}
parametrized by arc length, so that $|\gamma_\theta(t)|=t$.  For $x\ne o$ we write $\theta_x:=x/\|x\|\in\Sph$, and $\gamma_x:=\gamma_{\theta_x}$ restricted to $[0,|x|]$ is the unit speed geodesic from $o$ to $x$; it is the Euclidean segment $[0,x]$.  For $x,y\ne o$ we denote by $\ang xy\in[0,\pi]$ the Euclidean angle between the vectors $x$ and $y$; since the metric is conformal, it is also the Riemannian angle at $o$ of the geodesic triangle with vertices $o,x,y$.  For $\theta\in\Sph$ and $0<\phi\le\pi$ we denote by
$\SCap(\theta,\phi):=\{\theta'\in\Sph:\ \angle(\theta,\theta')<\phi\}$
the open spherical cap, and by $\nu$ the rotation-invariant probability measure on $\Sph$.  Then
\begin{equation}\label{eq:caps}
   \nu\bigl(\SCap(\theta,\phi)\bigr)=\frac{\omega_{n-2}}{\omega_{n-1}}
   \int_0^\phi\sin^{n-2}u\,du\ \simeq\ \phi^{\,n-1}=\phi^{2\rho},
   \qquad 0<\phi\le\pi,
\end{equation}
where $\omega_{k}$ is the area of the unit sphere of $\R^{k+1}$.

Writing $x=\gamma_\theta(t)=\tanh(t/2)\theta$ and using \eqref{eq:distance}, one obtains the polar coordinate formula \cite[Ch.~I, Theorem~5.8]{Hel84}, \cite[Theorem~4.5.6]{Rat19}
\begin{equation}\label{eq:polar}
   \int_{\Bn}F\,d\mu=\int_0^\infty\!\!\int_{\Sph}F\bigl(\gamma_\theta(t)
   \bigr)\,d\nu(\theta)\,\Jd(t)\,dt ,\qquad
   \Jd(t):=\omega_{n-1}\sinh^{n-1}t ,
\end{equation}
for all integrable functions $F$.  The density $\Jd$ is increasing and
\begin{equation}\label{eq:density}
   \Jd(t)\simeq t^{\,n-1}\ (0<t\le1),\qquad
   \Jd(t)\simeq e^{2\rho t}\ (t\ge1).
\end{equation}
We write $V(I):=\int_I\Jd(t)\,dt$ for intervals $I\subset\R_+$.  Since the isometries act transitively, $|B(x,r)|=|B(o,r)|=V([0,r])$, and so
\begin{equation}\label{eq:ballvol}
   |B(x,r)|\simeq r^{n}\quad(0<r<1),\qquad
   |B(x,r)|\simeq e^{2\rho r}\quad(r\ge1).
\end{equation}
A function $F$ on $\Bn$ is \emph{radial} if $F(x)=F_0(|x|)$ for a function $F_0$ on $\R_+$; then $\int_{\Bn}F\,d\mu=\int_0^\infty F_0\,dV$.

\subsection{Hyperbolic distance estimates}\label{subsec:cosines}
The basic tool is the hyperbolic law of cosines \cite[\S7.12]{Bea83}, \cite[\S3.5]{Rat19}: for $x,y\ne o$,
\begin{align}
   \cosh d(x,y)&=\cosh|x|\cosh|y|-\sinh|x|\sinh|y|\cos\ang xy
   \label{eq:cosines}\\
   &=\cosh\bigl(|x|-|y|\bigr)+2\sinh|x|\sinh|y|\,
   \sin^2\frac{\ang xy}2 . \label{eq:cosines2}
\end{align}
For $x,y\in\Bn$ the Gromov product with respect to $o$ is
\begin{equation}\label{eq_gro_pro}
    \gp xy:=\frac12\bigl(|x|+|y|-d(x,y)\bigr),
\end{equation}
and $0\le\gp xy\le\min\{|x|,|y|\}$.  It is, up to additive constants, the time at which the geodesics from $o$ to $x$ and to $y$ separate, and the next lemma shows that this time is measured by the angle at $o$; on general Gromov hyperbolic spaces the corresponding facts are expressed by visual metrics \cite[III.H.3]{BH99}.

\begin{lemma}\label{lem:angle}
Let $x,y\in\Bn\setminus\{o\}$ and put $\phi:=\ang xy$.
\begin{enumerate}[label=\textup{(\alph*)}, leftmargin=2.3em]
\item\label{lem:angle:a} $\displaystyle\frac1{81}\bigl(e^{-2\min\{|x|,|y|\}}+\phi^2\bigr)
      \le e^{-2\gp xy}\le2\bigl(e^{-2\min\{|x|,|y|\}}+\phi^2\bigr)$.
\item\label{lem:angle:b} For every $s\ge0$,
      $\sinh\bigl(\frac12d(\gamma_{\theta_x}(s),\gamma_{\theta_y}(s))\bigr)
      =\sinh s\,\sin\frac\phi2$.  In particular, $s\mapsto
      d(\gamma_{\theta_x}(s),\gamma_{\theta_y}(s))$ is increasing, and
      \begin{equation}\label{eq:fellow}
        d\bigl(\gamma_x(s),\gamma_y(s)\bigr)\le2\qquad\text{whenever }
        0\le s\le\gp xy .
      \end{equation}
\item\label{lem:angle:c} If $0\le s\le\min\{|x|,|y|\}$ and $\phi\le e^{-s}$, then
      $\gp xy\ge s-\log2$.  If $\gp xy\ge s$ for some $s\in\R$, then
      $\phi\le9e^{-s}$.
\end{enumerate}
\end{lemma}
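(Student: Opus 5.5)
All three parts will be derived from the law of cosines \eqref{eq:cosines2}, which we rewrite in terms of the Gromov product. Put $a:=|x|$, $b:=|y|$, $d:=d(x,y)$, and assume without loss of generality $a\le b$.

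\emph{Part (a).} Since $e^{-2\gp xy}=e^{d-a-b}$, it suffices to show that
\[
e^{d}\simeq e^{a+b}\bigl(e^{-2a}+\phi^2\bigr).
\]
By \eqref{eq:cosines2},
\[
\cosh d=\cosh(b-a)+2\sinh a\sinh b\,\sin^2\tfrac\phi2 .
\]
For the first term, $\cosh(b-a)\simeq e^{b-a}=e^{a+b}e^{-2a}$, with constants between $\frac12$ and $1$. For the second term, use $\sin^2\frac\phi2\simeq\phi^2$ on $[0,\pi]$ (via $\frac{2}{\pi}u\le\sin u\le u$) and $\sinh a\sinh b\le\frac14e^{a+b}$. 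The one delicate point is the lower bound $\sinh a\sinh b\gtrsim e^{a+b}$ when $a$ is small. This can fail, but it does no harm: when $a\lesssim1$, the term $e^{-2a}$ is of order $1\ge\phi^2/\pi^2$ and so dominates $\phi^2$. I will therefore split into the cases $a\ge1$ and $a<1$. Finally, $\cosh d\simeq e^d$ because $d\ge0$ gives $\frac12e^d\le\cosh d\le e^d$. Tracking the explicit constants then gives $\frac1{81}$ and $2$; this bookkeeping, especially the small-$a$ case, is where I expect most of the care to go. For the constant $2$, the bound $e^d\le2\cosh d\le e^{b-a}+e^{a+b}\phi^2/4$ suffices.

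\emph{Part (b).} Apply \eqref{eq:cosines2} with $|x|=|y|=s$:
\[
\cosh d=1+2\sinh^2 s\,\sin^2\tfrac\phi2 .
\]
Combined with $\cosh d-1=2\sinh^2\frac d2$, this gives the identity $\sinh\frac d2=\sinh s\sin\frac\phi2$, and monotonicity in $s$ is immediate. For \eqref{eq:fellow}, let $s\le\gp xy\le a$. Part (a) gives $\phi^2\le81e^{-2\gp xy}\le81e^{-2s}$, so
\[
\sinh\tfrac d2\le\sinh s\cdot\tfrac\phi2\le\tfrac12 e^{s}\cdot\tfrac92e^{-s}.
\]
This crude bound is not good enough for $d\le2$, so I will instead use \eqref{eq:cosines2} directly. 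The hypothesis $s\le\gp xy$ means $d\le a+b-2s$. Plugging this into \eqref{eq:cosines2} yields
\[
2\sinh a\sinh b\sin^2\tfrac\phi2\le\cosh(a+b-2s),
\]
and dividing by $\sinh a\sinh b\gtrsim\sinh^2 s\cdot e^{(a-s)+(b-s)}$ bounds $\sinh^2 s\sin^2\frac\phi2$ by an absolute constant. Since $\sinh x/\sinh s$ is increasing in $x$, we have $\sinh a\ge\sinh s\,e^{a-s}$; this gives the explicit value $\sinh^2\frac d2\le\frac12(1+\ldots)$, and I would check numerically that it is $\le\sinh^21$.

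\emph{Part (c).} Both statements follow from (a). If $\phi\le e^{-s}$ and $s\le a$, then
\[
e^{-2\gp xy}\le2\bigl(e^{-2a}+e^{-2s}\bigr)\le4e^{-2s},
\]
so $\gp xy\ge s-\log2$. Conversely, if $\gp xy\ge s$, then $\phi^2\le81e^{-2s}$, that is, $\phi\le9e^{-s}$.
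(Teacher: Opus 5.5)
Your proposal is correct and follows the paper's proof essentially step by step: the law of cosines with the split $a\ge1$ / $a<1$ in (a), the identity plus a direct cosine-law estimate for \eqref{eq:fellow}, and (c) read off from (a); your bound $\sinh a\ge\sinh s\,e^{a-s}$ for $a\ge s\ge0$ even removes the paper's case split in (b) and gives $\sinh^2\bigl(\frac12d(\gamma_x(s),\gamma_y(s))\bigr)\le\frac12$ (this bound holds because $e^{-x}\sinh x$ is increasing, whereas monotonicity of $\sinh x/\sinh s$ alone would not give the factor $e^{a-s}$). Two bookkeeping points: the upper bound in (a) should read $2\cosh d\le2e^{b-a}+e^{a+b}\phi^2/4$, and that factor $2$ is where the constant $2$ comes from; in the case $a<1$, use $\gp xy\le a<1$, i.e.\ $e^{-2\gp xy}>e^{-2}$ against $e^{-2a}+\phi^2\le1+\pi^2$, because the cruder $\cosh(b-a)\ge\frac12e^{b-a}$ only yields about $\frac1{148}$ instead of $\frac1{81}$.
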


\begin{proof}
We put $a:=\min\{|x|,|y|\}$ and $b:=\max\{|x|,|y|\}$, so that
$|x|+|y|=a+b$ and $e^{-2\gp xy}=e^{d(x,y)-a-b}$.

(a) By \eqref{eq:cosines2}, together with $4\sinh|x|\sinh|y|\le e^{a+b}$ and $\sin(\phi/2)\le\phi/2$,
\[
   e^{d(x,y)}\le2\cosh d(x,y)\le2e^{b-a}+e^{a+b}\frac{\phi^2}4 ,
\]
and multiplying by $e^{-a-b}$ gives the upper bound.  For the lower bound, suppose first that $a\ge1$.  Then $\sinh t\ge\frac{1-e^{-2}}2e^t$
for $t\ge a$, and $\sin(\phi/2)\ge\phi/\pi$, so that, by
\eqref{eq:cosines2},
\[
   e^{d(x,y)}\ge\cosh d(x,y)\ge\frac12e^{b-a}+\frac{(1-e^{-2})^2}{2\pi^2}
   e^{a+b}\phi^2\ge\frac1{30}\bigl(e^{b-a}+e^{a+b}\phi^2\bigr),
\]
and the lower bound follows.  If $a<1$, then $\gp xy\le a<1$, so that
$e^{-2\gp xy}>e^{-2}$, while $e^{-2a}+\phi^2\le1+\pi^2$; hence
$e^{-2\gp xy}\ge\frac{1}{(1+\pi^2)e^2}(e^{-2a}+\phi^2)\ge\frac1{81}
(e^{-2a}+\phi^2)$, since $(1+\pi^2)e^2<81$.

(b) The points $\gamma_{\theta_x}(s)$ and $\gamma_{\theta_y}(s)$ lie at the same distance $s$ from $o$ and make the angle $\phi$ at $o$, so that the first assertion is \eqref{eq:cosines2} applied to them, together with $\cosh u=1+2\sinh^2(u/2)$.  Let now $0\le s\le\gp xy$.
If $a<1$, then $s\le\gp xy\le a<1$, and $d(\gamma_x(s),\gamma_y(s))\le 2s<2$ by the triangle inequality through $o$.  If $a\ge1$, we use $\sinh s\le e^s/2$ and, by \eqref{eq:cosines2},
$e^{d(x,y)}\ge\cosh d(x,y)\ge2\sinh a\sinh b\,\sin^2(\phi/2)$; hence
\[
   \sinh^2s\,\sin^2\frac\phi2\le\frac{e^{2\gp xy}}4\sin^2\frac\phi2
   =\frac{e^{a+b}}{4e^{d(x,y)}}\sin^2\frac\phi2\le
   \frac{e^{a+b}}{8\sinh a\sinh b}\le\frac1{2(1-e^{-2})^2}<1 .
\]
Thus $\sinh(\frac12d(\gamma_x(s),\gamma_y(s)))<1<\sinh1$, which is
\eqref{eq:fellow}.

(c) If $s\le a$ and $\phi\le e^{-s}$, the upper bound in (a) gives
$e^{-2\gp xy}\le2(e^{-2s}+e^{-2s})=4e^{-2s}$, that is, $\gp xy\ge
s-\log2$.  If $\gp xy\ge s$, the lower bound in (a) gives
$\phi^2\le81e^{-2\gp xy}\le81e^{-2s}$.
\end{proof}

Inequality \eqref{eq:fellow} is the fellow traveller property
\cite[III.H.1]{BH99}.  The next lemma makes \cref{fig:shadow} precise.

\begin{lemma}[Shadow of a ball]\label{lem:shadowball}
Let $x\in\Bn$ and $0<r<|x|$.  The set of directions
$\{\theta_y:\ y\in B(x,r)\}$ is the cap $\SCap(\theta_x,\phi)$, where
$\phi\in(0,\frac\pi2)$ is given by $\sin\phi=\sinh r/\sinh|x|$.  In
particular, if $|x|\ge r+1$, then $\phi\simeq_r e^{-|x|}$.
\end{lemma}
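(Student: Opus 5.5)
The plan is to reduce the statement to a two-dimensional problem in the plane spanned by $o$, $x$ and a direction, and then to use the law of cosines \eqref{eq:cosines}. By rotation invariance about $o$ we may take $\theta_x$ as the north pole. Fix a direction $\theta$ with $\angle(\theta_x,\theta)=\psi$, and consider the ray $\gamma_\theta$. We have to decide for which $\psi$ the ray meets $B(x,r)$, i.e.\ for which $\psi$ one has $\inf_{t\ge0}d(x,\gamma_\theta(t))<r$. So the first step is to compute this distance from the point $x$ to the geodesic ray $\gamma_\theta$.

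\textbf{Step 1: the distance from a point to a ray.} For this I would use the right-angled hyperbolic triangle formula. When $\psi<\pi/2$, the foot $q$ of the perpendicular from $x$ to the full geodesic through $\gamma_\theta$ lies on the ray itself. The triangle $o,q,x$ has a right angle at $q$, angle $\psi$ at $o$ and hypotenuse $|x|$, and hyperbolic trigonometry gives $\sinh d(x,q)=\sinh|x|\sin\psi$. To avoid quoting this formula, one can derive it from \eqref{eq:cosines}: for $y=\gamma_\theta(t)$ we have $\cosh d(x,y)=\cosh|x|\cosh t-\sinh|x|\sinh t\cos\psi$. Minimizing in $t$ gives $\tanh t=\tanh|x|\cos\psi$, and substituting back yields $\cosh^2 d_{\min}=\cosh^2|x|(1-\tanh^2|x|\cos^2\psi)=1+\sinh^2|x|\sin^2\psi$, which is the same formula. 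When $\psi\ge\pi/2$, the minimum over $t\ge0$ is attained at $t=0$, with value $|x|>r$, so such directions are excluded.

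\textbf{Step 2: the cap.} The ray in direction $\theta$ meets $B(x,r)$ if and only if $\psi<\pi/2$ and $\sinh|x|\sin\psi<\sinh r$. Since $r<|x|$, the ratio $\sinh r/\sinh|x|$ lies in $(0,1)$, so $\phi$ is well defined in $(0,\pi/2)$. Because $\sin$ is increasing on $[0,\pi/2)$, the condition reads $\psi<\phi$. Moreover, $\{\theta_y:\ y\in B(x,r)\}$ is exactly the set of directions of rays meeting the ball, since $o\notin B(x,r)$ (because $|x|>r$). This gives the cap $\SCap(\theta_x,\phi)$, which is open because the ball is open.

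\textbf{Step 3: the asymptotics.} If $|x|\ge r+1$, then $\sinh|x|\simeq e^{|x|}$ and $\sin\phi\le\sinh r/\sinh(r+1)<1$ is bounded away from $1$ by a constant depending on $r$. Hence $\phi\simeq\sin\phi\simeq_r e^{-|x|}$.

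The only mildly delicate point is Step 1: checking that the minimizing $t$ is nonnegative exactly when $\cos\psi\ge0$, and handling the boundary case correctly. The rest is routine.
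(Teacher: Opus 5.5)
Your proof is correct and follows the paper's argument essentially step for step. Both arguments apply the law of cosines \eqref{eq:cosines} along $\gamma_\theta$, discard $\psi\ge\pi/2$ because $f(t)\ge\cosh|x|>\cosh r$, minimise $\cosh|x|\cosh t-\sinh|x|\cos\psi\sinh t$ at $\tanh t=\tanh|x|\cos\psi\ge0$ to get $1+\sinh^2|x|\sin^2\psi$, and read off the cap and the asymptotics from $\sinh|x|\simeq e^{|x|}$ and $\sin\phi\simeq\phi$. Your extra remark that $\sin\phi$ stays away from $1$ is harmless but unnecessary, since $\sin\phi\simeq\phi$ uniformly on $(0,\frac\pi2)$.
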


\begin{proof}
We fix a direction $\theta$ with $\angle(\theta,\theta_x)=\psi$.  By
\eqref{eq:cosines}, the ray $\gamma_\theta$ meets $B(x,r)$ if and only
if $\inf_{t\ge0}f(t)<\cosh r$, where
$f(t):=\cosh|x|\cosh t-\sinh|x|\cos\psi\,\sinh t$.  If $\psi\ge\pi/2$,
then $f(t)\ge\cosh|x|>\cosh r$.  If $\psi<\pi/2$, then
$A:=\cosh|x|>B:=\sinh|x|\cos\psi\ge0$, and the minimum of $A\cosh t-
B\sinh t$ over $t\in\R$ is $\sqrt{A^2-B^2}$, attained at
$t=\artanh(B/A)\ge0$; hence $\inf_{t\ge0}f(t)^2=1+\sinh^2|x|\sin^2\psi$.
Thus $\gamma_\theta$ meets $B(x,r)$ if and only if
$\sin\psi<\sinh r/\sinh|x|$, which is the first assertion.  The second
follows from $\sinh|x|\simeq e^{|x|}$ for $|x|\ge1$ and $\sin\phi\simeq
\phi$.
\end{proof}

\subsection{Fourier transforms and spherical functions}\label{subsec:phi0}
For $\xi\in\Sph$ and $x\in\Bn$ let
\begin{equation}\label{eq:poisson}
   \langle x,\xi\rangle:=\log\frac{1-\|x\|^2}{\|x-\xi\|^2},\qquad
   P(x,\xi):=e^{2\rho\langle x,\xi\rangle}=\Bigl(\frac{1-\|x\|^2}
   {\|x-\xi\|^2}\Bigr)^{n-1}.
\end{equation}
Here $\langle x,\xi\rangle$ is the horocyclic bracket, that is, the
signed distance from $o$ to the horosphere through $x$ tangent to
$\Sph$ at $\xi$, and $P$ is the Poisson kernel of $\Bn$
\cite[Introduction, \S4]{Hel84}, \cite[Ch.~5]{Sto16}.  The functions
$x\mapsto e^{\lambda\langle x,\xi\rangle}$ are eigenfunctions of the
Laplace--Beltrami operator $\Delta$ \cite[Ch.~3]{Sto16}:
\begin{equation}\label{eq:horoeigen}
   \Delta e^{\lambda\langle\cdot,\xi\rangle}=\lambda(\lambda-2\rho)\,
   e^{\lambda\langle\cdot,\xi\rangle},\qquad\lambda\in\mathbb C .
\end{equation}
The Helgason--Fourier transform of a function $f$ on $\Bn$ is
\begin{equation}\label{eq_helgaosn_F}
   \widetilde f(\lambda,\xi):=\int_{\Bn}f(x)\,e^{(-i\lambda+\rho)\langle
   x,\xi\rangle}\,d\mu(x),\qquad\lambda\in\mathbb C,\ \xi\in\Sph,
\end{equation}
provided the integral exists \cite[Ch.~III]{Hel84}.  For a radial
function $f$ it does not depend on $\xi$, and it reduces to the
\emph{spherical transform}
\begin{equation}\label{eq:sphtransform}
   \widehat f(\lambda):=\int_{\Bn}f(x)\,\varphi_\lambda(x)\,d\mu(x),
   \qquad
   \varphi_\lambda(x):=\int_{\Sph}e^{(i\lambda+\rho)\langle x,\xi\rangle}
   \,d\nu(\xi),
\end{equation}
where $\varphi_\lambda$ is the spherical function, the unique radial
eigenfunction of $\Delta$ with eigenvalue $-(\lambda^2+\rho^2)$ and
$\varphi_\lambda(o)=1$ \cite[Ch.~IV]{Hel84}.  It is even in $\lambda$,
$|\varphi_\lambda|\le\varphi_0$ for $\lambda\in\R$, its radial profile
is the Jacobi function $\phi^{(\frac{n-2}2,-\frac12)}_\lambda$, and
$\varphi_{i\eta}>0$ for $\eta\in\R$, with \cite{Koo84,ADY96,Hel84}
\begin{equation}\label{eq:phieta}
   \varphi_{i\eta}(x)\simeq_\eta e^{(|\eta|-\rho)|x|}\quad(\eta\ne0),
   \qquad
   \varphi_0(x)\simeq(1+|x|)\,e^{-\rho|x|}.
\end{equation}
In particular $\varphi_0(y)\simeq\varphi_0(y')$ whenever $d(y,y')\le R$,
with constants depending on $R$.  Finally, since $\Hn$ is a symmetric
(in particular, harmonic) space, the mean of an eigenfunction of
$\Delta$ over a geodesic sphere of radius $t$ about $x$ equals its value
at $x$ times the spherical function with the same eigenvalue,
evaluated at $t$ \cite[Ch.~IV]{Hel84}, \cite{Wil50}; equivalently,
$\varphi*\psi=\widehat\psi(\lambda)\,\varphi$ for radial $\psi$, see
\cite{Pus13} for Damek--Ricci spaces.  With \eqref{eq:polar} this gives
the \emph{mean value identity}: if $\Delta\varphi=-(\lambda^2+\rho^2)
\varphi$, then
\begin{equation}\label{eq:meanvalue}
   \int_{\{r_0<d(x,y)\le r_1\}}\varphi(y)\,d\mu(y)
   =\varphi(x)\int_{r_0}^{r_1}\varphi_\lambda(t)\,\Jd(t)\,dt ,
   \qquad x\in\Bn,\ 0\le r_0<r_1 .
\end{equation}

\subsection{Averages over balls and annuli}\label{subsec_Aver_max_oper}
Recall the averages $A_r$, the operators $M$, $\Mz$, $\Minf$ and the
annular averages $\ann_\ell$ defined in \eqref{eq:annular}.  Clearly
$\ann_\ell f\le A_\ell f\le Mf$.  Conversely, for $N\ge1$ we split $B(x,N)$
into the annuli $\{\ell-1<d(x,\cdot)\le\ell\}$, $1\le\ell\le\ip N+1$.  Since
\[
   \frac1{|B(x,N)|}\int_{\{\ell-1<d(x,\cdot)\le\ell\}}|f|\,d\mu=
   \frac{|B(x,\ell)|}{|B(x,N)|}\,\ann_\ell f(x)
   \quad\text{and}\quad\frac{|B(x,\ell)|}{|B(x,N)|}\le Ce^{2\rho(\ell-N)}
\]
for $1\le\ell\le\ip N+1$ by \eqref{eq:ballvol}, and since
$\sum_{\ell\le\ip N+1}e^{2\rho(\ell-N)}\le e^{2\rho}(1-e^{-2\rho})^{-1}$, we
obtain
\begin{equation}\label{eq:ballfromannuli}
   A_Nf(x)\le C\sum_{\ell=1}^{\ip N+1}e^{2\rho(\ell-N)}\,\ann_\ell f(x),
   \quad\text{hence}\quad
   \Minf f(x)\le C\sup_{\ell\ge1}\ann_\ell f(x)\le C\sum_{\ell\ge1}\ann_\ell f(x).
\end{equation}
\subsection{Local Muckenhoupt weights}\label{subsec:local}
For $1<p<\infty$ and $L>0$ let
\[
   [w]_{A_{p,\mathrm{loc}},L}:=\sup_{x\in\Bn,\ 0<r\le L}
   \avg w{B(x,r)}\,\avg\sigma{B(x,r)}^{p-1}.
\]
Thus $w\in\Aloc$ if and only if $[w]_{A_{p,\mathrm{loc}},1}<\infty$, and
then $[w]_{A_{p,\mathrm{loc}},L}<\infty$ for every $L>0$
\cite[(1.3)]{GRS25}.  Since every hyperbolic ball is a
Euclidean ball and the balls of radius at most $L$ are, uniformly,
bi-Lipschitz equivalent to Euclidean balls (\cref{subsec:ball}), they
are uniformly spaces of homogeneous type, and the following facts are
standard \cite{Muc72,ST89,Ryc01,GRS25}:
\begin{enumerate}[label=\textup{(L\arabic*)}, leftmargin=2.6em]
\item\label{item_L1} $\Mz$ is bounded on $L^p(w)$ ($1<p<\infty$) iff $w\in\Aloc$;
      $\Mz:L^p(w)\to L^{p,\infty}(w)$ ($1\le p<\infty$) iff
      $w\in A_{p,\mathrm{loc}}(\Bn)$.
\item\label{item_L21} If $w\in\Aloc$, then $w$ and $\sigma$ are doubling at scales
      $\le L$ (and so is $w$ if $w\in A_{1,\mathrm{loc}}(\Bn)$, since
      $A_{1,\mathrm{loc}}\subset A_{2,\mathrm{loc}}$), and for every
      measurable $E\subset B(x,R)$ with $R\le L$ and $|E|\ge c|B(x,R)|$,
      $\avg wE\avg\sigma E^{p-1}\le C(c,L)[w]_{A_{p,\mathrm{loc}},L}$.
\item\label{item_L31} \textup{(Local openness and reverse H\"older.)}  If $w\in\Aloc$,
      then $w\in A_{q,\mathrm{loc}}(\Bn)$ for some $q<p$, and there is
      $\eta>0$ with $\avg{\sigma^{1+\eta}}B^{1/(1+\eta)}\le
      C\avg\sigma B$ for all balls $B$ of radius $\le L$.
\end{enumerate}

\begin{proposition}[Necessity of the local condition]
\label{prop:locnec}
Let $1<p<\infty$.  If $\|M(\sigma\Chi_B)\|^p_{L^p(w)}\le C_0\,
\sigma(B)$ for every ball $B$ of radius $\le1$ (in particular, if $M$
is bounded on $L^p(w)$), then $w\in\Aloc$ with
$[w]_{A_{p,\mathrm{loc}},1}\lesssim C_0$.  If $1\le p<\infty$ and
$M:L^p(w)\to L^{p,\infty}(w)$ is bounded, then
$w\in A_{p,\mathrm{loc}}(\Bn)$.
\end{proposition}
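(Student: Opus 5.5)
The plan is the classical Muckenhoupt test, run on small balls where $M$ is dominated by averages at scale $\le 2$.

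\emph{Strong-type case.} Fix a ball $B=B(x_0,r)$ with $r\le1$. For every $x\in B$ we have $B\subset B(x,2r)$ and $2r\le2$, so
\[
M(\sigma\Chi_B)(x)\ge A_{2r}(\sigma\Chi_B)(x)=\frac{\sigma(B)}{|B(x,2r)|}=\frac{\sigma(B)}{|B(o,2r)|}.
\]
By \eqref{eq:ballvol}, $|B(o,2r)|\simeq|B(o,r)|=|B|$ uniformly for $r\le1$; this is the local doubling of $\mu$ coming from $\Jd(t)\simeq t^{n-1}$ on $(0,2]$. Hence $M(\sigma\Chi_B)\gtrsim\avg\sigma B$ on $B$. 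Integrating against $w$ over $B$ gives
\[
w(B)\,\avg\sigma B^p\lesssim\|M(\sigma\Chi_B)\|_{L^p(w)}^p\le C_0\,\sigma(B).
\]
Dividing by $|B|\,\avg\sigma B$ yields $\avg wB\avg\sigma B^{p-1}\lesssim C_0$, which is the claim, provided $0<\sigma(B)<\infty$.

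\emph{Degenerate cases.} Since $w>0$ a.e., we always have $\sigma(B)>0$. If $\sigma(B)=\infty$, we use the stated convention: replace $\sigma$ by $\sigma_N:=\min\{\sigma,N\}$, with the testing inequality assumed uniformly in $N$. The argument above then gives $\avg wB\,\avg{\sigma_N}B^{p}\lesssim C_0\avg{\sigma_N}B$, so $\avg{\sigma_N}B^{p-1}\lesssim C_0/\avg wB$. Letting $N\to\infty$ by monotone convergence shows $\sigma(B)<\infty$ and gives the bound. If $M$ is bounded on $L^p(w)$, we apply it to $f=\sigma_N\Chi_B$, which satisfies $\|f\|_{L^p(w)}^p=\int_B\sigma_N^pw\le\int_B\sigma_N$, since $\sigma_N^{p}w\le\sigma_N^{p-1}\sigma w=\sigma_N^{p-1}\sigma^{1-p}\le\sigma_N$. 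The same limiting argument then applies.

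\emph{Weak-type case.} Here I would use the standard weak-type test. For $f\ge0$ supported in $B$ with radius $\le1$, the same lower bound gives $Mf\ge c\avg fB$ on $B$, so for $\lambda<c\avg fB$,
\[
w(B)\le\lambda^{-p}C\|f\|_{L^p(w)}^p.
\]
Letting $\lambda\uparrow c\avg fB$ gives $w(B)\avg fB^p\lesssim\|f\|^p_{L^p(w)}$, and applying this to $f\Chi_E$ gives $w(B)(|E|/|B|)^p\lesssim w(E)$.

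\emph{Recovering the condition from the testing inequality.} For $p>1$, take $f=\sigma_N\Chi_B$ again to get the local $A_p$ inequality as above. For $p=1$, take $E=\{y\in B:\ w(y)<t\}$: then $\avg wB\le C\,w(E)/|E|\le Ct$ whenever $|E|>0$. Taking $t$ just above $\essinf_Bw$ gives $\avg wB\le C\essinf_Bw$.

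The only step needing care is the uniform comparability $|B(x,2r)|\simeq|B(x,r)|$ for $r\le1$ together with the $\sigma=\infty$ truncation. Neither is a real obstacle, and the whole argument is the classical Muckenhoupt necessity proof restricted to scales where $M$ coincides with a local doubling maximal function.
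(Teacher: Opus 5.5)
Your proposal is correct and is essentially the paper's own proof: Muckenhoupt's test on balls of radius $r\le1$ via $B\subset B(x,2r)$ and $|B(x,2r)|\simeq|B|$, the truncation $\sigma_N=\min\{\sigma,N\}$, the test functions $\Chi_E$ for the weak type, and sublevel sets of $w$ just above $\essinf_B w$ for $p=1$. The one slip is in your justification of $\sigma_N^p w\le\sigma_N$: the middle of your chain is wrong, since $\sigma w=\sigma^{2-p}$ rather than $\sigma^{1-p}$, and the correct one-line argument is $\sigma_N^p w\le\sigma_N\,\sigma^{p-1}w=\sigma_N$.
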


\begin{proof}
This is Muckenhoupt's argument \cite{Muc72}.  Let $B$ be a ball of
radius $r\le1$.  For $y\in B$ the ball of
radius $2r$ about $y$ contains $B$ and has comparable volume by
\eqref{eq:ballvol}, so that $M(f\Chi_B)\ge c\avg fB$ on $B$ for every
$f\ge0$.  With $f=\sigma_N:=\min\{\sigma,N\}$ and
$\int_B\sigma_N^pw\,d\mu\le\sigma_N(B)$, the hypothesis gives
$\avg wB\avg{\sigma_N}B^{p-1}\le c^{-p}C_0$ for all $N$, and we let
$N\to\infty$.  For the weak type we take $f=\Chi_E$ with $E\subset B$
and obtain $w(B)\le C(|B|/|E|)^pw(E)$; for $p=1$ this is the
$A_{1,\mathrm{loc}}$ condition, as one sees by letting $E$ run through
the sets $\{y\in B:\ w(y)\le\essinf_Bw+\delta\}$, and for $p>1$ the
choice $f=\sigma_N$ at the level $\frac c2\avg{\sigma_N}B$ gives the
$A_{p,\mathrm{loc}}$ condition as above.
\end{proof}

\section{Geometry of shadow slices}\label{sec:geometry}

Two geodesic rays from $o$ which make an angle $\phi$ stay within
bounded distance of each other up to time $\log(1/\phi)$ and then
separate at unit speed (\cref{lem:angle}\ref{lem:angle:b}).  Hence the points at
distance about $t$ from $o$ whose directions lie in a cap of aperture
$e^{-s}$, $s\le t$, form a \emph{polar box} of volume $\simeq
e^{2\rho(t-s)}$, which stays near a single geodesic up to time $s$;
this is the elementary form, for $\Hn$, of Sullivan's shadow lemma
\cite{Sul79}.  The shadow slices are polar boxes of this kind.  We show
that they have volume $\simeq e^{2\rho m}$ and bounded overlap
(\cref{lem:shadow}), that their width can be changed at the cost of a
constant provided that all thin slices are attached to the same point
(\cref{lem:slack}), and that every sphere is covered by slices attached
to the points of a single segment $[0,x]$ (\cref{lem:decomp}).

\subsection{Polar boxes and cones}
For an interval $I\subset\R_+$ and a measurable set $\Theta\subset\Sph$
of directions we put
\begin{equation}\label{eq:polarbox}
   E(I,\Theta):=\{y\in\Bn\setminus\{o\}:\ |y|\in I,\ \theta_y\in
   \Theta\}.
\end{equation}
By \eqref{eq:polar}, for every radial $F=F_0(|\cdot|)\ge0$,
\begin{equation}\label{eq:polarboxint}
   \int_{E(I,\Theta)}F\,d\mu=\nu(\Theta)\int_IF_0\,dV,\qquad\text{in
   particular}\qquad |E(I,\Theta)|=\nu(\Theta)\,V(I). 
\end{equation}
The polar boxes adapted to a point $z\in\Bn$ are those in which $\Theta$ is a
cap about $\theta_z$ of aperture proportional to $e^{-|z|}$: for
$\kappa>0$ we define
\begin{equation}\label{eq:polarboxz}
    \mathcal E_z(I;\kappa):=\begin{cases}
        E\bigl(I,\SCap(\theta_z,\kappa e^{-|z|})\bigr)
        =\bigl\{y\in\Bn\setminus\{o\}:\ |y|\in I,\
   \ang yz<\kappa e^{-|z|}\bigr\}, &  z\ne o,\\[3pt]
   \{y\in\Bn:\ |y|\in I\}, & z= o.
    \end{cases}
\end{equation}
In the ball model, $\mathcal E_z(I;\kappa)$ is the part of the
Euclidean annulus $\{|y|\in I\}$ lying in the Euclidean cone with
vertex $0$, axis through $z$ and aperture $\kappa e^{-|z|}\simeq
\kappa(1-\|z\|)$ (see \eqref{eq:radius}).  By \eqref{eq:polarboxint},
\eqref{eq:caps} and \eqref{eq:density},
\begin{equation}\label{eq:polarboxsize}
   |\mathcal E_z(I;\kappa)|=\nu\bigl(\SCap(\theta_z,\min\{\kappa
   e^{-|z|},\pi\})\bigr)\,V(I)\ \simeq_\kappa\ e^{-2\rho|z|}\,V(I),
\end{equation}
and, more generally, $\int_{\mathcal E_z(I;\kappa)}F\,d\mu\simeq_\kappa
e^{-2\rho|z|}\int_IF_0\,dV$ for radial $F=F_0(|\cdot|)\ge0$.

The next lemma is \cref{lem:shadowball} restricted to one unit shell:
the polar box over $[t,t+1)$ in the direction of $x$ has the size of a
ball of radius $t-|x|$, and it is contained in a ball of radius
$t-|x|+O(1)$ about $x$.

\begin{lemma}[Cones]\label{lem:cone}
Let $\kappa\ge0$, $x\in\Bn$ and $t\ge|x|$.  Then
\[
   \bigl|\mathcal E_x([t,t+1);e^\kappa)\bigr|\simeq_\kappa
   e^{2\rho(t-|x|)}\qquad\text{and}\qquad
   \mathcal E_x([t,t+1);e^\kappa)\subset B\bigl(x,\,t-|x|+1+2\kappa+
   2\log2\bigr).
\]
\end{lemma}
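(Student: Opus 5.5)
The plan is to prove the two assertions separately: the volume estimate follows from the polar-box formula, and the inclusion from \cref{lem:angle}\ref{lem:angle:a} combined with the definition of the Gromov product.

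\emph{Volume.} Apply \eqref{eq:polarboxsize} with $I=[t,t+1)$ and aperture $\min\{e^\kappa e^{-|x|},\pi\}$. By \eqref{eq:caps} the cap has measure $\simeq_\kappa e^{-2\rho|x|}$; when $e^{\kappa-|x|}\ge\pi$ the cap is the whole sphere, but then $|x|\le\kappa$ and $e^{-2\rho|x|}\simeq_\kappa1$, so the estimate still holds. By \eqref{eq:density}, $V([t,t+1))\simeq e^{2\rho t}$ for $t\ge1$, and $V([t,t+1))\simeq1$ for $0\le t<1$, which is also $\simeq e^{2\rho t}$ there. Multiplying the two factors gives $|\mathcal E_x([t,t+1);e^\kappa)|\simeq_\kappa e^{2\rho(t-|x|)}$. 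The case $x=o$ is the same computation with $\nu(\Theta)=1$.

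\emph{Inclusion.} Let $y$ satisfy $|y|\in[t,t+1)$ and $\phi=\ang yx<e^{\kappa-|x|}$. By definition of the Gromov product, $d(x,y)=|x|+|y|-2\gp xy$. Since $|y|\ge t\ge|x|$, we have $\min\{|x|,|y|\}=|x|$, so the upper bound in \cref{lem:angle}\ref{lem:angle:a} gives
\[
e^{-2\gp xy}\le2\bigl(e^{-2|x|}+\phi^2\bigr)\le2\bigl(1+e^{2\kappa}\bigr)e^{-2|x|}\le4e^{2\kappa}e^{-2|x|},
\]
using $\kappa\ge0$. Hence $2\gp xy\ge2|x|-2\kappa-2\log2$, and
\[
d(x,y)\le|y|-|x|+2\kappa+2\log2<t+1-|x|+2\kappa+2\log2,
\]
which is exactly the claimed radius. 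If $x=o$ or $y=o$ the inclusion is trivial, since then $d(x,y)=|y|-|x|$.

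The argument has no real obstacle. The only point needing care is the regime where the aperture exceeds $\pi$ or $|x|$ is small. There \cref{lem:angle}\ref{lem:angle:a} still applies, since it holds for all angles, so the inclusion is unaffected; the volume estimate is rescued by the $\kappa$-dependence of the constants.
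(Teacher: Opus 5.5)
Your proof is correct and follows the paper's argument. The volume comes from \eqref{eq:polarboxsize} together with $V([t,t+1))\simeq e^{2\rho t}$, and the inclusion comes from inserting $\gp xy\ge|x|-\kappa-\log2$ into $d(x,y)=|x|+|y|-2\gp xy$. The only difference is that you read this bound off the upper estimate in \cref{lem:angle}\ref{lem:angle:a}, which holds for all angles. The paper instead uses \cref{lem:angle}\ref{lem:angle:c} with $s=|x|-\kappa$, so it must treat $|x|\le\kappa$ separately via the triangle inequality through $o$.
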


\begin{proof}
The first assertion follows from \eqref{eq:polarboxsize}, since
$V([t,t+1))\simeq e^{2\rho t}$ by \eqref{eq:density}.  For the inclusion
let $y\in\mathcal E_x([t,t+1);e^\kappa)$.  If $|x|\le\kappa$, then
$d(x,y)\le|x|+|y|<(t-|x|+1)+2\kappa$.  If $|x|>\kappa$, we put
$s:=|x|-\kappa$.  Since $|x|\le t\le|y|$, we have $0<s\le\min\{|x|,
|y|\}$, and $\ang yx<e^{\kappa-|x|}=e^{-s}$.  Hence $\gp xy\ge
|x|-\kappa-\log2$ by \cref{lem:angle}\ref{lem:angle:c}, and
\[
   d(x,y)=|x|+|y|-2\gp xy\le|y|-|x|+2\kappa+2\log2< t+1-|x|+2\kappa+
   2\log2 .\qedhere
\]
\end{proof}

\subsection{Shadow slices and geodesic ancestors}
For $x\in\Bn$ and $t\ge0$ let
\begin{equation}\label{eq:ancestor}
   a_t(x):=\gamma_x\bigl((|x|-t)_+\bigr)
   =\tanh\Bigl(\frac{(|x|-t)_+}{2}\Bigr)\,\theta_x ,
   \qquad a_t(o):=o,
\end{equation}
be the point of the segment $[0,x]$ at distance $\min\{t,|x|\}$ from
$x$.  Thus $d(x,a_t(x))=\min\{t,|x|\}$ and $|a_t(x)|=(|x|-t)_+$.
The ancestors of two points of a unit ball are only close to each
other, not equal; this is the source of the changes of width in
\cref{lem:slack} below.  Besides the slices
\eqref{eq:shadow} we need slices of larger width: for $s\ge s_0$,
$z\in\Bn$ and $m\in\Zp$ let
\begin{equation}\label{eq:slack}
   \Omega^{(s)}_z(m):=\bigl\{y\in\Bn:\ m-s\le|y|-|z|\le m+s,\ \
   d(z,y)\le m+s\bigr\},
\end{equation}
and let $I^{(s)}_{z,m}:=[|z|+m-s,\,|z|+m+s]\cap\R_+$ be the
corresponding interval of radii, with $I_{z,m}:=I^{(1)}_{z,m}$.  Clearly
$\Omega_z(m)\subset\Omega^{(s_0)}_z(m)\subset\Omega^{(s)}_z(m)
\subset\Omega^{(s')}_z(m)$ for $s_0\le s\le s'$.  We refer to $s$ as
the \emph{width} of the slice.

\begin{lemma}\label{lem:shadow}
Let $s\ge s_0$, $z\in\Bn$ and $m\in\Zp$.
\begin{enumerate}[label=\textup{(\alph*)}, leftmargin=2.3em]
\item\label{lem:shadow:a} If $y\in\Omega^{(s)}_z(m)$, then $\gp yz\ge|z|-s$.
\item\label{lem:shadow:b} \textup{(Polar boxes.)}
      $\mathcal E_z(I_{z,m};e)\subset\Omega_z(m)$ and
      $\Omega^{(s)}_z(m)\subset\mathcal E_z(I^{(s)}_{z,m};9e^{s})\cup
      \{o\}$.
\item\label{lem:shadow:c} $|\Omega_z(m)|\simeq|\Omega^{(s)}_z(m)|\simeq e^{2\rho m}$, with
      constants depending on $s$.
\item\label{lem:shadow:d} \textup{(Bounded overlap.)}  For every $y\in\Bn$, the set of
      centres $\{x\in\Bn:\ y\in\Omega^{(s)}_x(m)\}$ is contained in the
      ball $B(a_{m+2s}(y),3s+2)$.  Consequently it has measure at most
      $C_s$, and every $\frac14$-separated set $X\subset\Bn$ contains at
      most $C_s$ points $x$ with $y\in\Omega^{(s)}_x(m)$.
\item\label{lem:shadow:e} If $d(z,z')\le\delta$, then $\Omega^{(s)}_z(m)\subset
      \Omega^{(s+\delta)}_{z'}(m)$.
\item\label{lem:shadow:f} If $x\in\Bn$ and $j\in\Zp$, $j\le|x|$, then
      $x\in\Omega_{a_j(x)}(j)$.
\end{enumerate}
\end{lemma}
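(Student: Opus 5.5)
All six parts reduce to the Gromov product identity $d(z,y)=|z|+|y|-2\gp yz$, combined with \cref{lem:angle} and the polar box volume formula \eqref{eq:polarboxsize}. I would prove them in the order (a), (b), (c), (e), (f), (d).

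For (a), take $y\in\Omega^{(s)}_z(m)$. Then $|y|\ge|z|+m-s$ and $d(z,y)\le m+s$, so
\[
2\gp yz=|z|+|y|-d(z,y)\ge 2|z|-2s,
\]
which is the claim.

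For the first inclusion in (b), let $y\in\mathcal E_z(I_{z,m};e)$. Then $|y|\in[|z|+m-1,|z|+m+1]$. If $|z|\le 1$, then $d(z,y)\le|z|+|y|\le m+3\le m+s_0$. Otherwise put $\sigma:=|z|-1$. We have $\ang yz<e^{1-|z|}=e^{-\sigma}$ and $\sigma\le\min\{|z|,|y|\}$, so \cref{lem:angle}\ref{lem:angle:c} gives $\gp yz\ge|z|-1-\log2$. Hence
\[
d(z,y)=|z|+|y|-2\gp yz\le |y|-|z|+2+2\log 2\le m+3+2\log2\le m+s_0,
\]
using $s_0\ge6$. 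The case $m=0$ (where $|y|$ may be below $|z|$) is covered by the same estimate. For the second inclusion, (a) together with the second half of \cref{lem:angle}\ref{lem:angle:c} gives $\ang yz\le 9e^{s-|z|}$, and the radial constraint is exactly $|y|\in I^{(s)}_{z,m}$. The case $z=o$ is trivial.

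Part (c) then follows from (b) and \eqref{eq:polarboxsize}. Indeed $V(I_{z,m})\simeq e^{2\rho(|z|+m)}$ when $|z|+m\ge1$, and for small radii everything is $O(1)$. The cap factor is $e^{-2\rho|z|}$ up to constants, because the aperture $\min\{\kappa e^{-|z|},\pi\}$ is comparable to $e^{-|z|}$ uniformly in $|z|$, with constants depending on $\kappa$.

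Parts (e) and (f) are triangle inequality checks. For (e), $\bigl||z|-|z'|\bigr|\le\delta$ and $d(z',y)\le d(z,y)+\delta$, so both defining conditions persist with $s$ replaced by $s+\delta$. For (f), put $z=a_j(x)$. Then $|x|-|z|=j$ and $d(z,x)=j$, so $x\in\Omega_z(j)$.

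Part (d) is the one I expect to need care. Suppose $y\in\Omega^{(s)}_x(m)$. By (a), $\gp yx\ge|x|-s$, and $|x|\in[|y|-m-s,|y|-m+s]$. Let $t:=(|y|-m-2s)_+$, so that $a_{m+2s}(y)=\gamma_y(t)$. If $t=0$, then $|x|\le 3s$, so $d(x,o)\le 3s$ and we are done. Otherwise $t\le|x|-s\le\gp yx$, so \eqref{eq:fellow} gives
\[
d\bigl(\gamma_y(t),\gamma_x(t)\bigr)\le 2,
\]
while $d(\gamma_x(t),x)=|x|-t\le 3s$. Together these place $x$ in $B(a_{m+2s}(y),3s+2)$. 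The remaining claims follow at once. That ball has volume $C_s$ by \eqref{eq:ballvol}. A $\tfrac14$-separated set meets it in boundedly many points, since the disjoint balls of radius $\tfrac18$ about those points lie in a ball of radius $3s+3$, and a volume count bounds their number.
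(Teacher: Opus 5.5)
Your proposal is correct and follows the paper's proof essentially step by step. Parts (a), (b), (e) and (f) match the paper's argument. In (d) you apply the fellow traveller property directly at the time $t_0=(|y|-m-2s)_+$, whereas the paper applies it at $(|x|-s)_+$ and then moves at most $2s$ along $[0,y]$; both give the bound $3s+2$. In (c) you take the upper bound from the second polar-box inclusion of (b) instead of from $\Omega^{(s)}_z(m)\subset B(z,m+s)$ and \eqref{eq:ballvol}, which is equally valid.
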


\begin{proof}
(a) For $y\in\Omega^{(s)}_z(m)$ we have $|y|\ge|z|+m-s$ and
$d(y,z)\le m+s$, so that
$\gp yz=\frac12(|y|+|z|-d(y,z))\ge\frac12(|z|+m-s+|z|-m-s)=|z|-s$.

(b) Let $y\in\mathcal E_z(I_{z,m};e)$, so that $|y|-|z|\in[m-1,m+1]$;
we have to show that $d(z,y)\le m+s_0$.  If $|z|\le1$ (in particular if
$z=o$), then $d(z,y)\le|z|+|y|\le2|z|+m+1\le m+3$.  If $|z|>1$, we put
$t:=|z|-1$.  Then $t\le\min\{|y|,|z|\}$ and $\ang yz<e^{1-|z|}=e^{-t}$,
so that $\gp yz\ge|z|-1-\log2$ by \cref{lem:angle}\ref{lem:angle:c}, and
$d(z,y)=|y|+|z|-2\gp yz\le|y|-|z|+2+2\log2\le m+3+2\log2<m+s_0$.
Conversely, let $y\in\Omega^{(s)}_z(m)\setminus\{o\}$ and $z\ne o$.
Then $|y|\in I^{(s)}_{z,m}$ by definition, and by (a) and
\cref{lem:angle}\ref{lem:angle:c}, $\ang yz\le9 e^{s}e^{-|z|}$ (if $|z|<s$
this is trivial, since then $9e^{s-|z|}>\pi$).

(c) Since $\Omega^{(s)}_z(m)\subset B(z,m+s)$, \eqref{eq:ballvol} gives
$|\Omega^{(s)}_z(m)|\lesssim_s e^{2\rho m}$.  Conversely, by (b) and
\eqref{eq:polarboxsize}, $|\Omega_z(m)|\gtrsim e^{-2\rho|z|}
V(I_{z,m})$, and $V(I_{z,m})\gtrsim e^{2\rho(|z|+m)}$, because
$I_{z,m}$ contains an interval of length $1$ with left endpoint
$(|z|+m-1)_+$.

(d) Let $y\in\Omega^{(s)}_x(m)$.  We first locate $x$ with respect to the
geodesic from $o$ to $y$.  By (a), $\gp xy\ge|x|-s$, so that
$t:=(|x|-s)_+\le\gp xy$, and \eqref{eq:fellow} gives
$d(\gamma_x(t),\gamma_y(t))\le2$ (if $x=o$ or $y=o$, then $t=0$ and
this is trivial).  Since $d(x,\gamma_x(t))=|x|-t\le s$, we get
$d(x,\gamma_y(t))\le s+2$.  Next, $|x|\in[|y|-m-s,|y|-m+s]$, so that
$t$ differs from $t_0:=(|y|-m-2s)_+$ by at most $2s$, and
$\gamma_y(t_0)=a_{m+2s}(y)$.  Hence
\[
   d\bigl(x,a_{m+2s}(y)\bigr)\le d\bigl(x,\gamma_y(t)\bigr)+|t-t_0|
   \le3s+2 .
\]
Thus every point $x$ whose slice $\Omega^{(s)}_x(m)$ contains $y$ lies
in the ball $B(a_{m+2s}(y),3s+2)$, whose measure depends only on $s$ by
\eqref{eq:ballvol}.  If $X$ is $\frac14$-separated, the balls of radius
$\frac18$ about its points are pairwise disjoint, and those about the
points $x$ in question lie in $B(a_{m+2s}(y),3s+3)$; comparing volumes
gives the last assertion.

(e) If $y\in\Omega^{(s)}_z(m)$, then $|y|-|z'|$ differs from $|y|-|z|$ by
at most $\delta$, and $d(z',y)\le d(z,y)+\delta\le m+s+\delta$, which implies $y\in\Omega^{(s+\delta)}_{z'}(m)$.

(f) We have $|a_j(x)|=|x|-j$ and $d(a_j(x),x)=j$, hence $x\in\Omega_{a_j(x)}(j)$.
\end{proof}

In particular, by (b) and \eqref{eq:polarboxsize}, the slices are
comparable to polar boxes, in the sense that for every radial
$F=F_0(|\cdot|)\ge0$,
\begin{equation}\label{eq:radialslice}
   e^{-2\rho|z|}\int_{I_{z,m}}F_0\,dV\ \lesssim\ \int_{\Omega_z(m)}F\,d\mu
   \le\int_{\Omega^{(s)}_z(m)}F\,d\mu\ \lesssim_s\ e^{-2\rho|z|}
   \int_{I^{(s)}_{z,m}}F_0\,dV .
\end{equation}

The next lemma shows that the width of the slices can be changed at the
expense of a constant.  The point is that all the thin slices are
attached to the \emph{same} anchor, namely a geodesic ancestor of $z$;
this is what allows us to compare the two layers of \cref{def:class}
simultaneously.

\begin{lemma}[Changing the width]\label{lem:slack}
Let $s\ge s_0$ and $h_s:=\ip s+2$.  For
$z\in\Bn$ let $z':=a_{h_s}(z)$.  Then $d(z,z')\le h_s$, and for every
$m\in\Zp$,
\[
   \Omega^{(s)}_z(m)\subset\bigcup_{\substack{i\in\Z,\ |i|\le2\ip s+4\\
   m+h_s+i\ge0}}\Omega_{z'}(m+h_s+i).
\]
\end{lemma}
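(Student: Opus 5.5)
The plan is to fix $y\in\Omega^{(s)}_z(m)$, choose the index $i$ by radial bookkeeping, and then check the distance condition of \eqref{eq:shadow} for the anchor $z'$ with the Gromov product estimates of \cref{lem:angle}.

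First, $d(z,z')=\min\{h_s,|z|\}\le h_s$ is immediate from \eqref{eq:ancestor}, and $|z'|=(|z|-h_s)_+$.

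\emph{Choice of $i$.} Put $k:=\ip{|y|-|z'|}$ (note $|y|\ge0$) and $i:=k-m-h_s$. Then $|y|-|z'|\in[k,k+1)\subset[k-1,k+1]$, which is the radial condition for $\Omega_{z'}(m+h_s+i)$. For the range of $i$, consider two cases.
\begin{enumerate}
\item If $|z|\ge h_s$, then $|y|-|z'|=|y|-|z|+h_s\in[m+h_s-s,\,m+h_s+s]$, so $|i|\le s+1\le 2\ip s+4$.
\item If $|z|<h_s$, then $z'=o$ and $|y|\le|z|+m+s<m+h_s+s$, so $i\le s$. From below, $|y|\ge|z|+m-s\ge m-s$, so $i\ge -s-h_s-1\ge-(2\ip s+4)$.
\end{enumerate}
In both cases $k\ge0$, hence $m+h_s+i\ge0$.

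\emph{Distance condition.} It remains to show $d(z',y)\le k+s_0$. Since $d(z',y)=|y|+|z'|-2\gp{y}{z'}=|y|-|z'|+2(|z'|-\gp y{z'})$, it suffices to show $\gp y{z'}\ge|z'|-\tfrac12(s_0-1)$.
\begin{enumerate}
\item If $z'=o$, this holds trivially, since $d(o,y)=|y|<k+1\le k+s_0$.
\item If $z'\ne o$, then $|z'|=|z|-h_s$. By \cref{lem:shadow}\ref{lem:shadow:a}, $\gp yz\ge|z|-s$. Let $t:=|z|-s\ge|z'|+1$; we may assume $t\ge 0$, otherwise $|z'|<0$, which is impossible. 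By \eqref{eq:fellow}, $d(\gamma_z(t),\gamma_y(t))\le2$. The point $z'=\gamma_z(|z'|)$ lies on $[0,\gamma_z(t)]$, and $|z'|\le t\le |y|$. Monotonicity in \cref{lem:angle}\ref{lem:angle:b} then gives $d(z',\gamma_y(|z'|))\le 2$. Since $\gamma_y(|z'|)$ lies on the geodesic $[o,y]$, we get $d(z',y)\le 2+|y|-|z'|$, and therefore $\gp y{z'}\ge|z'|-1$.
\end{enumerate}
Consequently $d(z',y)\le |y|-|z'|+2<k+3\le k+s_0$, since $s_0\ge6$.

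The main obstacle is this distance estimate in the case $z'\ne o$. It relies on $|z'|\le |z|-s$, which is exactly why the ancestor is taken $h_s=\ip s+2$ steps back rather than $s$. With that gap the fellow traveller property applies uniformly, and the thin slice inherits a bounded excess $d(z',y)-(|y|-|z'|)\le2$, independent of $s$.
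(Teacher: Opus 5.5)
Your proof is correct and follows the paper's argument. It uses the same index $\ip{|y|-|z'|}$ and the same case split according to whether $|z|\ge h_s$, and it obtains $d(z',y)\le |y|-|z'|+2$ from \cref{lem:shadow}\ref{lem:shadow:a} and the fellow traveller property, with only a cosmetic difference: the paper applies \eqref{eq:fellow} directly at time $|z'|<|z|-s\le\gp{y}{z}$, so your detour through $t=|z|-s$ and the monotonicity in \cref{lem:angle}\ref{lem:angle:b} is harmless but unnecessary.
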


\begin{proof}
Let $y\in\Omega^{(s)}_z(m)$, and let $J:=\ip{|y|-|z'|}$ if $|y|\ge|z'|$.
We show that $y\in\Omega_{z'}(J)$ and $|J-(m+h_s)|\le2\ip s+4$.

Suppose first that $|z|\ge h_s$, so that $|z'|=|z|-h_s\le|z|-s-1$.  By
\cref{lem:shadow}\ref{lem:shadow:a}, $|z'|<|z|-s\le\gp yz$; in particular $|y|>|z'|$.
By \eqref{eq:fellow} applied at time $|z'|$, and since
$\gamma_z(|z'|)=z'$, we have $d(\gamma_y(|z'|),z')\le2$, and
therefore $d(z',y)\le2+(|y|-|z'|)$.  Now $|y|-|z'|\in[J,J+1)\subset
[J-1,J+1]$ and $d(z',y)<J+3\le J+s_0$, that is, $y\in\Omega_{z'}(J)$.
Moreover $|y|-|z'|=(|y|-|z|)+h_s\in[m+h_s-s,m+h_s+s]$, so that
$J\in(m+h_s-s-1,m+h_s+s]$, and since $J$ is an integer,
$|J-(m+h_s)|\le\ip s+1$.

Suppose now that $|z|<h_s$, so that $z'=o$ and
$\Omega_o(J)=\{y:\ J-1\le|y|\le J+1\}$ with $J=\ip{|y|}$.  Then
$y\in\Omega_o(J)$, and since $|y|\in[|z|+m-s,|z|+m+s]\subset
[m-s,m+h_s+s]$, we get $J\in(m-s-1,m+h_s+s]$, that is,
$|J-(m+h_s)|\le h_s+\ip s+1<2\ip s+4$.
\end{proof}

For $p>1$ it is convenient to have a name for the two-layer quantities
of \cref{def:class}: for $j,m\in\Z$ we put
\begin{equation}\label{def_Lambda_m,j}
   \Lambda_{j,m}[w]:=\sup_{z\in\Bn}\ e^{-2\rho p(j+m)}\,
   w\bigl(\Omega_z(j)\bigr)\,\sigma\bigl(\Omega_z(m)\bigr)^{p-1}
   \qquad(\Lambda_{j,m}[w]:=0\ \text{ if }\min\{j,m\}<0),
\end{equation}
so that $w\in\Ab_{p,\eps}$ if and only if $\Lambda_{j,m}[w]\le
[w]_{\Ab_{p,\eps}}e^{-2\rho\eps(j+m)}$ for all $j,m\in\Zp$.

\begin{corollary}\label{cor:slack}
Let $1<p<\infty$, $s\ge s_0$, and let $h_s=\ip s+2$ be as in
\cref{lem:slack}.  Then for all $z\in\Bn$ and $j,m\in\Zp$,
\begin{equation}\label{eq:slackLambda}
   e^{-2\rho p(j+m)}\,w\bigl(\Omega^{(s)}_z(j)\bigr)\,
   \sigma\bigl(\Omega^{(s)}_z(m)\bigr)^{p-1}\le C_s\sum_{|i|,|i'|\le2\ip s+4}
   \Lambda_{j+h_s+i,\,m+h_s+i'}[w].
\end{equation}
In particular, if $\eps>0$ and
\[
   [w]^{(s)}_{\Ab_{p,\eps}}:=\sup_{z\in\Bn}\ \sup_{j,m\in\Zp}\
   \frac{w\bigl(\Omega^{(s)}_z(j)\bigr)\,\sigma\bigl(\Omega^{(s)}_z(m)
   \bigr)^{p-1}}{e^{2\rho(p-\eps)(j+m)}},
\]
then $[w]_{\Ab_{p,\eps}}\le[w]^{(s)}_{\Ab_{p,\eps}}\le C_s[w]_{\Ab_{p,\eps}}$,
with $C_s$ depending only on $n$, $s$ and $p$.
\end{corollary}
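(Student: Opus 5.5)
The plan is to apply \cref{lem:slack} to both slices, anchored at the same point $z':=a_{h_s}(z)$, and then compare each resulting pair of thin slices with $\Lambda$. By \cref{lem:slack},
\[
   \Omega^{(s)}_z(j)\subset\bigcup_{|i|\le2\ip s+4}\Omega_{z'}(j+h_s+i),\qquad
   \Omega^{(s)}_z(m)\subset\bigcup_{|i'|\le2\ip s+4}\Omega_{z'}(m+h_s+i'),
\]
where terms with negative index are simply absent. Subadditivity of the measures $w$ and $\sigma$ gives
\[
   w\bigl(\Omega^{(s)}_z(j)\bigr)\le\sum_i w\bigl(\Omega_{z'}(j+h_s+i)\bigr),
\]
and similarly for $\sigma$. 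For the power $p-1$ we use $(\sum_{i'}a_{i'})^{p-1}\le K^{(p-1)_+}\sum_{i'}a_{i'}^{p-1}$, where $K=4\ip s+9$ is the number of terms. This holds with constant $1$ when $p-1\le1$, by subadditivity of $t\mapsto t^{p-1}$, and with constant $K^{p-2}$ when $p>2$, by convexity; in both cases the constant is bounded by $K^{\max\{p-2,0\}}$. Multiplying the two bounds expresses the left side of \eqref{eq:slackLambda} as at most $C\sum_{i,i'}e^{-2\rho p(j+m)}\,w(\Omega_{z'}(j'))\,\sigma(\Omega_{z'}(m'))^{p-1}$, with $j'=j+h_s+i$ and $m'=m+h_s+i'$.

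Each summand is then compared with $\Lambda_{j',m'}[w]$. Since $z'$ is a single point, the definition \eqref{def_Lambda_m,j} gives
\[
   w\bigl(\Omega_{z'}(j')\bigr)\,\sigma\bigl(\Omega_{z'}(m')\bigr)^{p-1}\le e^{2\rho p(j'+m')}\Lambda_{j',m'}[w],
\]
and $j'+m'-(j+m)=2h_s+i+i'\le 2h_s+4(2\ip s+4)$. The exponential factor $e^{2\rho p(j'+m'-j-m)}$ is therefore bounded by a constant depending on $s$, $n$ and $p$. Terms with $\min\{j',m'\}<0$ do not occur in the covering, so the convention $\Lambda=0$ is harmless. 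This proves \eqref{eq:slackLambda}.

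For the class comparison, the first inequality $[w]_{\Ab_{p,\eps}}\le[w]^{(s)}_{\Ab_{p,\eps}}$ follows from the inclusion $\Omega_z(m)\subset\Omega^{(s)}_z(m)$. For the second, let $w\in\Ab_{p,\eps}$, so that $\Lambda_{j',m'}\le[w]_{\Ab_{p,\eps}}e^{-2\rho\eps(j'+m')}$. Because $j'+m'\ge j+m-2(2\ip s+4)$, we get $e^{-2\rho\eps(j'+m')}\le C_s e^{-2\rho\eps(j+m)}$, where this $C_s$ can be taken uniform in $\eps\le p/2$, or in any case depends only on $n,s,p$. Multiplying \eqref{eq:slackLambda} by $e^{2\rho\eps(j+m)}$ and summing the boundedly many terms then gives $[w]^{(s)}_{\Ab_{p,\eps}}\le C_s[w]_{\Ab_{p,\eps}}$.

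The only genuinely delicate point is that both slices must be re-anchored at the same point $z'$, so that a single $\Lambda$ controls each product. This is exactly what \cref{lem:slack} provides, so no real obstacle remains beyond bookkeeping the $p-1$ power.
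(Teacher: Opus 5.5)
Your proof is correct and is essentially the paper's argument: you re-anchor both wide slices at the common point $z'=a_{h_s}(z)$ via \cref{lem:slack}, split the product by subadditivity (your constant $K^{\max\{p-2,0\}}$ is a harmless sharpening of the paper's $K^{p-1}$), bound each term by $\Lambda_{j',m'}[w]$ at $z'$, and absorb the bounded index shift into $C_s$. The one step you leave unjustified is the $\eps$-independence of $C_s$ (your ``or in any case depends only on $n,s,p$''), since the factor $e^{2\rho\eps(2\ip s+4)}$ is bounded only for bounded $\eps$. As the paper does, observe that taking $j=m=i$ together with H\"older's inequality $w(\Omega_z(i))\,\sigma(\Omega_z(i))^{p-1}\ge ce^{2\rho pi}$ forces $[w]_{\Ab_{p,\eps}}=\infty$ for $\eps>p/2$, so only $\eps\le p/2$ matters.
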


\begin{proof}
We use the following elementary inequality: if $E\subset\bigcup_{i=1}^K
E_i$ and $F\subset\bigcup_{i'=1}^KF_{i'}$, then
\begin{equation}\label{eq:unionsplit}
   w(E)\,\sigma(F)^{p-1}\le\Bigl(\sum_iw(E_i)\Bigr)
   \Bigl(K\max_{i'}\sigma(F_{i'})\Bigr)^{p-1}
   \le K^{p-1}\sum_{i,i'}w(E_i)\,\sigma(F_{i'})^{p-1}.
\end{equation}
Let $z'=a_{h_s}(z)$ be as in \cref{lem:slack}.  By \cref{lem:slack}, the
slices $\Omega^{(s)}_z(j)$ and $\Omega^{(s)}_z(m)$ are covered by the
slices $E_i:=\Omega_{z'}(j+h_s+i)$ and $F_{i'}:=\Omega_{z'}(m+h_s+i')$,
$|i|,|i'|\le2\ip s+4$, respectively, which are all attached to the
\emph{same} point $z'$.  By \eqref{eq:unionsplit} with $K=4\ip s+9$ and by
\eqref{def_Lambda_m,j} at the point $z'$,
\[
   w\bigl(\Omega^{(s)}_z(j)\bigr)\,\sigma\bigl(\Omega^{(s)}_z(m)\bigr)^{p-1}
   \le(4\ip s+9)^{p-1}\sum_{|i|,|i'|\le2\ip s+4}e^{2\rho p(j+m+2h_s+i+i')}
   \Lambda_{j+h_s+i,\,m+h_s+i'}[w].
\]
Since $2h_s+i+i'\le6\ip s+12$, this gives \eqref{eq:slackLambda} with
$C_s:=(4\ip s+9)^{p-1}e^{12\rho p(\ip s+2)}$.

The first inequality of the last assertion is obvious.  For the second
we may assume $\eps\le p$, since by H\"older's inequality and
\cref{lem:shadow}\ref{lem:shadow:c}, $w(\Omega_z(i))\sigma(\Omega_z(i))^{p-1}\ge
ce^{2\rho pi}$, so that $[w]_{\Ab_{p,\eps}}=\infty$ if $\eps>p$.  Since
$(j+h_s+i)+(m+h_s+i')\ge j+m-2h_s$ in \eqref{eq:slackLambda}, each term there
is at most $e^{4\rho ph_s}[w]_{\Ab_{p,\eps}}e^{-2\rho\eps(j+m)}$, and the
claim follows, with a constant independent of $\eps\in(0,p]$.
\end{proof}

\subsection{Decomposition of spheres}
The next lemma is the geometric input of the sufficiency part of
\cref{thmA}: every annulus $\{y:\ \ell-1<d(x,y)\le
\ell\}$ is covered by shadow slices $\Omega_{a_j(x)}(m)$ attached to the
points of the segment $[0,x]$, with $j+m=\ell$ up to a bounded error.  The
role of the last common ancestor is played by the point of $[0,x]$ at
time $\gp xy$, at which, by \eqref{eq:fellow}, the geodesics from $o$ to
$x$ and to $y$ separate (\cref{fig:decomp}).  The error in $j+m$ is
harmless, and the change of width needed later is absorbed by
\cref{cor:slack}.

\begin{figure}[t]
\centering
\begin{minipage}[t]{0.48\textwidth}
\centering
\begin{tikzpicture}[scale=2.1,>=Stealth]
 \fill[teal!22] (18:0.87) arc[start angle=18,end angle=106,radius=0.87]
   -- (106:0.981) arc[start angle=106,end angle=18,radius=0.981] -- cycle;
 \draw[thick] (0,0) circle (1);
 \draw[gray!50,densely dashed] (0,0) -- (18:1);
 \draw[gray!50,densely dashed] (0,0) -- (106:1);
 \draw[gray!70!black,densely dashed,thick] ([shift={(163.32:0.37008)}]0.79111,0.71491)
   arc[start angle=163.32,end angle=284.03,radius=0.37008];
 \draw[thick,blue!70!black] (0,0) -- (62:0.93);
 \draw[thick,red!70!black] (0,0) -- (22:0.95);
 \fill (0,0) circle (0.5pt) node[left] {\small $o$};
 \fill[blue!70!black] (62:0.93) circle (0.6pt) node[above=0pt] {\small $x$};
 \fill[red!70!black] (22:0.95) circle (0.6pt) node[right] {\small $y$};
 \fill[blue!70!black] (62:0.4875) circle (0.7pt) node[left=1pt] {\small $a_j(x)$};
 \draw[decorate,decoration={brace,amplitude=3pt}]
   ($(62:0.50)+(152:0.04)$) -- node[above left=-1pt] {\scriptsize $j$} ($(62:0.91)+(152:0.04)$);
 \node[teal!40!black] at (122:0.80) {\small $\Omega_{a_j(x)}(m)$};
 \node[gray!40!black] at ([shift={(222.1:0.25)}]0.79111,0.71491) {\scriptsize $[x,y]$};
\end{tikzpicture}
\caption{Decomposition of spheres: $y\in\Omega_z(m)$, $z=a_j(x)$,
$j+m\approx d(x,y)$.}
\label{fig:decomp}
\end{minipage}\hfill
\begin{minipage}[t]{0.48\textwidth}
\centering
\begin{tikzpicture}[scale=2.1,>=Stealth]
 \fill[teal!22] (-28:0.90) arc[start angle=-28,end angle=62,radius=0.90]
   -- (62:0.97) arc[start angle=62,end angle=-28,radius=0.97] -- cycle;
 \fill[orange!30] (-28:0.56) arc[start angle=-28,end angle=62,radius=0.56]
   -- (62:0.67) arc[start angle=62,end angle=-28,radius=0.67] -- cycle;
 \draw[thick] (0,0) circle (1);
 \draw[gray!50,densely dashed] (0,0) -- (-28:1);
 \draw[gray!50,densely dashed] (0,0) -- (62:1);
 \draw[blue!60!black,densely dashed] (0,0) -- (13:0.97);
 \draw[blue!60!black,densely dashed] (0,0) -- (37:0.97);
 \draw[thick,red!70!black] (0,0) -- (25:0.935);
 \fill (0,0) circle (0.5pt) node[left] {\small $o$};
 \fill (17:0.30) circle (0.6pt) node[below=1pt] {\small $x$};
 \fill[red!70!black] (25:0.935) circle (0.6pt) node[above right=-2pt] {\small $x'$};
 \draw[blue!70!black,fill=blue!10] (25:0.615) circle (0.045);
 \fill[blue!70!black] (25:0.615) circle (0.4pt);
 \node[blue!60!black] at (68:0.42) {\scriptsize $B(y',\frac12)$};
 \draw[->,blue!60!black] (60:0.45) -- (29:0.60);
 \node[orange!40!black] at (-40:0.50) {\scriptsize $\Omega^{(s')}_x(k)$};
 \node[teal!40!black] at (-43:0.86) {\scriptsize $\Omega^{(s)}_x(k+m)$};
\end{tikzpicture}
\caption{The semigroup inequality: $B(y',\frac12)\subset
\Omega^{(s')}_x(k)$ and $x'\in\Omega^{(s)}_y(m)$ for $y\in B(y',\frac12)$.}
\label{fig:semigroup}
\end{minipage}
\end{figure}

\begin{lemma}[Decomposition of spheres]\label{lem:decomp}
Let $x,y\in\Bn$ and $\ell\in\N$ with $\ell-1<d(x,y)\le\ell$.  Then there are
integers $0\le j\le|x|$ and $m\ge0$ with
\[
   \ell-3\le j+m\le\ell\qquad\text{and}\qquad y\in\Omega_{a_j(x)}(m).
\]
Consequently, if for $s\ge s_0$ and $j,m\in\Zp$ we define
\begin{equation}\label{eq:Sjm}
   \Sop^{(s)}_{j,m}f(x):=e^{-2\rho(j+m)}\int_{\Omega^{(s)}_{a_j(x)}(m)}
   |f|\,d\mu\quad(j\le|x|),\qquad \Sop^{(s)}_{j,m}f(x):=0\quad(j>|x|),
\end{equation}
and $\Gamma_\ell:=\{(j,m)\in\Zp^2:\ \ell-3\le j+m\le\ell\}$, then
$\#\Gamma_\ell\le4(\ell+1)$ and
\begin{equation}\label{eq:decomp}
   \ann_\ell f(x)\le C\sum_{(j,m)\in\Gamma_\ell}\Sop^{(s_0)}_{j,m}f(x),
   \qquad x\in\Bn,\ \ell\in\N .
\end{equation}
\end{lemma}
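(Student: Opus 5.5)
The plan is to take $z$ at the separation time of the geodesics from $o$ to $x$ and to $y$, as in \cref{fig:decomp}. Put $g:=\gp xy$, so $0\le g\le\min\{|x|,|y|\}$. Let $j:=\ip{|x|-g}$, so that $0\le j\le|x|$, and let $z:=a_j(x)$. Then $|z|=|x|-j\in(g,g+1]$, and $|z|\le|x|$, hence $|z|\le g+1$ and $|z|\ge g$.

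I will choose $m$ from $|y|-|z|$. If $|y|\ge|z|$, set $m:=\ip{|y|-|z|}$, so that $|y|-|z|\in[m,m+1)\subset[m-1,m+1]$. If $|y|<|z|$, then $|y|\ge g\ge|z|-1$, and I set $m:=0$, so that $|y|-|z|\in[-1,0)$.

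To verify $y\in\Omega_z(m)$, it remains to show $d(z,y)\le m+s_0$. I apply \eqref{eq:fellow} at time $s=\min\{g,|z|\}$. When $g\le|z|$ this gives $d(\gamma_x(g),\gamma_y(g))\le2$. Since $\gamma_x(g)$ lies on $[0,x]$ at distance $|z|-g\le1$ from $z$, the triangle inequality yields
\[
d(z,y)\le 1+2+(|y|-g)\le 3+(|y|-|z|)+1\le m+5\le m+s_0 ,
\]
using $s_0\ge6$. The case $y=o$ or $x=o$ is trivial: then $g=0$, $|z|\le1$ and $d(z,y)\le|z|+|y|$.

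Next comes the counting. We have
\[
j+m\approx(|x|-g)+(|y|-g)=d(x,y)\in(\ell-1,\ell].
\]
More precisely, $j\in(|x|-g-1,\,|x|-g]$ and $m\in(|y|-|z|-1,\,|y|-|z|]$, the latter up to the $m=0$ edge case. Also $|y|-|z|\in[|y|-g-1,\,|y|-g]$. Hence
\[
j+m\le(|x|-g)+(|y|-g)=d(x,y)\le\ell ,
\qquad
j+m>(|x|-g-1)+(|y|-g-2)>\ell-4 .
\]
Since $j+m$ is an integer, this gives $j+m\ge\ell-3$. In the edge case $m=0$ we have $|y|-g\le1$, and the same bounds hold.

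This index bookkeeping is the only delicate step. If the lower bound turned out off by one, I would instead take $j:=\ip{|x|-g}$ and round $m$ upward, using the slack $s_0\ge6$ in the distance condition.

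For the consequences, note first that $\#\Gamma_\ell\le4(\ell+1)$ because $j+m$ takes at most $4$ values and, for each value, $j$ ranges over at most $\ell+1$ integers. For \eqref{eq:decomp}, the first part shows that the annulus $\{y:\ell-1<d(x,y)\le\ell\}$ is contained in $\bigcup_{(j,m)\in\Gamma_\ell,\,j\le|x|}\Omega_{a_j(x)}(m)$. Also $\Omega_{a_j(x)}(m)\subset\Omega^{(s_0)}_{a_j(x)}(m)$. Integrating $|f|$ and dividing by $|B(x,\ell)|\simeq e^{2\rho\ell}\gtrsim e^{2\rho(j+m)}$ (since $j+m\le\ell$) gives
\[
\ann_\ell f(x)\le C\sum_{(j,m)\in\Gamma_\ell}\Sop^{(s_0)}_{j,m}f(x).
\]
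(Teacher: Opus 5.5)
Your proof is correct and is essentially the paper's: the same anchor $z=a_j(x)$ with $j=\ip{|x|-g}$, $g=\gp xy$, the same fellow-traveller bound $d(z,y)\le|y|-|z|+4$, and the same covering and volume step for \eqref{eq:decomp}. The only difference is that you take $m=\ip{|y|-|z|}$ where the paper rounds to the nearest integer $\ip{|y|-|z|+\frac12}$, which gives $j+m\in(d(x,y)-3,\,d(x,y)]$ instead of $(d(x,y)-\frac52,\,d(x,y)+\frac12]$ and still yields $\ell-3\le j+m\le\ell$. (A harmless slip: in fact $|z|\in[g,g+1)$, not $(g,g+1]$, but you only use $g\le|z|\le g+1$; and your fallback remark about rounding $m$ upward is not needed.)
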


\begin{proof}
Let $t:=\gp xy\in[0,\min\{|x|,|y|\}]$ be the time at which the
geodesics from $o$ to $x$ and to $y$ separate, let $j:=\ip{|x|-t}$ and
$z:=a_j(x)$.  Then $0\le j\le|x|$ and $|z|=|x|-j\in[t,t+1)$; in
particular $|y|-|z|>|y|-t-1\ge-1$.  By \eqref{eq:fellow} applied at
time $t$,
\begin{equation}\label{eq_d(z,y)_leq_|y|-|z|}
   d(z,y)\le d\bigl(z,\gamma_x(t)\bigr)+d\bigl(\gamma_x(t),\gamma_y(t)
   \bigr)+d\bigl(\gamma_y(t),y\bigr)<1+2+(|y|-t)
   \le|y|-|z|+4 .
\end{equation}
We now choose the depth $m$.  If $|y|-|z|\ge-\frac12$, let
$m:=\ip{|y|-|z|+\frac12}$; then $|y|-|z|\in[m-\frac12,m+\frac12)$, and
\eqref{eq_d(z,y)_leq_|y|-|z|} gives $d(z,y)\le m+\frac92\le m+s_0$.  If
$-1<|y|-|z|<-\frac12$, let $m:=0$; then $d(z,y)<\frac72\le s_0$ by
\eqref{eq_d(z,y)_leq_|y|-|z|}.  In both cases $y\in\Omega_z(m)$.
Moreover, in both cases $m\in(|y|-t-\frac32,|y|-t+\frac12]$, while
$j\in(|x|-t-1,|x|-t]$; since $d(x,y)=|x|+|y|-2t$, we obtain
$j+m\in(d(x,y)-\frac52,d(x,y)+\frac12]$.  As $\ell-1<d(x,y)\le\ell$ and
$j+m$ is an integer, $\ell-3\le j+m\le\ell$.

For \eqref{eq:decomp}, the first part shows that the annulus
$\{y:\ \ell-1<d(x,y)\le\ell\}$ is covered by the slices
$\Omega_{a_j(x)}(m)\subset\Omega^{(s_0)}_{a_j(x)}(m)$ with
$(j,m)\in\Gamma_\ell$ and $j\le|x|$.  Since $|B(x,\ell)|\ge ce^{2\rho \ell}$ and
$e^{-2\rho \ell}\le e^{-2\rho(j+m)}$ on $\Gamma_\ell$, the claim follows.
Finally, for each of the at most four values of $j+m$ there are at most
$\ell+1$ pairs $(j,m)$, so that $\#\Gamma_\ell\le4(\ell+1)$.
\end{proof}

\section{Directional operators}\label{sec:directional}

In this section we introduce the directional operators, namely the
descendant averages $P^{(s)}_m$, which look $m$ units outwards, and the
ancestor averages $Q_j$, which look $j$ units back towards $o$.  Their
main properties are the one-sided semigroup inequality
(\cref{lem:semigroup}) and the norm bounds of \cref{lem:normbounds},
which replace the exact semigroup law and the norm identity of
\cite{YYZ26}.  We then record some basic properties of the two-layer
class.

We recall that $\avg fE=|E|^{-1}\int_E|f|\,d\mu$.  For $s\ge s_0$,
$m,j\in\Zp$, measurable $f$ and $x\in\Bn$ we put
\begin{equation}\label{eq_def_P_j_Q_m}
   P^{(s)}_mf(x):=\avg f{\Omega^{(s)}_x(m)},\qquad
   Q_jf(x):=\begin{cases}e^{-2\rho j}\,\avg f{B(a_j(x),\frac12)},&
   j\le|x|,\\ 0,& j>|x|,\end{cases}
\end{equation}
and $P_mf(x):=\avg f{\Omega_x(m)}$ is the average over the slices
$\Omega_x(m)$, and we define the \emph{ancestor maximal
operator}
\begin{equation}\label{def_anc_maximal}
   \Hst f:=\sup_{j\in\Zp}Q_jf.
\end{equation}
The operators $\Sop^{(s)}_{j,m}$ of \eqref{eq:Sjm} are dominated by
compositions of these operators:

\begin{lemma}\label{lem:factor}
Let $s\ge s_0$.
\begin{enumerate}[label=\textup{(\alph*)}, leftmargin=2.3em]
\item\label{lem:factor:a} For all $j,m\in\Zp$, $\Sop^{(s)}_{j,m}f\le C_s\,Q_j\bigl(
      P^{(s+1)}_mf\bigr)$ pointwise.
\item\label{lem:factor:b} For every $f$, $\Minf f\le C\sum_{m\in\Zp}\Hst\bigl(
      P^{(s_0+1)}_mf\bigr)$ pointwise.
\end{enumerate}
\end{lemma}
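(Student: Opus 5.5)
For (a), fix $x$ with $j\le|x|$ and put $z:=a_j(x)$, so that $\Sop^{(s)}_{j,m}f(x)=e^{-2\rho(j+m)}\int_{\Omega^{(s)}_z(m)}|f|\,d\mu$. The plan is to show that for every $y\in B(z,\frac12)$ the slice $\Omega^{(s)}_z(m)$ lies inside $\Omega^{(s+1)}_y(m)$. This is \cref{lem:shadow}\ref{lem:shadow:e} with $\delta=\frac12\le1$, together with the monotonicity of slices in the width. It follows that
\[
   \int_{\Omega^{(s)}_z(m)}|f|\,d\mu\le\int_{\Omega^{(s+1)}_y(m)}|f|\,d\mu=|\Omega^{(s+1)}_y(m)|\,P^{(s+1)}_mf(y)\le C_se^{2\rho m}P^{(s+1)}_mf(y),
\]
where the last step uses \cref{lem:shadow}\ref{lem:shadow:c}. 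We then average in $y$ over $B(z,\frac12)=B(a_j(x),\frac12)$, which gives $\int_{\Omega^{(s)}_z(m)}|f|\le C_se^{2\rho m}\avg{P^{(s+1)}_mf}{B(a_j(x),\frac12)}$. Multiplying by $e^{-2\rho(j+m)}$ produces exactly $C_sQ_j(P^{(s+1)}_mf)(x)$. When $j>|x|$ both sides vanish, so there is nothing to prove. Measurability of $y\mapsto P^{(s+1)}_mf(y)$ is routine and we do not dwell on it.

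For (b), we start from \eqref{eq:ballfromannuli}, which gives $\Minf f\le C\sup_\ell\ann_\ell f$. It is better not to use the crude bound by $\sum_\ell$ here, because summing over $\ell$ would count each $m$ several times in the wrong way. Next we apply \eqref{eq:decomp} and then (a) with $s=s_0$:
\[
   \ann_\ell f(x)\le C\sum_{(j,m)\in\Gamma_\ell}Q_j\bigl(P^{(s_0+1)}_mf\bigr)(x).
\]
We now regroup the sum by $m$. For a fixed $m$, the pairs in $\Gamma_\ell$ with that value of $m$ have $j\in[\ell-3-m,\ell-m]$, so there are at most four of them. Each term is bounded by $\Hst(P^{(s_0+1)}_mf)(x)$. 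Hence $\ann_\ell f(x)\le 4C\sum_{m\in\Zp}\Hst(P^{(s_0+1)}_mf)(x)$, and the right-hand side does not depend on $\ell$. Taking the supremum over $\ell$ gives (b).

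The only point that needs care is the regrouping in (b). The decay factor $e^{-2\rho(j+m)}$ is already built into $Q_j$ and into the normalised average $P_m$, so we never need the lower bound $e^{-2\rho\ell}\le e^{-2\rho(j+m)}$ beyond what \eqref{eq:decomp} already provides. The supremum over $j$ is what absorbs the range of $\ell$.
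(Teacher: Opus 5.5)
Your proof is correct and uses the same ingredients as the paper: part (a) is identical, and in (b) you combine \eqref{eq:ballfromannuli}, \eqref{eq:decomp} and (a) just as the paper does. The only difference is bookkeeping in (b): you pass first to $\sup_\ell\ann_\ell f$ and note that a single $\Gamma_\ell$ contains at most four pairs with a given $m$, whereas the paper keeps the weights $e^{2\rho(\ell-N)}$ from \eqref{eq:ballfromannuli} and sums them as a geometric series in $j$ for each fixed $m$; your version is slightly shorter and equally valid.
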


\begin{proof}
(a) Let $j\le|x|$ and $z=a_j(x)$.  For $z'\in B(z,\frac12)$,
\cref{lem:shadow}\ref{lem:shadow:e} gives $\Omega^{(s)}_z(m)\subset\Omega^{(s+1)}_{z'}
(m)$, hence, by \cref{lem:shadow}\ref{lem:shadow:c},
\[
   \int_{\Omega^{(s)}_z(m)}|f|\,d\mu\le|\Omega^{(s+1)}_{z'}(m)|\,
   P^{(s+1)}_mf(z')\le C_se^{2\rho m}P^{(s+1)}_mf(z').
\]
Averaging over $z'\in B(z,\frac12)$ and recalling \eqref{eq:Sjm} and
\eqref{eq_def_P_j_Q_m}, we obtain (a).

(b) Fix $N>2$.  By \eqref{eq:ballfromannuli}, \eqref{eq:decomp} and (a),
\[
   A_Nf(x)\le C\sum_{\ell=1}^{\ip N+1}e^{2\rho(\ell-N)}
   \sum_{(j,m)\in\Gamma_\ell}Q_j\bigl(P^{(s_0+1)}_mf\bigr)(x).
\]
For $(j,m)\in\Gamma_\ell$ we have $\ell\le j+m+3$, so that
$e^{2\rho(\ell-N)}\le e^{6\rho}e^{2\rho(j+m-N)}$, and $\ell\le\ip N+1$
forces $j+m<N+1$; moreover each pair $(j,m)$ belongs to at most four
sets $\Gamma_\ell$.  Hence
\[
   A_Nf(x)\le C\sum_{m\in\Zp}\Hst\bigl(P^{(s_0+1)}_mf\bigr)(x)
   \sum_{j<N+1-m}e^{2\rho(j+m-N)}\le C'\sum_{m\in\Zp}
   \Hst\bigl(P^{(s_0+1)}_mf\bigr)(x),
\]
with $C'$ independent of $N$.
\end{proof}

\subsection{A one-sided semigroup inequality}
The next lemma replaces the identity $P_kP_m=P_{k+m}$ of the tree.  The
enlargement of the slices falls only on the outer operator, and the
inequality holds at every point of $\Bn$, including the points near
the origin.

\begin{lemma}\label{lem:semigroup}
Let $s\ge s_0$ and $s':=3s+3$.  Then for every measurable $f$, every
$x\in\Bn$, every $m\in\Zp$ and every integer $k\ge s$,
\[
   P^{(s)}_{k+m}f(x)\le C_s\,P^{(s')}_k\bigl(P^{(s)}_mf\bigr)(x).
\]
Consequently, for $1\le p<\infty$ and every weight $w$, the operator
norms on $L^p(w)$ satisfy
$\|P^{(s)}_{k+m}\|\le C_s\|P^{(s')}_k\|\,\|P^{(s)}_m\|$.
\end{lemma}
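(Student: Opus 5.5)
The plan is to prove the pointwise inequality first; the norm inequality then follows at once, since $P^{(s')}_k$ is a positive operator, so $\|P^{(s)}_{k+m}f\|\le C_s\|P^{(s')}_k(P^{(s)}_mf)\|\le C_s\|P^{(s')}_k\|\,\|P^{(s)}_m\|\,\|f\|$. Fix $x$, $m$ and $k\ge s$. Set $y'$ to be the point of the geodesic ray from $o$ through $x$ at distance $|x|+k$ from $o$, i.e. $y'=\gamma_{\theta_x}(|x|+k)$ (for $x=o$ take any direction). We will show two inclusions, as in \cref{fig:semigroup}:
\[
   B(y',\tfrac12)\subset\Omega^{(s')}_x(k)\qquad\text{and}\qquad
   \Omega^{(s)}_x(k+m)\subset\Omega^{(s)}_y(m)\ \text{ for every } y\in B(y',\tfrac12)
\]
up to enlarging the width in the second one. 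That is not allowed, since the inner operator must keep width $s$. So I will instead cover $\Omega^{(s)}_x(k+m)$ by boundedly many sets $\Omega^{(s)}_{y}(m)$ with $y$ ranging over a unit-size region inside $\Omega^{(s')}_x(k)$.

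Granting a suitable inclusion, the estimate is an averaging argument. Suppose that for every $u\in\Omega^{(s)}_x(k+m)$ the set $G_u:=\{y\in\Omega^{(s')}_x(k):\ u\in\Omega^{(s)}_y(m)\}$ has measure $\ge c_s>0$. Then Fubini gives
\[
   \int_{\Omega^{(s')}_x(k)}\int_{\Omega^{(s)}_y(m)}|f|\,d\mu\,d\mu(y)\ \ge\ c_s\int_{\Omega^{(s)}_x(k+m)}|f|\,d\mu .
\]
Dividing by $|\Omega^{(s)}_y(m)|\simeq e^{2\rho m}$, $|\Omega^{(s')}_x(k)|\simeq e^{2\rho k}$ and $|\Omega^{(s)}_x(k+m)|\simeq e^{2\rho(k+m)}$ (\cref{lem:shadow}\ref{lem:shadow:c}) yields the claim. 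So the core is a uniform lower bound on $|G_u|$.

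To get it, fix $u\in\Omega^{(s)}_x(k+m)$ and let $y_u:=a_{m}(u)$, the point of $[0,u]$ at distance $m$ from $u$, so $|y_u|=|u|-m\in[|x|+k-s,|x|+k+s]$. I claim $B(y_u,\frac12)\subset G_u$.

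\emph{Inclusion in $\Omega^{(s')}_x(k)$.} By \cref{lem:shadow}\ref{lem:shadow:a}, $\gp ux\ge|x|-s$. Since $y_u\in[0,u]$ and $|y_u|\ge|x|-s$ (using $k\ge s$), the fellow traveller property \eqref{eq:fellow} at time $(|x|-s)_+$ gives $d(x,y_u)\le (|y_u|-|x|)+2s+2$, which is $\le k+3s+2$. Moving within $\frac12$ keeps $y$ in $\Omega^{(s')}_x(k)$ with $s'=3s+3$; the radial condition $||y|-|x|-k|\le s+\frac12$ is immediate.

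\emph{$u\in\Omega^{(s)}_y(m)$.} We have $|u|-|y_u|=m$ and $d(y_u,u)=m$ exactly. For $y\in B(y_u,\frac12)$ this becomes $||u|-|y|-m|\le\frac12\le s$ and $d(y,u)\le m+\frac12\le m+s$.

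Hence $|G_u|\ge|B(o,\frac12)|$. The case $m=0$ and points near the origin need no separate treatment, since $a_m(u)$ is defined with the positive part and $k\ge s$ guarantees $|y_u|\ge0$.

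The main obstacle I expect is the bookkeeping in the first inclusion: checking that the constant $3s+3$ really suffices. That means tracking the fellow traveller estimate when $|x|<s$, where the point $(|x|-s)_+=0$ and one bounds $d(x,y_u)\le|x|+|y_u|$ directly, and the $\pm s$ slack in $|u|$. If the constant comes out slightly larger, I would adjust via \cref{lem:shadow}\ref{lem:shadow:e}.
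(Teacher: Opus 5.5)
Your argument is correct and is essentially the paper's proof. For each $u\in\Omega^{(s)}_x(k+m)$ you take $y_u=a_m(u)$, use \cref{lem:shadow}\ref{lem:shadow:a} and the fellow traveller property to show that $B(y_u,\frac12)\subset\{y\in\Omega^{(s')}_x(k):\ u\in\Omega^{(s)}_y(m)\}$, and conclude by Fubini. Your bookkeeping $d(x,y)\le k+3s+\frac52\le k+s'$ already closes, so no adjustment via \cref{lem:shadow}\ref{lem:shadow:e} is needed.

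Your opening idea of a fixed point $y'=\gamma_{\theta_x}(|x|+k)$, and of covering $\Omega^{(s)}_x(k+m)$ by boundedly many slices $\Omega^{(s)}_y(m)$, would not work. The measure count $e^{2\rho(k+m)}/e^{2\rho m}$ shows that about $e^{2\rho k}$ slices are needed, so the point must depend on $u$. You discard that idea and never use it, so it does not affect the proof.
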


\begin{proof}
By the definition of $P^{(s)}_m$ and \cref{lem:shadow}\ref{lem:shadow:c},
\begin{align*}
   P^{(s')}_k\bigl(P^{(s)}_mf\bigr)(x)&=\frac1{|\Omega^{(s')}_x(k)|}
   \int_{\Omega^{(s')}_x(k)}\frac1{|\Omega^{(s)}_y(m)|}
   \int_{\Omega^{(s)}_y(m)}|f(x')|\,d\mu(x')\,d\mu(y)\\
   &\ge c_se^{-2\rho(k+m)}\int_{\Omega^{(s')}_x(k)}\int_{\Bn}|f(x')|\,
   \Chi_{\Omega^{(s)}_y(m)}(x')\,d\mu(x')\,d\mu(y).
\end{align*}
Interchanging the order of integration (Fubini's theorem), we get
\begin{equation}\label{eq_P_lP_m>}
   P^{(s')}_k\bigl(P^{(s)}_mf\bigr)(x)\ge c_s\,e^{-2\rho(k+m)}
   \int_{\Bn}|f(x')|\,\bigl|\{y\in\Omega^{(s')}_x(k):\ x'\in
   \Omega^{(s)}_y(m)\}\bigr|\,d\mu(x').
\end{equation}
On the other hand, again by \cref{lem:shadow}\ref{lem:shadow:c},
$P^{(s)}_{k+m}f(x)\le C_se^{-2\rho(k+m)}\int_{\Omega^{(s)}_x(k+m)}|f|\,
d\mu$.  Therefore, it suffices to show that, for every
$x'\in\Omega^{(s)}_x(k+m)$, the set in \eqref{eq_P_lP_m>} contains a ball
of radius $\frac12$.

Let $x'\in\Omega^{(s)}_x(k+m)$.  Then $|x'|-m\ge|x|+k-s\ge0$, and we let
$y':=a_m(x')=\gamma_{x'}(|x'|-m)$ be the point of $[0,x']$ at distance $m$
from $x'$ (\cref{fig:semigroup}).  We claim that every
$y\in B(y',\frac12)$ belongs to the set in \eqref{eq_P_lP_m>}.

First, $|x'|-|y|\in[m-\frac12,m+\frac12]$ and $d(y,x')\le d(y',x')+\frac12
=m+\frac12$, so that $x'\in\Omega^{(s)}_y(m)$.  Next, $|y|-|x|$ lies in
$[|x'|-m-|x|-\frac12,|x'|-m-|x|+\frac12]\subset[k-s-\frac12,k+s+\frac12]$.
Finally, by \cref{lem:shadow}\ref{lem:shadow:a}, $\gp x{x'}\ge|x|-s$, so that the point
$a_s(x)=\gamma_x((|x|-s)_+)$ satisfies $d(a_s(x),\gamma_{x'}((|x|-s)_+))\le
2$ by \eqref{eq:fellow}.  Since $(|x|-s)_+\le|x|\le|y'|$, going from $x$
to $a_s(x)$, then to $\gamma_{x'}((|x|-s)_+)$, and then along $[0,x']$ to
$y'$, we get
\[
   d(x,y')\le s+2+\bigl(|y'|-(|x|-s)_+\bigr)\le s+2+(|x'|-m-|x|+s)
   \le k+3s+2 ,
\]
and so $d(x,y)\le k+3s+\frac52\le k+s'$.  Hence $y\in
\Omega^{(s')}_x(k)$, which proves the claim.  By \eqref{eq_P_lP_m>},
\[
   P^{(s')}_k\bigl(P^{(s)}_mf\bigr)(x)\ge c_s\,|B(o,\tfrac12)|\,
   e^{-2\rho(k+m)}\int_{\Omega^{(s)}_x(k+m)}|f|\,d\mu\ge c'_s\,
   P^{(s)}_{k+m}f(x),
\]
whence the norm inequality follows.
\end{proof}

\begin{remark}\label{rem:exactness}
On the tree \cite[Lemma~3.3]{YYZ26} iterates $P_kP_m=P_{k+m}$ and
$Q_jQ_i=Q_{j+i}$ with the \emph{same} operator in the inner position.
On $\Bn$ a covering argument gives only $P_{k+m}\le CP^\sharp_k
P^\sharp_m$ with enlarged slices in both factors, and
\cref{lem:semigroup} removes the enlargement from the inner factor.  For the ancestor averages, \cref{lem:angle}\ref{lem:angle:b} gives
$Q_{j+i}\le CQ'_jQ''_i$ only with a strictly larger ball in the inner
operator $Q''_i$, which cannot be iterated; this is why in
\cref{subsec:Qtail} we work directly with the sums
$\sum_je^{-2\rho pj}w(\Omega_u(j))$.
\end{remark}

\subsection{Norm bounds}
For $s\ge s_0$, $z\in\Bn$ and $j,m\in\Zp$ we put, for $1<p<\infty$,
\begin{equation}\label{eq:alpha}
\begin{aligned}
   \PP^{(s)}_m[w](z)&:=e^{-2\rho pm}\,w(B_{z})\,
   \sigma\bigl(\Omega^{(s)}_z(m)\bigr)^{p-1},\\
   \QQ^{(s)}_j[w](z)&:=e^{-2\rho pj}\,w\bigl(\Omega^{(s)}_z(j)\bigr)\,
   \sigma(B_{z})^{p-1},
\end{aligned}
\end{equation}
and for $p=1$ we replace $\sigma(\Omega^{(s)}_z(m))^{p-1}$ by
$\esssup_{\Omega^{(s)}_z(m)}w^{-1}$ in the definition of $\PP$.  We
write $\PP_m[w]$ and $\QQ_j[w]$ for the same quantities defined with the
slices $\Omega_z(m)$, $\Omega_z(j)$ of \eqref{eq:shadow}, so that
$\PP_m[w]$ is the quantity \eqref{eq:alpha-intro}.  Thus $\PP$ compares
$w$ near $z$ with $\sigma$ on a descendant slice of $z$, and $\QQ$
compares $w$ on a descendant slice with $\sigma$ near $z$.  By
H\"older's inequality $|B_z|^p\le w(B_z)\sigma(B_z)^{p-1}$, and
so
\begin{equation}\label{eq:PQlink}
   e^{-2\rho p(j+m)}\,w\bigl(\Omega_z(j)\bigr)\,\sigma\bigl(\Omega_z(m)
   \bigr)^{p-1}\le|B_o|^{-p}\,\QQ_j[w](z)\,\PP_m[w](z).
\end{equation}
This H\"older inequality on a single unit ball is the only link between
the two layers of \cref{def:class} in our proof of necessity.

\begin{lemma}[Norm bounds]\label{lem:normbounds}
Let $1\le p<\infty$, $s\ge s_0$ and let $w$ be a weight.
\begin{enumerate}[label=\textup{(\alph*)}, leftmargin=2.3em]
\item\label{lem:normbounds:a} $\|P^{(s)}_m\|^p\le C_s\sup_{z\in\Bn}\PP^{(s+1)}_m[w](z)$ for all
      $m\in\Zp$.
\item\label{lem:normbounds:b} $\|P^{(s+1)}_m\|^p\ge c_s\,\PP^{(s)}_m[w](z)$ for all $z\in\Bn$
      and $m\in\Zp$.
\item\label{lem:normbounds:c} Let $p>1$, and let $h_{s+1}=\ip s+3$ be
      the number of \cref{lem:slack} for the width $s+1$.  Then for all $j,m\in\Zp$,
      \[
         \begin{aligned}
         \|\Sop^{(s)}_{j,m}\|^p&\le C_s\,e^{-2\rho p(j+m)}\sup_{z\in\Bn}
         w\bigl(\Omega^{(s+1)}_z(j)\bigr)\,\sigma\bigl(\Omega^{(s+1)}_z(m)
         \bigr)^{p-1}\\
         &\le C_s\sum_{|i|,|i'|\le2\ip s+6}
         \Lambda_{j+h_{s+1}+i,\,m+h_{s+1}+i'}[w].
         \end{aligned}
      \]
      In particular, if $w\in\Ab_{p,\eps}$, then
      \begin{equation}\label{eq:Sjmnorm}
         \|\Sop^{(s)}_{j,m}\|\le C_s\bigl([w]_{\Ab_{p,\eps}}
         e^{-2\rho\eps(j+m)}\bigr)^{1/p}
         =C_s\Bigl(\frac{[w]_{\Ab_{p,\eps}}e^{2\rho(p-\eps)(j+m)}}
         {e^{2\rho p(j+m)}}\Bigr)^{1/p}.
      \end{equation}
\end{enumerate}
\end{lemma}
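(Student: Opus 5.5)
For \ref{lem:normbounds:a} I use the standard H\"older-plus-overlap argument. Fix a maximal $\frac14$-separated set $X\subset\Bn$ and cover $\Bn$ by the unit balls $B_u$, $u\in X$. For $x\in B_u$, \cref{lem:shadow}\ref{lem:shadow:e} gives $\Omega^{(s)}_x(m)\subset\Omega^{(s+1)}_u(m)$, and \cref{lem:shadow}\ref{lem:shadow:c} gives comparable volumes, so
\[
P^{(s)}_mf(x)\le C_s e^{-2\rho m}\int_{\Omega^{(s+1)}_u(m)}|f|\,d\mu .
\]
For $p>1$, H\"older with the weights $w^{1/p}$ and $\sigma^{1/p'}$ bounds the integral by $\sigma(\Omega^{(s+1)}_u(m))^{(p-1)/p}\bigl(\int_{\Omega^{(s+1)}_u(m)}|f|^pw\bigr)^{1/p}$. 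Raising to the $p$-th power and integrating against $w$ over $B_u$ gives
\[
\int_{B_u}(P^{(s)}_mf)^pw\le C_s\,\PP^{(s+1)}_m[w](u)\int_{\Omega^{(s+1)}_u(m)}|f|^pw .
\]
Summing over $u\in X$, the bounded overlap of the slices $\Omega^{(s+1)}_u(m)$ for $u$ in a $\frac14$-separated set (\cref{lem:shadow}\ref{lem:shadow:d}) yields \ref{lem:normbounds:a}. For $p=1$ the same argument works, with $\int|f|\le \esssup w^{-1}\int|f|w$ replacing H\"older.

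For \ref{lem:normbounds:b} I test the operator on $f=\sigma_N\Chi_{\Omega^{(s)}_z(m)}$, where $\sigma_N=\min\{\sigma,N\}$ is used to avoid infinities. For $x\in B_z$, \cref{lem:shadow}\ref{lem:shadow:e} gives $\Omega^{(s)}_z(m)\subset\Omega^{(s+1)}_x(m)$, so
\[
P^{(s+1)}_mf(x)\ge c_se^{-2\rho m}\sigma_N(\Omega^{(s)}_z(m)).
\]
Integrating the $p$-th power over $B_z$ against $w$ and using $\|f\|^p_{L^p(w)}\le\sigma_N(\Omega^{(s)}_z(m))$, we get
\[
\|P^{(s+1)}_m\|^p\ge c_s e^{-2\rho pm}w(B_z)\,\sigma_N(\Omega^{(s)}_z(m))^{p-1}.
\]
Letting $N\to\infty$ gives \ref{lem:normbounds:b}. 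For $p=1$ I instead test on $f=\Chi_E$, where $E\subset\Omega^{(s)}_z(m)$ is a set of positive measure on which $w^{-1}$ is close to its essential supremum; the resulting lower bound is $e^{-2\rho m}w(B_z)|E|/w(E)$, which tends to the $\esssup$ form of $\PP$.

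For \ref{lem:normbounds:c} I repeat the discretization of \ref{lem:normbounds:a}, now in the variable $z=a_j(x)$. For $x\in B_u$ with $j\le|x|$, the ancestor $a_j(x)$ lies within distance $1$ of $a_j(u)$ when $j\le|u|$. This follows from the fellow traveller estimate \eqref{eq:fellow} together with the monotonicity in \cref{lem:angle}\ref{lem:angle:b}, since nearby points have nearby geodesics; the cases near $o$ (and $j>|u|$) are handled by allowing the radius change to be absorbed into the width. Then, via \cref{lem:shadow}\ref{lem:shadow:e},
\[
\Sop^{(s)}_{j,m}f(x)\le e^{-2\rho(j+m)}\int_{\Omega^{(s+1)}_{a_j(u)}(m)}|f| .
\]
Applying H\"older as before gives
\[
\int_{B_u}(\Sop^{(s)}_{j,m} f)^pw\le e^{-2\rho p(j+m)}w(B_u)\,\sigma(\Omega^{(s+1)}_{a_j(u)}(m))^{p-1}\int_{\Omega^{(s+1)}_{a_j(u)}(m)}|f|^pw .
\]
This is the main obstacle: we must group the $u$'s by their ancestor $v=a_j(u)$. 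The set of $u$ with $a_j(u)$ near a given $v$ has $\bigcup B_u\subset\Omega^{(s+1)}_v(j)$ (using \cref{lem:shadow}\ref{lem:shadow:f} and enlarged width), with bounded overlap. So I sum first over $u$ with $a_j(u)\in B(v,\frac12)$, $v$ ranging over a separated net; the total $w$-mass of these $B_u$ is $\lesssim w(\Omega^{(s+1)}_v(j))$. Then I sum over $v$ using the bounded overlap of $\Omega^{(s+1)}_v(m)$ from \cref{lem:shadow}\ref{lem:shadow:d}. This yields the first inequality, possibly after enlarging the width by a harmless constant, which can be absorbed by redefining $s$ in the bookkeeping. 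The second inequality is then \cref{cor:slack}, i.e.\ \eqref{eq:slackLambda} with width $s+1$, and \eqref{eq:Sjmnorm} follows from the definition of $\Lambda_{j,m}$ and $\Ab_{p,\eps}$, since the shifts $h_{s+1}+i$ change $j+m$ by a bounded amount.
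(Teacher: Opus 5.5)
Your arguments for \ref{lem:normbounds:a} and \ref{lem:normbounds:b} are the paper's. The paper covers by the balls $B(z,\frac12)$ and uses $w(B(z,\frac12))\le w(B_z)$, while you cover by unit balls directly; this changes nothing.

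For \ref{lem:normbounds:c} you take a different, discrete route, and it is correct in substance. The paper never compares ancestors of different points. For fixed $x$ it averages $F(z')^p$, where $F(z'):=\int_{\Omega^{(s+1)}_{z'}(m)}|f|\,d\mu$, over $z'\in B(a_j(x),\frac12)$, and then interchanges the $x$- and $z'$-integrations. Two facts then finish the argument:
\begin{itemize}
\item the $w$-measure of $\{x:\ |x|\ge j,\ d(a_j(x),z')<\frac12\}$ is at most $w(\Omega^{(s+1)}_{z'}(j))$, by \cref{lem:shadow}\ref{lem:shadow:e},\ref{lem:shadow:f};
\item $|\{z':\ y\in\Omega^{(s+1)}_{z'}(m)\}|\le C_s$, by \cref{lem:shadow}\ref{lem:shadow:d}.
\end{itemize}
This continuous averaging replaces both of your nets and the regrouping by ancestors. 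It needs no regularity of $x\mapsto a_j(x)$ and lands on width $s+1$ immediately. Your version is closer to the tree picture, with descendants grouped under a common ancestor, but it costs an extra geometric fact and some care with scales.

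Two points in your version need repair.

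First, the justification of $d(a_j(x),a_j(u))<1$ does not work as stated. \eqref{eq:fellow} only gives distance $\le2$. The monotonicity in \cref{lem:angle}\ref{lem:angle:b} compares points at equal times, whereas $a_j(x)$ and $a_j(u)$ sit at times $|x|-j$ and $|u|-j$. Combined with the triangle inequality, these tools yield something like $d(a_j(x),a_j(u))\le 3d(x,u)$, not $<1$. The sharp bound $d(a_j(x),a_j(u))\le d(x,u)$ is nevertheless true:
\begin{itemize}
\item for $j\le\min\{|x|,|u|\}$ it follows from \eqref{eq:cosines2}, since $a_j(x),a_j(u)$ have the same angle and the same radial difference as $x,u$, and $\sinh|a_j(x)|\sinh|a_j(u)|\le\sinh|x|\sinh|u|$;
\item for $|u|<j\le|x|$ one has $a_j(u)=o$ and $d(a_j(x),o)=|x|-j<|x|-|u|\le d(x,u)$.
\end{itemize}

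Second, the width $s+1$ and the numbers $h_{s+1}$, $2\ip s+6$ are fixed in the statement, so a loss of width cannot be ``absorbed by redefining $s$''. With unit balls $B_u$ and a $\frac12$-net for the ancestors you get $d(a_j(x),v)<\frac32$, which means width $s+\frac32$ for the $\sigma$-factor. The fix is to shrink the covering balls. For example, take $B(u,\frac14)$ about a maximal $\frac14$-separated set and a $\frac14$-net for the ancestors. Then $d(a_j(x),v)<\frac12$, hence $\Omega^{(s)}_{a_j(x)}(m)\subset\Omega^{(s+1)}_v(m)$ and $x\in\Omega_{a_j(x)}(j)\subset\Omega^{(s+1)}_v(j)$, exactly as required.

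Finally, the inclusion $\bigcup B_u\subset\Omega^{(s+1)}_v(j)$ can fail when $j>|u|$. Restrict it to $\{|x|\ge j\}$; this costs nothing, since $\Sop^{(s)}_{j,m}f=0$ off that set.
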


\begin{proof}
We give the proofs for $p>1$; for $p=1$ one replaces H\"older's
inequality below by $\int_\Omega|f|\,d\mu\le(\esssup_\Omega w^{-1})
\int_\Omega|f|w\,d\mu$, and the test function in (b) by $\Chi_E$, where
$E\subset\Omega^{(s)}_z(m)$, $|E|>0$, and $w^{-1}\ge(1-\eta)
\esssup_{\Omega^{(s)}_z(m)}w^{-1}$ on $E$ (if $\operatorname{ess\,sup}_{\Omega}w^{-1}=\infty$, take $E\subset\{w^{-1}\ge N\}$ and send $N\to\infty$).

(a) Let $X$ be a maximal $\frac12$-separated subset of $\Bn$, so that
the balls $B(z,\frac12)$, $z\in X$, cover $\Bn$.  If $x\in B(z,\frac12)$,
then $\Omega^{(s)}_x(m)\subset\Omega^{(s+1)}_z(m)$ by
\cref{lem:shadow}\ref{lem:shadow:e}, and by \cref{lem:shadow}\ref{lem:shadow:c} and H\"older's
inequality,
\[
   \bigl(P^{(s)}_mf(x)\bigr)^p\le C_se^{-2\rho pm}\Bigl(
   \int_{\Omega^{(s+1)}_z(m)}|f|\,d\mu\Bigr)^p\le C_se^{-2\rho pm}\,
   \sigma\bigl(\Omega^{(s+1)}_z(m)\bigr)^{p-1}\int_{\Omega^{(s+1)}_z(m)}
   |f|^pw\,d\mu .
\]
Integrating over $B(z,\frac12)$ against $w\,d\mu$ and summing over
$z\in X$,
\[
   \|P^{(s)}_mf\|^p_{L^p(w)}\le C_s\sup_{z}\PP^{(s+1)}_m[w](z)
   \int_{\Bn}|f|^pw\,\sum_{z\in X}\Chi_{\Omega^{(s+1)}_z(m)}\,d\mu
   \le C'_s\sup_z\PP^{(s+1)}_m[w](z)\,\|f\|^p_{L^p(w)},
\]
since $\sum_{z\in X}\Chi_{\Omega^{(s+1)}_z(m)}\le C_s$ by
\cref{lem:shadow}\ref{lem:shadow:d}.

(b) Let $\Omega:=\Omega^{(s)}_z(m)$, $\sigma_N:=\min\{\sigma,N\}$ and
$f:=\sigma_N\Chi_\Omega$.  Then $\|f\|^p_{L^p(w)}\le\int_\Omega\sigma_N
\sigma^{p-1}w\,d\mu=\sigma_N(\Omega)$.  For $x\in B_z$ we have
$\Omega\subset\Omega^{(s+1)}_x(m)$, so that $P^{(s+1)}_mf(x)\ge
c_se^{-2\rho m}\sigma_N(\Omega)$.  Hence $\|P^{(s+1)}_m\|^p\ge
c_s^pe^{-2\rho pm}w(B_z)\sigma_N(\Omega)^{p-1}$, and we let
$N\to\infty$.

(c) Let $j\le|x|$ and $z=a_j(x)$, and put $F(z'):=\int_{\Omega^{(s+1)}_{z'}
(m)}|f|\,d\mu$.  For every $z'\in B(z,\frac12)$ we have
$\Omega^{(s)}_z(m)\subset\Omega^{(s+1)}_{z'}(m)$ by
\cref{lem:shadow}\ref{lem:shadow:e}, so that $e^{2\rho(j+m)}\Sop^{(s)}_{j,m}f(x)\le
F(z')$.  Averaging the $p$-th power over $z'\in B(z,\frac12)$,
\[
   \bigl(\Sop^{(s)}_{j,m}f(x)\bigr)^p\le\frac{e^{-2\rho p(j+m)}}
   {|B(o,\frac12)|}\int_{\Bn}\Chi_{\{d(a_j(x),z')<\frac12\}}\,F(z')^p\,
   d\mu(z').
\]
We multiply by $w(x)$, integrate over $\{x:\ |x|\ge j\}$ and interchange
the order of integration:
\[
   \|\Sop^{(s)}_{j,m}f\|^p_{L^p(w)}\le\frac{e^{-2\rho p(j+m)}}
   {|B(o,\frac12)|}\int_{\Bn}w\bigl(\{x:\ j\le|x|,\ d(a_j(x),z')<
   \tfrac12\}\bigr)\,F(z')^p\,d\mu(z').
\]
If $j\le|x|$ and $d(a_j(x),z')<\frac12$, then $x\in\Omega_{a_j(x)}(j)
\subset\Omega^{(s+1)}_{z'}(j)$ by \cref{lem:shadow}\ref{lem:shadow:f},\ref{lem:shadow:e}; hence the
$w$-measure in the last integral is at most $w(\Omega^{(s+1)}_{z'}(j))$.
By H\"older's inequality, $F(z')^p\le\sigma(\Omega^{(s+1)}_{z'}(m))^{p-1}
\int_{\Omega^{(s+1)}_{z'}(m)}|f|^pw\,d\mu$, and therefore
\begin{align*}
   &\|\Sop^{(s)}_{j,m}f\|^p_{L^p(w)}
   \le Ce^{-2\rho p(j+m)}\sup_{z'}\Bigl[w\bigl(\Omega^{(s+1)}_{z'}(j)\bigr)
   \sigma\bigl(\Omega^{(s+1)}_{z'}(m)\bigr)^{p-1}\Bigr]
   \int_{\Bn}\int_{\Omega^{(s+1)}_{z'}(m)}|f|^pw\,d\mu\,d\mu(z')\\
   &=Ce^{-2\rho p(j+m)}\sup_{z'}\Bigl[w\bigl(\Omega^{(s+1)}_{z'}(j)\bigr)
   \sigma\bigl(\Omega^{(s+1)}_{z'}(m)\bigr)^{p-1}\Bigr]
   \int_{\Bn}|f(y)|^pw(y)\,\bigl|\{z':\ y\in
   \Omega^{(s+1)}_{z'}(m)\}\bigr|\,d\mu(y).
\end{align*}
The last measure is at most $C_s$ by \cref{lem:shadow}\ref{lem:shadow:d}, which gives
the first inequality in (c).  The second is \eqref{eq:slackLambda} with
$s+1$ in place of $s$, and \eqref{eq:Sjmnorm} follows from the first
inequality and \cref{cor:slack}.
\end{proof}

\subsection{Basic properties of the class}
We recall the weights $\ww_{\alpha,\gamma}$ from  \eqref{eq:examples}.

\begin{proposition}\label{prop:basic}
Let $1<p<\infty$.
\begin{enumerate}[label=\textup{(\alph*)}, leftmargin=2.3em]
\item\label{prop:basic:a} Let $\alpha,\gamma\in\R$ and $\eps>0$.  Then
      $\ww_{\alpha,\gamma}\in\Ab_{p,\eps}$ if and only if
      $2\rho(\eps-1)\le\alpha\le2\rho(p-1-\eps)$, where $\gamma\ge0$ is required if $\alpha=2\rho(\eps-1)$, and
      $\gamma\le0$ if $\alpha=2\rho(p-1-\eps)$.  In particular,
      $\ww_{\alpha,\gamma}\in\Ab_p$ if and only if
      $\alpha\in(-2\rho,2\rho(p-1))$.
\item\label{prop:basic:b} $\Ab_{p,\eps}\ne\emptyset$ if and only if $0<\eps\le p/2$.
\item\label{prop:basic:c} If $1<p<q$, then $[w]_{\Ab_{q,\eps}}\lesssim[w]_{\Ab_{p,\eps}}$;
      hence $\Ab_p\subsetneq\Ab_q$.
\item\label{prop:basic:d} \textup{(Openness.)}  If $w\in\Aloc\cap\Ab_p$, then $w\in
      A_{r,\mathrm{loc}}(\Bn)\cap\Ab_r$ for some $r\in(1,p)$.
\end{enumerate}
\end{proposition}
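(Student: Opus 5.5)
We handle the four parts separately. Parts (a)–(c) are explicit computations with \eqref{eq:radialslice} and H\"older's inequality. Part (d) combines the local reverse H\"older inequality \ref{item_L31} with a covering of slices by small balls and \cref{cor:slack}.

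\emph{(a).} Let $w=\ww_{\alpha,\gamma}$, so $\sigma=\ww_{-\alpha/(p-1),-\gamma/(p-1)}$. By \eqref{eq:radialslice} and $\Jd(t)\simeq e^{2\rho t}$, with $T:=|z|$,
\[
 w(\Omega_z(j))\simeq e^{2\rho j}e^{\alpha(T+j)}(1+T+j)^{\gamma},\qquad
 \sigma(\Omega_z(m))^{p-1}\simeq e^{2\rho(p-1)m}e^{-\alpha(T+m)}(1+T+m)^{-\gamma}.
\]
The upper bound uses a width-$s$ slice, harmless by \cref{cor:slack}; the endpoints $t\approx0$ change only constants. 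Hence the quotient in \cref{def:class} is comparable to
\[
 e^{j(\alpha+2\rho-2\rho p+2\rho\eps)}\,e^{m(-\alpha-2\rho+2\rho\eps)}\Bigl(\frac{1+T+j}{1+T+m}\Bigr)^{\gamma}.
\]
Taking $z=o$ and letting $j\to\infty$ with $m=0$, and then $m\to\infty$ with $j=0$, shows the two stated inequalities on $\alpha$ are necessary. In the boundary cases the polynomial factor forces $\gamma\le0$, resp.\ $\gamma\ge0$. Conversely, in the interior both exponentials decay, and they absorb the polynomial factor $\le(1+j)^{|\gamma|}(1+m)^{|\gamma|}$ (the base ratio is at most $1+j$, resp.\ $1+m$). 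In the boundary cases the sign of $\gamma$ makes the relevant ratio power $\le1$ for every $T$. The "in particular'' follows by letting $\eps\downarrow0$.

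\emph{(b).} Taking $j=m$ and using H\"older together with \cref{lem:shadow}\ref{lem:shadow:c} gives $w(\Omega_z(m))\sigma(\Omega_z(m))^{p-1}\ge|\Omega_z(m)|^p\gtrsim e^{2\rho pm}$. This is $\lesssim e^{4\rho(p-\eps)m}$ for all $m$ only if $\eps\le p/2$. For sufficiency, (a) with $\eps=p/2$ allows exactly $\alpha=\rho(p-2)$, $\gamma=0$. Since $\Ab_{p,\eps}\supset\Ab_{p,\eps'}$ for $\eps\le\eps'$, all $\eps\in(0,p/2]$ give nonempty classes.

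\emph{(c).} Write $\sigma_q=w^{-1/(q-1)}$. Since $\frac{p-1}{q-1}<1$, Jensen gives $\avg{\sigma_q}\Omega^{q-1}\le\avg{\sigma}\Omega^{p-1}$. Hence
\[
 w(\Omega_z(j))\,\sigma_q(\Omega_z(m))^{q-1}\le w(\Omega_z(j))\,\sigma(\Omega_z(m))^{p-1}\,|\Omega_z(m)|^{q-p}\lesssim[w]_{\Ab_{p,\eps}}e^{2\rho(p-\eps)(j+m)}e^{2\rho(q-p)m},
\]
which is $\le[w]_{\Ab_{p,\eps}}e^{2\rho(q-\eps)(j+m)}$. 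Strictness of $\Ab_p\subsetneq\Ab_q$ follows from (a): take $\ww_{\alpha,0}$ with $2\rho(p-1)\le\alpha<2\rho(q-1)$.

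\emph{(d), the main step.} By \ref{item_L31} there is $\eta>0$ with $\avg{\sigma^{1+\eta}}B\le C\avg\sigma B^{1+\eta}$ for balls of radius $\le1$; the inequality persists if $\eta$ is decreased. Define $r<p$ by $\frac1{r-1}=\frac{1+\eta}{p-1}$, so that $\sigma_r=\sigma^{1+\eta}$ and $(1+\eta)(r-1)=p-1$. We take $\eta$ so small that $p-r<\eps$ and that $w\in A_{r,\mathrm{loc}}$, which \ref{item_L31} permits.

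Fix a maximal $\frac12$-separated set $X$ and let $X_\Omega$ be its points within distance $\frac12$ of $\Omega=\Omega_z(m)$. The balls $B(x,\frac12)$, $x\in X_\Omega$, cover $\Omega$ and have bounded overlap. By \cref{lem:shadow}\ref{lem:shadow:e} (moving the centre by $\le1$) they all lie in $\Omega^{(s_0+1)}_z(m)$. Applying reverse H\"older on each ball, then $\sum a_i^{1+\eta}\le(\sum a_i)^{1+\eta}$, and then bounded overlap, we get
\[
 \sigma_r(\Omega)\le C\sum_{x\in X_\Omega}\sigma(B(x,\tfrac12))^{1+\eta}\le C\,\sigma\bigl(\Omega^{(s_0+1)}_z(m)\bigr)^{1+\eta}.
\]
Therefore
\[
 w(\Omega_z(j))\,\sigma_r(\Omega_z(m))^{r-1}\le C\,w\bigl(\Omega^{(s_0+1)}_z(j)\bigr)\,\sigma\bigl(\Omega^{(s_0+1)}_z(m)\bigr)^{p-1}\le C[w]_{\Ab_{p,\eps}}e^{2\rho(p-\eps)(j+m)},
\]
the last step by \cref{cor:slack}. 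The right-hand side equals $Ce^{2\rho(r-\eps')(j+m)}$ with $\eps'=\eps-(p-r)>0$, so $w\in\Ab_{r,\eps'}$.

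The delicate point is that the reverse H\"older inequality is only local. It transfers to slices because a slice is a bounded-overlap union of half-unit balls contained in a slightly wider slice, and the width change is absorbed by \cref{cor:slack}.
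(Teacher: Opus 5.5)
Your proposal is correct and follows the paper's proof step by step. In (a) you use the explicit slice computation with the test point $z=o$; in (b), H\"older on a single slice together with the example $\ww_{\rho(p-2),0}$; in (c), the bound $\sigma_q(\Omega)^{q-1}\le|\Omega|^{q-p}\sigma(\Omega)^{p-1}$; and in (d), the local reverse H\"older inequality on a bounded-overlap cover of $\Omega_z(m)$ by small balls lying in $\Omega^{(s_0+1)}_z(m)$, then $\ell^1\subset\ell^{1+\eta}$ and \cref{cor:slack}.

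Two citations are slightly off but harmless. The inclusion of the neighbourhood of $\Omega_z(m)$ in $\Omega^{(s_0+1)}_z(m)$ follows directly from the definition of the slices, not from \cref{lem:shadow}\ref{lem:shadow:e}. In (a), \cref{cor:slack} is not needed, since \eqref{eq:radialslice} already gives the upper bound.
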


\begin{proof}
(a) Let $c,\delta\in\R$.  On $\Omega_z(i)$ we have
$|y|\in[|z|+i-1,|z|+i+1]$, so that $\ww_{c,\delta}\simeq
e^{c(|z|+i)}(1+|z|+i)^\delta$ there, and \cref{lem:shadow}\ref{lem:shadow:c} gives
$\ww_{c,\delta}(\Omega_z(i))\simeq e^{c(|z|+i)}(1+|z|+i)^\delta
e^{2\rho i}$, with constants depending on $c$ and $\delta$.  The dual
weight of $\ww_{\alpha,\gamma}$ is $\ww_{-\alpha/(p-1),-\gamma/(p-1)}$,
and therefore
\[
   \frac{\ww_{\alpha,\gamma}(\Omega_z(j))\,\sigma(\Omega_z(m))^{p-1}}
   {e^{2\rho(p-\eps)(j+m)}}\simeq e^{-aj-bm}\,
   \Bigl(\frac{1+|z|+j}{1+|z|+m}\Bigr)^{\gamma},
\]
where $a:=2\rho(p-1-\eps)-\alpha$ and $b:=\alpha-2\rho(\eps-1)$.  Since
$(1+m)^{-1}\le\frac{1+|z|+j}{1+|z|+m}\le1+j$, the right-hand side is at
most $e^{-aj-bm}(1+j)^{\gamma}$ if $\gamma\ge0$, and at most
$e^{-aj-bm}(1+m)^{|\gamma|}$ if $\gamma\le0$.  Let $a,b\ge0$.  If
$\gamma\ge0$, the quotient is bounded when $a>0$, and also when $a=0$
provided that $\gamma=0$; if $\gamma\le0$, it is bounded when $b>0$,
and also when $b=0$ provided that $\gamma=0$.  This proves the
sufficiency.  Conversely, take $z=o$.  If $a<0$, or $a=0$ and
$\gamma>0$, we let $m=0$ and $j\to\infty$, and the quotient is
$\simeq e^{|a|j}(1+j)^\gamma\to\infty$; if $b<0$, or $b=0$ and
$\gamma<0$, we let $j=0$ and $m\to\infty$.  The last assertion follows by
taking the union over $\eps>0$.

(b) By \cref{lem:shadow}\ref{lem:shadow:c} and H\"older's inequality,
\[
   ce^{2\rho i}\le|\Omega_z(i)|=\int_{\Omega_z(i)}w^{1/p}w^{-1/p}\,
   d\mu\le w(\Omega_z(i))^{1/p}\sigma(\Omega_z(i))^{1/p'},
\]
that is, $w(\Omega_z(i))\sigma(\Omega_z(i))^{p-1}\ge ce^{2\rho pi}$.
Taking $j=m=i$ in \cref{def:class}, we get $ce^{2\rho pi}\le
[w]_{\Ab_{p,\eps}}e^{4\rho(p-\eps)i}$ for all $i$, which forces
$p\le2(p-\eps)$, i.e.\ $\eps\le p/2$.  Conversely, if $\eps\le p/2$, then
$\alpha:=\rho(p-2)$ satisfies $2\rho(\eps-1)\le\alpha\le2\rho(p-1-\eps)$,
so $\ww_{\rho(p-2),0}\in\Ab_{p,\eps}$ by (a).

(c) Let $t=(q-1)/(p-1)>1$, so that $\sigma_q=\sigma_p^{1/t}$.  H\"older's
inequality with exponents $t,t'$ gives $\sigma_q(\Omega)^{q-1}\le
|\Omega|^{q-p}\sigma_p(\Omega)^{p-1}$.  With $\Omega=\Omega_z(m)$ and
$|\Omega_z(m)|\lesssim e^{2\rho m}$, the quotient of \cref{def:class}
for $q$ is at most $C[w]_{\Ab_{p,\eps}}e^{-2\rho(q-p)j}$.  Moreover
$\ww_{2\rho(p-1),0}\in\Ab_q\setminus\Ab_p$ by (a).

(d) Let $w\in\Ab_{p,\eps}\cap\Aloc$.  By \ref{item_L31},
$w\in A_{r,\mathrm{loc}}(\Bn)$ for $r<p$ close to $p$, and $\sigma=
\sigma_p$ satisfies a local reverse H\"older inequality with exponent
$1+\eta$.  Choose $r\in(\max\{1,p-\eps\},p)$ with
$t:=(p-1)/(r-1)\le1+\eta$, and put $\eps_r:=\eps-(p-r)>0$; then
$\sigma_r=\sigma^t$.  Fix $z$ and $m$, and let $Y$ be a maximal
$\frac12$-separated subset of $\Omega_z(m)$.  The balls
$B_y$, $y\in Y$, cover $\Omega_z(m)$, they are contained in
$\Omega^{(s_0+1)}_z(m)$ by \cref{lem:shadow}\ref{lem:shadow:e}, and they have bounded
overlap, with an overlap constant depending only on $n$ (by
\eqref{eq:ballvol}, a point lies in at most $|B(o,\frac54)|/|B(o,\frac14)|$
of them).  By the reverse H\"older inequality and $|B_y|\simeq1$,
\[
   \sigma_r(\Omega_z(m))\le\sum_{y\in Y}\int_{B_y}\sigma^t\,d\mu\lesssim
   \sum_{y\in Y}\sigma(B_{y})^t\le\Bigl(\sum_{y\in Y}
   \sigma(B_{y})\Bigr)^t\lesssim\sigma\bigl(\Omega^{(s_0+1)}_z
   (m)\bigr)^t,
\]
where in the third inequality we used $\ell^1\subset\ell^t$, and in the
last one the bounded overlap of the balls $B_y$.  Hence
$\sigma_r(\Omega_z(m))^{r-1}\lesssim\sigma(\Omega^{(s_0+1)}_z(m))^{p-1}$,
and by \cref{cor:slack},
\[
   w(\Omega_z(j))\,\sigma_r(\Omega_z(m))^{r-1}\lesssim[w]_{\Ab_{p,\eps}}
   e^{2\rho(p-\eps)(j+m)}=[w]_{\Ab_{p,\eps}}\,
   e^{2\rho(r-\eps_r)(j+m)} .\qedhere
\]
\end{proof}

\begin{remark}[One slice is not enough]\label{rem:onewindow}
For every $\alpha\in\R$ and $w=\ww_{\alpha,0}$, the computation in (a)
gives $\avg w{\Omega_z(i)}\avg\sigma{\Omega_z(i)}^{p-1}\simeq1$, whereas $M$ is bounded on $L^p(\ww_{\alpha,0})$ only for
$\alpha\in(-2\rho,2\rho(p-1))$ (\cref{thmC}).
\end{remark}

\begin{remark}\label{rem:openness}
The proof of (d) is the only place where the local condition enters the
large-scale class,
and it cannot be
avoided: $\Ab_p$ alone is not open.
Let $w(x):=|x|^{n(p-1)}\log(e/|x|)^{2(p-1)}$ on $B_o$ and $w:=1$
elsewhere.  Then $\sigma=|x|^{-n}\log(e/|x|)^{-2}$ on $B_o$, and
since $t\mapsto t^{-n}\log(e/t)^{-2}$ is decreasing on $(0,1]$ and
equals $1$ at $t=1$, we have $\sigma=1+b$ with $b\ge0$ supported in
$B_o$; moreover $b$ is integrable, since by \eqref{eq:polar} and
\eqref{eq:density}, $\int_{B_o}\sigma\,d\mu\simeq\int_0^1\frac{dt}{t
\log^2(e/t)}<\infty$.  As $w\le C$, we get
$w\in\Ab_{p,\eps}$ for $\eps\le\min\{1,p-1\}$,
and $\Minf$ is bounded on $L^p(w)$ by \cref{thm:largescale}.  But for
$r<p$ the function $\sigma_r=w^{-1/(r-1)}$ is not integrable near $o$,
so that $w\notin\Ab_r$ and $\Minf$ is unbounded on $L^r(w)$
(\cref{lem:testing}).  Of course $w\notin\Aloc$, since
$\avg w{B(o,t)}\avg\sigma{B(o,t)}^{p-1}\simeq\log(e/t)^{p-1}$ for
$t\le1$. 
\end{remark}

\section{General weights: proof of Theorem~\ref{thmA}}\label{sec:thmA}

The sufficiency of the two-layer condition for $\Minf$ follows directly
from the decomposition of spheres and the norm bounds of
\cref{sec:directional} (\cref{thm:largescale}).  The main work concerns
the necessity, which we derive from a testing condition for $\Minf$ in
three steps: the ancestor tails decay (\cref{lem:Qtail}), the
descendant averages decay (\cref{prop:Pgeom}), and the two decays are
combined by the H\"older link \eqref{eq:PQlink} (\cref{prop:necessity}).
None of these steps uses the local part of $M$.

\subsection{The testing condition at large scales}
Let $1<p<\infty$.  If $\Minf$ is bounded on $L^p(w)$, then testing it on
the functions $\sigma\Chi_B$, as in Sawyer's theorem, bounds
$\|\Minf(\sigma\Chi_B)\|^p_{L^p(w)}$ by a multiple of $\sigma(B)$.  We
only use balls of radius at least $1$, and define
\begin{equation}\label{eq:testing}
   [w]_{\mathbb{T}_p}:=\sup\left\{\frac{\|\Minf(\sigma\Chi_B)\|^p_{L^p(w)}}
   {\sigma(B)}:\ B=B(z,r),\ z\in\Bn,\ r\ge1\right\}
   \quad\text{if }\sigma\in L^1_{\mathrm{loc}}(\Bn),
\end{equation}
and $[w]_{\mathbb{T}_p}:=\infty$ otherwise.  Let $\sigma_N:=\min\{\sigma,
N\}$, so that $\int_B\sigma_N^pw\,d\mu\le\sigma_N(B)$.

\begin{lemma}\label{lem:testing}
Let $1<p<\infty$ and $C_0>0$, and suppose that
$\|\Minf(\sigma_N\Chi_B)\|^p_{L^p(w)}\le C_0\,\sigma_N(B)$ for every
ball $B$ of radius $r\ge1$ and every $N\in\N$.  Then
$\sigma\in L^1_{\mathrm{loc}}(\Bn)$ and $[w]_{\mathbb{T}_p}\le C_0$.
In particular $[w]_{\mathbb{T}_p}\le\|\Minf\|^p_{L^p(w)\to L^p(w)}$, and
$[w]_{\mathbb{T}_p}\lesssim1$ under \cref{thmA}\ref{thmA:iv}.
\end{lemma}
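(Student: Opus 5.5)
The plan has two parts. First we show that the truncated testing inequality forces $\sigma\in L^1_{\mathrm{loc}}$. Then we pass from $\sigma_N$ to $\sigma$ by monotone convergence. The "in particular" clauses are then immediate.

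\emph{Local integrability.} Fix a ball $B=B(z,1)$. Every $x$ with $d(x,z)\le 1$ satisfies $B\subset B(x,3)$, and $3>2$. Hence
\[
   \Minf(\sigma_N\Chi_B)(x)\ge A_3(\sigma_N\Chi_B)(x)\ge c\,\sigma_N(B)\quad\text{on } B ,
\]
with $c=|B(o,3)|^{-1}$. Plugging this into the hypothesis gives
\[
   c^p\,w(B)\,\sigma_N(B)^p\le C_0\,\sigma_N(B),\qquad\text{that is,}\qquad \sigma_N(B)^{p-1}\le C_0c^{-p}/w(B).
\]
Here $w(B)>0$ because $w$ is positive, and $\sigma_N(B)<\infty$ because $\sigma_N\le N$, so the division is legitimate. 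Letting $N\to\infty$ and using monotone convergence, $\sigma(B)\le (C_0c^{-p}/w(B))^{1/(p-1)}<\infty$. Every compact set is covered by finitely many unit balls, so $\sigma\in L^1_{\mathrm{loc}}$.

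\emph{Passing to the limit.} Now let $B$ have radius $r\ge1$. Since $\sigma$ is locally integrable and $\sigma_N\uparrow\sigma$, monotone convergence gives $A_r(\sigma_N\Chi_B)(x)\uparrow A_r(\sigma\Chi_B)(x)$ for each $r$ and $x$. A supremum of increasing limits is the limit of the suprema, so $\Minf(\sigma_N\Chi_B)\uparrow\Minf(\sigma\Chi_B)$ pointwise. A second application of monotone convergence, this time in $L^p(w)$, yields
\[
   \|\Minf(\sigma\Chi_B)\|^p_{L^p(w)}\le C_0\lim_N\sigma_N(B)=C_0\,\sigma(B),
\]
so $[w]_{\mathbb T_p}\le C_0$.

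\emph{The ``in particular'' clauses.} If $\Minf$ is bounded on $L^p(w)$, we use
\[
   \|\sigma_N\Chi_B\|^p_{L^p(w)}=\int_B\sigma_N^pw\,d\mu\le\sigma_N(B).
\]
The last inequality holds because $\sigma_N^p w\le\sigma_N\sigma^{p-1}w=\sigma_N$. Together with boundedness this gives the hypothesis with $C_0=\|\Minf\|^p$. Under \cref{thmA}\ref{thmA:iv}, with the stated convention of using $\sigma_N$ uniformly in $N$, we have $\Minf\le M$, so the hypothesis holds with $C_0\lesssim1$.

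The only real subtlety is the first step. One must check that the truncation makes all quantities finite before dividing, and that the ball $B(x,3)$ has radius greater than $2$, so that $\Minf$ and not only $\Mz$ sees it. Neither point is a genuine obstacle.
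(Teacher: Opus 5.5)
Your proposal is correct and follows the paper's argument. You bound $\Minf(\sigma_N\Chi_B)$ from below on $B$ by an average over a ball of radius greater than $2$ that contains $B$, use the hypothesis to get $\sigma(B)<\infty$, and then pass from $\sigma_N$ to $\sigma$ by monotone convergence. The only cosmetic difference is that you obtain local integrability from unit balls and $B(x,3)$, whereas the paper takes each ball $B$ of radius $r\ge1$ with $B(x,\max\{2r,3\})$ and gets $\avg{w}{B}\avg{\sigma_N}{B}^{p-1}\le c_r^{-p}C_0$ directly on every such ball.
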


\begin{proof}
Let $B$ have radius $r\ge1$.  For $x\in B$ we have $B\subset
B(x,\max\{2r,3\})$, so that $\Minf(\sigma_N\Chi_B)\ge c_r\sigma_N(B)/|B|$
on $B$; hence $\avg wB\avg{\sigma_N}B^{p-1}\le c_r^{-p}C_0$, so
$\sigma(B)<\infty$, and monotone convergence gives the claim.  The
hypothesis holds with $C_0=\|\Minf\|^p$, and under
\cref{thmA}\ref{thmA:iv} (understood, as usual, with $\sigma_N$ in place of
$\sigma$ if $\sigma(B)=\infty$).
\end{proof}

The large-scale operator controls the $A_p$ condition at unit scales:

\begin{lemma}\label{lem:unitAp}
Let $1<p<\infty$, $[w]_{\mathbb{T}_p}<\infty$ and $R_0\ge1$.  Then:
\begin{enumerate}[label=\textup{(\alph*)}, leftmargin=2.3em]
\item\label{lem:unitAp:a} $\avg{w}{B}\avg{\sigma}{B}^{p-1}\le C[w]_{\mathbb{T}_p}$ for every
      ball $B$ of radius $r\in[1,R_0]$;
\item\label{lem:unitAp:b} $w(B)\le C[w]_{\mathbb{T}_p}\,w(B_y)$ and $\sigma(B)^{p-1}\le
      C[w]_{\mathbb{T}_p}\,\sigma(B_y)^{p-1}$ whenever $B_y\subset
      B=B(x,R)$ and $R\le R_0$.
\end{enumerate}
Here $C$ depends only on $n$, $p$ and $R_0$.
\end{lemma}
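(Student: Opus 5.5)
For \ref{lem:unitAp:a} we would repeat the argument of \cref{lem:testing} with the constants tracked. Let $B=B(z,r)$ with $1\le r\le R_0$ and put $\sigma_N:=\min\{\sigma,N\}$. For $x\in B$ we have $B\subset B(x,\max\{2r,3\})$, and $\max\{2r,3\}>2$ is a radius admissible for $\Minf$. By \eqref{eq:ballvol}, $|B(x,\max\{2r,3\})|\le C(R_0)|B|$. Hence
\[
   \Minf(\sigma_N\Chi_B)\ge c(R_0)\,\sigma_N(B)/|B| \quad\text{on } B .
\]
Integrating the $p$-th power against $w$ over $B$ and using the definition of $[w]_{\mathbb{T}_p}$ (with $\sigma_N\le\sigma$ and \cref{lem:testing}) gives
\[
   w(B)\,\avg{\sigma_N}B^{p}\lesssim[w]_{\mathbb{T}_p}\,\sigma_N(B).
\]
Dividing by $|B|\,\avg{\sigma_N}B$ and letting $N\to\infty$ gives (a).

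For \ref{lem:unitAp:b} we would use (a) together with H\"older's inequality on the unit ball $B_y$, which is again a ball of radius $1\in[1,R_0]$. Suppose $B_y\subset B=B(x,R)$ with $R\le R_0$. Necessarily $R\ge1$, since $B_y$ has radius $1$, so (a) applies to $B$. H\"older's inequality gives $|B_y|^p\le w(B_y)\,\sigma(B_y)^{p-1}$. Hence
\[
   w(B)\le C[w]_{\mathbb{T}_p}\,|B|^p\,\sigma(B)^{1-p}
   \le C[w]_{\mathbb{T}_p}\,|B|^p\,\sigma(B_y)^{1-p}
   \le C[w]_{\mathbb{T}_p}\,\frac{|B|^p}{|B_y|^p}\,w(B_y).
\]
Here the middle step uses $\sigma(B_y)\le\sigma(B)$. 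Since $|B|\le C(R_0)$ and $|B_y|=|B_o|$, this is the first inequality of (b). The second follows symmetrically:
\[
   \sigma(B)^{p-1}\le C[w]_{\mathbb{T}_p}\,|B|^p/w(B)\le C[w]_{\mathbb{T}_p}\,|B|^p/w(B_y)\le C[w]_{\mathbb{T}_p}\,(|B|/|B_y|)^p\,\sigma(B_y)^{p-1}.
\]

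The only point needing care is the case $\sigma(B)=\infty$. This is excluded by \cref{lem:testing}, which gives $\sigma\in L^1_{\mathrm{loc}}$ whenever $[w]_{\mathbb{T}_p}<\infty$. Likewise $w(B_y)>0$ because $w$ is a positive weight. No step is a genuine obstacle; the constants depend only on $n$, $p$ and $R_0$ through the volume comparisons in \eqref{eq:ballvol}.
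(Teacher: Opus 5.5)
Your proof is correct and follows the paper's argument. Part (a) applies the testing condition on $B$ via $B\subset B(x,\max\{2r,3\})$, and part (b) is the same H\"older-on-$B_y$ chain combined with (a).

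One small slip in (a): the right-hand side of your intermediate inequality should be $\sigma(B)$, not $\sigma_N(B)$, because the definition \eqref{eq:testing} only controls $\|\Minf(\sigma\Chi_B)\|^p_{L^p(w)}$ by $\sigma(B)$. This is harmless once you let $N\to\infty$. In fact the truncation is unnecessary altogether, since $[w]_{\mathbb{T}_p}<\infty$ already forces $\sigma\in L^1_{\mathrm{loc}}$ by definition, so you can work with $\sigma\Chi_B$ directly as the paper does.
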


\begin{proof}
(a) Let $B=B(z,r)$ with $1\le r\le R_0$, and let $x\in B$.  Then
$B\subset B(x,r')$ with $r':=\max\{2r,3\}>2$, and $|B(x,r')|\le C|B|$ by
\eqref{eq:ballvol}.  Hence $\Minf(\sigma\Chi_B)\ge c\,\sigma(B)/|B|$
on $B$, and the testing condition gives
$c^p\sigma(B)^pw(B)|B|^{-p}\le[w]_{\mathbb{T}_p}\sigma(B)$, that is,
$\avg{w}{B}\avg{\sigma}{B}^{p-1}\le c^{-p}[w]_{\mathbb{T}_p}$.

(b) By H\"older's inequality and (a),
\[
   |B_y|^p\le w(B_y)\,\sigma(B_y)^{p-1}\le w(B_y)\,\sigma(B)^{p-1}\le
   C[w]_{\mathbb{T}_p}\,w(B_y)\frac{|B|^p}{w(B)},
\]
which gives the first inequality since $|B|\le C|B_y|$.  Similarly,
\[
   |B_y|^p\le w(B)\,\sigma(B_y)^{p-1}\le C[w]_{\mathbb{T}_p}|B|^p\,
   \sigma(B)^{1-p}\sigma(B_y)^{p-1},
\]
which gives the second inequality.
\end{proof}

\begin{remark}\label{rem:large-scale-ap}
Thus $[w]_{\mathbb{T}_p}<\infty$ gives the $A_p$ condition, and the
doubling of $w$ and $\sigma$, at unit scales, but no condition at
scales $r<1$ (\cref{ex:loc}); this is why $w\in\Aloc$ has to be added
for $M$ in \cref{thmA}.
\end{remark}

\subsection{The ancestor tails}\label{subsec:Qtail}
For $x\in\Bn$, $s\ge s_0$ and $k\in\Zp$ we consider the tails of the
sums of the quantities $\QQ^{(s)}_j[w](x)$ of \eqref{eq:alpha},
\[
   \Phi^{(s)}_k(x):=\sum_{j\ge k}\QQ^{(s)}_j[w](x)=\sigma(B_x)^{p-1}
   \sum_{j\ge k}e^{-2\rho pj}\,w\bigl(\Omega^{(s)}_x(j)\bigr),
\]
and we write $\Phi_k(x)$ for the same quantity defined with the slices
$\Omega_x(j)$ of \eqref{eq:shadow}.

\begin{lemma}\label{lem:Qtail}
Let $1<p<\infty$ and $[w]_{\mathbb{T}_p}<\infty$.  Then there are
$C<\infty$ and $\eps_0>0$, depending only on $n$, $p$ and
$[w]_{\mathbb{T}_p}$, such that
\[
   \QQ_j[w](x)\le \Phi_j(x)\le C\,e^{-2\rho\eps_0j}\qquad\text{for all }
   x\in\Bn,\ j\in\Zp .
\]
\end{lemma}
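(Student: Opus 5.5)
The first inequality is trivial, since $\QQ_j[w](x)$ is the first term of the tail $\Phi_j(x)$. The work is the exponential decay of $\Phi_j$. I plan two steps: first a uniform bound $\Phi_0(x)\le C$ from testing, then self-improvement of this summability into geometric decay of the tails by a covering and averaging argument (as announced in \cref{rem:exactness}).

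\emph{Step 1: uniform summability.} Fix $x$ and test $\Minf$ on $\sigma_N\Chi_B$ with $B=B(x,1)$ (radius $\ge1$ is allowed). For $y\in\Omega_x(j)$ we have $d(x,y)\le j+s_0$. Hence $B\subset B(y,j+s_0+1)$ and
\[
\Minf(\sigma_N\Chi_B)(y)\gtrsim e^{-2\rho j}\sigma_N(B_x).
\]
The slices $\Omega_x(j)$, $j\in\Zp$, have bounded overlap, since $|y|-|x|\in[j-1,j+1]$. Integrating against $w$ gives
\[
\sigma_N(B_x)^p\sum_je^{-2\rho pj}w(\Omega_x(j))\lesssim[w]_{\mathbb{T}_p}\sigma_N(B_x).
\]
Letting $N\to\infty$ yields $\Phi_0(x)\le C_1[w]_{\mathbb{T}_p}$ uniformly in $x$. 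The same argument with $B=B(x,R)$ for fixed $R$ gives the wider-slice version $\Phi^{(s)}_0\le C_s$. Alternatively, one can pass to wider slices via \cref{lem:slack} and \cref{lem:unitAp}\ref{lem:unitAp:b}, which compare $\sigma(B_x)$ with $\sigma(B_{x'})$ for nearby centres.

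\emph{Step 2: self-improvement.} I aim for $\Phi_{k+K}(x)\le\frac12\Phi_k(x')$ up to controlled losses, with $K$ a fixed large integer. Iterating this would give $\Phi_j\le C2^{-j/K}$. For $j\ge k+K$, decompose $\Omega_x(j)$ through the layer at depth $k$. Every $y\in\Omega_x(j)$ lies in $\Omega_{u}(j-k)$, suitably widened, for some $u$ in a maximal $\frac12$-separated set $U$ of points of $\Omega^{(s)}_x(k)$ near the ancestor $a_{j-k}(y)$; this uses \cref{lem:shadow}\ref{lem:shadow:f},\ref{lem:shadow:e} and \cref{lem:slack}. Then
\[
\sum_{j\ge k+K}e^{-2\rho pj}w(\Omega_x(j))\lesssim\sum_{u\in U}e^{-2\rho pk}\sum_{i\ge K}e^{-2\rho pi}w(\Omega^{(s)}_u(i)),
\]
and by the definition of $\Phi^{(s)}_K(u)$ this is at most
\[
\sum_{u\in U}e^{-2\rho pk}\,\sigma(B_u)^{1-p}\,\Phi^{(s)}_K(u).
\]
Now apply Step 1 at $u$ in the form of a smallness statement. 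I would use Step 1 as $\Phi^{(s)}_K(u)\le\Phi^{(s)}_0(u)$ together with the lower bound $\Phi^{(s)}_0(u)-\Phi^{(s)}_K(u)\ge\QQ_0$-type terms. Combined with H\"older ($w(B_u)\sigma(B_u)^{p-1}\ge c$), this gives $\Phi^{(s)}_K(u)\le(1-\delta)\Phi^{(s)}_0(u)$ for a fixed $\delta>0$. Equivalently, the tail beyond $K$ is a fixed fraction of the total, because the total is bounded by $C$ while the head is bounded below by a constant times $w(B_u)\sigma(B_u)^{p-1}e^{0}\ge c$. Summing back, $\sum_u e^{-2\rho pk}\sigma(B_u)^{1-p}\Phi^{(s)}_0(u)$ is comparable to $\sigma(B_x)^{1-p}\Phi_{k-O(1)}(x)$, using bounded overlap (\cref{lem:shadow}\ref{lem:shadow:d}) and \cref{cor:slack} to return to thin slices. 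This produces a recursion of the form
\[
\Phi_{k+K}(x)\le(1-\delta)C'\Phi_{k-c}(x).
\]
Alternatively, a version with $\Phi$ of the head $\ge c$ gives the Gronwall-type inequality $\Phi_{k+K}\le\Phi_k-c'\Phi_{k+K}$, hence $\Phi_{k+K}\le\theta\Phi_k$ with $\theta<1$.

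\emph{Main obstacle.} The hard part is Step 2, specifically making the constant in the recursion strictly less than $1$. The widening of slices and the replacement of $\sigma(B_x)$ by $\sigma(B_u)$ each cost constants, and these can overwhelm $1-\delta$. I would handle this by taking $K$ large and using the cleaner form $\sum_{k}\Phi_k(x)\lesssim\Phi_0$-type bounds. Concretely, I would first prove $\sum_{k\ge0}\Phi_{k}(x)\le C$ uniformly in $x$, which is equivalent to $\sum_j(j+1)\QQ_j\le C$, by applying Step 1's argument to each depth-$k$ layer and summing. Iterating gives $\sum_j\binom{j+r}{r}\QQ_j\le C^r$, and exponential decay then follows by choosing $r\simeq\eps j$, as a Chernoff-type bound.
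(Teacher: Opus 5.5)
Your first inequality and Step~1 are correct and agree with the paper's first step, which obtains $\sup_x\Phi^{(s)}_0(x)\le C_s[w]_{\mathbb{T}_p}$ by testing on $\sigma\Chi_{B_x}$. The gap is in Step~2, and the obstacle you name there is never actually overcome.

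\emph{Why your recursion does not close.} The gain $\Phi^{(s)}_K(u)\le(1-\delta)\Phi^{(s)}_0(u)$ comes only from the term $i=0$, via $\QQ^{(s)}_0[w](u)\ge w(B_u)\sigma(B_u)^{p-1}\ge c$. The terms $1\le i<K$ have no lower bound, so $\delta\simeq c/C$ is fixed and does not improve as $K$ grows. The covering, the bounded overlap, the change of width and the change of anchor produce a factor $C'$ that is also independent of $K$. Nothing forces $C'(1-\delta)<1$, and taking $K$ large does not help. The Gronwall variant has the same defect. It needs $\sum_{k\le i<k+K}\QQ_i[w](x)\ge c'\Phi_{k+K}(x)$, a lower bound on the head at depth $k$ \emph{relative} to the tail. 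The absolute bound at depth $0$ does not give this.

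\emph{The missing step.} Use Step~1 at the points $u$ of the depth-$k$ layer as an upper bound, not as a smallness statement. Then trade $\sigma(B_u)^{1-p}$ for $w$ by H\"older on the unit ball, $\sigma(B_u)^{1-p}\le|B_u|^{-p}w(B_u)$. With your own decomposition, together with $B_u\subset\Omega^{(s+1)}_x(k)$ and bounded overlap, this gives
\[
   \Phi_{k+1}(x)\lesssim\sigma(B_x)^{p-1}e^{-2\rho pk}\sum_{u\in U}\sigma(B_u)^{1-p}\,\Phi^{(s)}_0(u)
   \lesssim\sigma(B_x)^{p-1}e^{-2\rho pk}\sum_{u\in U}w(B_u)
   \lesssim\QQ^{(s+1)}_k[w](x).
\]
So the whole tail beyond depth $k$ is dominated by the \emph{single} layer term at depth $k$. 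The paper proves this with a Fubini argument over balls $B(y',\frac12)$ instead of a net.

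\emph{How decay follows.} Exponential decay then comes from the monotonicity of $k\mapsto\Phi_k(x)$ and averaging over a window of length $J$. With $h:=h_{s+1}$ as in \cref{lem:slack},
\[
   J\,\Phi_{N+J}(x)\le\sum_{k=N}^{N+J-1}\Phi_{k+1}(x)\le C_0\,\Phi^{(s+1)}_N(x)\le C_0C_1\,\Phi_{N-h}(a_h(x)).
\]
The last step moves the anchor using \cref{lem:slack} and \cref{lem:unitAp}\ref{lem:unitAp:b}. Hence $g_k:=\sup_x\Phi_k(x)$ satisfies $g_{N+J}\le\frac12g_{N-h}$ once $J\ge2C_0C_1$. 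The constant losses are beaten by the length of the window, not by a gain $\delta$.

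\emph{The Chernoff fallback.} It rests on the same inequality. The bound $\sum_k\Phi_k\le C$ is exactly this layer inequality summed over $k$. ``Applying Step~1 to each depth-$k$ layer'' does not by itself explain how $\sigma(B_u)^{p-1}$ is replaced by $\sigma(B_x)^{p-1}$ for $u$ at distance $\approx k$ from $x$, since \cref{lem:unitAp} only compares nearby centres. The iteration to $\sum_j\binom{j+r}{r}\QQ_j\le C^r$ is also not carried out. Each step changes width and anchor and shifts indices by $O(1)$. A shift in the wrong direction multiplies the binomial weights by factors growing with $r$, so the iteration would need extra care, for instance splitting off $j<r$. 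Once the layer inequality is available, the window argument makes this unnecessary.
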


\begin{proof}
We first show that the sums are bounded.  Let $s\ge s_0$ and
$x\in\Bn$.  If $y\in\Omega^{(s)}_x(j)$, then $B_x\subset B(y,j+s+1)$
and $j+s+1>2$, so that $\Minf(\sigma\Chi_{B_x})(y)\ge
c_se^{-2\rho j}\sigma(B_x)$.  Since $|y|-|x|-s\le j\le|y|-|x|+s$ for
such $j$, a point $y$ belongs to $\Omega^{(s)}_x(j)$ for at most
$2s+1$ values of $j$.  Therefore, by \eqref{eq:testing},
\[
   \sigma(B_x)^p\sum_{j\ge0}e^{-2\rho pj}w\bigl(\Omega^{(s)}_x(j)\bigr)\le
   C_s\|\Minf(\sigma\Chi_{B_x})\|^p_{L^p(w)}\le C_s[w]_{\mathbb{T}_p}\,
   \sigma(B_x),
\]
that is,
\begin{equation}\label{eq:Qbound}
   \sup_{x\in\Bn}\Phi^{(s)}_0(x)\le C_s[w]_{\mathbb{T}_p}<\infty .
\end{equation}
We claim the following inequality holds
\begin{equation}\label{eq:Qcover}
   \Phi_{k+1}(x)\le C_0\,\QQ^{(2s_0+4)}_k[w](x),\qquad x\in\Bn,\
   k\in\Zp,
\end{equation}
with $C_0$ depending only on $n$, $p$ and $[w]_{\mathbb{T}_p}$.  To
prove it, let $m\ge1$ and $x'\in\Omega_x(k+m)$.  Since $|x'|\ge|x|+k+
m-1\ge|x|+k$, we may consider the point $y':=\gamma_{x'}(|x|+k)$ of
$[0,x']$, at distance $|x'|-|x|-k\in[m-1,m+1]$ from $x'$.  We claim
that every $y\in B(y',\frac12)$ belongs to the set
\begin{equation}\label{eq:Qcover-set}
   \bigl\{y\in\Omega^{(2s_0+3)}_x(k):\ x'\in\Omega^{(s_0)}_y(m)\bigr\}.
\end{equation}
First, $|x'|-|y|\in[m-\frac32,m+\frac32]$ and $d(y,x')\le d(y',x')+
\frac12\le m+\frac32$, so that $x'\in\Omega^{(s_0)}_y(m)$.  Next,
$|y|-|x|\in[k-\frac12,k+\frac12]$.  Finally, by \cref{lem:shadow}\ref{lem:shadow:a},
$\gp x{x'}\ge|x|-s_0$, so that the point $a_{s_0}(x)=\gamma_x
((|x|-s_0)_+)$ satisfies $d(a_{s_0}(x),\gamma_{x'}((|x|-s_0)_+))\le2$
by \eqref{eq:fellow}.  Since $(|x|-s_0)_+\le|x|\le|y'|$, going from $x$
to $a_{s_0}(x)$, then to $\gamma_{x'}((|x|-s_0)_+)$, and then along
$[0,x']$ to $y'$, we get
\[
   d(x,y')\le s_0+2+\bigl(|y'|-(|x|-s_0)_+\bigr)\le s_0+2+(k+s_0)
   =k+2s_0+2 ,
\]
and so $d(x,y)\le k+2s_0+\frac52$.  Hence $y\in\Omega^{(2s_0+3)}_x(k)$,
which proves the claim.  Thus, for every $x'\in\Omega_x(k+m)$, the set
\eqref{eq:Qcover-set} contains a ball of radius $\frac12$, and by
Fubini's theorem,
\[
   w\bigl(\Omega_x(k+m)\bigr)\le\frac1{|B(o,\frac12)|}
   \int_{\Omega^{(2s_0+3)}_x(k)}w\bigl(\Omega^{(s_0)}_y(m)\bigr)\,d\mu(y) .
\]
Multiplying by $e^{-2\rho p(k+m)}$, summing over $m\ge1$ and using
\eqref{eq:Qbound} with $s=s_0$, we get
\[
   \begin{aligned}
   \sum_{m\ge1}e^{-2\rho p(k+m)}w\bigl(\Omega_x(k+m)\bigr)
   &\le\frac{e^{-2\rho pk}}{|B(o,\frac12)|}\int_{\Omega^{(2s_0+3)}_x(k)}
   \Phi^{(s_0)}_1(y)\,\sigma(B_y)^{1-p}\,d\mu(y)\\
   &\le C\,e^{-2\rho pk}\int_{\Omega^{(2s_0+3)}_x(k)}\sigma(B_y)^{1-p}\,
   d\mu(y).
   \end{aligned}
\]
By H\"older's inequality, $\sigma(B_y)^{1-p}\le|B_y|^{-p}w(B_y)$;
since the $1$-neighbourhood of $\Omega^{(2s_0+3)}_x(k)$ lies in
$\Omega^{(2s_0+4)}_x(k)$, Fubini's theorem bounds the integral of
$w(B_y)$ over $\Omega^{(2s_0+3)}_x(k)$ by $|B_o|\,w(\Omega^{(2s_0+4)}_x
(k))$.  Multiplying by $\sigma(B_x)^{p-1}$ we obtain \eqref{eq:Qcover}.

Let now $s:=2s_0+4$.  Since $\Phi_k(x)$ decreases in $k$, averaging
\eqref{eq:Qcover} over $k$ gives, for all $N\in\Zp$ and $J\in\N$,
\begin{equation}\label{eq:Qaverage}
   J\,\Phi_{N+J}(x)\le\sum_{k=N}^{N+J-1}\Phi_{k+1}(x)\le
   C_0\sum_{k=N}^{N+J-1}\QQ^{(s)}_k[w](x)\le C_0\,\Phi^{(s)}_N(x).
\end{equation}
The right-hand side involves wider slices, and we return to the slices
\eqref{eq:shadow} by moving the anchor: let $h_s=\ip s+2$ and $z'=a_{h_s}(x)$
be as in \cref{lem:slack}.  By that lemma,
$w(\Omega^{(s)}_x(k))\le\sum_{|i|\le2\ip s+4}w(\Omega_{z'}(k+h_s+i))$,
where terms with $k+h_s+i<0$ are omitted.  Each index $k+h_s+i$ arises from
at most $4\ip s+9$ pairs $(k,i)$, and $e^{-2\rho pk}\le
e^{2\rho p(3\ip s+6)}e^{-2\rho p(k+h_s+i)}$.  Moreover $B_x\subset
B(z',h_s+1)$, so that $\sigma(B_x)^{p-1}\le C[w]_{\mathbb{T}_p}\,
\sigma(B_{z'})^{p-1}$ by \cref{lem:unitAp}\ref{lem:unitAp:b} with $R_0=h_s+1$.  Since
$k+h_s+i\ge k-h_s$, we conclude that
\begin{equation}\label{eq:Qshift}
   \Phi^{(s)}_N(x)\le C_1\,\Phi_{\max\{0,N-h_s\}}(z'),
\end{equation}
with $C_1$ depending only on $n$, $p$ and $[w]_{\mathbb{T}_p}$.

Finally, let $g_k:=\sup_{x\in\Bn}\Phi_k(x)$, which is finite by
\eqref{eq:Qbound} and nonincreasing in $k$, and choose $J\in\N$ with
$J\ge2C_0C_1$.  By \eqref{eq:Qaverage} and \eqref{eq:Qshift},
$g_{N+J}\le\frac12g_{N-h_s}$ for $N\ge h_s$.  Taking $N=h_s+i(J+h_s)$, we get
$g_{(i+1)(J+h_s)}\le\frac12g_{i(J+h_s)}$ for all $i\in\Zp$, and hence
$g_k\le2g_0e^{-2\rho\eps_0k}$ for all $k$, where $e^{-2\rho\eps_0}:=
2^{-1/(J+h_s)}$.  Since $\QQ_j[w]\le \Phi_j$, the proof is complete.
\end{proof}

\subsection{The descendant averages}
\begin{lemma}\label{lem:Pdecay}
Let $1<p<\infty$, $[w]_{\mathbb{T}_p}<\infty$ and $s\ge s_0$.  Then
\[
   \sup_{z\in\Bn}\PP^{(s)}_m[w](z)\le C\,(m+1)^{-1/p'},\qquad m\in\Zp,
\]
where $C$ depends only on $n$, $p$, $s$ and $[w]_{\mathbb{T}_p}$.
\end{lemma}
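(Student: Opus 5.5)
The bound $(m+1)^{-1/p'}$ is a summability statement in disguise. I would show that for fixed $z$ and a \emph{family} of intermediate points the quantities $\PP^{(s)}_k$ are summable, and then use the semigroup-type geometry to see that $\PP^{(s)}_m$ is dominated by each of about $m$ of these terms. The testing function will be $f=\sigma_N\Chi_{\Omega}$ with $\Omega:=\Omega^{(s)}_z(m)$. By \cref{lem:testing} and monotone convergence it suffices to work with $\sigma$ itself, using $\|\Minf(\sigma\Chi_{\Omega})\|^p_{L^p(w)}\lesssim[w]_{\mathbb T_p}\sigma(\Omega)$. This needs testing on $\Omega$ rather than on a ball, so I would first cover $\Omega$ by a ball $B(z,m+s)$ of comparable volume. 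That is the wrong comparison, since $\sigma(B)$ may be much larger than $\sigma(\Omega)$. So instead I would apply the testing inequality directly to the ball $B(z,m+s)$ but only evaluate $\Minf$ along the geodesic between $z$ and $\Omega$.

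\emph{Step 1 (the ancestors of the slice).} For $k=0,1,\dots,m$, let $z_k$ be the point at distance $k$ from $z$ along the extension of $[0,z]$. Then $\Omega$ is, up to a change of width, contained in the slice $\Omega^{(s')}_{z_k}(m-k)$; this follows from the fellow traveller property \eqref{eq:fellow} and \cref{lem:shadow}\ref{lem:shadow:a},\ref{lem:shadow:e}. Hence for $x\in B_{z_k}$ the ball $B(x,m-k+s'')$ contains $\Omega$, and
\[
\Minf(\sigma\Chi_{\Omega})(x)\gtrsim e^{-2\rho(m-k)}\sigma(\Omega),\qquad x\in B_{z_k}.
\]
The balls $B_{z_k}$ have bounded overlap.

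\emph{Step 2 (summation).} Summing over $k$ gives
\[
\sigma(\Omega)^p\sum_{k=0}^m e^{-2\rho p(m-k)}w(B_{z_k})\lesssim\|\Minf(\sigma\Chi_\Omega)\|^p_{L^p(w)}.
\]
Testing on sets $\Omega$ instead of balls still has to be justified. For this I would use the bound on $P$ norms: $\Minf$ dominates $P^{(s+1)}_{j}$ pointwise up to constants, and so does $\Omega\subset$ ball. I expect the cleanest route is to test the ball $B=B(z_m',1)$-scale pieces. Concretely, cover $\Omega$ by unit balls $B_y$, $y\in Y$, with bounded overlap and apply the testing condition on each. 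I flag this as the main obstacle: passing from ball testing to slice testing without losing the $\sigma(\Omega)$ normalisation. If the direct route fails, I would instead dualise via the norm of $P^{(s)}$ (\cref{lem:normbounds}\ref{lem:normbounds:b}) and use $\|P^{(s)}_{m}\|\lesssim$ a testing constant.

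\emph{Step 3 (comparing the terms).} Granting Step 2, I obtain $\sum_{k}\PP^{(s')}_{m-k}[w](z_k)\cdot(\text{ratio})\lesssim1$. To compare the terms, I would use the unit-scale doubling of \cref{lem:unitAp}\ref{lem:unitAp:b}. Then $w(B_{z_k})\,\sigma(\Omega)^{p-1}e^{-2\rho p(m-k)}$ should be compared with $\PP^{(s)}_m[w](z)$. Along the geodesic there is a H\"older trade-off: $w(B_z)$ versus $w(B_{z_k})$ is controlled by $\QQ$ through \eqref{eq:PQlink} and \cref{lem:Qtail}. Writing $a_k:=e^{-2\rho pk}w(B_{z_k})/w(B_z)$, Step 2 gives
\[
\PP^{(s)}_m[w](z)\,\sum_k a_k^{-1}\cdots\lesssim1.
\]
By H\"older's inequality, $\sum_k a_k\lesssim1$ (from \eqref{eq:Qbound}) together with the summed testing bound yields
\[
(m+1)\le\Bigl(\sum a_k\Bigr)^{1/p}\Bigl(\sum a_k^{-p'/p}\Bigr)^{1/p'}.
\]
This produces the exponent $1/p'$, giving $\PP^{(s)}_m[w](z)\lesssim(m+1)^{-1/p'}$.
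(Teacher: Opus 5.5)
Your Step~1 is false, and the rest of the plan depends on it. The inclusion $\Omega^{(s)}_z(m)\subset\Omega^{(s')}_{z_k}(m-k)$ fails as soon as $k$ exceeds a constant. By \cref{lem:shadow}\ref{lem:shadow:b},\ref{lem:shadow:c}, the left side is a polar box of aperture $\simeq e^{-|z|}$ and volume $\simeq e^{2\rho m}$. The right side has aperture $\simeq e^{-|z|-k}$ and volume $\simeq e^{2\rho(m-k)}$. In fact, \cref{lem:angle}\ref{lem:angle:a} gives $\gp{z_k}y\approx|z|$ for a typical $y\in\Omega$, so $d(z_k,y)\approx m+k$, not $m-k$.

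Hence on $B_{z_k}$ you only get $\Minf(\sigma\Chi_\Omega)\gtrsim e^{-2\rho(m-k)}\sigma\bigl(\Omega\cap\Omega^{(s')}_{z_k}(m-k)\bigr)$. For a non-radial weight this piece can carry a negligible part of $\sigma(\Omega)$. Capturing the full $\sigma(\Omega)$ forces radius $\approx m+k$, and then the terms $e^{-2\rho p(m+k)}w(B_{z_k})$ cannot produce a factor $m$. No argument that evaluates $\Minf$ only along one geodesic can work: at depth $k$ the slice $\Omega$ splits into $\simeq e^{2\rho k}$ sub-slices $\Omega_y(m-k)$, with $y$ running over a net of $\Omega_z(k)$, and all of them are needed.

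The later steps do not repair this. Step~2 leaves testing on $\Omega$ unresolved, and $[w]_{\mathbb T_p}$ does not provide it. Step~3 contains no inequality linking the $a_k$ to $\PP^{(s)}_m[w](z)$. The H\"older inequality you write for $m+1$ is correct but says nothing about $\PP$.

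The missing idea is to average over the whole intermediate slice by means of the semigroup inequality.

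\begin{enumerate}[label=\textup{(\arabic*)}, leftmargin=2.3em]
\item Put $R:=\Omega^{(s)}_z(m)$ and, for $\ip s+2\le j\le m$, $g_j:=P^{(s+1)}_{m-j}(\sigma\Chi_R)$. Then \cref{lem:semigroup} gives $\sigma(R)/|R|\lesssim P^{(3s+6)}_j(g_j)(x)$ for $x\in B_z$.
\item Apply H\"older's inequality on $\Omega^{(3s+7)}_z(j)$. Combine it with the uniform bound $\PP^{(3s+7)}_j[w](z)\lesssim[w]_{\mathbb T_p}$, which comes from testing on the ball $B(z,j+3s+7)$. This yields $\PP^{(s)}_m[w](z)\,\sigma(R)\lesssim\int_{\Omega^{(3s+7)}_z(j)}g_j^pw\,d\mu$.
\item Use $g_j\lesssim\Minf(\sigma\Chi_{B(z,m+s)})$ and the bounded overlap in $j$ of the slices $\Omega^{(3s+7)}_z(j)$. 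Summing over $j$ gives $(m-\ip s-1)\,\PP^{(s)}_m[w](z)\,\sigma(R)\lesssim\sigma(B(z,m+s))$.
\item The comparison with the ball, which you discarded as the wrong one, closes the argument. Testing on $B(z,m+s)$ also gives $w(B_z)\,\sigma(B(z,m+s))^{p-1}\lesssim e^{2\rho pm}$, that is, $\bigl(\sigma(B(z,m+s))/\sigma(R)\bigr)^{p-1}\lesssim\PP^{(s)}_m[w](z)^{-1}$.
\end{enumerate}

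So the loss in passing from $R$ to the ball is paid for by $\PP$ itself, and one obtains $(m-\ip s-1)\,\PP^{(s)}_m[w](z)^{p'}\lesssim1$. This self-referential trade-off is where the exponent $1/p'$ comes from.
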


\begin{proof}
Fix $z\in\Bn$ and $m\in\Zp$, and put $R:=\Omega^{(s)}_z(m)$, so that
$R\subset B(z,m+s)$.  For every $s'\ge s_0$ and $x\in B_z$ we have
$B(z,m+s')\subset B(x,m+s'+1)$, so that $\Minf(\sigma\Chi_{B(z,m+s')})
\ge c_{s'}e^{-2\rho m}\sigma(B(z,m+s'))$ on $B_z$, and the testing
condition \eqref{eq:testing} gives
\begin{equation}\label{eq:alpha-unif}
   w(B_z)\,\sigma\bigl(B(z,m+s')\bigr)^{p-1}\le C_{s'}[w]_{\mathbb{T}_p}
   \,e^{2\rho pm};\quad\text{in particular}\quad\PP^{(s')}_m[w](z)\le
   C_{s'}[w]_{\mathbb{T}_p} .
\end{equation}
This proves the lemma for $m\le\ip s+1$.  Let now $m\ge\ip s+2$, and
for $\ip s+2\le j\le m$ put $g_j:=P^{(s+1)}_{m-j}(\sigma\Chi_R)$.  For
$x\in B_z$ we have $R\subset\Omega^{(s+1)}_x(m)$ by
\cref{lem:shadow}\ref{lem:shadow:e}, so that, by \cref{lem:shadow}\ref{lem:shadow:c} and
\cref{lem:semigroup} (applied with $s+1$ in place of $s$, which is
allowed since $j\ge s+1$),
\[
   c_s\frac{\sigma(R)}{|R|}\le P^{(s+1)}_m(\sigma\Chi_R)(x)\le
   C_s\,P^{(3s+6)}_j(g_j)(x).
\]
Since $\Omega^{(3s+6)}_x(j)\subset\Omega^{(3s+7)}_z(j)$ by
\cref{lem:shadow}\ref{lem:shadow:e}, H\"older's inequality gives
\[
   \Bigl(\frac{\sigma(R)}{|R|}\Bigr)^p\le C_se^{-2\rho pj}\,
   \sigma\bigl(\Omega^{(3s+7)}_z(j)\bigr)^{p-1}\int_{\Omega^{(3s+7)}_z(j)}
   g_j^pw\,d\mu ,
\]
and the right-hand side does not depend on $x\in B_z$.  Multiplying by
$w(B_z)$ and using $|R|\le C_se^{2\rho m}$ and \eqref{eq:alpha-unif}
(with $3s+7$ in place of $s'$ and $j$ in place of $m$), we get
\begin{equation}\label{eq:Pstep}
   \PP^{(s)}_m[w](z)\,\sigma(R)\le C_s\,\PP^{(3s+7)}_j[w](z)
   \int_{\Omega^{(3s+7)}_z(j)}g_j^pw\,d\mu\le C_s[w]_{\mathbb{T}_p}
   \int_{\Omega^{(3s+7)}_z(j)}g_j^pw\,d\mu .
\end{equation}
For every $y\in\Bn$ we have $g_j(y)\le C_se^{-2\rho(m-j)}\sigma(R\cap
B(y,m-j+s+1))\le C_s\Minf(\sigma\Chi_{B(z,m+s)})(y)$, because
$m-j+s+1>2$.  Since a point belongs to $\Omega^{(3s+7)}_z(j)$ for at
most $6s+15$ values of $j$, summing \eqref{eq:Pstep} over $\ip s+2\le
j\le m$ gives
\[
   (m-\ip s-1)\,\PP^{(s)}_m[w](z)\,\sigma(R)\le C_s[w]_{\mathbb{T}_p}\,
   \|\Minf(\sigma\Chi_{B(z,m+s)})\|^p_{L^p(w)}\le C_s[w]_{\mathbb{T}_p}^2
   \,\sigma\bigl(B(z,m+s)\bigr).
\]
On the other hand, \eqref{eq:alpha-unif} with $s'=s$ and the definition
of $\PP^{(s)}_m[w](z)$ give $(\sigma(B(z,m+s))/\sigma(R))^{p-1}\le
C_s[w]_{\mathbb{T}_p}/\PP^{(s)}_m[w](z)$.  Combining the two estimates,
$(m-\ip s-1)\,\PP^{(s)}_m[w](z)^{p'}\le C_s[w]_{\mathbb{T}_p}^{2+1/(p-1)}$,
which proves the lemma for $m\ge\ip s+2$.
\end{proof}

\begin{proposition}\label{prop:Pgeom}
Let $1<p<\infty$, $[w]_{\mathbb{T}_p}<\infty$ and $s\ge s_0$.  Then
there are $C<\infty$ and $\eps_1>0$, depending only on $n$, $p$, $s$ and
$[w]_{\mathbb{T}_p}$, with
\[
   \|P^{(s)}_m\|_{L^p(w)\to L^p(w)}\le C\,e^{-2\rho\eps_1m}\quad\text{and}
   \quad\sup_{z\in\Bn}\PP^{(s)}_m[w](z)\le C\,e^{-2\rho p\eps_1m},\qquad
   m\in\Zp .
\]
\end{proposition}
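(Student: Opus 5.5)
The plan is to convert the polynomial decay of \cref{lem:Pdecay} into geometric decay of operator norms through the one-sided semigroup inequality (\cref{lem:semigroup}), and then to transfer the decay back to $\PP$ via \cref{lem:normbounds}. The norms of $P^{(s)}_m$ and the quantities $\sup_z\PP^{(s)}_m[w](z)$ are comparable up to a change of width. Precisely, \cref{lem:normbounds}\ref{lem:normbounds:a} gives $\|P^{(s)}_m\|^p\le C_s\sup_z\PP^{(s+1)}_m[w](z)$. In the other direction, \cref{lem:normbounds}\ref{lem:normbounds:b} gives $\sup_z\PP^{(s)}_m[w](z)\le C_s\|P^{(s+1)}_m\|^p$. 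Combining the first of these with \cref{lem:Pdecay} (at width $s+1$), we get for every width $s\ge s_0$
\[
   \|P^{(s)}_m\|\le C_s\,(m+1)^{-1/(pp')},\qquad m\in\Zp .
\]
So every width has norms tending to zero, although only polynomially.

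The iteration step is as follows. Fix $s\ge s_0$ and set $s'=3s+3$. By \cref{lem:semigroup}, for every integer $k\ge s$ and every $m\in\Zp$,
\[
   \|P^{(s)}_{k+m}\|\le C_s\|P^{(s')}_k\|\,\|P^{(s)}_m\| .
\]
The crucial feature is that the enlarged width $s'$ falls only on the outer factor $k$. That factor is a \emph{fixed} operator whose norm we can make small. We choose $K\ge s$ so large that $C_s\|P^{(s')}_K\|\le\frac12$, which is possible by the polynomial bound above at width $s'$. Put $a_m:=\|P^{(s)}_m\|$. Then $a_{m+K}\le\frac12a_m$ for all $m\in\Zp$. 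The values $a_0,\dots,a_{K-1}$ are bounded, either by \cref{lem:normbounds}\ref{lem:normbounds:a} and \eqref{eq:alpha-unif} or by the polynomial bound. Iterating gives $a_m\le C2^{-\lfloor m/K\rfloor}$, that is, $a_m\le Ce^{-2\rho\eps_1m}$ with $e^{-2\rho\eps_1K}=\frac12$.

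For the second estimate we use \cref{lem:normbounds}\ref{lem:normbounds:b}. It gives $\sup_z\PP^{(s)}_m[w](z)\le C_s\|P^{(s+1)}_m\|^p$. We apply the first part at width $s+1$; this yields its own rate $\eps_1(s+1)$ and constant. Hence $\sup_z\PP^{(s)}_m[w](z)\le Ce^{-2\rho p\eps_1m}$, after replacing $\eps_1$ by the minimum of the two rates obtained at widths $s$ and $s+1$.

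The main point to check is the iteration step, specifically that the constant $C_s$ in \cref{lem:semigroup} does not depend on $k$. This is what allows $K$ to be chosen after $C_s$ is fixed. It holds because $C_s$ there comes only from \cref{lem:shadow}\ref{lem:shadow:c} and $|B(o,\frac12)|$. We also need that \cref{lem:Pdecay} applies at the larger width $s'$ (and $s'+1$), which it does since it is stated for arbitrary $s\ge s_0$, with constants depending on $n,p,s,[w]_{\mathbb{T}_p}$ only. All constants and $\eps_1$ therefore depend only on $n,p,s,[w]_{\mathbb{T}_p}$.
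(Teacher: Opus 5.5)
Your proposal is correct and follows the paper's proof essentially step by step. The paper likewise uses \cref{lem:normbounds}\ref{lem:normbounds:a} with \cref{lem:Pdecay} at widths $s+1$ and $s'+1$ to get bounded and vanishing norms, then fixes an integer $k_0\ge s$ with $C_s\|P^{(s')}_{k_0}\|\le\frac12$ to obtain the halving $\|P^{(s)}_{k_0+m}\|\le\frac12\|P^{(s)}_m\|$. Finally it deduces the bound for $\PP^{(s)}_m$ from \cref{lem:normbounds}\ref{lem:normbounds:b} and the first estimate at width $s+1$, taking the smaller of the two rates.
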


\begin{proof}
Let $s'=3s+3$.  By \cref{lem:normbounds}\ref{lem:normbounds:a} and
\cref{lem:Pdecay} (applied with $s+1$ and $s'+1$),
$\sup_m\|P^{(s)}_m\|<\infty$ and $\|P^{(s')}_k\|\to0$ as
$k\to\infty$.  Choose an integer $k_0\ge s$ with
$C_s\|P^{(s')}_{k_0}\|\le\frac12$, where $C_s$ is the constant of
\cref{lem:semigroup}.  Then $\|P^{(s)}_{k_0+m}\|\le\frac12\|P^{(s)}_m
\|$ for all $m$, which gives the first estimate with
$e^{-2\rho\eps_1}:=2^{-1/k_0}$.  The second estimate follows from
\cref{lem:normbounds}\ref{lem:normbounds:b} and the first estimate with $s+1$ in place of
$s$ (after replacing $\eps_1$ by the smaller of the two numbers so
obtained).
\end{proof}

\subsection{Proof of Theorem~\ref{thmA}}

\begin{proposition}\label{prop:necessity}
Let $1<p<\infty$ and $[w]_{\mathbb{T}_p}<\infty$.  Then
$w\in\Ab_{p,\eps}$ for some $\eps=\eps(n,p,[w]_{\mathbb{T}_p})>0$, and
$[w]_{\Ab_{p,\eps}}\le C(n,p,[w]_{\mathbb{T}_p})$.
\end{proposition}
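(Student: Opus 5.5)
The plan is to combine the two decay estimates already obtained through the H\"older link \eqref{eq:PQlink}. Fix $z\in\Bn$ and $j,m\in\Zp$. By \eqref{eq:PQlink},
\[
   e^{-2\rho p(j+m)}\,w\bigl(\Omega_z(j)\bigr)\,\sigma\bigl(\Omega_z(m)\bigr)^{p-1}
   \le|B_o|^{-p}\,\QQ_j[w](z)\,\PP_m[w](z).
\]
For the factor $\QQ_j[w](z)$ we invoke \cref{lem:Qtail}, which gives $\QQ_j[w](z)\le C e^{-2\rho\eps_0 j}$ uniformly in $z$, with $C,\eps_0$ depending only on $n,p,[w]_{\mathbb{T}_p}$. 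For the factor $\PP_m[w](z)$ we invoke \cref{prop:Pgeom} with $s=s_0$. Since $\Omega_z(m)\subset\Omega^{(s_0)}_z(m)$, we have $\PP_m[w](z)\le\PP^{(s_0)}_m[w](z)\le Ce^{-2\rho p\eps_1 m}$, again uniformly in $z$.

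Multiplying the two bounds yields
\[
   \Lambda_{j,m}[w]\le C\,e^{-2\rho(\eps_0 j+p\eps_1 m)}\le C\,e^{-2\rho\eps(j+m)},\qquad \eps:=\min\{\eps_0,p\eps_1\}>0 .
\]
Rewriting, $w(\Omega_z(j))\sigma(\Omega_z(m))^{p-1}\le C e^{2\rho(p-\eps)(j+m)}$, which is exactly $[w]_{\Ab_{p,\eps}}\le C$. All constants depend only on $n,p,[w]_{\mathbb{T}_p}$, with $s_0$ fixed. If one prefers $\eps\le p/2$, it can be decreased further at no cost, since decreasing $\eps$ only weakens the requirement.

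The essential difficulty has already been handled in \cref{lem:Qtail} and \cref{prop:Pgeom}, namely the self-improvement of boundedness into geometric decay. At this stage the only points to check are these: \eqref{eq:PQlink} is applied with the thin slices $\Omega_z(\cdot)$ and the same $z$ in both factors; the passage from thin slices to width $s_0$ is monotone in the correct direction for $\sigma$; and $\sigma(B_z)$ is finite and positive, so that the H\"older link is legitimate. Finiteness of $\sigma(B_z)$ follows from $\sigma\in L^1_{\mathrm{loc}}$, which holds because $[w]_{\mathbb{T}_p}<\infty$ by \cref{lem:testing}.
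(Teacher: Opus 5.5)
Your proposal is correct and is essentially the paper's own proof: the H\"older link \eqref{eq:PQlink} at the single point $z$, the bound $\QQ_j[w](z)\le Ce^{-2\rho\eps_0 j}$ from \cref{lem:Qtail}, the bound $\PP_m[w](z)\le\PP^{(s_0)}_m[w](z)\le Ce^{-2\rho p\eps_1 m}$ from \cref{prop:Pgeom}, and then $\eps:=\min\{\eps_0,p\eps_1\}$. Your extra remarks on the finiteness of $\sigma(B_z)$ and on the direction of the slice inclusion are harmless side checks.
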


\begin{proof}
Let $z\in\Bn$ and $j,m\in\Zp$.  By the H\"older link
\eqref{eq:PQlink}, and since $\PP_m[w](z)\le\PP^{(s_0)}_m[w](z)$,
\[
   w\bigl(\Omega_z(j)\bigr)\,\sigma\bigl(\Omega_z(m)\bigr)^{p-1}\le
   \frac{e^{2\rho p(j+m)}}{|B_o|^p}\,\QQ_j[w](z)\,\PP^{(s_0)}_m[w](z).
\]
By \cref{lem:Qtail}, $\QQ_j[w](z)\le Ce^{-2\rho\eps_0j}$, and by
\cref{prop:Pgeom}, $\PP^{(s_0)}_m[w](z)\le Ce^{-2\rho p\eps_1m}$,
uniformly in $z$.  Hence, with $\eps:=\min\{\eps_0,p\eps_1\}>0$, the
right-hand side is at most $Ce^{2\rho(p-\eps)(j+m)}$, that is,
$[w]_{\Ab_{p,\eps}}\le C$.
\end{proof}

\begin{theorem}\label{thm:largescale}
Let $1<p<\infty$ and let $w$ be a weight on $\Bn$.  Then $\Minf$ is
bounded on $L^p(w)$ if and only if $[w]_{\mathbb{T}_p}<\infty$, if and
only if $w\in\Ab_p$, if and only if $\|\ann_\ell\|_{L^p(w)\to L^p(w)}
\lesssim e^{-\eps\ell}$, $\ell\in\N$, for some $\eps>0$.  Moreover, for
every weight $w$, every $\eps\in(0,p/2]$ and every $\ell\in\N$,
\begin{equation}\label{eq:annulardecay}
   \|\ann_\ell f\|_{L^p(w)}\le C(n,p)\,[w]_{\Ab_{p,\eps}}^{1/p}\,(\ell+1)\,
   e^{-2\rho\eps\ell/p}\,\|f\|_{L^p(w)},
\end{equation}
and $\|\Minf\|_{L^p(w)\to L^p(w)}\le C(n,p)\,\eps^{-2}\,
[w]^{1/p}_{\Ab_{p,\eps}}$.
\end{theorem}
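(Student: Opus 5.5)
We prove the chain of implications $\Minf$ bounded $\Rightarrow[w]_{\mathbb T_p}<\infty\Rightarrow w\in\Ab_p\Rightarrow$ annular decay $\Rightarrow\Minf$ bounded, together with the quantitative estimates. The first implication is \cref{lem:testing}, which gives $[w]_{\mathbb T_p}\le\|\Minf\|^p$. The second is \cref{prop:necessity}: it combines the ancestor tail decay of \cref{lem:Qtail} with the descendant decay of \cref{prop:Pgeom} through the H\"older link \eqref{eq:PQlink}, and yields $w\in\Ab_{p,\eps}$ for some $\eps>0$. So all the work that remains is on the sufficiency side.

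For \eqref{eq:annulardecay}, fix $\eps\in(0,p/2]$ and $\ell\in\N$. By \eqref{eq:decomp}, $\ann_\ell f\le C\sum_{(j,m)\in\Gamma_\ell}\Sop^{(s_0)}_{j,m}f$ pointwise. Minkowski's inequality in $L^p(w)$ and the norm bound \eqref{eq:Sjmnorm} with $s=s_0$ then give
\[
\|\ann_\ell f\|_{L^p(w)}\le C\sum_{(j,m)\in\Gamma_\ell}[w]_{\Ab_{p,\eps}}^{1/p}e^{-2\rho\eps(j+m)/p}\|f\|_{L^p(w)}.
\]
On $\Gamma_\ell$ we have $j+m\ge\ell-3$, so $e^{-2\rho\eps(j+m)/p}\le e^{6\rho\eps/p}e^{-2\rho\eps\ell/p}$, and $e^{6\rho\eps/p}\le e^{3\rho}$ because $\eps\le p/2$. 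Since $\#\Gamma_\ell\le4(\ell+1)$, this gives \eqref{eq:annulardecay} with $C=C(n,p)$, uniformly in $\eps$. In particular $w\in\Ab_p$ implies $\|\ann_\ell\|\lesssim(\ell+1)e^{-2\rho\eps\ell/p}\lesssim e^{-\eps'\ell}$ for any $\eps'<2\rho\eps/p$.

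Conversely, suppose $\|\ann_\ell\|\lesssim e^{-\eps\ell}$. By \eqref{eq:ballfromannuli}, $\Minf f\le C\sum_{\ell\ge1}\ann_\ell f$, and summing the norms gives $\|\Minf\|\le C\sum_\ell e^{-\eps\ell}<\infty$. This closes the cycle.

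For the explicit bound, sum \eqref{eq:annulardecay} over $\ell\ge1$:
\[
\sum_{\ell\ge1}(\ell+1)e^{-a\ell}\lesssim a^{-2}\quad\text{for }a:=2\rho\eps/p\le\rho .
\]
Hence $\|\Minf\|\le C(n,p)\,\eps^{-2}[w]_{\Ab_{p,\eps}}^{1/p}$, where the factor $(p/2\rho)^2$ is absorbed into $C(n,p)$. The sum $\sum(\ell+1)e^{-a\ell}$ is bounded below by a constant for $a\le\rho$, so no problem arises for large $\eps$.

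The only genuine difficulty is the necessity $[w]_{\mathbb T_p}<\infty\Rightarrow\Ab_p$, and that is already contained in \cref{prop:necessity}. The remaining points to check are the uniformity of the constants in $\eps$, which holds because \eqref{eq:Sjmnorm} comes via \cref{cor:slack} with constants independent of $\eps\in(0,p]$, and the observation that $[w]_{\Ab_{p,\eps}}=\infty$ for $\eps>p/2$ by \cref{prop:basic}\ref{prop:basic:b}, so the restriction $\eps\le p/2$ loses nothing.
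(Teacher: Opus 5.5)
Your proposal is correct and follows the paper's proof essentially step for step. You use the same cycle $\Minf$ bounded $\Rightarrow[w]_{\mathbb T_p}<\infty\Rightarrow\Ab_p\Rightarrow$ annular decay $\Rightarrow\Minf$ bounded, via \cref{lem:testing}, \cref{prop:necessity}, \eqref{eq:decomp} combined with \eqref{eq:Sjmnorm}, and \eqref{eq:ballfromannuli}, with the same uniformity of the constants in $\eps\le p/2$. One small quibble: what makes $\sum_\ell(\ell+1)e^{-a\ell}\lesssim a^{-2}$ uniform for $a\le\rho$ is the inequality $1-e^{-a}\ge c_\rho a$ on that range, not a lower bound on the sum.
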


\begin{proof}
By \eqref{eq:decomp}, Minkowski's inequality and \eqref{eq:Sjmnorm} with
$s=s_0$,
\[
   \|\ann_\ell f\|_{L^p(w)}\le C\sum_{(j,m)\in\Gamma_\ell}\|\Sop^{(s_0)}_{j,m}f\|
   _{L^p(w)}\le C\sum_{(j,m)\in\Gamma_\ell}\bigl([w]_{\Ab_{p,\eps}}
   e^{-2\rho\eps(j+m)}\bigr)^{1/p}\|f\|_{L^p(w)}.
\]
The constant in \eqref{eq:Sjmnorm} does not depend on $\eps\le p$ (see
the proof of \cref{cor:slack}).  Since $j+m\ge \ell-3$ on $\Gamma_\ell$ and
$\#\Gamma_\ell\le4(\ell+1)$ (\cref{lem:decomp}), this gives
\eqref{eq:annulardecay}.  By \eqref{eq:ballfromannuli} and Minkowski's
inequality, $\|\Minf f\|\le C\sum_{\ell\ge1}\|\ann_\ell f\|\le
C[w]^{1/p}_{\Ab_{p,\eps}}\sum_\ell(\ell+1)e^{-2\rho\eps\ell/p}\|f\|$, and
$\sum_\ell(\ell+1)e^{-2\rho\eps\ell/p}\le C(n,p)\eps^{-2}$ for $\eps\le p/2$.
Thus $\Ab_p$ implies the annular decay (absorbing $\ell+1$ into a smaller
exponent), and the annular decay implies the boundedness of $\Minf$ by
the same argument.  Finally, the boundedness of $\Minf$ gives
$[w]_{\mathbb{T}_p}<\infty$ by \cref{lem:testing}, and
$[w]_{\mathbb{T}_p}<\infty$ gives $w\in\Ab_p$ by \cref{prop:necessity}.
\end{proof}

\begin{proof}[\textbf{Proof of \cref{thmA}}]
(i)$\Rightarrow$(iv) is immediate.  (iv)$\Rightarrow$(ii): by
\cref{prop:locnec}, $w\in\Aloc$; moreover $[w]_{\mathbb{T}_p}<\infty$ by
\cref{lem:testing}, and \cref{prop:necessity} gives $w\in\Ab_p$.
(ii)$\Rightarrow$(iii) is \cref{thm:largescale}.  (iii)$\Rightarrow$(i):
$\Minf$ is bounded by \cref{thm:largescale}, and $\Mz$ is bounded by
\ref{item_L1}.



(i)$\Rightarrow$(v).  By the above, $w\in\Aloc$ and
$\|\ann_\ell\|\le Ce^{-\eps\ell}$ with, say, $\eps<2\rho$.  For measurable
$E,F$, by Fubini's theorem and H\"older's inequality,
\begin{align*}
    \int_F|\{x\in E:\ \ell-1<d(x,y)\le\ell\}|\,w(y)\,d\mu(y) &=\int_F|B(y,\ell)|\,\ann_\ell\Chi_E(y)\,w(y)\,d\mu(y)\\
   & \le C e^{2\rho\ell} \|\ann_\ell\Chi_E\|_{L^p(w)}w(F)^{1/p'} \\
&\le C
e^{(2\rho-\eps)\ell}w(E)^{1/p}w(F)^{1/p'}.
\end{align*}  Summing over
$\ell\le\ip r+1$ and dividing by $|B(o,r)|\simeq e^{2\rho r}$ gives
\eqref{eq:AOcond} with $\beta=1-\eps/(2\rho)$.

(v)$\Rightarrow$(i).  Condition \eqref{eq:AOcond} is condition (1.3) of
\cite[Theorem~1.2]{AO25} with $\alpha=1$ and $\beta<\alpha<p$; 
and the condition is required for every $r\ge1$, as here.  Together
with $w\in\Aloc$, \cite[Theorem~1.2]{AO25} (see also
\cite[Theorem~1.2(1)]{GRS25}) yields the boundedness of $M$.  

The characterisation of $\Minf$ and \eqref{eq:buckley-intro} are
\cref{thm:largescale}, the optimality of $1/p$ is \cref{prop:sharp}
below, and the openness follows from \cref{prop:basic}\ref{prop:basic:c},\ref{prop:basic:d}.
\end{proof}

\begin{remark}\label{rem:invariance}
The class $\Ab_p$ is defined through shadows seen from the base point
$o$, and nothing in its definition shows that it does not depend on
$o$.  However, isometries commute with $\Minf$, so that
$\|\Minf\|_{L^p(w\circ g)}=\|\Minf\|_{L^p(w)}$ for every M\"obius
transformation $g$ of $\Bn$, and \cref{thm:largescale} shows that
$w\in\Ab_p$ if and only if $w\circ g\in\Ab_p$.  Thus, belonging to
$\Ab_p$ is an intrinsic property of the weight, independent of the
choice of base point.
\end{remark}

\begin{proposition}[Optimality of the exponent $1/p$]\label{prop:sharp}
Let $1<p<\infty$ and fix $\eps\in(0,p/2]$.  If $\|\Minf\|_{L^p(w)\to
L^p(w)}\le C[w]^t_{\Ab_{p,\eps}}$ for all $w\in\Ab_{p,\eps}$, then
$t\ge1/p$.
\end{proposition}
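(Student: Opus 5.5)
We will exhibit a one-parameter family of weights $w_\lambda$, $\lambda\to\infty$, with $[w_\lambda]_{\Ab_{p,\eps}}\to\infty$ and $\|\Minf\|_{L^p(w_\lambda)}^p\gtrsim[w_\lambda]_{\Ab_{p,\eps}}$. The bound $\|\Minf\|\le C[w]^t$ then gives $[w_\lambda]^{1/p}\lesssim[w_\lambda]^t$ for all large $\lambda$, and hence $t\ge1/p$. For the lower bound on the operator norm we use the testing argument of \cref{lem:testing}. For every ball $B$ of radius $r\ge1$ we have $\Minf(\sigma\Chi_B)\ge c_r\avg\sigma B$ on $B$. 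Therefore
\[
   \|\Minf\|^p\ge c\,\avg wB\avg\sigma B^{p-1}
\]
for balls of a fixed radius, say $r=1$, where $c$ depends only on $n,p$. So it suffices to make the unit-scale ($r=1$) $A_p$ constant of $w_\lambda$ comparable to $[w_\lambda]_{\Ab_{p,\eps}}$.

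\textbf{Construction.} We take a weight $w_0\in\Ab_{p,\eps}$ with a comfortable margin, for instance $w_0=\ww_{\alpha,0}$ with $\alpha$ strictly inside the interval of \cref{prop:basic}\ref{prop:basic:a}. We then perturb it locally: let $w_\lambda:=\lambda w_0$ on a set $E\subset B_o$ of half the measure of $B_o$, and $w_\lambda:=w_0$ elsewhere. Equivalently, we could use $w_\lambda=1+\lambda\Chi_{E}$ with $E$ half of $B_o$.

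On $B_o$ we compute $\avg w{B_o}\simeq\lambda$ and $\avg\sigma{B_o}^{p-1}\simeq1$, so $\|\Minf\|^p\gtrsim\lambda$.

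\textbf{Upper bound for the class constant.} We must show $[w_\lambda]_{\Ab_{p,\eps}}\lesssim\lambda$. Since $\sigma_\lambda\le\sigma_0$, every $\sigma$-factor is bounded by the corresponding factor for $w_0$. For the $w$-factor, $w_\lambda(\Omega_z(j))\le w_0(\Omega_z(j))+\lambda w_0(E\cap\Omega_z(j))\le(1+\lambda)w_0(\Omega_z(j))$. Hence
\[
   [w_\lambda]_{\Ab_{p,\eps}}\le(1+\lambda)[w_0]_{\Ab_{p,\eps}}\lesssim\lambda .
\]
This direction is trivial.

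\textbf{Main obstacle.} The difficulty is making the two bounds match, that is, ensuring the ratio between $[w_\lambda]_{\Ab_{p,\eps}}$ and $\|\Minf\|^p$ does not grow with $\lambda$. The construction above already achieves this, since the testing lower bound is $\gtrsim\lambda$ and the class upper bound is $\lesssim\lambda$. The constant $c$ in the testing lower bound must not depend on $\lambda$, and it does not: it comes from the ball-volume comparison in \cref{lem:testing}, with $r=1$ and $\Minf$ taken over radii in $(2,3]$, using $|B(x,3)|\simeq|B_o|$.

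\textbf{Conclusion and sanity check.} For fixed $\eps$, the inequality $c\lambda\le\|\Minf\|^p\le C^p[w_\lambda]^{tp}\le C'\lambda^{tp}$ holds for all $\lambda\ge1$. Letting $\lambda\to\infty$ forces $tp\ge1$. As a sanity check, we note that the example is consistent with \cref{thm:largescale}: $\|\Minf\|\lesssim\lambda^{1/p}$ there, so the exponent $1/p$ is attained on this family.
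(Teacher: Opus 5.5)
Your proof is correct. It follows the same strategy as the paper: perturb a fixed weight of $\Ab_{p,\eps}$ by a large factor near $o$, so that the class constant grows at most linearly in the factor while $\|\Minf\|^p$ grows at least linearly. The difference is that your perturbation goes in the opposite direction.

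\textbf{The paper's version.} It takes $v=\ww_{\rho(p-2),0}$ and \emph{shrinks} it, setting $v_\delta=\delta v$ on all of $B_o$. It bounds $[v_\delta]_{\Ab_{p,\eps}}\lesssim\delta^{-1}$ by locating the slices that meet $B_o$ and using $\eps\le p/2$. For the lower bound it tests on $\Chi_{B_o}$, for which $\|\Chi_{B_o}\|^p_{L^p(v_\delta)}=\delta v(B_o)$, while $\Minf\Chi_{B_o}\gtrsim1$ on the shell $2\le|x|\le3$, where the weight is unperturbed.

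\textbf{Your version.} You \emph{enlarge} the weight on half of $B_o$. The class bound $[w_\lambda]_{\Ab_{p,\eps}}\le\lambda[w_0]_{\Ab_{p,\eps}}$ is then pure pointwise monotonicity. Your lower bound is the unit-scale $A_p$ product on $B_o$, which $\Minf$ controls through $A_3$ (\cref{lem:testing,lem:unitAp}). This only needs the weight to jump by the factor $\lambda$ inside a single unit ball, which is why you split $B_o$.

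Your monotonicity observation would also shorten the paper's side. Since $v_\delta\le v$ and $\sigma_{v_\delta}\le\delta^{-1/(p-1)}\sigma_v$, one gets $[v_\delta]_{\Ab_{p,\eps}}\le\delta^{-1}[v]_{\Ab_{p,\eps}}$ at once.

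Three side remarks in your write-up need correcting, although the main line does not depend on them.
\begin{enumerate}
\item For $\eps=p/2$ the interval of \cref{prop:basic}\ref{prop:basic:a} degenerates to the single point $\alpha=\rho(p-2)$, so no $\alpha$ lies strictly inside it. No margin is used anywhere, however. All you need is $w_0\in\Ab_{p,\eps}$ with $w_0\simeq1$ on $B_o$, and $w_0=\ww_{\rho(p-2),0}$ (the paper's choice) works for every $\eps\in(0,p/2]$.
\item The alternative $1+\lambda\Chi_E$ is not equivalent. By \cref{prop:basic}\ref{prop:basic:a}, $1\in\Ab_{p,\eps}$ only for $\eps\le\min\{1,p-1\}$. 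Since $1+\lambda\Chi_E$ coincides with $1$ off $B_o$, for larger $\eps$ it is not in $\Ab_{p,\eps}$, and the hypothesis does not apply to it.
\item The step $[w_\lambda]^{tp}\lesssim\lambda^{tp}$ presupposes $t\ge0$. The case $t<0$ is excluded because $[w]_{\Ab_{p,\eps}}\gtrsim1$ for every weight (H\"older's inequality on $\Omega_z(0)$). The hypothesis would then bound $\|\Minf\|$ uniformly along your family, contradicting $\|\Minf\|^p\gtrsim\lambda$. The paper's proof leaves the same step implicit.
\end{enumerate}
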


\begin{proof}
Let $v:=\ww_{\rho(p-2),0}=e^{\rho(p-2)|x|}$, so that $v\in\Ab_{p,\eps}$
by \cref{prop:basic}\ref{prop:basic:a}.  For $\delta\in(0,1)$ let $v_\delta:=\delta v$
on $B_o$ and $v_\delta:=v$ elsewhere.  Then $v_\delta\le v$ and
$\sigma_{v_\delta}=  \delta^{-\frac{1}{p-1}} \sigma_v \chi_{B_o} + \sigma_v \chi_{B_o^c} \le\sigma_v+\delta^{-1/(p-1)}\sigma_v\Chi_{B_o}$.  If
$\Omega_z(m)$ meets $B_o$, then $|z|+m\le2$, and $v(\Omega_z(j))\lesssim
e^{\rho pj}\le e^{2\rho(p-\eps)(j+m)}$ by \cref{lem:shadow}\ref{lem:shadow:c} and
$\eps\le p/2$.  Hence $[v_\delta]_{\Ab_{p,\eps}}\lesssim
[v]_{\Ab_{p,\eps}}+\delta^{-1}\lesssim\delta^{-1}$.  With
$f=\Chi_{B_o}$ we have $\|f\|^p_{L^p(v_\delta)}=\delta v(B_o)$, while
for $2\le|x|\le3$ the ball $B(x,4)$ contains $B_o$, so that
$\Minf f(x)\ge   A_4(\chi_{B_o})(x) = \frac{|B_o|}{|B(x,4)|}\geq c$.
By definition of operator norm $$\|{M}^\infty\| \geq  \frac{\|M^\infty f\|_{L^p(v_{\delta})}}{\|f\|_{L^p(v_{\delta})}} \ge \frac{c}{\delta v(B_o)} \gtrsim \delta^{-1}.$$
Hence
$\delta^{-1}\lesssim\|\Minf\|^p_{L^p(v_\delta)}\lesssim\delta^{-tp}$ for
all $\delta$. Taking $\delta \to 0^+$, we get \ $t\ge1/p$.
\end{proof}

The next example shows that the local condition is independent of 
large-scale condition $\Ab_p$.
\begin{example}\label{ex:loc}
Let $1<p<\infty$ and $t_0\ge2$, choose disjoint intervals
$I_k\subset[t_0,t_0+1]$ of length $2^{-k}$, $k\in\N$, split each $I_k$
into its lower and upper halves $I_k^-$ and $I_k^+$, and let
$w=w_0(|\cdot|)$ with $w_0=2^{-k(p-1)/2}$ on $I_k^-$ and $w_0=1$
elsewhere.  Then $w\le1$, and $\sigma-1\ge0$ is supported in
$\{t_0\le|x|\le t_0+1\}$ with $\int_{\Bn}(\sigma-1)\,d\mu\simeq
\sum_k2^{-k}2^{k/2}<\infty$.  Hence $w(\Omega_z(j))\lesssim e^{2\rho j}$
and $\sigma(\Omega_z(m))\lesssim e^{2\rho m}+1$, so that
$w\in\Ab_{p,\eps}$ for $\eps\le\min\{1,p-1\}$, and $\Minf$ is bounded on
$L^p(w)$ by \cref{thm:largescale}.  On the other hand, let $x_k$ be a point
such that $|x_k|$ is the midpoint of $I_k$.  About half of the ball
$B(x_k,2^{-k-2})$ lies in $\{|y|\in I_k^-\}$, so that
$\avg{w}{B(x_k,2^{-k-2})}\avg{\sigma}{B(x_k,2^{-k-2})}^{p-1}\gtrsim
2^{k(p-1)/2}\to
\infty$.  Thus $w\notin\Aloc$, and $M$ is unbounded on $L^p(w)$ by
\cref{prop:locnec}.
\end{example}

As on the tree \cite[Theorem~4.3]{ORR22}, the global testing condition
in \cref{thmA}\ref{thmA:iv} cannot be replaced by its local form.

\begin{proposition}
\label{prop:localsawyer}
Let $1<p<\infty$ and $w=\ww_{2\rho(p-1),0}$, so that
$\sigma=\ww_{-2\rho,0}$.  Then $M\sigma\le C\sigma$, hence
$\int_BM(\sigma\Chi_B)^p\,w\,d\mu\le C\sigma(B)$ for every ball $B$, and
$M$ is of weak type $(p,p)$ with respect to $w$; but
$\|M(\sigma\Chi_{B_o})\|_{L^p(w)}=\infty$.
\end{proposition}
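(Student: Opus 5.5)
We prove three facts for $w=e^{2\rho(p-1)|x|}$, $\sigma=e^{-2\rho|x|}$: the pointwise bound $M\sigma\le C\sigma$, the weak type $(p,p)$, and $\|M(\sigma\Chi_{B_o})\|_{L^p(w)}=\infty$.

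\textbf{Step 1: $M\sigma\le C\sigma$.} Since $\sigma$ is radial, $A_r\sigma(x)$ can be computed with \eqref{eq:polar} and the law of cosines, but it is simpler to use geometry. For $r\le2$, $\sigma\simeq\sigma(x)$ on $B(x,r)$, because $|y|$ differs from $|x|$ by at most $2$. For $r>2$ we use \eqref{eq:ballfromannuli} and \cref{lem:decomp}. A point $y$ with $\ell-1<d(x,y)\le\ell$ lies in $\Omega_{a_j(x)}(m)$ with $j+m\approx\ell$, and there $|y|\approx|x|-j+m$. Hence
\[
   \ann_\ell\sigma(x)\lesssim e^{-2\rho\ell}\sum_{(j,m)\in\Gamma_\ell}e^{2\rho m}e^{-2\rho(|x|-j+m)}=e^{-2\rho|x|}e^{-2\rho\ell}\sum_{(j,m)\in\Gamma_\ell}e^{2\rho j},
\]
with $j\le\min\{\ell,|x|\}$. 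The inner sum is $\lesssim e^{2\rho\ell}$, so $\ann_\ell\sigma(x)\lesssim\sigma(x)$. The sum in \eqref{eq:ballfromannuli} carries the factor $e^{2\rho(\ell-N)}$, which is geometric, so $A_N\sigma\lesssim\sigma$ as well.

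\textbf{Step 2: the local testing bound and the weak type.} From Step 1, $\int_B M(\sigma\Chi_B)^pw\le C^p\int_B\sigma^pw=C^p\sigma(B)$. For the weak type we invoke \cref{thmC}\ref{thmC:c}, since $\alpha=2\rho(p-1)$ and $\gamma=0\le0$ is exactly the admissible endpoint. (A direct route via \cref{thmC}\ref{thmC:b}\ref{thmC:b:ii} also works: $W_N\simeq e^{2\rho pN}$, so \Ro and \Rtw hold, and $w\in\Aloc$ because $w$ is smooth and positive.)

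\textbf{Step 3: the global integral diverges.} For $|x|\ge2$ the ball $B(x,|x|+1)$ contains $B_o$, so
\[
   M(\sigma\Chi_{B_o})(x)\ge c\,\sigma(B_o)\,e^{-2\rho|x|}.
\]
Then
\[
   \int M(\sigma\Chi_{B_o})^pw\gtrsim\int_{|x|\ge2}e^{-2\rho p|x|}e^{2\rho(p-1)|x|}e^{2\rho|x|}\,dt=\int_2^\infty 1\,dt=\infty,
\]
where the last factor $e^{2\rho|x|}$ is the Jacobian $\Jd\simeq e^{2\rho t}$.

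\textbf{Main obstacle.} The only delicate step is the uniform bound in Step 1, specifically the sum over $j$. It closes because the constraint $j\le\ell$ leaves a geometric series bounded by $e^{2\rho\ell}$ with no extra factor of $\ell$.
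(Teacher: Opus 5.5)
Your proof is correct. Steps 2 and 3 are the paper's argument: the weak type is quoted from \cref{thmC}\ref{thmC:c}, and the divergence comes from $B(x,|x|+1)\supset B_o$ together with $\Jd\simeq e^{2\rho t}$. The genuine difference is in Step 1.

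\textbf{The paper's route.} The paper computes $\int_{B(x,r)}\sigma\,d\mu$ directly in polar coordinates. For $y\in B(x,r)$ with $|y|=s$, the bound $\gp xy\ge\frac12(t+s-r)$ and \cref{lem:angle}\ref{lem:angle:c} confine $\theta_y$ to a cap of $\nu$-measure $\lesssim e^{-\rho(t+s-r)_+}$. Since $\sigma\,\Jd\lesssim1$, this gives
\[
\int_{B(x,r)}\sigma\,d\mu\lesssim
\begin{cases}
e^{-2\rho(t-r)}, & r\le t,\\
1+r-t, & r>t,
\end{cases}
\]
and hence $A_r\sigma(x)\lesssim e^{-2\rho t}$ for all $r\ge1$.

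\textbf{Your route.} You instead apply the sphere decomposition \cref{lem:decomp}. On $\Omega_{a_j(x)}(m)$ one has $|y|\ge|x|-j+m-1$ and $|\Omega_{a_j(x)}(m)|\lesssim e^{2\rho m}$, so each slice contributes $\lesssim e^{-2\rho|x|}e^{2\rho j}$. Since $j\le\ell$ on $\Gamma_\ell$ and each $j$ occurs with at most four values of $m$, the sum is geometric. This gives the uniform annular bound $\ann_\ell\sigma\lesssim\sigma$ with no factor $\ell+1$, and \eqref{eq:ballfromannuli} then finishes.

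\textbf{Comparison.} Your argument reuses the shadow-slice machinery of \cref{sec:geometry} and needs no angular computation. The paper's computation is self-contained and exhibits the exact profile of the ball averages. In particular it shows the factor $1+r-t$ coming from balls that contain $o$, where every unit shell carries $\sigma$-mass $\simeq1$. That same fact is what makes the global testing integral diverge.
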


\begin{proof}
Let $|x|=t$ and $r>0$.  If $r<1$, then $\sigma\simeq e^{-2\rho t}$ on
$B(x,r)$.  Let $r\ge1$.  If $y\in B(x,r)\setminus\{o\}$ and $|y|=s$, then
$\gp xy\ge\frac12(t+s-r)$, so that $\ang yx\le9e^{-(t+s-r)/2}$ by
\cref{lem:angle}\ref{lem:angle:c}, and the set of such directions has $\nu$-measure
$\lesssim e^{-\rho(t+s-r)_+}$ by \eqref{eq:caps}.  By \eqref{eq:polar}
and $\Jd(s)\lesssim e^{2\rho s}$,
\[
   \int_{B(x,r)}\sigma\,d\mu\lesssim\int_{(t-r)_+}^{t+r}e^{-2\rho s}\,
   e^{2\rho s}\,e^{-\rho(t+s-r)_+}\,ds\lesssim
   \begin{cases}e^{-2\rho(t-r)},& r\le t,\\ 1+r-t,& r>t.\end{cases}
\]
Dividing by $|B(x,r)|\simeq e^{2\rho r}$ gives $A_r\sigma(x)\lesssim
e^{-2\rho t}$ in both cases, since $(1+r-t)e^{-2\rho(r-t)}\lesssim1$.
Thus $M\sigma\le C\sigma$, and
$\int_BM(\sigma\Chi_B)^pw\,d\mu\le C\int_B\sigma^pw\,d\mu=C\sigma(B)$.  The
weak type is \cref{thmC}\ref{thmC:c}.  Finally, if $|x|=t\ge2$, then
$B(x,t+1)\supset B_o$, so that $M(\sigma\Chi_{B_o})(x)\gtrsim
e^{-2\rho t}$, and $\int_{\Bn}M(\sigma\Chi_{B_o})^pw\,d\mu\gtrsim
\int_2^\infty e^{-2\rho pt}e^{2\rho(p-1)t}e^{2\rho t}\,dt=\infty$.
\end{proof}

\section{The weak type: proof of Theorem~\ref{thmB}}\label{sec:weak}

On the tree the weak type is characterised for radial weights in
\cite[Theorem~4.1]{YYZ26} by two conditions, which control the ancestor
maximal operator and the decay of the descendant averages; conditions
\WQ and \WP are their counterparts for general weights on $\Bn$.

\begin{lemma}[Ancestor maximal operator]\label{lem:Hstar}
Let $1\le p<\infty$ and $w\in A_{p,\mathrm{loc}}(\Bn)$.  If $w$
satisfies \WQ, then $\Hst:L^p(w)\to L^{p,\infty}(w)$ with
$\|\Hst\|^p\lesssim[w]_{\mathcal W_Q}$.
\end{lemma}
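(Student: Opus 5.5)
The plan is a stopping-time argument on a discrete net, in which $\Hst$ is dominated by averages over unit balls attached to the points of the net. Fix $\lambda>0$ and $f\ge0$, and let $X$ be a maximal $\frac14$-separated subset of $\Bn$. If $j\le|x|$, choose $z\in X$ with $d(a_j(x),z)<\frac14$. Then $B(a_j(x),\frac12)\subset B_z$, so $Q_jf(x)\le C e^{-2\rho j}\avg f{B_z}$. Moreover $x\in\Omega_{a_j(x)}(j)\subset\Omega^{(s_0+1)}_z(j)$ by \cref{lem:shadow}\ref{lem:shadow:f},\ref{lem:shadow:e}. For each $z\in X$ let
\[
   N(z):=\max\bigl\{j\in\Zp:\ Ce^{-2\rho j}\avg f{B_z}>\lambda\bigr\},
\]
with the convention that $z$ is discarded if this set is empty. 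Since $j\mapsto Ce^{-2\rho j}\avg f{B_z}$ is decreasing, the admissible $j$ are exactly $0,\dots,N(z)$, and $N(z)<\infty$ whenever $f\in L^1_{\mathrm{loc}}$. This holds because $w\in A_{p,\mathrm{loc}}$ forces $L^p(w)\subset L^1_{\mathrm{loc}}$. Hence
\[
   \{\Hst f>\lambda\}\subset\bigcup_{z\in X}\ \bigcup_{j=0}^{N(z)}\Omega^{(s_0+1)}_z(j).
\]

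Next I bound $w$ of each piece by \WQ. The condition \WQ is stated for the thin slices, so I first change the width by \cref{lem:slack} with $s=s_0+1$. Let $h=h_s$ and $z'=a_h(z)$. Then
\[
   \Omega^{(s)}_z(j)\subset\bigcup_{|i|\le2\ip s+4}\Omega_{z'}(j+h+i),
\]
and every index $k=j+h+i$ arises from boundedly many pairs $(j,i)$. Therefore
\[
   \sum_{j\le N}w\bigl(\Omega^{(s)}_z(j)\bigr)\le C\sum_{k\le N+3h}w\bigl(\Omega_{z'}(k)\bigr)\le C[w]_{\mathcal W_Q}\,e^{2\rho p(N+3h)}\,w(B_{z'}).
\]
Since $B_{z'}\subset B(z,h+1)$, the doubling of $w$ at bounded scales, which holds for $w\in A_{p,\mathrm{loc}}$ by \ref{item_L21} (including $p=1$), gives $w(B_{z'})\le Cw(B_z)$. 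Hence
\[
   w\bigl(\{\Hst f>\lambda\}\bigr)\le C[w]_{\mathcal W_Q}\sum_{z\in X}e^{2\rho pN(z)}w(B_z).
\]
By the definition of $N(z)$, we have $e^{2\rho pN(z)}<(C\avg f{B_z}/\lambda)^p$.

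To finish, I use the local $A_p$ condition in its standard form $w(B)\avg fB^p\le C\int_B|f|^pw\,d\mu$ for unit balls. This follows from H\"older's inequality for $p>1$ and from the $A_{1,\mathrm{loc}}$ bound for $p=1$. It yields
\[
   w(\{\Hst f>\lambda\})\le C[w]_{\mathcal W_Q}\lambda^{-p}\sum_{z\in X}\int_{B_z}|f|^pw\,d\mu\le C[w]_{\mathcal W_Q}\lambda^{-p}\|f\|^p_{L^p(w)},
\]
where the last step uses the bounded overlap of the balls $B_z$, $z\in X$.

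The step I expect to need the most care is the transfer of \WQ from the thin slices $\Omega_{z'}(\cdot)$ to the wider slices $\Omega^{(s_0+1)}_z(\cdot)$. The shift of the anchor from $z$ to $z'$ costs only the factor $e^{6\rho ph}$ and a comparison $w(B_{z'})\lesssim w(B_z)$. That comparison is exactly where the local condition enters, through bounded-scale doubling. Everything else is bookkeeping once the level set has been organised into the families $\{\Omega^{(s_0+1)}_z(j)\}_{j\le N(z)}$ of consecutive layers below net points.
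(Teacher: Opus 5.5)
Your proof is correct and follows essentially the same stopping-time argument as the paper: a $\frac14$-separated net, the largest admissible depth $N(z)$ at each net point, \WQ applied there, and the local $A_p$ condition together with bounded overlap to sum over the net. The one difference is that the paper observes directly that $x\in\Omega_z(j)$, since $|x|-|z|\in(j-\frac14,j+\frac14)$ and $d(z,x)<j+\frac14$. Hence the width change via \cref{lem:slack} and the comparison $w(B_{z'})\lesssim w(B_z)$, which you flagged as the delicate step, are not needed, although you handled them correctly.
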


\begin{proof}
Let $Y$ be a maximal $\frac14$-separated subset of $\Bn$; the balls
$B(y,\frac34)$, $y\in Y$, have bounded overlap by \eqref{eq:ballvol}.
Let $f\in L^p(w)$ and $\lambda>0$.  If $Q_j f(x)>\lambda$ for some
$j\le|x|$, choose $y\in Y$
with $d(a_j(x),y)<\frac14$.  Then $B(a_j(x),\frac12)\subset
B(y,\frac34)$, so that $b_y:=\avg{f}{B(y,\frac34)}>c\lambda e^{2\rho j}$
with $c:=|B(o,\frac12)|/|B(o,\frac34)|$; moreover $x\in\Omega_y(j)$,
since $d(a_j(x),x)=j$ gives $|x|-|y|\in(j-\frac14,j+\frac14)$ and
$d(y,x)<j+\frac14$.  For each $y\in Y$ with $b_y>c\lambda$ let $N_y$ be the
largest $j$ with $b_y>c\lambda e^{2\rho j}$.  Then, by \WQ,
\begin{align*}
   w(\{\Hst f>\lambda\})&\le\sum_{y}\sum_{j\le N_y}w(\Omega_y(j))
   \le[w]_{\mathcal W_Q}\sum_ye^{2\rho p N_y}w(B_y)\\
   &\le\frac{[w]_{\mathcal W_Q}}{(c\lambda)^p}\sum_yb_y^p\,w(B_y)
   \lesssim\frac{[w]_{\mathcal W_Q}}{\lambda^p}\|f\|^p_{L^p(w)},
\end{align*}
because, by the doubling of $w$ at bounded scales \ref{item_L21} and H\"older's
inequality,
\[
   b_y^pw(B_y)\lesssim\bigl(\avg{w}{B}\avg{\sigma}{B}^{p-1}\bigr)
   \int_B|f|^pw\,d\mu\qquad\Bigl(p=1:\ b_yw(B_y)\lesssim
   \frac{\avg{w}{B}}{\essinf_Bw}\int_B|f|w\,d\mu\Bigr),
\]
with $B=B(y,\frac34)$, and the balls $B(y,\frac34)$ have bounded overlap.
\end{proof}

The next lemma shows that, under the local condition, \WP does not
depend on the width of the slices, and that it is equivalent to the
exponential decay of the norms of the descendant averages.

\begin{lemma}\label{lem:WPwidth}
Let $1\le p<\infty$, $w\in A_{p,\mathrm{loc}}(\Bn)$, and suppose that
\WP holds with the constant $\eps>0$.  Then for every $s\ge s_0$ there
is $C_s<\infty$ such that
\[
   \sup_{z\in\Bn}\PP^{(s)}_m[w](z)\le C_s\,e^{-2\rho p\eps m}\qquad
   \text{and}\qquad\|P^{(s)}_m\|_{L^p(w)\to L^p(w)}^p\le C_s\,
   e^{-2\rho p\eps m},\qquad m\in\Zp .
\]
Conversely, if $\|P^{(s)}_m\|_{L^p(w)\to L^p(w)}^p\le C\,e^{-2\rho p
\eps m}$ for some $s\ge s_0+1$ and $\eps>0$, then \WP holds.
\end{lemma}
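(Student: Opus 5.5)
The proof has three parts: pass from the width $s_0$ in \WP to an arbitrary width $s$, convert that into an operator bound, and then go back.

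\textbf{Changing the width.} Fix $s\ge s_0$ and $z\in\Bn$. Cover $\Omega^{(s)}_z(m)$ via \cref{lem:slack}: with $z'=a_{h_s}(z)$,
\[
\Omega^{(s)}_z(m)\subset\bigcup_{|i|\le 2\ip s+4}\Omega_{z'}(m+h_s+i).
\]
Hence, for $p>1$,
\[
\sigma\bigl(\Omega^{(s)}_z(m)\bigr)^{p-1}\lesssim_s\sum_i\sigma\bigl(\Omega_{z'}(m+h_s+i)\bigr)^{p-1}.
\]
For $p=1$ the essential supremum over the union is the maximum of the essential suprema. It remains to replace $w(B_z)$ by $w(B_{z'})$. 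Since $d(z,z')\le h_s$, we have $B_z\subset B(z',h_s+1)$, and the doubling of $w$ at bounded scales \ref{item_L21} gives $w(B_z)\le C_s w(B_{z'})$. Therefore
\[
\PP^{(s)}_m[w](z)\le C_s\sum_i e^{-2\rho pm}\,w(B_{z'})\,\sigma\bigl(\Omega_{z'}(m+h_s+i)\bigr)^{p-1}.
\]
Each term equals $e^{2\rho p(h_s+i)}\,\PP_{m+h_s+i}[w](z')$, which by \WP is at most $C e^{2\rho p(h_s+i)}e^{-2\rho p\eps(m+h_s+i)}$. Because $|i|\le 2\ip s+4$, this is bounded by $C_s e^{-2\rho p\eps m}$. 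One detail needs care: indices with $m+h_s+i<0$ do not occur, since \cref{lem:slack} restricts to $m+h_s+i\ge 0$. The exponential factor $e^{2\rho p(h_s+i)}$ is controlled because $i$ ranges over a bounded set.

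\textbf{Norm bound.} By \cref{lem:normbounds}\ref{lem:normbounds:a},
\[
\|P^{(s)}_m\|^p\le C_s\sup_z\PP^{(s+1)}_m[w](z).
\]
Applying the previous step with width $s+1$ gives $\|P^{(s)}_m\|^p\le C_s e^{-2\rho p\eps m}$. That lemma is stated for $1\le p<\infty$, with the $p=1$ modification already given in its proof, so both cases are covered.

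\textbf{Converse.} Suppose $\|P^{(s)}_m\|^p\le Ce^{-2\rho p\eps m}$ for some $s\ge s_0+1$. Apply \cref{lem:normbounds}\ref{lem:normbounds:b} with $s-1\ge s_0$ in place of $s$:
\[
\PP^{(s-1)}_m[w](z)\le c^{-1}\|P^{(s)}_m\|^p.
\]
Since $\Omega_z(m)\subset\Omega^{(s-1)}_z(m)$, it follows that $\PP_m[w](z)\le\PP^{(s-1)}_m[w](z)$, and this yields \WP with the same $\eps$.

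\textbf{Main obstacle.} The main point is the first step. The slices in \cref{lem:slack} are anchored at a shifted point $z'$, so $w(B_z)$ must be compared with $w(B_{z'})$. This comparison is exactly where $w\in A_{p,\mathrm{loc}}(\Bn)$ is used, through doubling at scale $h_s+1$ via \ref{item_L21}. In the case $p=1$ one must also check that the inclusion $A_{1,\mathrm{loc}}\subset A_{2,\mathrm{loc}}$, as noted in \ref{item_L21}, supplies this doubling.
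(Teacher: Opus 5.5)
Your proposal is correct and follows the paper's proof essentially step by step: you re-anchor at $z'=a_{h_s}(z)$ via \cref{lem:slack}, compare $w(B_z)\le Cw(B_{z'})$ using local doubling \ref{item_L21}, then apply \cref{lem:normbounds}\ref{lem:normbounds:a} at width $s+1$ for the norm bound, and \cref{lem:normbounds}\ref{lem:normbounds:b} at width $s-1$ together with $\PP_m[w]\le\PP^{(s-1)}_m[w]$ for the converse. The only difference is cosmetic: you bound $\sigma$ of the union by a sum over the pieces rather than by $K^{p-1}$ times their maximum, which is immaterial since the number of pieces depends only on $s$.
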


\begin{proof}
Let $h_s=\ip s+2$ and $z'=a_{h_s}(z)$ be as in \cref{lem:slack}.  Since $B_z\subset
B(z',h_s+1)$ and $w$ is doubling at scales $\le h_s+1$ by \ref{item_L21}, we
have $w(B_z)\le Cw(B_{z'})$.  For $p>1$, \cref{lem:slack} and \eqref{eq:unionsplit} give
\[
   \begin{aligned}
   \PP^{(s)}_m[w](z)&\le C\,e^{-2\rho pm}\,w(B_{z'})\,\max_{|i|\le2\ip s+4}
   \sigma\bigl(\Omega_{z'}(m+h_s+i)\bigr)^{p-1}\\
   &\le C\max_{|i|\le2\ip s+4}e^{2\rho p(h_s+i)}\PP_{m+h_s+i}[w](z'),
   \end{aligned}
\]
where indices with $m+h_s+i<0$ are omitted, and \WP bounds the right-hand
side by $C_se^{-2\rho p\eps m}$.  For $p=1$ the same argument applies,
since the essential supremum of $w^{-1}$ over $\Omega^{(s)}_z(m)$ is at
most the maximum of those over the slices $\Omega_{z'}(m+h_s+i)$.  The second assertion follows from the
first and \cref{lem:normbounds}\ref{lem:normbounds:a}, and the converse
from \cref{lem:normbounds}\ref{lem:normbounds:b}, since
$\PP_m[w]\le\PP^{(s_0)}_m[w]$.
\end{proof}

\begin{proof}[\textbf{Proof of \cref{thmB}}]
(a) The local part $\Mz$ is of weak type $(p,p)$ by \ref{item_L1}.  By
\cref{lem:factor}\ref{lem:factor:b},
\begin{equation}\label{eq:MinfH}
   \Minf f\le C_1\sum_{m\in\Zp}\Hst\bigl(P^{(s_0+1)}_mf\bigr)
\end{equation}
pointwise, and by \cref{lem:WPwidth}, $\|P^{(s_0+1)}_mf\|^p_{L^p(w)}\le
Ce^{-2\rho p\eps m}\|f\|^p_{L^p(w)}$.  Let $\lambda>0$ and
$\lambda_m:=(1-e^{-\rho\eps})e^{-\rho\eps m}\lambda$, so that
$\sum_m\lambda_m=\lambda$.  By \eqref{eq:MinfH} and \cref{lem:Hstar},
\[
   w(\{\Minf f>C_1\lambda\})\le\sum_{m\in\Zp}
   w\bigl(\{\Hst(P^{(s_0+1)}_mf)>\lambda_m\}\bigr)
   \lesssim\sum_{m\in\Zp}\frac{e^{-2\rho p\eps m}}{\lambda_m^p}
   \|f\|^p_{L^p(w)}\lesssim\frac{\|f\|^p_{L^p(w)}}{\lambda^p},
\]
since $e^{-2\rho p\eps m}\lambda_m^{-p}\simeq e^{-\rho p\eps m}
\lambda^{-p}$.  The weak type of $M=\max\{\Mz,\Minf\}$ follows.

(b) Suppose that $M:L^p(w)\to L^{p,\infty}(w)$ is bounded.  Then
$w\in A_{p,\mathrm{loc}}(\Bn)$ by \cref{prop:locnec}; in particular
$\sigma\in L^1_{\mathrm{loc}}(\Bn)$ if $p>1$.

\emph{Condition \WQ.}  Let $y\in\Bn$, $B:=B_y$ and $N\in\Zp$.  If
$x\in\Omega_y(j)$, then $d(x,y)\le j+s_0$, so that $B\subset
B(x,j+s_0+1)$, and by \eqref{eq:ballvol}, for every nonnegative
measurable function $f$,
\begin{equation}\label{eq:MsigmaB}
   M(f\Chi_B)(x)\ge\frac{\int_Bf\,d\mu}{|B(x,j+s_0+1)|}\ge
   c\,e^{-2\rho j}\int_Bf\,d\mu .
\end{equation}
Let $p>1$ and $f=\sigma$.  Then $\bigcup_{j\le N}\Omega_y(j)\subset
\{M(\sigma\Chi_B)\ge ce^{-2\rho N}\sigma(B)\}$, and the weak type
inequality gives
\begin{equation}\label{eq_w(O)_le_}
   w\Bigl(\bigcup_{j\le N}\Omega_y(j)\Bigr)\le C\bigl(e^{-2\rho N}
   \sigma(B)\bigr)^{-p}\|\sigma\Chi_B\|^p_{L^p(w)}
   =Ce^{2\rho pN}\sigma(B)^{1-p}\le Ce^{2\rho pN}\frac{w(B)}{|B|^p},
\end{equation}
where in the last step we used H\"older's inequality $|B|^p\le
w(B)\sigma(B)^{p-1}$.  Since a point belongs to $\Omega_y(j)$ for at
most three values of $j$, we have $\sum_{j=0}^Nw(\Omega_y(j))\le
3\,w(\bigcup_{j=0}^N\Omega_y(j))$, and \eqref{eq_w(O)_le_} gives \WQ.
For $p=1$ we take $f=1$ in \eqref{eq:MsigmaB}, and obtain
$w(\bigcup_{j\le N}\Omega_y(j))\le Ce^{2\rho N}|B|^{-1}w(B)$ directly.

\emph{Uniform form of \WP.}  Let $z\in\Bn$ and $m\in\Zp$, so that
$\Omega_z(m)\subset B(z,m+s_0)$.  For $x\in B_z$ we have $B(z,m+s_0)
\subset B(x,m+s_0+1)$, so that $M(f\Chi_{B(z,m+s_0)})\ge ce^{-2\rho m}
\int_{B(z,m+s_0)}f\,d\mu$ on $B_z$ for $f\ge0$.  For $p>1$ we take
$f=\sigma$; the weak type inequality gives $w(B_z)\le Ce^{2\rho pm}
\sigma(B(z,m+s_0))^{1-p}$, that is, $e^{-2\rho pm}w(B_z)\sigma(B(z,m+
s_0))^{p-1}\le C$, and a fortiori
$\PP_m[w](z)\le C$.  For $p=1$ let $\delta>0$ and let $E\subset
\Omega_z(m)$ with $|E|>0$ and $w\le(1+\delta)\essinf_{\Omega_z(m)}w$ on
$E$.  Taking $f=\Chi_E$ we get $w(B_z)\le Ce^{2\rho m}w(E)/|E|\le
C(1+\delta)e^{2\rho m}\essinf_{\Omega_z(m)}w$, that is,
$\PP_m[w](z)\le C(1+\delta)$.

(c) If $p>1$ and $M$ is bounded on $L^p(w)$, then $[w]_{\mathbb T_p}<
\infty$ by \cref{lem:testing}, and \cref{prop:Pgeom} gives \WP with
$\eps=\eps_1$, since $\PP_m[w]\le\PP^{(s_0)}_m[w]$.  The
statement for radial weights is part of \cref{thm:radialweak} below.
\end{proof}

\begin{remark}\label{rem:weakcompare}
\Cref{thmB}\ref{thmB:a} can be compared with the case $\beta=\alpha$ of
\cite[Theorem~1.2]{AO25} and \cite[Theorem~1.2(2)]{GRS25}.  For
$w=\ww_{2\rho(p-1),0}$, for which the weak type holds but the strong type
fails \cite[Example~4.1(2)]{AO25}, conditions \WQ and \WP hold
(\cref{thm:radialweak}).
\end{remark}

\section{Radial weights: proof of Theorem~\ref{thmC}}\label{sec:radial}

Throughout this section $w=w_0(|\cdot|)$ is radial.  For $t\ge0$, let us define
\begin{align}\label{def_of_R_n}
\mathcal R_t:=\{x:\ t\le|x|<t+1\},    
\end{align}
and for $N\in\Zp$ let
$W_N:=w(\mathcal R_N)$ and $\Sigma_N:=\sigma(\mathcal R_N)$.  On the
tree a radial weight is constant on each level, so that the level
masses of $w$ determine those of $\sigma$; on $\Bn$ the required
relation is supplied by the local condition
(\cref{lem:radialshadow}\ref{lem:radialshadow:b}).

\begin{lemma}\label{lem:radialshadow}
Let $1\le p<\infty$ and let $w$ be a radial weight in
$A_{p,\mathrm{loc}}(\Bn)$.  Let $R_0\ge1$ and $s\ge s_0$.  The
constants below depend only on $n$, $p$, $R_0$, $s$ and the local
constant of $w$.
\begin{enumerate}[label=\textup{(\alph*)}, leftmargin=2.3em]
\item\label{lem:radialshadow:a} $w(\mathcal R_t)\simeq w(\mathcal R_{t'})$
      whenever $t,t'\ge0$ and $|t-t'|\le R_0$; in particular
      $W_{N+1}\simeq W_N$ and $w(\mathcal R_t)\simeq W_{\ip t}$.
\item\label{lem:radialshadow:b} If $p>1$, then $W_N\Sigma_N^{p-1}\simeq e^{2\rho pN}$.  If $p=1$,
      then $\essinf_{[N,N+1)}w_0\gtrsim e^{-2\rho N}W_N$.
\item\label{lem:radialshadow:c} Let $z\in\Bn$ and $N:=\ip{|z|}$, so that $N\le|z|<N+1$.  Then,
      for all $m\in\Zp$ and $\frac14\le r\le R_0$,
      \[
         w\bigl(\Omega_z(m)\bigr)\simeq w\bigl(\Omega^{(s)}_z(m)\bigr)\simeq
         e^{-2\rho N}\,W_{N+m}\qquad\text{and}\qquad
         w\bigl(B(z,r)\bigr)\simeq e^{-2\rho N}\,W_N .
      \]
\end{enumerate}
If $p>1$, then \textup{(a)} and \textup{(c)} also hold for $\sigma$ and
$\Sigma_N$ in place of $w$ and $W_N$.
\end{lemma}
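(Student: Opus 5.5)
The plan is to reduce everything to one-dimensional statements about the radial profile $w_0$ on $\R_+$ with the measure $dV$, using the polar box comparisons \eqref{eq:polarboxint} and \eqref{eq:radialslice}, and to feed in the local condition through the doubling and local $A_p$ facts \ref{item_L21}. The key observation is that a radial weight which is locally $A_p$ on $\Bn$ gives a profile $w_0$ which is locally $A_p$ for $dV$ on intervals of bounded length (away from $o$ the measure $dV$ is comparable to $e^{2\rho t}dt$, hence to Lebesgue measure on a unit interval).

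\emph{Part (a).} First take $|t-t'|\le1$ with $t\ge1$ (the case of small $t$ is handled by the local doubling near $o$). Choose a unit ball $B_x$ with $|x|=t+\frac12$ and consider the polar box $E:=\mathcal E_x([t,t+1);1)$. By \cref{lem:cone}, $E\subset B(x,C)$ and $|E|\simeq|B_x|$. Local doubling of $w$ at scale $C$ then gives $w(E)\simeq w(B_x)$. Since $w$ is radial, \eqref{eq:polarboxint} gives $w(E)=\nu(\SCap)\,w(\mathcal R_t)\simeq e^{-2\rho t}w(\mathcal R_t)$. Applying the same argument at $t'$, with a ball centred between the two shells, compares $w(\mathcal R_t)e^{-2\rho t}$ and $w(\mathcal R_{t'})e^{-2\rho t'}$, and $e^{2\rho|t-t'|}\simeq1$. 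Iterating $R_0$ times gives (a).

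\emph{Part (b).} For $p>1$, apply the local $A_p$ condition of \ref{item_L21} to the set $E=\mathcal E_x([N,N+1);1)$, which lies in a ball of bounded radius and has comparable measure; this gives $\avg wE\avg\sigma E^{p-1}\lesssim1$. By radiality and \eqref{eq:polarboxint}, $w(E)\simeq e^{-2\rho N}W_N$ and $\sigma(E)\simeq e^{-2\rho N}\Sigma_N$, while $|E|\simeq1$. Hence $W_N\Sigma_N^{p-1}\lesssim e^{2\rho pN}$. The reverse inequality is H\"older's inequality on $\mathcal R_N$ together with $|\mathcal R_N|\simeq e^{2\rho N}$ (for $N=0$ use $|\mathcal R_0|\simeq1$). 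For $p=1$, the $A_{1,\mathrm{loc}}$ condition on $E$ gives $\essinf_E w\gtrsim\avg wE\simeq e^{-2\rho N}W_N$, and $\essinf_E w=\essinf_{[N,N+1)}w_0$ by radiality.

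\emph{Part (c).} By \eqref{eq:radialslice}, both $w(\Omega_z(m))$ and $w(\Omega^{(s)}_z(m))$ lie between $e^{-2\rho|z|}\int_{I}w_0\,dV$ for intervals $I=I_{z,m}$ and $I^{(s)}_{z,m}$ of bounded length containing $[|z|+m-1,|z|+m+1]$. By (a), these integrals are $\simeq W_{N+m}$; when $I$ starts below $0$, it is simply truncated, which is harmless. For the ball, $B(z,r)$ with $\frac14\le r\le R_0$ contains the polar box $\mathcal E$ of a point at a bounded distance, and is contained in a union of boundedly many shells. Local doubling together with (a) then yields $w(B(z,r))\simeq e^{-2\rho N}W_N$. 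For $|z|\le R_0+1$, everything is bounded by doubling on $B(o,C)$. For $\sigma$, the same arguments apply, since $\sigma$ is radial and doubling at bounded scales by \ref{item_L21}.

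The main obstacle is the comparison in (a) between a shell mass and the mass of a polar box of unit size. Doubling at bounded scales only controls balls, whereas a shell is a union of $\simeq e^{2\rho t}$ such boxes. Radiality is exactly what makes every box in the shell carry the same proportion of the shell mass, and this identity must be used carefully rather than a covering argument.
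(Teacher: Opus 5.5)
Your arguments for (a), (b) and for the slice estimates in (c) follow the paper. A unit polar box is compared with a ball by local doubling, and radiality, through \eqref{eq:polarboxint}, turns the box mass into $\simeq e^{-2\rho t}w(\mathcal R_t)$. There are two small repairs in that part:
\begin{enumerate}
\item \cref{lem:cone} requires $t\ge|x|$, so take $|x|=t$ rather than $|x|=t+\frac12$.
\item $|E|\simeq|B_x|$ together with doubling does not by itself give $w(E)\simeq w(B_x)$ for a set $E$ that is not a ball. You need either that $E$ contains a ball of fixed radius (this is why the paper checks \eqref{eq:Etheta}), or the local $A_p$ inequality $w(B)\lesssim(|B|/|E|)^pw(E)$ for $E\subset B$ with $B$ of bounded radius.
\end{enumerate}

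The ball estimate in (c) fails as you describe it.
\begin{itemize}
\item \emph{Upper bound.} Saying that $B(z,r)$ lies in boundedly many shells gives only $w(B(z,r))\lesssim\sum_iw(\mathcal R_{t_i})\simeq W_N$, which loses the factor $e^{-2\rho N}$. A whole shell carries about $e^{2\rho N}$ times the mass of a unit ball near $z$. The information the estimate needs is that the directions of $B(z,r)$ form a cap of aperture $\simeq e^{-|z|}$, and local doubling cannot recover that afterwards from a shell bound.
\item \emph{Lower bound.} For $r$ close to $\frac14$ the ball has radial extent $\frac12$, so it cannot contain a polar box of unit radial width. The lower bound by inclusion therefore also fails as stated.
\end{itemize}

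The repair is the comparison you already use in (a), applied in both directions. The box $E_{|z|}(\theta_z)$ (or your $\mathcal E_z([|z|,|z|+1);1)$) and the ball $B(z,r)$ both lie in $B(z,R_0+7)$, and both contain a ball of radius $\frac14$. Local doubling then gives $w(B(z,r))\simeq w(B(z,R_0+7))\simeq w(E_{|z|}(\theta_z))$. By \eqref{eq:polarboxint}, \eqref{eq:caps} and (a), the last quantity equals $\nu(\SCap(\theta_z,\min\{10e^{-|z|},\pi\}))\,w(\mathcal R_{|z|})\simeq e^{-2\rho N}W_N$. For the upper bound alone you could instead use \cref{lem:shadowball}: it places $B(z,r)$ inside a polar box over an interval of length $2r$ with aperture $\lesssim e^{-|z|}$. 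That handles $|z|\ge r+1$; when $|z|<r+1$ everything sits in a fixed ball about $o$.
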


\begin{proof}
We prove the assertions for $w$; those for $\sigma$ are proved in the
same way.  For $t\ge0$ and $\theta\in\Sph$ we use the polar boxes
\eqref{eq:polarboxz} over unit intervals,
\[
   E_t(\theta):=\mathcal E_{\gamma_\theta(t)}([t,t+1);10)
   =\bigl\{y\in\Bn\setminus\{o\}:\ t\le|y|<t+1,\
   \angle(\theta_y,\theta)<10e^{-t}\bigr\},
\]
where $\gamma_\theta$ is the geodesic ray \eqref{eq:geodesic}.  Since
$E_t(\theta)$ is the product of the cap $\SCap(\theta,\min\{10e^{-t},
\pi\})$ and the interval $[t,t+1)$ of radii, \eqref{eq:polarboxint} and
\eqref{eq:caps} give $w(E_t(\theta))\simeq e^{-2\rho t}w(\mathcal R_t)$,
and in the same way $|E_t(\theta)|\simeq1$.  Moreover,
\begin{equation}\label{eq:Etheta}
   B\bigl(\gamma_\theta(t+\tfrac12),\tfrac12\bigr)\subset E_t(\theta)
   \subset B\bigl(\gamma_\theta(t),7\bigr).
\end{equation}
Indeed, let $y\in B(\gamma_\theta(t+\frac12),\frac12)$.  Then
$t<|y|<t+1$ and $\gp{\gamma_\theta(t+\frac12)}y\ge\frac12(t+\frac12+|y|
-\frac12)>t$, so that $\angle(\theta_y,\theta)\le9e^{-t}$ by
\cref{lem:angle}\ref{lem:angle:c}, and $y\in E_t(\theta)$.  Next, let
$y\in E_t(\theta)$.  If $t=0$, then $d(\gamma_\theta(t),y)=|y|<1$.  If
$t>0$, then $\min\{|\gamma_\theta(t)|,|y|\}=t$, and
\cref{lem:angle}\ref{lem:angle:a} gives
$e^{-2\gp{\gamma_\theta(t)}y}\le2(1+100)e^{-2t}$; taking logarithms,
$2\gp{\gamma_\theta(t)}y\ge2t-\log202>2t-6$, and hence
$d(\gamma_\theta(t),y)=t+|y|-2\gp{\gamma_\theta(t)}y<|y|-t+6<7$.

(a) Let $|t-t'|\le R_0$ and $\theta\in\Sph$.  By \eqref{eq:Etheta},
both $E_t(\theta)$ and $E_{t'}(\theta)$ are contained in the ball
$B(\gamma_\theta(t),R_0+7)$ and contain a ball of radius $\frac12$.
Since $w$ is doubling at bounded scales by \ref{item_L21},
$w(E_t(\theta))\simeq w(B(\gamma_\theta(t),R_0+7))\simeq
w(E_{t'}(\theta))$, that is, $e^{-2\rho t}w(\mathcal R_t)\simeq
e^{-2\rho t'}w(\mathcal R_{t'})$, and so $w(\mathcal R_t)\simeq
w(\mathcal R_{t'})$.

(b) Let $E:=E_N(\theta)$.  By \eqref{eq:Etheta}, $E$ lies in a ball $B$
of radius $7$ with $|E|\ge c|B|$, so that \ref{item_L21} gives
$\avg{w}{E}\avg{\sigma}{E}^{p-1}\lesssim1$.  Since $w$ is radial and $E$
is the product of a cap and the interval $[N,N+1)$, the angular measure
cancels in the averages, and $\avg{w}{E}=W_N/V([N,N+1))$,
$\avg{\sigma}{E}=\Sigma_N/V([N,N+1))$.  Hence
$W_N\Sigma_N^{p-1}\lesssim V([N,N+1))^p\simeq e^{2\rho pN}$; the lower
bound is H\"older's inequality on $\mathcal R_N$.  For $p=1$,
$\avg{w}{E}\lesssim\essinf_Ew=\essinf_{[N,N+1)}w_0$.

(c) By \eqref{eq:radialslice}, $w(\Omega^{(s)}_z(m))\lesssim
e^{-2\rho|z|}\int_{I^{(s)}_{z,m}}w_0\,dV$.  The interval
$I^{(s)}_{z,m}$ has length at most $2s$ and is covered by at most
$2s+2$ unit intervals $[t,t+1)$ with $|t-(|z|+m)|\le s+1$; by (a), the
mass of $w$ over each of the corresponding rings is comparable to
$W_{N+m}$.  Thus $w(\Omega^{(s)}_z(m))\lesssim e^{-2\rho N}W_{N+m}$.
Conversely, \eqref{eq:radialslice} and (a) give $w(\Omega_z(m))\gtrsim
e^{-2\rho|z|}w(\mathcal R_{\max\{|z|+m-1,0\}})\simeq e^{-2\rho N}
W_{N+m}$, and since $\Omega_z(m)\subset\Omega^{(s)}_z(m)$, the estimate
for the slices follows.  For the balls, let $\frac14\le r\le R_0$ and
$\theta:=\theta_z$ ($\theta$ arbitrary if $z=o$).  Both $B(z,r)$ and
$E_{|z|}(\theta)$ are contained in $B(z,R_0+7)$ and contain balls of
radius $\frac14$, so that, by \ref{item_L21}, $w(B(z,r))\simeq
w(E_{|z|}(\theta))\simeq e^{-2\rho|z|}w(\mathcal R_{|z|})\simeq
e^{-2\rho N}W_N$.
\end{proof}

\begin{lemma}\label{lem:gaps}
\begin{enumerate}[label=\textup{(\alph*)}, leftmargin=2.3em]
\item\label{lem:gaps:a} \Ro holds iff there are $C,\kappa>0$ with
      $W_{N-i}\le Ce^{-\kappa i}W_N$ for $0\le i\le N$.
\item\label{lem:gaps:b} If \Rt holds, there are $C,\eta>0$ with
      $W_{N+i}\le Ce^{(2\rho p-\eta)i}W_N$ for all $N,i$.
\end{enumerate}
\end{lemma}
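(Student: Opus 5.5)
Both parts are statements about a positive sequence $(W_N)$ alone, so the plan is to prove them as elementary lemmas on sequences, in the spirit of the self-improvement arguments used for the ancestor tails in \cref{lem:Qtail}. Note that no local condition is assumed here, so we must not use $W_{N+1}\simeq W_N$ from \cref{lem:radialshadow}; the arguments below do not need it.

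\emph{Part (a).} The direction ``$\Leftarrow$'' is immediate: $\sum_{i=0}^NW_{N-i}\le C\sum_ie^{-\kappa i}W_N=C(1-e^{-\kappa})^{-1}W_N$. For ``$\Rightarrow$'', let $K$ be the constant in \Ro and put $S_N:=\sum_{i=0}^NW_i$, so that $S_N\le KW_N$. Then
\[
   S_{N-1}=S_N-W_N\le \Bigl(1-\frac1K\Bigr)S_N .
\]
Iterating $i$ times gives $W_{N-i}\le S_{N-i}\le(1-1/K)^iS_N\le K(1-1/K)^iW_N$, which is the claim with $e^{-\kappa}:=1-1/K$ and $C:=K$. (If $K\le1$ the sum forces $W_i=0$ for $i<N$, which is excluded since weights are positive, so $K>1$.)

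\emph{Part (b).} This is the dual argument with a forward tail sum. Put $T_N:=\sum_{i\ge0}e^{-2\rho pi}W_{N+i}$, so that \Rt reads $W_N\le T_N\le KW_N$. From
\[
   T_N=W_N+e^{-2\rho p}T_{N+1}
\]
we get $e^{-2\rho p}T_{N+1}=T_N-W_N\le(1-1/K)T_N$, that is, $T_{N+1}\le e^{2\rho p}(1-1/K)T_N$. Iterating, $W_{N+i}\le T_{N+i}\le\bigl(e^{2\rho p}(1-1/K)\bigr)^iT_N\le K\bigl(e^{2\rho p}(1-1/K)\bigr)^iW_N$. This gives (b) with $\eta:=-\log(1-1/K)>0$ and $C:=K$. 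If $T_N=\infty$ for some $N$, then \Rt already fails, so finiteness is automatic.

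I do not expect a real obstacle. The only care needed is that the constant in \Ro or \Rt is strictly greater than $1$ (otherwise the contraction factor is degenerate), and that the $T_N$ are finite so that the subtractions are legitimate; both follow directly from the hypothesis and the positivity of $w$.
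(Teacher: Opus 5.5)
Your proof is correct and follows the paper's argument. Part (a) is identical to the paper's. In part (b) your forward tails are related to the paper's normalized tail sums $S_i(N)=\sum_{j\ge0}r_{i+j}(N)$ by $T_{N+i}=e^{2\rho p i}W_N\,S_i(N)$, so your recursion $e^{-2\rho p}T_{N+1}=T_N-W_N\le(1-K^{-1})T_N$ is exactly the paper's $S_{i+1}(N)=S_i(N)-r_i(N)\le(1-C^{-1})S_i(N)$ written without the normalization by $W_N$.
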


\begin{proof}
(a) Suppose that \Ro holds, so that the partial sums
$S_N:=\sum_{i\le N}W_i$ satisfy $S_N\le CW_N$ for some $C>1$.  Then
$S_{N-1}=S_N-W_N\le(1-C^{-1})S_N$, and hence $W_{N-i}\le S_{N-i}\le
(1-C^{-1})^iCW_N$, which is the claim with $e^{-\kappa}:=1-C^{-1}$.
The converse follows by summing a geometric series.

(b) Let $r_i(N):=e^{-2\rho pi}W_{N+i}/W_N$, so that
$r_{i+j}(N)=r_i(N)r_j(N+i)$, and by \Rt there is $C>1$ with
$\sum_ir_i(N)\le C$ for all $N$.  The tail sums $S_i(N):=\sum_{j\ge0}
r_{i+j}(N)=r_i(N)\sum_{j\ge0}r_j(N+i)$ satisfy $S_i(N)\le Cr_i(N)$, and
$S_{i+1}(N)=S_i(N)-r_i(N)\le(1-C^{-1})S_i(N)$.  Hence $r_i(N)\le
S_i(N)\le(1-C^{-1})^iS_0(N)\le C(1-C^{-1})^i$, which is the claim with
$e^{-\eta}:=1-C^{-1}$.
\end{proof}

\begin{lemma}[Test functions]\label{lem:radialtests}
Let $N\in\Zp$.  Then
\begin{enumerate}[label=\textup{(\alph*)}, leftmargin=2.3em]
\item\label{lem:radialtests:a} $\Minf\Chi_{\mathcal R_N}(x)\ge c$ for $|x|<N$;
\item\label{lem:radialtests:b} $\Minf\Chi_{\mathcal R_N}(x)\ge ce^{-2\rho(|x|-N)}$ for $|x|\ge N$.
\end{enumerate}
\end{lemma}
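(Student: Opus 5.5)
Both bounds come from estimating a single average $A_r\Chi_{\mathcal R_N}(x)$ from below, with $r>2$ chosen according to the position of $x$. The quantity to control is $|B(x,r)\cap\mathcal R_N|$; we compare it with $|B(x,r)|\simeq e^{2\rho r}$ from \eqref{eq:ballvol}. The lower bounds on the intersection will come from the cone description of \cref{lem:cone}, or from balls that contain all of $\mathcal R_N$.

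\emph{Part (a).} Let $|x|<N$. The first idea is to take $r$ large enough that $B(x,r)$ contains $\mathcal R_N$, say $r=N+1+|x|$. This gives only $e^{2\rho(N-r)}$, which is too small when $|x|$ is large, so we need a sharper choice. Take instead $r:=N-|x|+C_1$, with $C_1\ge3$ a fixed constant chosen so that $r>2$. By \cref{lem:cone} with $t=N\ge|x|$ and $\kappa=1$, the polar box $\mathcal E_x([N,N+1);e)$ is contained in $\mathcal R_N\cap B(x,N-|x|+3+2\log2)$, and its volume is $\simeq e^{2\rho(N-|x|)}$. Choosing $C_1=3+2\log2$, the ball $B(x,r)$ has volume $\simeq e^{2\rho(N-|x|)}$. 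Hence $A_r\Chi_{\mathcal R_N}(x)\ge c$, and so $\Minf\Chi_{\mathcal R_N}(x)\ge c$.

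\emph{Part (b).} Let $|x|\ge N$, and write $j:=|x|-N\ge0$. We need a ball of radius $\approx j+O(1)$ about $x$ which captures a portion of $\mathcal R_N$ of volume $\gtrsim1$. Use the ancestor $a_j(x)$: it lies on $[0,x]$ with $|a_j(x)|=N$, so $d(x,a_j(x))=j$. The ball $B(\gamma_{x}(N+\frac12),\frac12)$ lies in $\mathcal R_N$, since its points have $|y|\in(N,N+1)$. Its centre is at distance $|j-\frac12|\le j+1$ from $x$, so the ball is contained in $B(x,j+2)$. Set $r:=j+3>2$. Then
\[
A_r\Chi_{\mathcal R_N}(x)\ge\frac{|B(o,\tfrac12)|}{|B(x,j+3)|}\ge ce^{-2\rho j}.
\]
A small adjustment is needed when $|x|<N+\frac12$, because then the point $\gamma_x(N+\frac12)$ lies beyond $x$ on the geodesic ray $\gamma_{\theta_x}$. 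The same distance bound still holds if we use $\gamma_{\theta_x}(N+\frac12)$, since its distance to $x$ is at most $1$. The case $x=o$ (possible only for $N=0$) is handled the same way with any direction $\theta$.

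The only delicate step is part (a): the radius must be tied to $N-|x|$ rather than to $N$, and for this we rely on the cone containment of \cref{lem:cone}. The rest is bookkeeping of constants, using \eqref{eq:ballvol}.
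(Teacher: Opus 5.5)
Your proof is correct and follows the paper's argument essentially verbatim. In (a) you apply the cone inclusion of \cref{lem:cone}; you take $\kappa=1$ where the paper takes $\kappa=0$, which only changes the additive constant in the radius. In (b) you place the ball $B(\gamma_{\theta_x}(N+\frac12),\frac12)\subset\mathcal R_N$ inside a ball of radius $|x|-N+O(1)$ about $x$, exactly as the paper does.
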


\begin{proof}
(a) Let $|x|<N$.  By \cref{lem:cone} with $\kappa=0$ and $t=N$, the
polar box $\mathcal E_x([N,N+1);1)\subset\mathcal R_N$ has measure
$\simeq e^{2\rho(N-|x|)}$ and lies in $B(x,r)$, $r:=N-|x|+1+2\log2>2$,
whose measure is also $\simeq e^{2\rho(N-|x|)}$.  Hence
$\Minf\Chi_{\mathcal R_N}(x)\ge A_r\Chi_{\mathcal R_N}(x)\ge
|\mathcal E_x([N,N+1);1)|/|B(x,r)|\ge c$.
(b) Let $z:=\gamma_{\theta_x}(N+\frac12)$; then $B(z,\frac12)\subset
\mathcal R_N\cap B(x,r)$ with $r:=\max\{|x|-N+1,3\}$, and
$|B(x,r)|\simeq e^{2\rho(|x|-N)}$.  Hence $\Minf\Chi_{\mathcal R_N}(x)
\ge|B(z,\frac12)|/|B(x,r)|\ge ce^{-2\rho(|x|-N)}$.
\end{proof}

The interval conditions of \cref{thmC} are handled by the polar boxes
$E(I,\Theta)$ defined in \eqref{eq:polarbox}.  By \eqref{eq:polarboxint},
the angular measure cancels in averages of radial functions: for every
radial $F=F_0(|\cdot|)\ge0$,
\begin{equation}\label{eq:polarboxavg}
   \avg{F}{E(I,\Theta)}=\frac1{V(I)}\int_IF_0\,dV .
\end{equation}
Let $B$ be a ball with $\{|y|:\ y\in B\}\subset I$, and
$\Theta:=\{\theta_y:\ y\in B\setminus\{o\}\}$.  Every $y\in E(I,\Theta)$
has the direction of some $y'\in B$, and $d(y,y')=||y|-|y'||\le|I|$;
hence
\begin{equation}\label{eq:boxball}
   B\setminus\{o\}\subset E(I,\Theta)\subset\{y:\ d(y,B)\le|I|\}.
\end{equation}
For $1\le p<\infty$ and an interval $I$ we write $[w_0]_{A_p(V,I)}$ for
the product in \eqref{eq:ApV} over the single interval $I$, with the
second factor replaced by $\|w_0^{-1}\|_{L^\infty(I)}$ if $p=1$.

\begin{lemma}\label{lem:ApV}
Let $w=w_0(|\cdot|)$ be a radial weight.
\begin{enumerate}[label=\textup{(\alph*)}, leftmargin=2.3em]
\item\label{lem:ApV:a} Let $1\le p<\infty$.  Then $w\in A_{p,\mathrm{loc}}(\Bn)$ if and
      only if $\sup\{[w_0]_{A_p(V,I)}:\ |I|\le1\}<\infty$.
\item\label{lem:ApV:b} Let $1<p<\infty$.  Then $w_0\in A_p(\R_+,V)$ if and only if
      $w\in\Aloc$ and \Ro and \Rt hold.
\end{enumerate}
\end{lemma}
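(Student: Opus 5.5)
For \ref{lem:ApV:a}, we pass between balls and polar boxes, as in \eqref{eq:polarboxavg} and \eqref{eq:boxball}. Suppose first that $\sup\{[w_0]_{A_p(V,I)}:\ |I|\le1\}<\infty$, and let $B$ be a ball of radius $r\le1$. Let $I:=\{|y|:\ y\in B\}$; it is an interval of length at most $2r\le2$, and we split it into at most two intervals of length at most $1$ if needed. Put $\Theta:=\{\theta_y:\ y\in B\setminus\{o\}\}$. By \eqref{eq:boxball}, $B\subset E(I,\Theta)$. The key claim is that $|E(I,\Theta)|\lesssim|B|$ uniformly. By \eqref{eq:polarboxint} and \cref{lem:shadowball}, this amounts to $\nu(\Theta)V(I)\simeq|B|$. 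When $B$ is far from $o$, $\Theta$ is a cap of aperture $\simeq\sinh r/\sinh|x|$ and $V(I)\simeq r\,\Jd(|x|)$, which gives $\nu(\Theta)V(I)\simeq r^{n}$. When $B$ is near $o$, the comparison reduces to the Euclidean one in the ball model. Granting the claim, $\avg wB\le C\avg w{E}$ and $\avg\sigma B\le C\avg\sigma E$, and \eqref{eq:polarboxavg} turns the product into $[w_0]_{A_p(V,I)}$. For $p=1$ we also use $\essinf_Bw\ge\essinf_Iw_0$. Conversely, given $I$ with $|I|\le1$, we pick a ball $B$ of radius $|I|/2$ centred on a ray at the midpoint radius of $I$, so that $\{|y|:\ y\in B\}=I$. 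Then $E(I,\Theta)$ lies in the $|I|$-neighbourhood of $B$, which is a ball of radius $\le\frac32|I|$ and comparable measure. We apply \ref{item_L21} there, with $|E|\ge c|3B|$, and use \eqref{eq:polarboxavg}. The main obstacle in (a) is the volume comparison $|E(I,\Theta)|\simeq|B|$ near the origin and for tiny $r$; I would do it via \eqref{eq:caps}, using $\sin\phi\simeq\phi$ and $\Jd$ comparable on $I$.

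For \ref{lem:ApV:b}, assume first that $w_0\in A_p(\R_+,V)$. The local condition follows from (a). For \Ro we take $I=[0,N+1]$ and $J=[N,N+1]$. By H\"older on $J$, $\int_J\sigma_0\,dV\ge V(J)^{p'}W_N^{-1/(p-1)}$, and $V(I)\simeq V(J)\simeq e^{2\rho N}$. The $A_p(V)$ bound on $I$ then gives $\sum_{i\le N}W_i\lesssim W_N$. For \Rt we take $I=[N,N+i+1]$ and use $\Sigma_N^{p-1}\gtrsim e^{2\rho pN}/W_N$. This yields $W_{N+i}\lesssim e^{2\rho p i}W_N$, which is only the weak bound. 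To get summability, we instead apply the $A_p(V)$ bound on $I_K=[N,N+K]$ to $\sum_{i<K}W_{N+i}$ and show that it grows slower than $e^{2\rho pK}$ times a fixed fraction. I expect this to need a self-improvement in the spirit of \cref{lem:gaps}\ref{lem:gaps:b}: for $w_0\in A_p(V)$ the dual weight also satisfies an \Ro-type condition, so $\Sigma_N$ cannot concentrate early, and then Hölder gives a strict gain. Honestly, I would first try the direct route. Set $r_i=e^{-2\rho p i}W_{N+i}/W_N$. The $A_p(V)$ condition on $[N,N+K]$, together with duality $\sigma_0\in A_{p'}(V)$ and the symmetric argument for $\sigma$ (which gives \Ro for $\Sigma$ and hence $\Sigma_{[N,N+K]}\simeq\Sigma_{N+K}$), gives $W_{[N,N+K]}\lesssim e^{2\rho pK}\,e^{2\rho pN}\Sigma_{N+K}^{1-p}\simeq W_{N+K}$. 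So $\sum_{i\le K}W_{N+i}\lesssim W_{N+K}$. Combined with $r_K\lesssim1$ and the reverse \Ro for $\sigma$, this yields geometric decay of $r_i$.

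Conversely, assume $w\in\Aloc$, \Ro and \Rt, and let $I=[a,b]$. If $|I|\le1$, we conclude by (a). Otherwise we discretise with \cref{lem:radialshadow}: $\int_Iw_0\,dV\simeq\sum_{a\le N\le b}W_N\lesssim W_{\ip b}$ by \Ro. Also $\Sigma_N\simeq e^{2\rho pN/(p-1)}W_N^{-1/(p-1)}$, and \cref{lem:gaps}\ref{lem:gaps:b} gives $\Sigma_{N-i}\lesssim e^{-\eta i/(p-1)}\Sigma_N$ after the exponential factor is accounted for, so $\int_I\sigma_0\,dV\lesssim\Sigma_{\ip b}$. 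Then $W_{\ip b}\Sigma_{\ip b}^{p-1}\simeq e^{2\rho p b}\simeq V(I)^p$, which bounds $[w_0]_{A_p(V,I)}$.
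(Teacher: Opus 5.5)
Your route is the paper's in both parts: balls versus polar boxes through \eqref{eq:boxball} and \eqref{eq:polarboxavg} in (a), and ring masses, $W_N\Sigma_N^{p-1}\simeq e^{2\rho pN}$, \cref{lem:radialshadow} and \cref{lem:gaps} in (b). Your proofs of \Ro and of the sufficiency in (b) match the paper's. Two steps need repair.

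\textbf{The derivation of \Rt does not work as written.} The estimate $\sum_{i\le K}W_{N+i}\lesssim W_{N+K}$ you obtain on $[N,N+K]$ is just \Ro again. Together with $r_K\lesssim1$ it cannot yield \Rt: the weight $\ww_{2\rho(p-1),0}$ lies in $\Aloc$ and satisfies \Ro and \Rtw, but $r_i\simeq1$ for all $i$, so \Rt fails. Nor does H\"older give a strict gain. It only gives the lower half of $W_M\Sigma_M^{p-1}\simeq e^{2\rho pM}$; the upper half is the $A_p(V)$ condition on $[M,M+1]$.

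The gain comes entirely from \Ro for the dual weight. This appears in your last sentence, but you never spell out its role. Run your \Ro argument for $\sigma_0\in A_{p'}(\R_+,V)$ to get $\sum_{i\le N}\Sigma_i\lesssim\Sigma_N$. The proof of \cref{lem:gaps}\ref{lem:gaps:a}, applied to $(\Sigma_N)$, then gives $\Sigma_N\le Ce^{-\kappa i}\Sigma_{N+i}$, and hence
\[
   r_i=e^{-2\rho pi}\frac{W_{N+i}}{W_N}\simeq\Bigl(\frac{\Sigma_N}{\Sigma_{N+i}}\Bigr)^{p-1}\lesssim e^{-\kappa(p-1)i},
\]
which is summable uniformly in $N$. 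This one line is the paper's proof of \Rt. The detour through $[N,N+K]$ should be dropped.

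\textbf{In (a), the obstacle you name is not the real one.} The volume comparison is immediate: by \eqref{eq:boxball}, $E(I,\Theta)\subset\{y:\ d(y,B)\le|I|\}\subset B(x,3r)$, and $|B(x,3r)|\lesssim|B(x,r)|$ by \eqref{eq:ballvol}. No cap computation is needed.

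What does need care is the length of the interval. If you split an $I$ of length up to $2$ into halves $I_1,I_2$, you are left with cross terms $\frac1{V(I_1)}\int_{I_1}w_0\,dV\,\bigl(\frac1{V(I_2)}\int_{I_2}\sigma_0\,dV\bigr)^{p-1}$. The hypothesis on single intervals of length at most $1$ does not control these without a further doubling argument. Instead, take $r\le\frac12$, so that $|I|\le1$, and use that $A_{p,\mathrm{loc}}(\Bn)$ does not depend on the threshold, as the paper does.

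For $p=1$ in the direction $w\in A_{1,\mathrm{loc}}(\Bn)\Rightarrow$ interval condition, you also need $\essinf_Bw=\essinf_Iw_0$. This holds because the radii of $B$ fill $I$.
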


\begin{proof}
(a) Suppose that $w\in A_{p,\mathrm{loc}}(\Bn)$.  Let $I=[a,b]$ with
$b-a\le1$, $y_0:=\gamma_\theta(\frac{a+b}2)$ for some direction
$\theta$, $B:=B(y_0,\frac{b-a}2)$, and $E=E(I,\Theta)$ as in
\eqref{eq:boxball}.  The radii of $B$ fill $(a,b)$, and
$B\setminus\{o\}\subset E\subset B(y_0,2(b-a))$ with
$|B(y_0,2(b-a))|\le C|B|$.  Hence, by \eqref{eq:polarboxavg},
$[w_0]_{A_p(V,I)}=\avg{w}{E}\avg{\sigma}{E}^{p-1}\le C$ by
\ref{item_L21} if $p>1$, and $V(I)^{-1}\int_Iw_0\,dV=\avg{w}{E}\le
C\avg{w}{B(y_0,2(b-a))}\le C\essinf_Bw=C\essinf_Iw_0$ if $p=1$.
Conversely, let $r\le\frac12$, $I:=[\max\{|x|-r,0\},|x|+r]$, and
$E=E(I,\Theta)$ be as in \eqref{eq:boxball} with $B(x,r)$ in place of
$B$.  Then $E\subset B(x,3r)$ and $|B(x,3r)|\le C|B(x,r)|$, so that
$\avg{w}{B(x,r)}\le C\avg{w}{E}$, and similarly for $\sigma$; moreover
$\essinf_{B(x,r)}w\ge\essinf_Iw_0$, since $|y|\in I$ for $y\in B(x,r)$.
By \eqref{eq:polarboxavg}, the $A_p$ (resp.\ $A_1$) product of $w$ on
$B(x,r)$ is at most $C[w_0]_{A_p(V,I)}$ with $|I|\le1$.  This
suffices, since the class $A_{p,\mathrm{loc}}(\Bn)$ does not depend on
the threshold $L$ (\cref{subsec:local}).

(b) The local condition follows from (a).  The
intervals $[N,N+1]$ and H\"older's inequality give $W_N\Sigma_N^{p-1}
\simeq V([N,N+1])^p\simeq e^{2\rho pN}$.  For $I=[0,N+1]$ we have
$V(I)\simeq e^{2\rho N}$ and
\[
   \Bigl(\sum_{i\le N}W_i\Bigr)\Sigma_N^{p-1}\le
   \Bigl(\sum_{i\le N}W_i\Bigr)\Bigl(\sum_{i\le N}\Sigma_i\Bigr)^{p-1}
   \le[w_0]_{A_p(V)}V(I)^p\lesssim W_N\Sigma_N^{p-1};
\]
cancelling $\Sigma_N^{p-1}$ we obtain \Ro, and in the same way
$\sum_{i\le N}\Sigma_i\lesssim\Sigma_N$.  The proof of
\cref{lem:gaps}\ref{lem:gaps:a} applies to $(\Sigma_N)$ and gives
$\kappa>0$ with $\Sigma_N\le Ce^{-\kappa i}\Sigma_{N+i}$; hence
$e^{-2\rho pi}W_{N+i}/W_N\simeq(\Sigma_N/\Sigma_{N+i})^{p-1}\lesssim
e^{-\kappa(p-1)i}$, and \Rt follows.

Next to prove sufficiency, we first observe  intervals of length $\le1$ are covered by (a).  Let
$I=[a,b]$ with $b-a>1$, and $N\in\Zp$ with $N<b\le N+1$.  Then
$N\ge1$, $I\subset[0,N+1]$ and $V(I)\ge V([b-1,b])\gtrsim e^{2\rho N}$.
By \Ro, $\int_Iw_0\,dV\le\sum_{i\le N}W_i\lesssim W_N$.  By
\cref{lem:gaps}\ref{lem:gaps:b} and \cref{lem:radialshadow}\ref{lem:radialshadow:b},
$\Sigma_{N+i}^{p-1}\simeq e^{2\rho p(N+i)}W_{N+i}^{-1}\gtrsim e^{\eta i}
\Sigma_N^{p-1}$, so that $\int_I\sigma_0\,dV\le\sum_{i\le N}\Sigma_i
\lesssim\Sigma_N$.  Hence $[w_0]_{A_p(V,I)}\lesssim e^{-2\rho pN}W_N
\Sigma_N^{p-1}\simeq1$.
\end{proof}

The next lemma shows that the local part of the interval condition is
already forced by the boundedness of $M$ on radial functions.

\begin{lemma}\label{lem:radiallocal}
Let $1<p<\infty$, and suppose that $M$ is bounded on the radial
functions of $L^p(w)$, with norm $\|M\|_{\mathrm{rad}}$.  Then
$[w_0]_{A_p(V,I)}\le C\|M\|_{\mathrm{rad}}^p$ for every interval
$I\subset\R_+$ with $|I|\le1$; in particular $w\in\Aloc$.
\end{lemma}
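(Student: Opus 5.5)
We run Muckenhoupt's testing argument, but with radial test functions supported on thin shells instead of on balls; \cref{lem:ApV}\ref{lem:ApV:a} then converts the interval bound into $w\in\Aloc$. Fix an interval $I=[a,b]\subset\R_+$ with $b-a\le1$, put $\sigma_{0,N}:=\min\{\sigma_0,N\}$, and take the radial test function $f:=\sigma_{0,N}(|\cdot|)\Chi_{\{|y|\in I\}}$. Then $\|f\|^p_{L^p(w)}\le\int_I\sigma_{0,N}\,dV$. The goal is to show that
\[
   Mf(x)\ge c\,\frac1{V(I)}\int_I\sigma_{0,N}\,dV\qquad\text{for all }x\text{ with }|x|\in I .
\]
Integrating the $p$-th power of this lower bound over $\{|x|\in I\}$ against $w$ and comparing with $\|M\|^p_{\mathrm{rad}}\|f\|^p_{L^p(w)}$ gives $[w_0]_{A_p(V,I)}$ with $\sigma_{0,N}$ in place of $\sigma_0$, bounded by $c^{-p}\|M\|_{\mathrm{rad}}^p$. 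Letting $N\to\infty$ by monotone convergence finishes the estimate.

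To get the pointwise lower bound, fix $x$ with $|x|\in I$ and a direction $\theta=\theta_x$ (arbitrary if $x=o$). Use a polar box $E:=E(I,\Theta)$ with $\Theta$ a cap about $\theta$, chosen so that two things hold. First, $E\subset B(x,r)$ with $|B(x,r)|\le C|E|$. Second, by \eqref{eq:polarboxavg}, $\avg fE=V(I)^{-1}\int_I\sigma_{0,N}\,dV$. The natural radius is $r\simeq|I|$, and the cap should be the set of directions of a ball of radius comparable to $|I|$ centred on the ray through $x$ at radius the midpoint of $I$, as in \eqref{eq:boxball}. Then $Mf(x)\ge A_r f(x)\ge\frac{|E|}{|B(x,r)|}\avg fE$, which is the claimed bound with $r\le 3|I|\le 3$. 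Only $M$ with radius at most $3$ is needed, so both the local part of $M$ and $\Minf$ are harmless here.

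The main obstacle is the geometric comparability $|E|\simeq|B(x,r)|$ uniformly in $I$ and $x$. This fails for the naive choice when $|I|$ is much smaller than $|x|$ and the Jacobian of the directional spread is distorted, and also near $o$, where $\Jd(t)\simeq t^{n-1}$ degenerates. I would handle it as in the proof of \cref{lem:ApV}\ref{lem:ApV:a}. Let $y_0:=\gamma_\theta(\frac{a+b}2)$ and $B:=B(y_0,\frac{b-a}2)$, and take $\Theta$ to be the set of directions of $B$. Then $B\setminus\{o\}\subset E\subset B(y_0,2(b-a))$ by \eqref{eq:boxball}. Moreover $x\in\{|y|\in I\}$ lies on the same ray, so $d(x,y_0)\le b-a$, giving $E\subset B(x,3(b-a))$. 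Finally, \eqref{eq:ballvol} for radii at most $3$ yields $|B(x,3(b-a))|\le C|B|\le C|E|$, uniformly. The case $a=0$, where $B$ may contain $o$, is covered by the same inclusions.
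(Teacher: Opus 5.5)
Your argument is correct, and it is somewhat cleaner than the paper's. The overall testing scheme is the same; the difference is in the geometric step.

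\textbf{The paper's route.} The paper splits into two cases.
\begin{itemize}
\item For $a\le b/2$ it tests with $\sigma_N\Chi_{B(o,b)}$ and obtains the $A_p$ product on the ball $B(o,b)$. It then passes to $[a,b]$ via $V([a,b])\ge\frac12V([0,b])$, which uses the monotonicity of $\Jd$.
\item For $a>b/2$ it uses your shell test function $\sigma_N\Chi_{\{|y|\in I\}}$, but handles the geometry by hand. It takes the cap of aperture $\phi_0=2\sinh(r/2)/\sinh b$ and proves that the resulting polar box lies in $B(x,2r)$ via \cref{lem:angle}\ref{lem:angle:b}. It then compares $\nu(\SCap(\theta_x,\phi_0))\,V(I)\simeq(r/\sinh b)^{n-1}\,r\sinh^{n-1}b$ with $|B(o,2r)|\simeq r^n$. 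The estimate $V(I)\simeq r\sinh^{n-1}b$ needs $\sinh a\gtrsim\sinh b$, which is why the first case is treated separately.
\end{itemize}

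\textbf{Your route.} You instead reuse the sandwich from the proof of \cref{lem:ApV}\ref{lem:ApV:a}. Since $x$ lies on the ray through $y_0$ with $d(x,y_0)\le\frac{|I|}2$, \eqref{eq:boxball} gives $B(y_0,\frac{|I|}2)\setminus\{o\}\subset E\subset B(x,\frac52|I|)$. Hence $|E|\gtrsim|B(x,3|I|)|$ follows from volume doubling at scales at most $3$ alone. You never compute $V(I)$ or $\nu(\Theta)$, and no case distinction is needed: intervals touching or near $0$ are covered by the same inclusions.

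\textbf{Comparison.} The paper's route additionally gives explicit apertures, which are not needed for this lemma. Yours is shorter and relies only on \eqref{eq:boxball}, \eqref{eq:polarboxavg} and \eqref{eq:ballvol}.

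\textbf{Two minor points,} neither of which affects the argument. When $a=0$ the point $o$ lies on the boundary of the open ball $B$, not inside it. Also, $d(x,y_0)\le\frac{b-a}2$ holds, which is slightly better than the bound $b-a$ you wrote.
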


\begin{proof}
Let $I=[a,b]$, $r:=b-a\in(0,1]$, $N\in\N$ and $\sigma_N:=\min\{\sigma,
N\}$, with radial profile $\sigma_{N,0}$.

\emph{Case $a\le b/2$.}  Then $b\le2r\le2$.  Let $f:=\sigma_N
\Chi_{B(o,b)}$, which is radial.  For $|x|\le b$ we have
$B(o,b)\subset B(x,2b)$ and $|B(x,2b)|\lesssim|B(o,b)|$, so that
$Mf\gtrsim\avg{\sigma_N}{B(o,b)}$ on $B(o,b)$, and therefore
$|B(o,b)|\avg{w}{B(o,b)}\avg{\sigma_N}{B(o,b)}^p\lesssim\int_{B(o,b)}
(Mf)^pw\,d\mu\le\|M\|_{\mathrm{rad}}^p\sigma_N(B(o,b))$.  Thus $\avg{w}{B(o,b)}\avg{\sigma_N}{B(o,b)}^{p-1}\le
C\|M\|_{\mathrm{rad}}^p$.  Since $\Jd$ is increasing, $V([a,b])\ge
V([b/2,b])\ge\frac12V([0,b])$, and letting $N\to\infty$ we obtain
$[w_0]_{A_p(V,I)}\le2^p[w_0]_{A_p(V,[0,b])}\le C\|M\|_{\mathrm{rad}}^p$.

\emph{Case $a>b/2$.}  Then $r<b/2<b$ and $\sinh a\ge c\sinh b$ (if
$b\le2$ because $a>b/2$, and if $b>2$ because $a\ge b-1$), so that
$V(I)\simeq r\sinh^{n-1}b$.  Let $f:=\sigma_N\Chi_{\{|y|\in I\}}$,
which is radial, and fix $x$ with $|x|\in I$.  Put
$\phi_0:=2\sinh(r/2)/\sinh b<1$.  If $|y|\in I$ and $\ang yx\le\phi_0$,
then, with $t:=\min\{|x|,|y|\}$, \cref{lem:angle}\ref{lem:angle:b}
gives $\sinh(\frac12d(\gamma_x(t),\gamma_y(t)))=\sinh t\sin(\ang yx/2)
\le\sinh(r/2)$, so that $d(x,y)\le||x|-|y||+r\le2r$.  Hence the polar
box $\{y:\ |y|\in I,\ \ang yx\le\phi_0\}$ is contained in $B(x,2r)$,
and by \eqref{eq:polarboxint}, \eqref{eq:caps} and \eqref{eq:ballvol},
\[
   Mf(x)\ge A_{2r}f(x)\ge\frac{\nu(\SCap(\theta_x,\phi_0))}{|B(o,2r)|}
   \int_I\sigma_{N,0}\,dV\gtrsim\frac{(r/\sinh b)^{n-1}}{r^n}
   \int_I\sigma_{N,0}\,dV\simeq\frac1{V(I)}\int_I\sigma_{N,0}\,dV .
\]
Since $\|f\|^p_{L^p(w)}\le\int_I\sigma_{N,0}\,dV$, the hypothesis gives
\[
   \Bigl(\frac1{V(I)}\int_I\sigma_{N,0}\,dV\Bigr)^p\int_Iw_0\,dV\le
   C\|M\|_{\mathrm{rad}}^p\int_I\sigma_{N,0}\,dV,
\]
and letting $N\to\infty$ we obtain $[w_0]_{A_p(V,I)}\le
C\|M\|_{\mathrm{rad}}^p$.  The last assertion follows from
\cref{lem:ApV}\ref{lem:ApV:a}.
\end{proof}

\begin{theorem}[Strong type]\label{thm:radialstrong}
Let $1<p<\infty$ and let $w$ be a radial weight.  Then conditions
\textup{(i)--(iv)} of \cref{thmC}\ref{thmC:a} are equivalent, and in
that case $w\in\Ab_{p,\eps}$ for some $\eps>0$.
\end{theorem}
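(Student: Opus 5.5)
The equivalence (iii)$\Leftrightarrow$(iv) is already \cref{lem:ApV}\ref{lem:ApV:b}, and (i)$\Rightarrow$(ii) is trivial. It therefore suffices to prove (ii)$\Rightarrow$(iii) and (iii)$\Rightarrow$(i), together with the final assertion $w\in\Ab_{p,\eps}$.

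\emph{(iii)$\Rightarrow$(i).} Since $w\in\Aloc$, $\Mz$ is bounded by \ref{item_L1}. By \cref{thm:largescale} it is then enough to show $w\in\Ab_{p,\eps}$.

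Fix $z$ with $N=\ip{|z|}$. By \cref{lem:radialshadow}\ref{lem:radialshadow:c}, applied to $w$ and to $\sigma$,
\[
   w(\Omega_z(j))\,\sigma(\Omega_z(m))^{p-1}\simeq e^{-2\rho pN}\,W_{N+j}\,\Sigma_{N+m}^{p-1}.
\]
By \cref{lem:radialshadow}\ref{lem:radialshadow:b}, $\Sigma_{N+m}^{p-1}\simeq e^{2\rho p(N+m)}W_{N+m}^{-1}$. The quotient in \cref{def:class} is therefore comparable to
\[
   e^{2\rho\eps(j+m)}\,e^{-2\rho pj}\,\frac{W_{N+j}}{W_{N+m}}.
\]
If $j\ge m$, write $W_{N+j}/W_{N+m}\le Ce^{(2\rho p-\eta)(j-m)}$ by \cref{lem:gaps}\ref{lem:gaps:b}. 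The expression is then at most $Ce^{2\rho\eps(j+m)}e^{-\eta(j-m)}e^{-2\rho pm}$, which is bounded once $2\rho\eps\le\eta$ and $2\rho\eps\le 2\rho p-\eta$; shrink $\eps$ accordingly. If $j<m$, use \cref{lem:gaps}\ref{lem:gaps:a}: $W_{N+j}\le Ce^{-\kappa(m-j)}W_{N+m}$. The expression is then at most $e^{2\rho\eps(j+m)}e^{-2\rho pj}e^{-\kappa(m-j)}$, which is bounded for $\eps$ small relative to $\kappa$ and $p$. This proves $w\in\Ab_{p,\eps}$, hence (i), and gives the final assertion under (iii).

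\emph{(ii)$\Rightarrow$(iii).} \Cref{lem:radiallocal} gives $w\in\Aloc$, so \cref{lem:radialshadow} is available. We test $M$ on radial functions.

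For \Ro, take $f=\Chi_{\mathcal R_N}\sigma$, which is radial, and use $\|f\|^p=\Sigma_N$. By \cref{lem:radialtests}\ref{lem:radialtests:a}, adapted to the weight $\sigma$ via the polar-box argument and local doubling, $Mf\gtrsim\Sigma_N/V([N,N+1))$ on $B(o,N)$. Boundedness then gives
\[
   \Bigl(\sum_{i<N}W_i\Bigr)\bigl(e^{-2\rho N}\Sigma_N\bigr)^p\lesssim\Sigma_N .
\]
Combined with \cref{lem:radialshadow}\ref{lem:radialshadow:b}, this yields $\sum_{i<N}W_i\lesssim W_N$.

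For \Rt, use \cref{lem:radialtests}\ref{lem:radialtests:b}. For $|x|\ge N$ we have $Mf(x)\gtrsim e^{-2\rho(|x|-N)}e^{-2\rho N}\Sigma_N$, since the polar box about $\gamma_{\theta_x}(N+\frac12)$ carries $\sigma$-mass $\simeq e^{-2\rho N}\Sigma_N$. Integrating over the shells $\mathcal R_{N+i}$ gives
\[
   \sum_i e^{-2\rho p(N+i)}\,\Sigma_N^p\,W_{N+i}\lesssim\Sigma_N .
\]
Using $\Sigma_N^{p-1}\simeq e^{2\rho pN}/W_N$, this is exactly \Rt.

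\emph{Main obstacle.} The delicate step is the lower bounds for $M(\sigma\Chi_{\mathcal R_N})$. One needs the $\sigma$-mass of the relevant polar box or unit ball to be comparable to $e^{-2\rho N}\Sigma_N$. This follows from \cref{lem:radialshadow}\ref{lem:radialshadow:c} for $\sigma$, which uses only $w\in\Aloc$. Truncation by $\sigma_N$ handles the possibility that $\Sigma_N=\infty$ a priori.
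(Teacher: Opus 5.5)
Your proposal is correct and follows the paper's proof: (iii)$\Rightarrow$(i) is the same two-case computation via \cref{lem:radialshadow} and \cref{lem:gaps}, and (ii)$\Rightarrow$(iii) uses \cref{lem:radiallocal} together with the lower bounds of \cref{lem:radialtests}. The only difference is your test function: the paper tests on $\Chi_{\mathcal R_N}$ itself, and since $\|\Chi_{\mathcal R_N}\|^p_{L^p(w)}=W_N$ this gives \Ro and \Rt directly, so your conversion through $W_N\Sigma_N^{p-1}\simeq e^{2\rho pN}$ and the $\sigma$-mass estimates you single out as the main obstacle are not needed.
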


\begin{proof}
(i)$\Rightarrow$(ii) is trivial, and (iii)$\Leftrightarrow$(iv) is
\cref{lem:ApV}\ref{lem:ApV:b}.

\emph{(ii)$\Rightarrow$(iii).}  $w\in\Aloc$ by \cref{lem:radiallocal}.
By \cref{lem:radialtests}\ref{lem:radialtests:a} and the hypothesis,
\[
   \sum_{i<N}W_i=w(B(o,N))\le c^{-p}\|\Minf\Chi_{\mathcal R_N}\|^p_{L^p(w)}
   \le c^{-p}\|M\|_{\mathrm{rad}}^p\|\Chi_{\mathcal R_N}\|^p_{L^p(w)}
   \lesssim W_N,
\]
which is \Ro.  By \cref{lem:radialtests}\ref{lem:radialtests:b},
$\sum_{i\ge0}e^{-2\rho p(i+1)}W_{N+i}\lesssim\|M\Chi_{\mathcal R_N}
\|^p_{L^p(w)}\lesssim W_N$, which is \Rt.

\emph{(iii)$\Rightarrow$(i).}  Let $z\in\Bn$ and $N=\ip{|z|}$.  By
\cref{lem:radialshadow}\ref{lem:radialshadow:c},\ref{lem:radialshadow:b},
\begin{equation}\label{eq_w_sigma_p-1}
   w(\Omega_z(j))\,\sigma(\Omega_z(m))^{p-1}\simeq e^{-2\rho pN}\,
   W_{N+j}\,\Sigma_{N+m}^{p-1}\simeq e^{2\rho pm}\frac{W_{N+j}}{W_{N+m}} .
\end{equation}
Let $\eta,\kappa>0$ be given by \cref{lem:gaps}, and let
$0<\eps\le\min\{p/2,\ \eta/(2\rho),\ \kappa/(2\rho)\}$.  If $j\ge m$,
then \cref{lem:gaps}\ref{lem:gaps:b} with $N+m$ in place of $N$ and
$i=j-m$ bounds the right-hand side of \eqref{eq_w_sigma_p-1} by
$Ce^{2\rho pm}e^{(2\rho p-\eta)(j-m)}=Ce^{2\rho pj-\eta(j-m)}$, and
\[
   2\rho(p-\eps)(j+m)-\bigl(2\rho pj-\eta(j-m)\bigr)
   =(\eta-2\rho\eps)(j-m)+2\rho(p-2\eps)m\ \ge0 .
\]
If $j<m$, then \cref{lem:gaps}\ref{lem:gaps:a} with $N+m$ in place of
$N$ and $i=m-j$ bounds it by $Ce^{2\rho pm}e^{-\kappa(m-j)}$, and
\[
   2\rho(p-\eps)(j+m)-\bigl(2\rho pm-\kappa(m-j)\bigr)
   =2\rho(p-2\eps)j+(\kappa-2\rho\eps)(m-j)\ \ge0 .
\]
In both cases $w(\Omega_z(j))\sigma(\Omega_z(m))^{p-1}\lesssim
e^{2\rho(p-\eps)(j+m)}$, uniformly in $z$.  Hence
$w\in\Aloc\cap\Ab_{p,\eps}$, and \cref{thmA} gives (i).
\end{proof}

We now turn to the weak type.  The next lemma identifies the interval
condition \eqref{eq:Aprw}.

\begin{lemma}\label{lem:Aprw}
Let $1\le p<\infty$ and let $w=w_0(|\cdot|)$ be a radial weight.
\begin{enumerate}[label=\textup{(\alph*)}, leftmargin=2.3em]
\item\label{lem:Aprw:a} If $[w_0]_{A^{\mathrm{KT}}_p(V)}<\infty$, then
      \Ro and \Rtw hold.
\item\label{lem:Aprw:b} If $w\in A_{p,\mathrm{loc}}(\Bn)$ and \Ro and
      \Rtw hold, then $[w_0]_{A^{\mathrm{KT}}_p(V)}<\infty$.
\end{enumerate}
\end{lemma}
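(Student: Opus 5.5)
For \ref{lem:Aprw:a} I will test the $A^{\mathrm{KT}}_p(V)$ condition on pairs of intervals built from unit shells. For \Ro I take $I=[N,N+1]$ and $J=[0,N+1]$. Then $|I|=1$, $V(I)\simeq e^{2\rho N}\simeq V(J)$ by \eqref{eq:density}, $\int_Iw_0\,dV=W_N$ and $\int_Jw_0\,dV=\sum_{i\le N}W_i$. The definition \eqref{eq:Aprw} therefore gives $\sum_{i\le N}W_i\lesssim[w_0]_{A^{\mathrm{KT}}_p(V)}W_N$ directly. For \Rtw I take $I=[N,N+1]$ and $J=[N,N+i+1]$. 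Now $V(J)\simeq e^{2\rho(N+i)}$, so $(V(I)/V(J))^p\simeq e^{-2\rho pi}$, and the condition yields $e^{-2\rho pi}\int_Jw_0\,dV\lesssim W_N$. Since $\int_Jw_0\,dV\ge W_{N+i}$, this is \Rtw. No local condition is needed in this direction.

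For \ref{lem:Aprw:b}, fix $I\subset J$ with $|I|\ge1$, and write $J=[a,b]$ and $I=[c,d]$ with $a\le c<d\le b$. Let $N_b:=\ip b$ and $N_d:=\ip d$, with the convention that the shells are $\mathcal R_k$. First I bound the numerator. Since $J\subset[0,N_b+1]$, \Ro gives $\int_Jw_0\,dV\le\sum_{k\le N_b}W_k\lesssim W_{N_b}$. Also $V(J)\gtrsim e^{2\rho b}$, because $J$ contains $[b-1,b]$ (we have $|J|\ge1$) and \eqref{eq:density} applies. Next I bound the denominator. By the same reasoning, $V(I)\lesssim e^{2\rho d}$ and $V(I)\gtrsim e^{2\rho d}$, since $I\supset[d-1,d]$. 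The main point is a lower bound for $\int_Iw_0\,dV$. Because $|I|\ge1$, the interval $I$ contains $[d-1,d]$, and the mass of $w$ on $\{d-1\le|x|<d\}$ is $w(\mathcal R_{d-1})\simeq W_{N_d}$ by \cref{lem:radialshadow}\ref{lem:radialshadow:a}. This is the only place the hypothesis $w\in A_{p,\mathrm{loc}}(\Bn)$ enters, through the doubling of $w$ at bounded scales. Hence $\int_Iw_0\,dV\gtrsim W_{N_d}$.

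Combining these bounds, and using $d\le b$ together with $N_b-N_d\ge b-d-1$,
\[
   \Bigl(\frac{V(I)}{V(J)}\Bigr)^p\frac{\int_Jw_0\,dV}{\int_Iw_0\,dV}
   \lesssim e^{-2\rho p(b-d)}\frac{W_{N_b}}{W_{N_d}}
   \lesssim e^{-2\rho p(N_b-N_d)}\frac{W_{N_d+(N_b-N_d)}}{W_{N_d}}\lesssim1,
\]
where the last step is \Rtw with $N=N_d$ and $i=N_b-N_d\ge0$. The step I expect to need care is the lower bound $\int_Iw_0\,dV\gtrsim W_{N_d}$, that is, the passage from a non-integer unit interval to the integer shell. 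Without the local condition a radial weight could concentrate its mass of $\mathcal R_{N_d}$ in $[N_d,d)$ or outside $[d-1,d]$ entirely, so \cref{lem:radialshadow}\ref{lem:radialshadow:a} is essential here. The remaining off-by-one adjustments between $b$ and $N_b$ only affect constants, absorbed via $W_{N+1}\simeq W_N$. The case $p=1$ is included, since the argument never uses $\sigma$.
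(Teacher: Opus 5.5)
Your proof is correct and follows the paper's argument essentially line by line: you use the same test intervals $I=[N,N+1]$ with $J=[0,N+1]$ and $J=[N,N+i+1]$ in (a), and in (b) the same combination of \Ro for $\int_Jw_0\,dV$, \cref{lem:radialshadow}\ref{lem:radialshadow:a} for the lower bound of $\int_Iw_0\,dV$ on the unit shell below the right endpoint of $I$, and \Rtw to compare the two shell masses. One small slip: the step $e^{-2\rho p(b-d)}\lesssim e^{-2\rho p(N_b-N_d)}$ needs $N_b-N_d\le b-d+1$, which follows from $N_b\le b$ and $N_d>d-1$; the inequality $N_b-N_d\ge b-d-1$ that you quote gives the reverse comparison, although the correct one is equally immediate.
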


\begin{proof}
(a) For \Ro take $I=[N,N+1]$ and $J=[0,N+1]$; then $V(I)\simeq V(J)$ by
\eqref{eq:density}, and $\sum_{i\le N}W_i=\int_Jw_0\,dV\lesssim
\int_Iw_0\,dV=W_N$.  For \Rtw take $I=[N,N+1]$ and $J=[N,N+i+1]$; then
$(V(I)/V(J))^p\gtrsim e^{-2\rho pi}$ and $W_{N+i}\le\int_Jw_0\,dV$, so
that $e^{-2\rho pi}W_{N+i}\lesssim\int_Iw_0\,dV=W_N$.

(b) Let $I=[a,b]\subset J=[a',b']$ with $b-a\ge1$; then $b'\ge1$.  On
the one hand, $V(I)\le V([0,b])\lesssim e^{2\rho b}$ and $V(J)\ge
V([b'-1,b'])\gtrsim e^{2\rho b'}$.  On the other hand, by \Ro,
$\int_Jw_0\,dV\le\sum_{i\le\ip{b'}}W_i\lesssim W_{\ip{b'}}$, while
$\int_Iw_0\,dV\ge w(\mathcal R_{b-1})\simeq W_{\ip b}$ by
\cref{lem:radialshadow}\ref{lem:radialshadow:a}.  By \Rtw,
$W_{\ip{b'}}\lesssim e^{2\rho p(\ip{b'}-\ip b)}W_{\ip b}$.  Hence
\[
   \Bigl(\frac{V(I)}{V(J)}\Bigr)^p\frac{\int_Jw_0\,dV}{\int_Iw_0\,dV}\lesssim
   e^{2\rho p(b-b')}e^{2\rho p(\ip{b'}-\ip b)}\le e^{2\rho p} .\qedhere
\]
\end{proof}

\begin{theorem}[Weak type]\label{thm:radialweak}
Let $1\le p<\infty$ and let $w$ be a radial weight.  The following are
equivalent:
\begin{enumerate}[label=\textup{(\roman*)}, leftmargin=2.3em]
\item\label{thm:radialweak:i} $M:L^p(w)\to L^{p,\infty}(w)$ is bounded;
\item\label{thm:radialweak:ii} $w\in A_{p,\mathrm{loc}}(\Bn)$, and \Ro
      and \Rtw hold;
\item\label{thm:radialweak:iii} $w\in A_{p,\mathrm{loc}}(\Bn)$ and
      $[w_0]_{A^{\mathrm{KT}}_p(V)}<\infty$;
\item\label{thm:radialweak:iv} $w\in A_{p,\mathrm{loc}}(\Bn)$, and $w$
      satisfies \WQ and \WP.
\end{enumerate}
\end{theorem}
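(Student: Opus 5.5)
I would prove the cycle (i)$\Rightarrow$(ii)$\Rightarrow$(iv)$\Rightarrow$(i), together with (ii)$\Leftrightarrow$(iii).

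\emph{(ii)$\Leftrightarrow$(iii).} This is exactly \cref{lem:Aprw}. Part (a) of that lemma gives \Ro and \Rtw from the Kerman--Torchinsky condition. Part (b) gives the converse, using the local condition through \cref{lem:radialshadow}.

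\emph{(i)$\Rightarrow$(ii).} The local condition follows from \cref{prop:locnec}. For \Ro I test the weak type on $f=\Chi_{\mathcal R_N}$. By \cref{lem:radialtests}\ref{lem:radialtests:a}, $B(o,N)\subset\{\Minf f\ge c\}$, so $\sum_{i<N}W_i\lesssim W_N$. For \Rtw I use the same $f$ and \cref{lem:radialtests}\ref{lem:radialtests:b}. These show $\mathcal R_{N+i}\subset\{Mf\ge ce^{-2\rho(i+1)}\}$, and the weak type bound gives $W_{N+i}\lesssim e^{2\rho p i}W_N$.

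\emph{(ii)$\Rightarrow$(iv).} By \cref{lem:radialshadow}\ref{lem:radialshadow:c}, with $N=\ip{|z|}$, we have $w(\Omega_y(j))\simeq e^{-2\rho N}W_{N+j}$ and $w(B_y)\simeq e^{-2\rho N}W_N$. Condition \WQ then reads $\sum_{j\le K}W_{N+j}\lesssim e^{2\rho pK}W_N$, and \Rtw gives this after summing, since $\sum_{j\le K}e^{2\rho pj}\lesssim e^{2\rho pK}$.

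For \WP with $p>1$, \cref{lem:radialshadow}\ref{lem:radialshadow:b},\ref{lem:radialshadow:c} give
\[
   \PP_m[w](z)\simeq e^{-2\rho pm}e^{-2\rho pN}W_N\Sigma_{N+m}^{p-1}\simeq W_N/W_{N+m}.
\]
Hence \WP is equivalent to $W_N\lesssim e^{-2\rho p\eps m}W_{N+m}$. This is \cref{lem:gaps}\ref{lem:gaps:a}, which \Ro gives with $\kappa=2\rho p\eps$.

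For $p=1$, \cref{lem:radialshadow}\ref{lem:radialshadow:b} bounds $\esssup_{\Omega_z(m)}w^{-1}\lesssim e^{2\rho(N+m)}/W_{N+m+O(1)}$, since $\Omega_z(m)$ meets at most three rings. Therefore $\PP_m[w](z)\lesssim W_N/W_{N+m}$ again, using $W_{N+1}\simeq W_N$, and \cref{lem:gaps}\ref{lem:gaps:a} applies as before.

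\emph{(iv)$\Rightarrow$(i).} This is \cref{thmB}\ref{thmB:a}. It also settles the radial clause of \cref{thmB}\ref{thmB:c}, via (i)$\Rightarrow$(ii)$\Rightarrow$(iv).

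The main obstacle is the $p=1$ case of the \WP computation. There the essential supremum over a slice has to be controlled by ring masses, and only the lower bound on $\essinf w_0$ from \cref{lem:radialshadow}\ref{lem:radialshadow:b} together with ring-to-ring comparability is available. I would check carefully that these suffice, applying them on each of the at most three unit rings that $\Omega_z(m)$ meets.
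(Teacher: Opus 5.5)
Your proposal is correct and follows the paper's proof essentially step by step. You use the same test functions $\Chi_{\mathcal R_N}$ via \cref{lem:radialtests} for (i)$\Rightarrow$(ii), the same reduction of \WP to $W_N/W_{N+m}$ through \cref{lem:radialshadow} and \cref{lem:gaps}\ref{lem:gaps:a} (including the three-ring argument at $p=1$), and \cref{lem:Aprw} and \cref{thmB}\ref{thmB:a} for the remaining implications. The only difference is cosmetic: for \WQ you sum \Rtw directly as a geometric series, whereas the paper first uses \Ro to get $\sum_{j\le N}W_{k+j}\lesssim W_{k+N}$ and then applies \Rtw once, and both arguments are valid.
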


\begin{proof}
(ii)$\Leftrightarrow$(iii) is \cref{lem:Aprw}, and (iv)$\Rightarrow$(i)
is \cref{thmB}\ref{thmB:a}.

\emph{(i)$\Rightarrow$(ii).}  The local condition is \cref{prop:locnec}.
By \cref{lem:radialtests}\ref{lem:radialtests:a} and the weak type at
the level $c/2$, $\sum_{i<N}W_i\le w(\{M\Chi_{\mathcal R_N}>c/2\})
\lesssim W_N$, which is \Ro.  By
\cref{lem:radialtests}\ref{lem:radialtests:b}, $M\Chi_{\mathcal R_N}\ge
ce^{-2\rho(i+1)}$ on $\{N\le|x|<N+i+1\}$, and the weak type gives
$W_{N+i}\le w(\{N\le|x|<N+i+1\})\lesssim e^{2\rho p(i+1)}W_N$, which is
\Rtw.

\emph{(ii)$\Rightarrow$(iv).}  Let $y\in\Bn$, $k:=\ip{|y|}$ and
$N\in\Zp$.  By \cref{lem:radialshadow}\ref{lem:radialshadow:c}, \Ro and
\Rtw,
\[
   \frac{\sum_{j\le N}w(\Omega_y(j))}{w(B_y)}\simeq\sum_{j\le N}
   \frac{W_{k+j}}{W_k}\lesssim\frac{W_{k+N}}{W_k}\lesssim e^{2\rho pN},
\]
which is \WQ.  For \WP let $z\in\Bn$, $k:=\ip{|z|}$ and $m\in\Zp$.  If
$p>1$, then by \cref{lem:radialshadow}\ref{lem:radialshadow:c},\ref{lem:radialshadow:b}
and \cref{lem:gaps}\ref{lem:gaps:a},
\[
   \PP_m[w](z)\simeq e^{-2\rho pm}e^{-2\rho k}W_k\,\bigl(e^{-2\rho k}
   \Sigma_{k+m}\bigr)^{p-1}\simeq e^{-2\rho pm}e^{-2\rho pk}W_k\,
   \frac{e^{2\rho p(k+m)}}{W_{k+m}}=\frac{W_k}{W_{k+m}}\lesssim
   e^{-\kappa m}.
\]
If $p=1$, the radii of the points of $\Omega_z(m)$ lie in
$[k+m-1,k+m+2]$, and
\cref{lem:radialshadow}\ref{lem:radialshadow:a},\ref{lem:radialshadow:b}
give $\esssup_{\Omega_z(m)}w^{-1}\lesssim e^{2\rho(k+m)}/W_{k+m}$, so
that again $\PP_m[w](z)\lesssim W_k/W_{k+m}\lesssim e^{-\kappa m}$.
This is \WP with $\eps:=\kappa/(2\rho p)$.
\end{proof}

\begin{proof}[\textbf{Proof of \cref{thmC}}]
Part \ref{thmC:a} is \cref{thm:radialstrong}, and part \ref{thmC:b} is
\cref{thm:radialweak}, which also contains the radial part of
\cref{thmB}\ref{thmB:c}.

For \ref{thmC:c}, note that the weights $\ww_{\alpha,\gamma}$ and
$\ww^\circ_{\alpha,\gamma}$ are comparable away from the origin, and
that $\ww_{\alpha,\gamma}$ is continuous and positive.  Hence
$\ww_{\alpha,\gamma}\in A_{p,\mathrm{loc}}(\Bn)$ for all $\alpha,\gamma$,
while, since $B(o,2)$ is bi-Lipschitz to a Euclidean ball and
$|x|\simeq\|x\|$ there, $\ww^\circ_{\alpha,\gamma}\in
A_{p,\mathrm{loc}}(\Bn)$ if and only if the Euclidean power weight
$\|x\|^\gamma$ belongs to $A_p$, that is, iff $\gamma\in(-n,n(p-1))$ for
$p>1$ and $\gamma\in(-n,0]$ for $p=1$.  If $\gamma>-n$, the two weights
have comparable ring masses, $W_N\simeq e^{(\alpha+2\rho)N}(1+N)^\gamma$
for $N\in\Zp$.  Since \Ro, \Rt and \Rtw depend only on the ring masses,
\cref{thm:radialstrong,thm:radialweak} reduce the assertions for
$\ww^\circ_{\alpha,\gamma}$ to those for $\ww_{\alpha,\gamma}$, together
with the local condition.

For the strong type of $\ww_{\alpha,\gamma}$ we use \cref{thmA} and
\cref{prop:basic}\ref{prop:basic:a}: $M$ is bounded on
$L^p(\ww_{\alpha,\gamma})$ iff $\ww_{\alpha,\gamma}\in\Ab_p$, iff
$\alpha\in(-2\rho,2\rho(p-1))$.  For the weak type we use
\cref{thm:radialweak}.  If $\alpha>-2\rho$, then $\sum_{i\le
N}W_i/W_N\lesssim\sum_ke^{-(\alpha+2\rho)k}(1+k)^{|\gamma|}$; if
$\alpha<-2\rho$, then $W_N\to0$ and $\sum_{i\le N}W_i/W_N\ge
W_0/W_N\to\infty$; if $\alpha=-2\rho$, then $\sum_{i\le N}W_i/W_N\simeq
\sum_{i\le N}((1+i)/(1+N))^{\gamma}\gtrsim_\gamma N$.  So \Ro holds iff
$\alpha>-2\rho$.  Next, $e^{-2\rho pi}W_{N+i}/W_N\simeq
e^{(\alpha+2\rho-2\rho p)i}((1+N+i)/(1+N))^\gamma$, so that \Rtw holds
if $\alpha<2\rho(p-1)$ and fails if $\alpha>2\rho(p-1)$; if
$\alpha=2\rho(p-1)$, then $\sup_{N,i}((1+N+i)/(1+N))^\gamma$ is $1$ for
$\gamma\le0$ and infinite for $\gamma>0$.
\end{proof}

\begin{remark}[The ball condition]\label{rem:ballAp}
That the ball condition \eqref{def_class_Ap} is not necessary for the
boundedness of $M$ was observed on the tree in \cite[Theorem~1.3]{ORR22}
and on $\Hn$ in \cite[Example~4.1(3)]{AO25}, for weights comparable to
$\ww_{-2\rho\gamma,0}$, $0<\gamma<1$.  For balls centred at $o$ the
product in \eqref{def_class_Ap} stays bounded for $\ww_{\alpha,0}$,
$\alpha\in(-2\rho,2\rho(p-1))$; the failure is detected by off-centred
balls, and we include a proof.  Let $\alpha<-\rho$, so that
$\sigma=\ww_{\beta,0}$ with $\beta:=-\alpha/(p-1)$, let $|x|=2R$, $R\ge3$,
and $B:=B(x,R)$.  Then $B\supset B(a_{R-1/2}(x),\frac12)$, so
$\avg wB\gtrsim e^{(\alpha-2\rho)R}$.  If $|y|\in[3R-3,3R-2)$ and
$\ang yx<e^{-2R}$, then $\gp xy\ge2R-\log2$ by \cref{lem:angle}\ref{lem:angle:c} and
$d(x,y)<R$; thus $B\supset\mathcal E_x([3R-3,3R-2);1)$, of volume
$\simeq e^{2\rho R}$, and $\avg\sigma B\gtrsim e^{3\beta R}$.  Hence
$\avg wB\avg\sigma B^{p-1}\gtrsim e^{-2(\alpha+\rho)R}\to\infty$, and by
duality the same happens for $\alpha>\rho(p-1)$.  Thus for
$\alpha\in(-2\rho,-\rho)\cup(\rho(p-1),2\rho(p-1))$, $M$ is bounded on
$L^p(\ww_{\alpha,0})$ although the ball condition fails.
\end{remark}

\section{The range $0<p\le1$: proof of Theorem~\ref{thmD}}
\label{sec:small}

For $p<1$ the obstruction is local: $L^p(w)$ contains functions which
are not locally integrable.  At $p=1$ it comes from the Euclidean small
scales, and the large-scale operator $\Minf$ alone still admits a
two-layer characterisation, as for $p>1$.

\begin{proof}[\textbf{Proof of \cref{thmD}\ref{thmD:a},\ref{thmD:b}}]
(a) Since $w<\infty$ a.e., there are $C_0>0$ and $z_0\in\Bn$ with
$|\{w\le C_0\}\cap B_{z_0}|>0$.  As $\mu$ has no atoms, we can choose
disjoint sets $E_k\subset\{w\le C_0\}\cap B_{z_0}$, $k\in\N$, with
$|E_k|=c\,2^{-k/(1-p)}$ for some small $c>0$, and let
$f:=\sum_k|E_k|^{-1}\Chi_{E_k}$.  Then $\|f\|^p_{L^p(w)}\le
C_0\sum_k|E_k|^{1-p}<\infty$, but $\int_{B_{z_0}}f\,d\mu=\infty$.  Since
every point of $\Bn$ is the centre of a ball of radius larger than $2$
containing $B_{z_0}$, $\Minf f\equiv\infty$, and $\Minf$ is not of weak
type $(p,p)$ with respect to $w$.

(b) Let $x_0$ be a Lebesgue point of $w$ with $0<w(x_0)<\infty$, and
choose $r_1\le1$ such that $|B(x_0,\frac r2)|\le\frac13|B(x_0,r)|$ and
$\frac9{10}w(x_0)\le\avg{w}{B(x_0,r)}\le\frac{11}{10}w(x_0)$ for all
$r\le r_1$.  For $\delta<r_1/2$ let $f:=\Chi_{B(x_0,\delta)}$, and
consider the annuli $D_k:=B(x_0,2^k\delta)\setminus
B(x_0,2^{k-1}\delta)$, $1\le k\le k_\delta:=\ip{\log_2(r_1/\delta)}$.
On $D_k$ we have $Mf\ge c2^{-kn}$, and $w(D_k)\ge\frac12w(x_0)
|B(x_0,2^k\delta)|$.  Hence $\|Mf\|_{L^1(w)}\ge cw(x_0)\delta^nk_\delta$,
whereas $\|f\|_{L^1(w)}\le Cw(x_0)\delta^n$.  Since $k_\delta\to\infty$
as $\delta\to0$, $M$ is unbounded on $L^1(w)$.
\end{proof}

For part (c) let $K(t):=e^{-2\rho t}$, so that condition
\ref{thmD:c}\ref{thmD:c:ii} reads $T_Kw\le Cw$ a.e., where
$T_Kf(x)=\int_{\Bn}K(d(x,y))f(y)\,d\mu(y)$.  Since $T_K$ is positive
with a symmetric kernel, $\|T_K\|_{L^1(w)\to L^1(w)}=\|T_Kw/w\|_
{L^\infty}$.  The self-improvement from $K$ to $e^{\eps t}K$ rests on
the following lower bound for the iterated kernels.

\begin{lemma}\label{lem:convpowers}
Let $\mathcal K_1(x,y):=e^{-2\rho d(x,y)}$ and $\mathcal K_{i+1}(x,y):=
\int_{\Bn}\mathcal K_i(x,z)\,e^{-2\rho d(z,y)}\,d\mu(z)$, $i\ge1$, so
that $\mathcal K_i$ is the kernel of $T_K^i$.  Put
$c_*:=e^{-2\rho}|B(o,\frac12)|$.  Then, with the convention
$\binom ab=0$ for $b>a$,
\begin{equation}\label{eq_K_i>}
   \mathcal K_i(x,y)\ge c_*^{\,i-1}\binom{\ip{d(x,y)}}{i-1}
   e^{-2\rho d(x,y)},\qquad x,y\in\Bn,\ i\ge1 .
\end{equation}
\end{lemma}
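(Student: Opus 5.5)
Induction on $i$. For $i=1$ the claim is the equality $\mathcal K_1=e^{-2\rho d(x,y)}$, since $\binom{a}{0}=1$. Suppose \eqref{eq_K_i>} holds for $i$, and fix $x,y$ with $L:=\ip{d(x,y)}$. If $L<i$ the right-hand side for $i+1$ vanishes and there is nothing to prove. Otherwise, the key device is to restrict the integral defining $\mathcal K_{i+1}$ to small balls around points of the geodesic segment $[x,y]$, where the exponential weights multiply exactly.

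For $k=1,\dots,L$ let $z_k$ be the point of $[x,y]$ with $d(x,z_k)=k$, so $d(z_k,y)=d(x,y)-k$. For $z\in B(z_k,\frac12)$ the triangle inequality gives
\[
   d(x,z)+d(z,y)\le d(x,y)+1 ,
\]
and hence $e^{-2\rho d(x,z)}e^{-2\rho d(z,y)}\ge e^{-2\rho}e^{-2\rho d(x,y)}$. Moreover, $d(x,z)\ge k-\frac12$, so $\ip{d(x,z)}\ge k-1$. To keep the balls disjoint, restrict to $z_k$ with $k\ge1$, spaced by $1>2\cdot\frac12$; the balls $B(z_k,\frac12)$ are pairwise disjoint since $d(z_k,z_{k'})=|k-k'|\ge1$. (Boundary overlap has measure zero; alternatively use open balls.)

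By the inductive hypothesis, for $z\in B(z_k,\frac12)$,
\[
   \mathcal K_i(x,z)\,e^{-2\rho d(z,y)}\ge c_*^{\,i-1}\binom{k-1}{i-1}e^{-2\rho d(x,z)}e^{-2\rho d(z,y)}
   \ge c_*^{\,i-1}\binom{k-1}{i-1}e^{-2\rho}e^{-2\rho d(x,y)} ,
\]
using that $\binom{a}{i-1}$ is nondecreasing in $a$. Integrating over the disjoint balls, each of measure $|B(o,\frac12)|$, gives
\[
   \mathcal K_{i+1}(x,y)\ge c_*^{\,i}\,e^{-2\rho d(x,y)}\sum_{k=1}^{L}\binom{k-1}{i-1}
   =c_*^{\,i}\binom{L}{i}e^{-2\rho d(x,y)}
\]
by the hockey-stick identity $\sum_{j=0}^{L-1}\binom{j}{i-1}=\binom{L}{i}$. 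This closes the induction.

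The only delicate point is the bookkeeping, namely checking that $\ip{d(x,z)}\ge k-1$ on $B(z_k,\frac12)$ (true, since $d(x,z)>k-\frac12$), and that the loss in $d(x,z)+d(z,y)$ is at most $1$, so that exactly one factor $e^{-2\rho}$ is lost per step. That lost factor is absorbed into $c_*$. No hyperbolic geometry beyond the existence of geodesics and the triangle inequality is needed.
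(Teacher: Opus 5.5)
Your proposal is correct and is essentially the paper's proof: induction on $i$, restricting the integral to the pairwise disjoint balls $B(\gamma(k),\frac12)$, $1\le k\le\ip{d(x,y)}$, centred at integer points of the geodesic from $x$ to $y$, losing one factor $e^{-2\rho}$ from the triangle inequality, and closing with the hockey-stick identity. Your explicit use of the monotonicity of $a\mapsto\binom{a}{i-1}$ and your separate treatment of the case $\ip{d(x,y)}<i$ only spell out steps the paper leaves implicit.
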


\begin{proof}
For $i=1$ there is nothing to prove.  Assume \eqref{eq_K_i>} for $i$,
let $x,y\in\Bn$, and let $\gamma$ be the unit speed geodesic from $x$
to $y$.  The balls $B(\gamma(k),\frac12)$, $1\le k\le\ip{d(x,y)}$, are
pairwise disjoint.  If $z\in B(\gamma(k),\frac12)$, then
$k-\frac12<d(x,z)<k+\frac12$ and $d(z,y)<d(x,y)-k+\frac12$, so that
$\ip{d(x,z)}\ge k-1$ and, by the induction hypothesis,
\[
   \mathcal K_i(x,z)\,e^{-2\rho d(z,y)}\ge c_*^{\,i-1}\binom{k-1}{i-1}
   e^{-2\rho(k+\frac12)}e^{-2\rho(d(x,y)-k+\frac12)}
   =c_*^{\,i-1}e^{-2\rho}\binom{k-1}{i-1}e^{-2\rho d(x,y)}.
\]
Integrating over $B(\gamma(k),\frac12)$, summing over $1\le
k\le\ip{d(x,y)}$ and using $\sum_{k=1}^{\ip{d(x,y)}}\binom{k-1}{i-1}=
\binom{\ip{d(x,y)}}i$, we obtain \eqref{eq_K_i>} for $i+1$.
\end{proof}

\begin{proof}[\textbf{Proof of \cref{thmD}\ref{thmD:c}}]
\ref{thmD:c:i}$\Rightarrow$\ref{thmD:c:ii}.  Let $y_0$ be a Lebesgue
point of $w$, $0<\delta<1$ and $f=\Chi_{B(y_0,\delta)}$.  If
$d(x,y_0)\ge2$, then by the triangle inequality the ball
$B(x,d(x,y_0)+\delta)$, whose radius exceeds $2$, contains
$B(y_0,\delta)$, and its volume is at most $Ce^{2\rho d(x,y_0)}$ by
\eqref{eq:ballvol}, since $\delta<1$; if $d(x,y_0)<2$, the ball $B(x,3)$
contains $B(y_0,\delta)$.  In both cases $\Minf f(x)\ge
c|B(y_0,\delta)|e^{-2\rho d(x,y_0)}$.  Hence
\[
   c\int_{\Bn}e^{-2\rho d(x,y_0)}w(x)\,d\mu(x)\le\|\Minf\|_{L^1(w)\to
   L^1(w)}\,\avg{w}{B(y_0,\delta)}\longrightarrow\|\Minf\|\,w(y_0)
   \qquad(\delta\to0).
\]

\ref{thmD:c:ii}$\Rightarrow$\ref{thmD:c:iii}.  Assume that
$T_Kw\le C_1w$ a.e.  Since $T_K$ is positive, we have $T^i_Kw\le
C_1^iw$ a.e.\ for all $i$, that is,
$\int_{\Bn}\mathcal K_i(x,y)\,w(y)\,d\mu(y)\le C_1^i\,w(x)$.  Let
$t_0:=(2C_1)^{-1}$.  By \cref{lem:convpowers} and the binomial theorem,
\[
   \sum_{i\ge1}t_0^{i-1}\mathcal K_i(x,y)\ge e^{-2\rho d(x,y)}
   \sum_{i\ge1}\binom{\ip{d(x,y)}}{i-1}(t_0c_*)^{i-1}
   =e^{-2\rho d(x,y)}(1+t_0c_*)^{\ip{d(x,y)}},
\]
and $(1+t_0c_*)^{\ip{d(x,y)}}\ge(1+t_0c_*)^{-1}e^{\eps d(x,y)}$ with
$\eps:=\log(1+t_0c_*)$.  Therefore
\[
   \int_{\Bn}e^{-(2\rho-\eps)d(x,y)}w(y)\,d\mu(y)\le(1+t_0c_*)
   \sum_{i\ge1}t_0^{i-1}C_1^iw(x)=(2C_1+c_*)\,w(x).
\]

\ref{thmD:c:iii}$\Rightarrow$\ref{thmD:c:iv}.  Since $|B(x,\ell)|\ge
ce^{2\rho\ell}$ and $e^{-(2\rho-\eps)d(x,y)}\ge e^{-(2\rho-\eps)\ell}$ if
$d(x,y)\le\ell$, Fubini's theorem gives
\[
   \|\ann_\ell f\|_{L^1(w)}=\int_{\Bn}|f(y)|\int_{\{\ell-1<d(x,y)\le\ell\}}
   \frac{w(x)}{|B(x,\ell)|}\,d\mu(x)\,d\mu(y)\le Ce^{-\eps\ell}\|f\|_{L^1(w)} .
\]

\ref{thmD:c:iv}$\Rightarrow$\ref{thmD:c:i} follows from
\eqref{eq:ballfromannuli}.

\ref{thmD:c:iii}$\Rightarrow$\ref{thmD:c:v}.  Let $z\in\Bn$ and
$j,m\in\Zp$.  If $x\in\Omega_z(j)$ and $y\in\Omega_z(m)$, then
$d(x,y)\le j+m+2s_0$.  By \ref{thmD:c:iii}, for a.e.\
$x\in\Omega_z(j)$, $w(x)\ge C^{-1}e^{-(2\rho-\eps)(j+m+2s_0)}
w(\Omega_z(m))$.  Hence $w(\Omega_z(m))\|w^{-1}\|_{L^\infty(\Omega_z(j))}
\lesssim e^{2\rho(1-\eps/(2\rho))(j+m)}$, which is \ref{thmD:c:v} with
$\eps/(2\rho)$, since the supremum in the definition of $\Ab_{1,\eps}$
is symmetric in $j$ and $m$.

\ref{thmD:c:v}$\Rightarrow$\ref{thmD:c:iii}.  The proof of
\cref{cor:slack} applies verbatim with $\sigma(F)^{p-1}$ replaced by
$\|w^{-1}\|_{L^\infty(F)}$, and gives, for $s\ge s_0$,
\[
   w\bigl(\Omega^{(s)}_z(m)\bigr)\,\|w^{-1}\|_{L^\infty(\Omega^{(s)}_z(j))}
   \le C_s[w]_{\Ab_{1,\eps}}e^{2\rho(1-\eps)(j+m)} .
\]
Let $X\subset\Bn$ be countable and dense.  Outside a null set $E$, the
inequality $w(\Omega^{(s_0+1)}_{z'}(m))\le C_{s_0}[w]_{\Ab_{1,\eps}}
e^{2\rho(1-\eps)(j+m)}w(x)$ holds for all $z'\in X$, $j,m\in\Zp$ and
$x\in\Omega^{(s_0+1)}_{z'}(j)$.  Let $x\notin E$, $y\in\Bn$ and
$\ell-1<d(x,y)\le\ell$.  By \cref{lem:decomp} there are $j\le|x|$ and
$m$ with $\ell-3\le j+m\le\ell$ and $y\in\Omega_{a_j(x)}(m)\subset
\Omega^{(s_0)}_{a_j(x)}(m)$.  Choose $z'\in X$ with $d(z',a_j(x))<1$.
By \cref{lem:shadow}\ref{lem:shadow:e},\ref{lem:shadow:f}, $\Omega^{(s_0)}_{a_j(x)}(m)\subset
\Omega^{(s_0+1)}_{z'}(m)$ and $x\in\Omega_{a_j(x)}(j)\subset
\Omega^{(s_0+1)}_{z'}(j)$.  Summing over the at most $4(\ell+1)$ pairs
in $\Gamma_\ell$, we get $w(\{y:\ \ell-1<d(x,y)\le\ell\})\le
C(\ell+1)e^{2\rho(1-\eps)\ell}w(x)$, and therefore, for
$\eps'<2\rho\eps$,
\[
   \int_{\Bn}e^{-(2\rho-\eps')d(x,y)}w(y)\,d\mu(y)\le C\sum_{\ell\ge1}
   (\ell+1)\,e^{-(2\rho-\eps')(\ell-1)}e^{2\rho(1-\eps)\ell}w(x)\le C'w(x).
\]

Finally, for $w=\ww_{\alpha,\gamma}$, \cref{lem:radialshadow}\ref{lem:radialshadow:c} and
$w\simeq e^{\alpha(N+m)}(1+N+m)^\gamma$ on $\Omega_z(m)$, $N=\ip{|z|}$,
show that the quotient defining $\Ab_{1,\eps}$ is comparable to
$e^{(2\rho+\alpha)j-\alpha m-2\rho(1-\eps)(j+m)}((1+N+j)/(1+N+m))^{\gamma}$,
which is bounded for small $\eps$ iff $-2\rho<\alpha<0$.  For
$w^\circ:=\ww^\circ_{\alpha,\gamma}$ with $\gamma>-n$, the quotient
$w^\circ/\ww_{\alpha,\gamma}$ is bounded above and below off $B_o$.  If
$\gamma>0$, then $w^\circ(x)\to0$ as $x\to o$, while the left-hand side
of \ref{thmD:c:ii} stays bounded below, so that \ref{thmD:c:ii} fails.
If $\gamma\le0$, then $w^\circ\ge\ww_{\alpha,\gamma}$, and
$\int_{B_o}e^{-2\rho d(x,y)}w^\circ(y)\,d\mu(y)\le Ce^{-2\rho|x|}$,
which is $\lesssim\ww_{\alpha,\gamma}(x)$ when $\alpha>-2\rho$; hence
\ref{thmD:c:ii} holds for $w^\circ$ if and only if it holds for
$\ww_{\alpha,\gamma}$.
\end{proof}

The class $\Ab_1$ contains no condition at scales below $1$, in
accordance with \cref{thmD}\ref{thmD:b}.

\begin{remark}\label{rem:coarse}
The behaviour of the tree is recovered by excluding the functions which
are not locally integrable.  Fix a partition $\cells$ of $\Bn$ into
cells $S$ of diameter at most $D$ and measure comparable to $1$, and let
$\Mcoarse$ be $M$ acting on the functions $f=\sum_{S\in\cells}b_S\Chi_S$
which are constant on each cell; we may assume $b_S\ge0$.  A ball about
$x$ which meets $S$ has radius at least $d(x,S)$, while
$B(x,d(x,S)+D)\supset S$; hence $M\Chi_S(x)\simeq e^{-2\rho d(x,S)}$.
Since $M$ is sublinear and $(\sum_Sa_S)^p\le\sum_Sa_S^p$ for $a_S\ge0$
and $0<p\le1$,
\[
   \int_{\Bn}(Mf)^pw\,d\mu\le\sum_{S\in\cells}b_S^p\int_{\Bn}
   (M\Chi_S)^pw\,d\mu\lesssim\sum_{S\in\cells}b_S^p\int_{\Bn}w(x)\,
   e^{-2\rho p\,d(x,S)}\,d\mu(x),
\]
while $\|f\|^p_{L^p(w)}=\sum_Sb_S^pw(S)$.  Together with the test
functions $f=\Chi_S$, this shows that $\Mcoarse$ is bounded on
$L^p(w)$, $0<p\le1$, if and only if
\begin{equation}\label{eq:coarse}
   \sup_{S\in\cells}\frac1{w(S)}\int_{\Bn}w(x)\,e^{-2\rho p\,d(x,S)}\,
   d\mu(x)<\infty .
\end{equation}
Let $K_p(t):=e^{-2\rho pt}$, so that $\widehat{K_p}(0)\simeq
\int_0^\infty(1+t)e^{\rho(1-2p)t}\,dt$ by \eqref{eq:phieta}.  If
$p>\frac12$, then $\widehat{K_p}(0)<\infty$ and
$T_{K_p}\varphi_0=\widehat{K_p}(0)\varphi_0$ by \eqref{eq:meanvalueK}.
Since $\varphi_0$ is comparable to a constant on each cell and
$d(x,S)\ge d(x,y_S)-D$ for any $y_S\in S$, the integral in
\eqref{eq:coarse} with $w=\varphi_0$ is at most $CT_{K_p}\varphi_0(y_S)
\simeq\varphi_0(S)$, so that $\varphi_0$ satisfies \eqref{eq:coarse}.
If $p\le\frac12$, no weight satisfies \eqref{eq:coarse}.  Indeed, for
$\bar w:=\sum_{S\in\cells}\avg{w}{S}\Chi_S$ and $x\in S'\in\cells$,
\eqref{eq:coarse} gives
\[
   T_{K_p}\bar w(x)\lesssim\sum_{S\in\cells}w(S)\,e^{-2\rho p\,d(S,S')}
   \lesssim\int_{\Bn}w(y)\,e^{-2\rho p\,d(y,S')}\,d\mu(y)\lesssim w(S')
   \simeq\bar w(x),
\]
so that $T_{K_p}$ is bounded on $L^1(\bar w)$, whereas
$\widehat{K_p}(0)=\infty$, contradicting
\cref{thm:herz}\ref{thm:herz:a}.  Thus the critical index $\frac12$ of
\cite[Theorem~1.3]{YYZ26} reappears for $\Mcoarse$.
\end{remark}

\section{Applications}\label{sec:applications}

\subsection{Consequences of exponential decay}\label{subsec:decayapps}
Let $1<p<\infty$ and let $\Minf$ be bounded on $L^p(w)$, so that
$\|\ann_\ell\|_{L^p(w)\to L^p(w)}\le Ce^{-\eps\ell}$ for some $\eps>0$
(\cref{thm:largescale}); where $M$ is concerned, let also $w\in\Aloc$.

\begin{theorem}[Vector-valued inequalities]\label{thm:FS}
\begin{enumerate}[label=\textup{(\alph*)}, leftmargin=2.3em]
\item\label{thm:FS:a} For every $q\in[1,\infty]$ and every sequence $(f_k)$,
\[
   \Bigl\|\Bigl(\sum_k(\Minf f_k)^q\Bigr)^{1/q}\Bigr\|_{L^p(w)}
   \le C\,\Bigl\|\Bigl(\sum_k|f_k|^q\Bigr)^{1/q}\Bigr\|_{L^p(w)},
\]
with $C$ independent of $q$ (usual modification for $q=\infty$).
\item\label{thm:FS:b} For $q\in(1,\infty]$ the same holds with $M$ in place of
$\Minf$, with $C$ depending on $q$.
\item\label{thm:FS:c} For $q=1$ and $w\equiv1$, \ref{thm:FS:b} fails.
\item\label{thm:FS:d} Conversely, if the inequality in \ref{thm:FS:a} holds for one
$q\in[1,\infty]$, then $\Minf$ is bounded on $L^p(w)$.
\end{enumerate}
\end{theorem}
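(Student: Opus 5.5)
For \ref{thm:FS:a} I will use the pointwise bound $\Minf f\le C\sum_{\ell\ge1}\ann_\ell f$ from \eqref{eq:ballfromannuli}, and reduce everything to the annular operators. Each $\ann_\ell$ is a positive linear integral operator, $\ann_\ell f(x)=\int k_\ell(x,y)|f(y)|\,d\mu(y)$ with $k_\ell\ge0$. For such an operator, Minkowski's inequality in $\ell^q$ gives pointwise
\[
\Bigl(\sum_k(\ann_\ell f_k)^q\Bigr)^{1/q}\le\ann_\ell\Bigl(\bigl(\textstyle\sum_k|f_k|^q\bigr)^{1/q}\Bigr),
\]
and this holds for every $q\in[1,\infty]$. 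Hence $\|(\sum_k(\ann_\ell f_k)^q)^{1/q}\|_{L^p(w)}\le Ce^{-\eps\ell}\|(\sum_k|f_k|^q)^{1/q}\|_{L^p(w)}$ by \cref{thm:largescale}. I then apply the triangle inequality in $\ell^q$ and in $L^p(w)$ to $\Minf f_k\le C\sum_\ell\ann_\ell f_k$ and sum the geometric series in $\ell$. The resulting constant does not depend on $q$.

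For \ref{thm:FS:b} I write $M\le\Mz+\Minf$. The large-scale part is handled by \ref{thm:FS:a}. For the local part, $w\in\Aloc$, and the Fefferman--Stein inequality for $\Mz$ with $1<q\le\infty$ holds on spaces of homogeneous type with $A_p$ weights. I will apply it locally: cover $\Bn$ by unit balls $B(z,1)$ with $z$ in a maximal $\frac12$-separated set, and use that $\Mz f$ on $B(z,1)$ depends only on $f\Chi_{B(z,3)}$. The weight restricted to $B(z,3)$ agrees with an $A_p$ weight of a bi-Lipschitz Euclidean ball, with uniform constants by \ref{item_L21}--\ref{item_L31}. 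Summing over $z$ is then possible by bounded overlap. The main technical point is to make the local Fefferman--Stein constant uniform in $z$. I would obtain this by a weighted extrapolation argument or by a direct Calder\'on--Zygmund proof on each ball, with constants depending only on $[w]_{A_{p,\mathrm{loc}},3}$.

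For \ref{thm:FS:c} I take $w\equiv1$ and $q=1$, and the failure comes from the small scales. I would take $f_k:=\Chi_{B(x_0,2^{-k})}/|B(x_0,2^{-k})|$ for $1\le k\le K$, restricted to a fixed Euclidean-like unit ball around $x_0$. Then $\sum_kf_k$ has $L^p$ norm at most a constant times that of the Euclidean analogue. Meanwhile $\sum_kMf_k(x)\gtrsim\sum_{k:2^{-k}\le d(x,x_0)}d(x,x_0)^{-n}\simeq\log(1/d(x,x_0))\,d(x,x_0)^{-n}$ for $d(x,x_0)\ge2^{-K}$. Comparing $L^p$ norms of the two sides over $2^{-K}\le d(x,x_0)\le1$, the left side exceeds the right by a factor $\simeq K$. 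This is the classical Euclidean counterexample, transferred through the bi-Lipschitz equivalence at unit scale. Any $p$ works, since only the small scales are used.

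For \ref{thm:FS:d} I take the sequence with $f_1=f$ and $f_k=0$ for $k\ge2$; this reduces the inequality to the scalar bound $\|\Minf f\|_{L^p(w)}\le C\|f\|_{L^p(w)}$.

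I expect the main obstacle to be the uniform local Fefferman--Stein estimate in \ref{thm:FS:b}.
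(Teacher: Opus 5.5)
Your arguments for (a), (b) and (d) are correct and match the paper's. Part (a) uses Minkowski in $\ell^q$ together with Minkowski's integral inequality for each $\ann_\ell$. Part (b) localises $\Mz$ to balls of bounded radius on which $w$ is uniformly $A_p$ and then extrapolates. Part (d) uses a sequence with a single nonzero term.

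The gap is in (c): your counting is reversed. The bound $Mf_k(x)\gtrsim d(x,x_0)^{-n}$ requires $2^{-k}\le d(x,x_0)$, i.e.\ $k\ge\log_2(1/d(x,x_0))$. There are about $K-\log_2(1/d(x,x_0))+1$ such indices $k\le K$, not $\log(1/d(x,x_0))$. The indices with $2^{-k}>d(x,x_0)$ are those for which $x$ lies in the support of $f_k$, and they only reproduce $\sum_kf_k(x)\simeq d(x,x_0)^{-n}$. With the correct count, at $d(x,x_0)\simeq2^{-j}$, $1\le j\le K$, one has $\sum_kf_k\simeq2^{jn}$ and $\sum_kMf_k\simeq(1+K-j)\,2^{jn}$. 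Hence, for fixed $p>1$,
\[
   \Bigl\|\sum_kMf_k\Bigr\|_{L^p}^p\simeq\sum_{j\le K}2^{jn(p-1)}(1+K-j)^p\simeq_p2^{Kn(p-1)}\simeq_p\Bigl\|\sum_kf_k\Bigr\|_{L^p}^p .
\]
The large scales contribute only $O(K^p)$. Both norms are carried by the region $d(x,x_0)\simeq2^{-K}$, where the pointwise ratio is $O(1)$. The ratio is large only near $d(x,x_0)\simeq1$, and that region carries a negligible part of the norm. So the ratio of norms stays bounded, and your example degenerates only at $p=1$, which is outside the range $1<p<\infty$ of the theorem. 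Reweighting the concentric balls does not help either: a discrete Hardy inequality with the geometric weights $2^{jn(p-1)}$ again bounds the ratio.

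What is needed is a family of disjoint sets spread out at a single scale, which is the paper's construction. Foliate $B_o$ by $N$ slabs $E_k$ of Euclidean width $1/N$, so that $\sum_k\Chi_{E_k}=\Chi_{B_o}$. For $x\in B(o,\frac12)$ and a slab separated from $x$ by $i\lesssim N$ other slabs, take a ball about $x$ of radius $\simeq(i+1)/N$. Since hyperbolic and Euclidean balls and volumes are comparable in $B_o$, this gives $M\Chi_{E_k}(x)\gtrsim(i+1)^{-1}$. Summing over slabs, $\sum_kM\Chi_{E_k}\gtrsim\log N$ on $B(o,\frac12)$, so the $\ell^1$-valued inequality fails for every $p$.
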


\begin{proof}
(a) Pointwise $\Minf f\le C\sum_\ell\ann_\ell|f|$ by
\eqref{eq:ballfromannuli}.  For $q<\infty$, Minkowski's inequality in
$\ell^q$ gives $(\sum_k(\sum_\ell\ann_\ell|f_k|)^q)^{1/q}\le\sum_\ell
(\sum_k(\ann_\ell|f_k|)^q)^{1/q}$, and Minkowski's integral inequality
for the average $\ann_\ell$ gives $(\sum_k(\ann_\ell|f_k|)^q)^{1/q}\le
\ann_\ell F$, where $F:=(\sum_k|f_k|^q)^{1/q}$.  Hence
$(\sum_k(\Minf f_k)^q)^{1/q}\le C\sum_\ell\ann_\ell F$; for $q=\infty$,
$\sup_k\Minf f_k\le C\sum_\ell\ann_\ell(\sup_k|f_k|)$ directly.  Since
$\|\sum_\ell\ann_\ell F\|\le\sum_\ell\|\ann_\ell\|\,\|F\|\le
C\sum_\ell e^{-\eps\ell}\|F\|$, (a) follows with $C$ independent of $q$.

(b) By (a) it remains to treat $\Mz$.  Let $\{z_i\}$ be a maximal
$1$-separated set and $B^*_{z_i}:=B(z_i,4)$; the balls $B^*_{z_i}$
have bounded overlap (\cref{sec:prelim}).  For $x\in B_{z_i}$ and
$r\le2$ we have $B(x,r)\subset B^*_{z_i}$, so that $\Mz f\le
M_i(f\Chi_{B^*_{z_i}})$ on $B_{z_i}$, where $M_i$ is the maximal
operator of the space of homogeneous type $(B^*_{z_i},d,\mu)$.  Since
hyperbolic balls are geodesically convex, for $x\in B^*_{z_i}$ and
$r\le8$ the set $B(x,r)\cap B^*_{z_i}$ contains a ball of radius $r/4$,
so that $|B(x,r)\cap B^*_{z_i}|\ge c|B(x,r)|$; hence, by
\ref{item_L21}, the restriction of $w$ to $B^*_{z_i}$ is an $A_p$
weight of this space, uniformly in $i$.  For $1<q<\infty$ the weighted
Fefferman--Stein inequality on $B^*_{z_i}$ follows from the scalar
weighted inequality by Rubio de Francia extrapolation, whose proof
\cite[Theorem~3.9]{CUMP11} applies verbatim on spaces of homogeneous
type \cite{ST89}, with constants depending only on the doubling and
$A_p$ constants.  For
$q=\infty$ it is the scalar inequality.  Summing over $i$ gives (b).

(c) The Euclidean counterexample \cite[Ch.~II, \S5.1]{Ste93} can be
realized inside $B_o$, where the metric and the measure are comparable
to the Euclidean ones: with $N$ slabs $E_k$ of Euclidean width $1/N$
foliating $B_o$, $\sum_kM\Chi_{E_k}\gtrsim\log N$ on $B(o,\frac12)$,
while $\sum_k\Chi_{E_k}=\Chi_{B_o}$.

(d) Apply the inequality to a sequence with a single nonzero term.
\end{proof}

\subsection{Spherical functions and a weighted Herz principle}
\label{subsec:spherical}
Here $w$ is an arbitrary weight, and $K:[0,\infty)\to[0,\infty)$ is a
radial kernel with
\[
   T_Kf(x):=\int_{\Bn}K(d(x,y))f(y)\,d\mu(y),\qquad
   \widehat K(\lambda):=\int_{\Bn}K(|x|)\,\varphi_\lambda(x)\,d\mu(x),
\]
the spherical transform \eqref{eq:sphtransform} of $K(|\cdot|)$.  On
$\Bn\cong SO_0(n,1)/SO(n)$, $T_Kf$ is the convolution of $f$ with
$K(|\cdot|)$.  By the mean value identity \eqref{eq:meanvalue}, if
$\varphi>0$ and $\Delta\varphi=(\eta^2-\rho^2)\varphi$ with $\eta\in\R$,
then
\begin{equation}\label{eq:meanvalueK}
   T_K\varphi=\widehat K(i\eta)\,\varphi .
\end{equation}

\begin{theorem}[Weighted Herz principle]\label{thm:herz}
Let $K$ be a radial kernel and $p\in[1,\infty)$.
\begin{enumerate}[label=\textup{(\alph*)}, leftmargin=2.3em]
\item\label{thm:herz:a} For every weight $w$,\ $\|T_K\|_{L^p(w)\to L^p(w)}\ge
      \|T_K\|_{L^2\to L^2}=\widehat K(0)$.
\item\label{thm:herz:b} If $\varphi>0$ and $\Delta\varphi=(\eta^2-\rho^2)\varphi$, $\eta\in\R$,
      then $\|T_K\|_{L^p(\varphi^{2-p})\to L^p(\varphi^{2-p})}\le
      \widehat K(i\eta)$.
\item\label{thm:herz:c} $\|T_K\|_{L^p(\varphi_0^{2-p})\to L^p(\varphi_0^{2-p})}=\widehat K(0)$,
      and $T_K$ is bounded on some weighted $L^p$ space if and only if
      $\widehat K(0)<\infty$.
\end{enumerate}
\end{theorem}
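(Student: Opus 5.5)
I would prove (b) first, then (a), and derive (c) from both.

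\emph{Part (b).} Let $v:=\varphi^{2-p}$. For $p=1$ the weight is $v=\varphi$. Since $T_K$ has a symmetric kernel, duality gives $\|T_K\|_{L^1(\varphi)\to L^1(\varphi)}=\|T_K\varphi/\varphi\|_{L^\infty}$, and this equals $\widehat K(i\eta)$ by \eqref{eq:meanvalueK}. For general $p$ I would use the Schur test with the positive function $\varphi$. Write $k(x,y)=K(d(x,y))$ and split $k\,|f|=\bigl(k\,\varphi(y)\bigr)^{1/p'}\bigl(k\,\varphi(y)^{-(p-1)}|f(y)|^p\bigr)^{1/p}$. H\"older's inequality gives
\[
   |T_Kf(x)|^p\le (T_K\varphi(x))^{p-1}\int k(x,y)\varphi(y)^{1-p}|f(y)|^p\,d\mu(y),
\]
and the first factor equals $\widehat K(i\eta)^{p-1}\varphi(x)^{p-1}$ by \eqref{eq:meanvalueK}. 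Multiplying by $\varphi(x)^{2-p}$ leaves the factor $\varphi(x)$. Integrating in $x$ with Fubini then produces $T_K\varphi(y)=\widehat K(i\eta)\varphi(y)$. This gives $\|T_Kf\|_{L^p(v)}^p\le\widehat K(i\eta)^p\int|f|^p\varphi^{2-p}\,d\mu$. The weights balance correctly: $\varphi(y)\cdot\varphi(y)^{1-p}=\varphi^{2-p}$.

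\emph{Part (a).} The identity $\|T_K\|_{L^2\to L^2}=\widehat K(0)$ is Herz's principle. The upper bound is (b) with $p=2$ and $\varphi=\varphi_0$. The lower bound follows from the testing argument below applied with $w\equiv1$. For an arbitrary weight $w$ I would test $T_K$ on $f_R:=\varphi_0^{2/p}\,w^{-1/p}\,\Chi_{B(o,R)}$, adjusted by truncations so that all integrals are finite. The idea is to compare with the unweighted pairing
\[
   \int \varphi_0\,\Chi_{B(o,R)}\;T_K(\varphi_0\Chi_{B(o,R)})\,d\mu\approx\widehat K(0)\int_{B(o,R)}\varphi_0^2\,d\mu,
\]
which holds because $\int_{B(o,R)}\varphi_0^2\simeq R^3$ grows while the boundary losses are of lower order. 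To pass this to $L^p(w)$ I would use duality and the positivity of the kernel. Put $g=\varphi_0^{2/p}w^{-1/p}\Chi$ in $L^p(w)$ and $h=\varphi_0^{2/p'}w^{1/p}\Chi$ in $L^{p'}(w^{1-p'})$, the dual space with respect to $d\mu$. Then $\int hT_Kg\le\|T_K\|\,\|g\|_{L^p(w)}\|h\|_{L^{p'}(w^{1-p'})}$, and both norms equal $(\int_{B(o,R)}\varphi_0^2)^{1/p}$ and $(\int_{B(o,R)}\varphi_0^2)^{1/p'}$ respectively. The left side is $\iint k(x,y)\varphi_0(x)^{2/p'}\varphi_0(y)^{2/p}(w(x)/w(y))^{1/p}$. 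This still contains the weight, which is the main obstacle. To remove it I would symmetrize: average the pairing with the one where $p$ is replaced by $p'$ (equivalently, use the dual operator, which is $T_K$ itself), and then apply $a^{1/p}+a^{-1/p}\ge$ something in a geometric-mean form. Concretely, I would use an operator-theoretic interpolation trick. Since $T_K$ is self-adjoint, its norm on $L^p(w)$ equals its norm on $L^{p'}(w^{1-p'})$. Stein--Weiss complex interpolation between these two spaces gives the unweighted $L^2$ space at $\theta=1/2$, because $w^{1/2}\cdot w^{(1-p')\cdot(1/2)\cdot(p'/\ldots)}$ becomes trivial. Hence $\widehat K(0)=\|T_K\|_{L^2}\le\|T_K\|_{L^p(w)}$. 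I would check the exponent bookkeeping here carefully: the interpolated weight is $w^{2(1-\theta)/p}\,(w^{1-p'})^{2\theta/p'}$, which equals $1$ at $\theta=1/2$. This interpolation route is cleaner than the test functions, so I would adopt it.

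\emph{Part (c).} The equality follows from (a) together with (b) at $\eta=0$. For the last claim: if $T_K$ is bounded on some weighted $L^p$, then (a) gives $\widehat K(0)<\infty$. Conversely, if $\widehat K(0)<\infty$, then $T_K$ is bounded on $L^p(\varphi_0^{2-p})$.
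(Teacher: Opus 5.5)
Your proposal is correct. Most of it coincides with the paper. Your Schur test for (b) is the paper's Jensen inequality for the conjugated operator $\varphi^{-1}T_K(\varphi\,\cdot)$ on $L^p(\varphi^2\,d\mu)$, written out directly. Your passage from $L^p(w)$ to $L^2$ by self-duality and Stein--Weiss interpolation at $\theta=\frac12$ is exactly the paper's step; for $p=1$, take the dual endpoint $\{f:\ f/w\in L^\infty\}$.

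The genuine difference is how you prove $\|T_K\|_{L^2\to L^2}=\widehat K(0)$. The paper uses the Plancherel theorem for the Helgason--Fourier transform. There $T_K$ is the multiplier $\widehat K(\lambda)$, so its norm is $\sup_{\lambda\in\R}|\widehat K(\lambda)|$, attained at $\lambda=0$ because $|\varphi_\lambda|\le\varphi_0$. You instead get the upper bound from (b) with $p=2$, $\varphi=\varphi_0$, and the lower bound from the almost-eigenfunctions $g_R:=\varphi_0\Chi_{B(o,R)}$.

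Your lower-bound sketch is vague on one point, namely which truncation controls the boundary losses. It should be a truncation of $K$, not of the test function:
\begin{itemize}
\item Pass to a bounded kernel $0\le K_L\le K$ supported in $[0,L]$, for instance $K_L=\min\{K,L\}\Chi_{[0,L]}$.
\item By \eqref{eq:meanvalueK}, $T_{K_L}g_R=\widehat{K_L}(0)\,\varphi_0$ on $B(o,R-L)$, and $T_{K_L}g_R\ge0$ everywhere.
\item By \eqref{eq:phieta}, the shell $B(o,R)\setminus B(o,R-L)$ carries only $O(LR^2)$ of $\|g_R\|_2^2\simeq R^3$.
\item Hence $\|T_K\|_{L^2}\ge\langle T_{K_L}g_R,g_R\rangle/\|g_R\|_2^2\ge\widehat{K_L}(0)\bigl(1-O(L/R)\bigr)$.
\item Let $R\to\infty$, then use monotone convergence in $L$. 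This gives $\|T_K\|_{L^2}\ge\widehat K(0)$, including the case $\widehat K(0)=\infty$.
\end{itemize}

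Your route is elementary and self-contained. It uses only the mean value identity and the polynomial growth of $\int_{B(o,R)}\varphi_0^2\,d\mu$, and the infinite case needs no separate argument. It is also the same almost-eigenfunction mechanism the paper itself uses, via Reiter's property, for the lower bound in \cref{thmE}\ref{thmE:b}. The paper's route is shorter once Plancherel is available, and it gives the full spectral formula $\|T_K\|_{L^2}=\sup_{\lambda}|\widehat K(\lambda)|$.
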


\begin{proof}
(a) Since the kernel of $T_K$ is symmetric, $T_K$ has the same norm on
the dual space $L^{p'}(w^{1-p'})$ (on $\{f:\ f/w\in L^\infty\}$ if
$p=1$).  The Stein--Weiss interpolation theorem with change of measures
\cite{SW58} gives $\|T_K\|_{L^2}\le\|T_K\|_{L^p(w)\to L^p(w)}$, since
$w^{1/(2p)}w^{(1-p')/(2p')}=1$.  On $L^2$, $T_K$ is a Fourier
multiplier: by \eqref{eq_helgaosn_F} and the mean value identity,
$\widetilde{T_Kf}(\lambda,\xi)=\widehat K(\lambda)\widetilde f(\lambda,\xi)$,
so by the Plancherel theorem $\|T_K\|_{L^2}=\sup_{\lambda\in\R}
|\widehat K(\lambda)|$.  Since $K\ge0$ and $|\varphi_\lambda|\le\varphi_0$,
this supremum is attained at $\lambda=0$, whence
$\|T_K\|_{L^2}=\widehat K(0)$.

(b) Let $d\mu_\varphi:=\varphi^2d\mu$ and $T_{K,\varphi}f:=\varphi^{-1}
T_K(\varphi f)$.  The kernel of $T_{K,\varphi}$ with respect to
$\mu_\varphi$ is $K(d(x,y))/(\varphi(x)\varphi(y))$, which is symmetric,
and $T_{K,\varphi}1=\widehat K(i\eta)$ by \eqref{eq:meanvalueK}.  Jensen's
inequality gives $|T_{K,\varphi}f|^p\le\widehat K(i\eta)^{p-1}
T_{K,\varphi}(|f|^p)$, and $\int T_{K,\varphi}(|f|^p)\,d\mu_\varphi=
\widehat K(i\eta)\int|f|^p\,d\mu_\varphi$ by symmetry.  Hence
$\|T_{K,\varphi}\|_{L^p(\mu_\varphi)}\le\widehat K(i\eta)$, and the claim
follows because $f\mapsto f/\varphi$ maps $L^p(\varphi^{2-p})$
isometrically onto $L^p(\mu_\varphi)$ and $T_Kf=\varphi\,
T_{K,\varphi}(f/\varphi)$.

(c) Take $\varphi=\varphi_0$, $\eta=0$, in (b), and combine with (a).
\end{proof}

For $p=2$, (c) is Herz's \emph{principe de majoration} \cite{Her70}.

\begin{corollary}\label{cor:extremal}
For every weight $w$, $p\in[1,\infty)$ and $\ell\in\N$,
$\|\ann_\ell\|_{L^p(w)}\ge|B(o,\ell)|^{-1}\int_{\ell-1}^\ell\varphi_0\Jd\,dt\ge
c(1+\ell)e^{-\rho\ell}$, with equality in the first inequality for
$w=\varphi_0^{2-p}$.  In particular the factor $\ell+1$ in
\eqref{eq:annulardecay} is sharp: $\ww_{\rho(p-2),0}\in\Ab_{p,p/2}$
and $\|\ann_\ell\|_{L^p(\ww_{\rho(p-2),0})}\simeq(1+\ell)e^{-\rho\ell}
\simeq\varphi_0(\ell)$.
\end{corollary}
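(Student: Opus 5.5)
The plan is to view $\ann_\ell$ as a radial convolution operator and apply \cref{thm:herz} to it. The relevant kernel is $K_\ell(t):=|B(o,\ell)|^{-1}\Chi_{(\ell-1,\ell]}(t)$, for which $\ann_\ell f=T_{K_\ell}|f|$. Since $K_\ell\ge0$, the operator norms of $f\mapsto T_{K_\ell}|f|$ and of $T_{K_\ell}$ agree on every $L^p(w)$.

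\cref{thm:herz}\ref{thm:herz:a} then gives $\|\ann_\ell\|_{L^p(w)}\ge\widehat{K_\ell}(0)$ for every weight $w$. To identify this number, I will evaluate $\widehat{K_\ell}(0)=\int_{\Bn}K_\ell(|x|)\varphi_0(x)\,d\mu(x)$ in polar coordinates \eqref{eq:polar}, using that $\varphi_0$ is radial. This yields $\widehat{K_\ell}(0)=|B(o,\ell)|^{-1}\int_{\ell-1}^{\ell}\varphi_0\Jd\,dt$, which is the first quantity in the statement. Equality for $w=\varphi_0^{2-p}$ is then exactly \cref{thm:herz}\ref{thm:herz:c}, or equivalently part \ref{thm:herz:b} with $\eta=0$.

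For the numerical lower bound I will use \eqref{eq:phieta}, namely $\varphi_0(t)\simeq(1+t)e^{-\rho t}$, together with $\Jd(t)\simeq e^{2\rho t}$ for $t\ge1$ and $|B(o,\ell)|\simeq e^{2\rho\ell}$. These combine to give $\int_{\ell-1}^\ell\varphi_0\Jd\,dt\simeq(1+\ell)e^{\rho\ell}$ for $\ell\ge2$, and hence the quotient is $\simeq(1+\ell)e^{-\rho\ell}$. The case $\ell=1$ needs separate handling, because there $\Jd(t)\simeq t^{n-1}$; I will only note that the integral is a fixed positive constant, so the bound still holds with a suitable $c$.

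For the sharpness claim, \cref{prop:basic}\ref{prop:basic:a} with $\eps=p/2$ and $\alpha=\rho(p-2)$ shows that $\ww_{\rho(p-2),0}\in\Ab_{p,p/2}$: the required condition $2\rho(\eps-1)\le\alpha\le2\rho(p-1-\eps)$ reads $\rho(p-2)\le\rho(p-2)\le\rho(p-2)$, which holds with $\gamma=0$. Consequently \eqref{eq:annulardecay} gives the upper bound $\|\ann_\ell\|\lesssim(\ell+1)e^{-2\rho(p/2)\ell/p}=(\ell+1)e^{-\rho\ell}$. The lower bound $\gtrsim(1+\ell)e^{-\rho\ell}$ holds for every weight by the previous paragraph, so the two sides match. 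Since both $\ann_\ell$ decay exponents and the polynomial factor $\ell+1$ coincide, the factor $\ell+1$ in \eqref{eq:annulardecay} cannot be removed.

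I expect no step to be a real obstacle. The only point needing care is checking that the absolute value in $\ann_\ell$ does not spoil the use of \cref{thm:herz}, and positivity of the kernel takes care of this.
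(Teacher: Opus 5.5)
Your proposal is correct and follows the paper's own proof. Like the paper, you identify $\ann_\ell$ with $T_K$ for $K=|B(o,\ell)|^{-1}\Chi_{(\ell-1,\ell]}$, use \cref{thm:herz}\ref{thm:herz:a},\ref{thm:herz:c} for the lower bound and the equality case, estimate $\widehat K(0)$ with \eqref{eq:phieta} and \eqref{eq:density}, and take the matching upper bound from \cref{prop:basic}\ref{prop:basic:a} and \eqref{eq:annulardecay} with $\eps=p/2$. Your extra care with the absolute value and with $\ell=1$ is fine; note only that the sharpness part implicitly needs $p>1$, since \eqref{eq:annulardecay} and \cref{prop:basic} are stated only for $1<p<\infty$.
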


\begin{proof}
The operator $\ann_\ell$ is $T_K$ with $K=|B(o,\ell)|^{-1}
\Chi_{(\ell-1,\ell]}$, so the first assertion is \cref{thm:herz}, and
the lower bound follows from \eqref{eq:phieta} and \eqref{eq:density}.
Finally $\ww_{\rho(p-2),0}\in\Ab_{p,p/2}$
by \cref{prop:basic}\ref{prop:basic:a}, and \eqref{eq:annulardecay} with $\eps=p/2$
gives the matching upper bound.
\end{proof}

\begin{corollary}[Weighted Kunze--Stein phenomenon]\label{cor:KS}
Let $p\in[1,\infty)$.  If $1\le q<2$ and $K\in L^q(\Bn)$ is radial, then
$\|T_Kf\|_{L^p(\varphi_0^{2-p})}\le\|\varphi_0\|_{L^{q'}}\|K\|_{L^q}
\|f\|_{L^p(\varphi_0^{2-p})}$, with $\|\varphi_0\|_{L^{q'}}<\infty$.  If
$2\le q\le\infty$, there is a radial $0\le K\in L^q(\Bn)$ with
$\widehat K(0)=\infty$, so that $T_K$ is bounded on no weighted $L^p$
space.
\end{corollary}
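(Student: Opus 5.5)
For the first part I would apply \cref{thm:herz}\ref{thm:herz:b} with $\varphi=\varphi_0$, $\eta=0$, getting $\|T_K\|_{L^p(\varphi_0^{2-p})}\le\widehat K(0)$. Since $K\ge0$, I would then bound $\widehat K(0)=\int_{\Bn}K(|x|)\varphi_0(x)\,d\mu(x)$ by H\"older's inequality, $\widehat K(0)\le\|K\|_{L^q}\|\varphi_0\|_{L^{q'}}$, where $K$ is identified with the radial function $K(|\cdot|)$. To see that $\|\varphi_0\|_{L^{q'}}<\infty$ for $q<2$, i.e.\ $q'>2$, I would use \eqref{eq:phieta}, \eqref{eq:polar} and \eqref{eq:density}. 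The integrand $\varphi_0^{q'}\Jd$ is bounded near $0$, and for $t\ge1$ it is $\simeq(1+t)^{q'}e^{-\rho q't}e^{2\rho t}$. This is integrable because $\rho q'>2\rho$; for $q=1$ one has $q'=\infty$ and $\varphi_0\le1$. That completes the bounded case.

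For $2\le q\le\infty$ I would build $K$ so that $K(|\cdot|)\in L^q$ but $\int K\varphi_0\,dV=\infty$. The natural choice is a function supported at large radii, $K(t)=e^{-2\rho t/q}(1+t)^{-a}$ for $t\ge1$ and $K=0$ on $[0,1)$. Then $\|K\|_q^q\simeq\int_1^\infty(1+t)^{-aq}\,dt$, which is finite for $aq>1$. On the other hand,
\[
\widehat K(0)\simeq\int_1^\infty(1+t)^{1-a}e^{(2\rho-\rho-2\rho/q)t}\,dt .
\]
For $q>2$ the exponent $\rho(1-2/q)$ is positive, so $\widehat K(0)=\infty$ for any $a$, and I would take $a=1$. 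For $q=2$ the exponent vanishes and the integral becomes $\int(1+t)^{1-a}\,dt$, which diverges for $a\le2$; combined with $2a>1$, the choice $a=1$ again works. For $q=\infty$ I would simply take $K=\Chi_{[1,\infty)}$, which gives $\widehat K(0)\simeq\int(1+t)e^{\rho t}\,dt=\infty$.

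The final step is to invoke \cref{thm:herz}\ref{thm:herz:a}: every weighted norm of $T_K$ is at least $\widehat K(0)=\infty$, so $T_K$ is bounded on no weighted $L^p$ space. The only delicate point is the borderline case $q=2$. There one needs the factor $1+t$ in $\varphi_0$ from \eqref{eq:phieta} to beat the $L^2$ integrability, and this is why the logarithmic choice $a=1$ is made in that case.
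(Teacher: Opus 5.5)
Your proof is correct and follows the paper's argument. It combines the bound $\|T_K\|\le\widehat K(0)\le\|K\|_{L^q}\|\varphi_0\|_{L^{q'}}$ from \cref{thm:herz} and H\"older, the integrability of $\varphi_0^{q'}$ via \eqref{eq:phieta} exactly when $q'>2$, and a kernel of the form $(1+t)^{-a}e^{-2\rho t/q}$ (the paper takes $a=2/q$ on all of $[0,\infty)$) together with \cref{thm:herz}\ref{thm:herz:a}. The only difference is that the paper applies the bound to $|K|$ so that signed kernels are covered, whereas you assume $K\ge0$; your argument extends verbatim via $|T_Kf|\le T_{|K|}|f|$.
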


\begin{proof}
By \cref{thm:herz}\ref{thm:herz:c} applied to $|K|$ and H\"older's inequality,
$\|T_K\|\le\widehat{|K|}(0)\le\|K\|_{L^q}\|\varphi_0\|_{L^{q'}}$, and
$\varphi_0\in L^{q'}$ if and only if $q'>2$ by \eqref{eq:phieta}.  For
$q\ge2$ take $K(t)=(1+t)^{-2/q}e^{-2\rho t/q}$.  Thus the Kunze--Stein
phenomenon \cite{KS60,Cow78} survives, for radial kernels, on every
$L^p(\varphi_0^{2-p})$; {see \cite[p.~657]{ADY96},
\cite[Corollary~4.5]{RS09}.}
\end{proof}

\begin{remark}[Heat maximal operator]\label{rem:heat}
The heat kernel of $\Bn$ is $h_t(d(x,y))$, where $h_t>0$,
$\int_{\Bn}h_t(|x|)\,d\mu(x)=1$, and $r\mapsto h_t(r)$ is decreasing
\cite[Theorem~2.1]{DM88}, \cite{GN98}; see \cite[Theorem~5.9]{ADY96} for sharp estimates.  Writing $h_t(d(x,y))=
\int_{d(x,y)}^{\infty}d(-h_t)(s)$, Tonelli's theorem gives
\[
   |e^{t\Delta}f(x)|\le\int_0^\infty\Bigl(\int_{B(x,s)}|f|\,d\mu\Bigr)
   \,d(-h_t)(s)\le Mf(x)\int_0^\infty|B(o,s)|\,d(-h_t)(s)=Mf(x),
\]
where the last integral equals $\int_{\Bn}h_t\,d\mu=1$ by the same
computation with $f\equiv1$ (see \cite[Theorem~2.1.10]{Gra14},
\cite[Ch.~III, \S2.2]{Ste70}).  Hence the heat
maximal operator $\mathcal M_\Delta f:=\sup_{t>0}|e^{t\Delta}f|$
satisfies $\mathcal M_\Delta f\le Mf$, and every strong or weak type
bound for $M$ on $L^p(w)$ holds for $\mathcal M_\Delta$ with the same
constant.  Moreover, $e^{t\Delta}\varphi_0=e^{-\rho^2t}\varphi_0$ by
\eqref{eq:meanvalueK}, so $H_tf:=\varphi_0^{-1}e^{t(\Delta+\rho^2)}
(\varphi_0f)$ is a symmetric diffusion semigroup on
$L^2(\varphi_0^2\,d\mu)$, and Stein's maximal theorem
\cite[Ch.~III]{Ste70} shows that $\sup_t|e^{t(\Delta+\rho^2)}F|$ is
bounded on $L^p(\varphi_0^{2-p})$, $1<p<\infty$; this does not follow
from the first part, since $e^{\rho^2t}h_t$ has mass $e^{\rho^2t}$.
\end{remark}

\subsection{Powers of the Poisson kernel}\label{subsec:horocyclic}
Fix $\xi\in\Sph$ and $s\in\R$.  The weight $P(\cdot,\xi)^s=
e^{2\rho s\langle\cdot,\xi\rangle}$ is constant on horospheres tangent
at $\xi$.  We pass to the upper half-space model
\[
   \Un:=\{x=(x',x_n):\ x'\in\R^{n-1},\ x_n>0\},\qquad
   ds^2=\frac{\|dx'\|^2+dx_n^2}{x_n^2},\qquad d\mu(x)=x_n^{-n}\,dx'\,dx_n,
\]
by a M\"obius transformation mapping $\Bn$ onto $\Un$, $o$ to
$o:=(0,1)$ and $\xi$ to $\infty$ \cite[\S4.5--4.6]{Rat19}.  Since
isometries preserve horocyclic brackets with the base point,
$\langle\cdot,\xi\rangle$ becomes $A(x):=\log x_n$, and the weight
becomes $e^{2\rho sA(x)}=x_n^{2\rho s}$.  The product
$xy:=(x'+x_ny',\,x_ny_n)$ makes $\Un$ a group $NA$ with identity $o$,
whose left translations are isometries; thus $\mu$ is a left Haar
measure, $d(x,xy)=|y|$, and $A$ is a homomorphism.  We use two facts.

(H1) For $y\in\Un$ the substitution $x\mapsto xy$ gives
$\int_{\Un}f(xy)\,d\mu(x)=e^{2\rho A(y)}\int_{\Un}f\,d\mu$, so that
$R_yf:=e^{-2\rho A(y)/p}f(\cdot\,y)$ is an isometry of $L^p(\Un)$.

(H2) For $\lambda_0\in\R$, \eqref{eq:horoeigen} gives $\Delta
e^{\lambda_0A}=\lambda_0(\lambda_0-2\rho)e^{\lambda_0A}$, so that
\eqref{eq:meanvalueK} at $o$, with $\eta=\lambda_0-\rho$, yields
$\int_{\Un}K(|y|)\,e^{\lambda_0A(y)}\,d\mu(y)=\widehat K(i(\lambda_0-\rho))$.

\begin{proof}[\textbf{Proof of \cref{thmE}}]
Part (a) is \cref{thm:herz}\ref{thm:herz:a},\ref{thm:herz:c}.  For (b) we work on $\Un$, where
the weight is $e^{2\rho sA}$.  Let $\eta=\frac{2\rho(1-s)}p-\rho$ and
$\lambda_0:=\eta+\rho$.  Given $F\in L^p(e^{2\rho sA})$, we put
$f:=e^{2\rho sA/p}F$, so that $\|f\|_{L^p}=\|F\|_{L^p(e^{2\rho sA})}$.
Substituting $y\mapsto xy$ and using that $A$ is a homomorphism,
\begin{equation}\label{eq:TKhoro}
   e^{2\rho sA(x)/p}\,T_KF(x)=\int_{\Un}K(|y|)\,e^{2\rho sA(x)/p}F(xy)\,
   d\mu(y)=\int_{\Un}(R_yf)(x)\,e^{\lambda_0A(y)}K(|y|)\,d\mu(y).
\end{equation}
Minkowski's inequality, (H1) and (H2) give $\|T_K\|\le\widehat K(i\eta)$.
Conversely, by monotone convergence we may assume that $K$ is bounded with support in $[0,R]$.  Since $NA$ is solvable, it is amenable and
has Reiter's property $(P_p)$ \cite[Definition~8.3.1 and
Theorem~8.3.2]{RS00}, \cite[Proposition~6.12]{Pie84}: for every
$\delta>0$ there is $\psi\ge0$ with $\|\psi\|_{L^p}=1$ and
$\|\psi(y^{-1}\cdot)-\psi\|_{L^p}\le\delta$ for $|y|\le R$.  The map
$Jf(x):=e^{2\rho A(x)/p}f(x^{-1})$ is an isometry of $L^p(\Un)$, and
$J(R_yf)=(Jf)(y^{-1}\cdot)$.  Hence $f:=J^{-1}\psi$ satisfies
$\|f\|_{L^p}=1$ and $\|R_yf-f\|_{L^p}\le\delta$ for $|y|\le R$.  By
\eqref{eq:TKhoro} and (H2), with $F:=e^{-2\rho sA/p}f$,
\[
   \bigl\|e^{2\rho sA/p}T_KF-\widehat K(i\eta)f\bigr\|_{L^p}
   \le\int_{|y|\le R}\|R_yf-f\|_{L^p}\,e^{\lambda_0A(y)}K(|y|)\,d\mu(y)
   \le\delta\,\widehat K(i\eta),
\]
so that $\|T_KF\|_{L^p(e^{2\rho sA})}\ge(1-\delta)\widehat K(i\eta)
\|F\|_{L^p(e^{2\rho sA})}$, and $\delta\to0$ gives the norm identity.
Applying it to $\ann_\ell=T_K$, $K=|B(o,\ell)|^{-1}\Chi_{(\ell-1,\ell]}$,
we get from \eqref{eq:phieta} and \eqref{eq:density} that
$\|\ann_\ell\|_{L^p(P(\cdot,\xi)^s)}$ is comparable to
$(1+\ell)e^{-\rho\ell}$ if $\eta=0$ and to $e^{-(\rho-|\eta|)\ell}$
otherwise.  If $|\eta|<\rho$, the annular norms are summable, and
\eqref{eq:ballfromannuli} and \ref{item_L1} apply, the weight being
continuous and positive.  If $|\eta|\ge\rho$, they do not decay, and
\cref{thm:largescale}, resp.\ \cref{thmD}\ref{thmD:c}\ref{thmD:c:iv}, shows unboundedness.
Finally, $|\eta|<\rho$ means $1-p<s<1$.
\end{proof}

\section{Concluding remarks and open questions}\label{sec:remarks}

(1) \emph{Two-layer $A_{p,q}$ weights.}  Let $0\le\gamma<1$, $1<p\le
q<\infty$, $\sigma=w^{-p'}$ and $\Minf_\gamma f:=\sup_{r>2}
|B(\cdot,r)|^\gamma A_rf$.  By the proof of \eqref{eq:annulardecay},
$\Minf_\gamma$ maps $L^p(w^p)$ to $L^q(w^q)$ if the products
$w^q(\Omega_z(j))^{1/q}\sigma(\Omega_z(m))^{1/p'}$ are at most
$Ce^{2\rho(1-\gamma-\eps)(j+m)}$ for some $\eps>0$ (see \cite{IFR25}). It will be interesting to know if this condition is necessary?

(2) \emph{Endpoint estimates for the heat maximal operator.}  Since
$\sup_th_t(r)\simeq(1+r)^{-1/2}e^{-2\rho r}$ for $r\ge1$
\cite{DM88,ADY96}, the classes of $\mathcal M_\Delta$ and $M$ can
differ for $\ww_{\alpha,\gamma}$ only at $\alpha\in\{-2\rho,2\rho(p-1)\}$.
Is $\mathcal M_\Delta$ bounded on $L^p(\ww_{2\rho(p-1),\gamma})$ for some
$\gamma$?

(3) \emph{Fourier inequalities with $A_p$-type weights.}  Can the
Pitt type inequalities of \cite{GRT26} for polynomial weights be
extended to $A_p$-type weights such as $\varphi_{i\eta}$?  These
functions drive \cref{thm:herz,thmE}, and are used in \cite{GRT26} to
define general monotone functions in hyperbolic geometry.

(4) \emph{Other spaces.}  
Our geometric input, namely \cref{lem:angle,lem:shadow},  has analogues on Damek--Ricci spaces
\cite{ADY96,GRS25}: they are Gromov hyperbolic, so that the angle at
$o$ can be replaced by a visual quasi-metric \cite[III.H.3]{BH99} and
shadows behave as in \cref{lem:shadow}.  This makes Damek--Ricci spaces
a natural setting for a two-layer theory, as are Jacobi hypergroups
with arbitrary parameters, to which \cref{thm:radialstrong} might
extend.
{\subsection*{Acknowledgments}
The author is supported by the FWO Odysseus 1 grant G.0H94.18N:
Analysis and Partial Differential Equations, by the Methusalem program
of the Ghent University Special Research Fund (BOF), project
BOFMET2021000601, and by a BOF postdoctoral fellowship at Ghent
University, BOF24/PDO/025.
\vspace{1mm}

\noindent \textbf{Declaration on the use of AI:}
The author used AI tools (Claude) to generate the TikZ
figures, refine the writing, and assist in developing the arguments for
\cref{thmD}\ref{thmD:b} and the lower bound in \cref{thmE}\ref{thmE:b}.
All mathematical arguments were written and verified by the author, who
takes full responsibility for the content.
\vspace{1mm}

\noindent\textbf{Data availability statement:}
Data sharing does not apply to this article, as no datasets were
generated or analysed during the current study.\par}

{\small
\bibliographystyle{alphaurl}
\bibliography{Ref_hyperbolic}}

\end{document}